\newcommand*{\rom}[1]{\expandafter\@slowromancap\romannumeral #1@}
\newcommand{\BC}{{\mathbb {C}}}
\newcommand{\BN}{{\mathbb {N}}}
\newcommand{\BR}{{\mathbb {R}}}
\newcommand{\CC}{{\mathcal {C}}}
\newcommand{\CF}{{\mathcal {F}}}
\newcommand{\CL}{{\mathcal {L}}}
\newcommand{\CO}{{\mathcal {O}}}
\newcommand{\RC}{{\mathrm {C}}}
\newcommand{\RD}{{\mathrm {D}}}
\newcommand{\wt}{\widetilde}
\newcommand{\wh}{\widehat}
\newcommand{\oT}{\operatorname{T}}
\newcommand{\g}{\mathfrak g}
\newcommand{\Z}{\mathbb{Z}}
\newcommand{\C}{\mathbb{C}}
\newcommand{\R}{\mathbb R}
\newcommand{\cf}{\textit{cf}.~}
\newcommand{\la}{\langle}
\newcommand{\ra}{\rangle}
\newcommand{\be}{\begin {equation}}
\newcommand{\ee}{\end {equation}}
\newcommand{\bee}{\begin {equation*}}
\newcommand{\eee}{\end {equation*}}
\newcommand{\qaq}{\quad\textrm{and}\quad}
\renewcommand{\mid}{\,:\,}
\theoremstyle{Theorem}
\theoremstyle{Theorem}
\theoremstyle{Theorem}
\theoremstyle{Theorem}
\theoremstyle{Plain}
\theoremstyle{remark}
\theoremstyle{remark}
\theoremstyle{Definition}
\newtheorem{dfn}{Definition}[section]
\newtheorem{cord}[dfn]{Corollary}
\newtheorem{prpd}[dfn]{Proposition}
\newtheorem{thmd}[dfn]{Theorem}
\newtheorem{lemd}[dfn]{Lemma}
\newtheorem{remarkd}[dfn]{Remark}
\newtheorem{exampled}[dfn]{Example}
\numberwithin{equation}{section}
\begin{document}

\title[Poincar\'e's lemma]{Poincar\'e's lemma for formal manifolds}

\author[F. Chen]{Fulin Chen}
\address{School of Mathematical Sciences, Xiamen University,
 Xiamen, 361005, China} \email{chenf@xmu.edu.cn}

\author[B. Sun]{Binyong Sun}
\address{Institute for Advanced Study in Mathematics \& New Cornerstone Science Laboratory, Zhejiang University,  Hangzhou, 310058, China}
\email{sunbinyong@zju.edu.cn}

\author[C. Wang]{Chuyun Wang}
\address{Institute for Theoretical
 Sciences/ Institute of Natural Sciences, Westlake University/ Westlake Institute for Advanced Study,
 Hangzhou, 310030, China}
\email{wangchuyun@westlake.edu.cn}

\subjclass[2020]{58A10, 58A12
} \keywords{formal manifold,  de Rham complex, Poincar\'e's lemma}

\begin{abstract}
This is a paper in a series that studies 
smooth relative Lie algebra homologies and cohomologies
based on the theory of formal manifolds and formal Lie groups. 
In two previous papers, we develop the basic theory of formal manifolds, including generalizations of vector-valued distributions and generalized functions on smooth manifolds to the setting of formal manifolds. In this paper, we establish 
Poincar\'e's lemma for de Rham complexes with coefficients in formal functions, formal generalized functions, compactly supported formal densities, or compactly supported formal distributions. 
\end{abstract}

\maketitle

\tableofcontents

\section{Introduction}

This is a sequel to \cite{CSW1} and \cite{CSW2} in a series to study the theory of formal manifolds and formal Lie groups, with the ultimate goal of establishing a smooth relative Lie algebra  (co)homology theory.

\subsection{The motivation}

Motivated by recent developments in the theories of Lie group representations and automorphic forms, there is a growing interest in establishing a  (co)homology theory for smooth representations of general Lie pairs. Here, by a Lie pair, we mean a finite-dimensional complex Lie algebra and a  (not necessarily compact) Lie group, together with additional structures that satisfy the same compatibility conditions as that of the usual pair $(\g,K)$ for a reductive Lie group $G$ (see \cite[(1.64)]{KV} for example), where $\g$ is the complexified Lie algebra of $G$ and $K$ is a maximal compact subgroup of $G$.  We call this  (co)homology theory the
smooth relative Lie algebra (co)homology theory, which unifies the smooth (co)homology theory for Lie groups (see \cite{HM} and \cite{BW}) with the algebraic theory of relative Lie algebra (co)homologies (see \cite{KV}).

In the 1970s, Zuckerman proposed a homological construction of Harish-Chandra modules on certain relative Lie algebra (co)homology spaces (see \cite{KV}), which played an important role in the representation theory of Lie groups. 
We expect that there is an analytic analog of Zuckerman’s functor based on the smooth relative Lie algebra (co)homology theory so that it produces interesting smooth
representations of reductive Lie groups through cohomological induction in a similar way. 

Starting with \cite{CSW1}, we formulate and study a notion of what we call formal manifolds, inspired by the notion of formal schemes in algebraic geometry.
In a forthcoming paper, we will further explore formal
Lie groups, defined as group objects in the category of formal manifolds. Particularly, we
will prove therein that there is an equivalence of categories between the category of formal
Lie groups and that of general Lie pairs. 
Then we plan to establish the smooth relative Lie algebra (co)homology theory by studying (co)homologies of ``suitable" representations for formal Lie groups. 


In the  smooth (co)homology theory of Lie groups,  the standard (projective or injective) resolutions of a smooth representation 
can be constructed by using various functions spaces(see \cite{HM,BW,KS}). 
In \cite{CSW2} we introduce and study various functions spaces on formal manifolds, including generalizations of
vector-valued generalized functions and vector-valued distributions on smooth
manifolds to the setting of formal manifolds.
As a stone step towards the (co)homologies of representations for formal Lie groups, we need to construct standard resolutions of the trivial representation. 
 To achieve this goal, an important work is to define the de Rham complexes for formal manifolds with coefficients in various function spaces and prove the corresponding Poincar\'e's lemma. This is the main motivation of the present paper.

\subsection{Notations and conventions} Before presenting the main results of this paper, we provide some notations and conventions that are used throughout this paper. 
Let $N$ be a smooth manifold and $k\in \BN$. We denote by $N^{(k)}$ the locally ringed space $(N, \CO_N^{(k)})$ over $\mathrm{Spec}(\BC)$, where 
	\[
	\CO_N^{(k)}(U):=\RC^\infty(U)[[y_1, y_2, \dots, y_k]]\quad (U\ \text{is an open subset of}\ N)
	\]
denotes the algebra of formal power series with coefficients in the algebra  $\RC^\infty(U)$ of complex-valued smooth functions on $U$.
Recall from \cite[Definition 1.4]{CSW1} that a formal manifold  is a locally ringed space $(M, \CO)$ over $\mathrm{Spec}(\BC)$ such that
\begin{itemize}
    \item the topological space $M$ is paracompact and Hausdorff; and
    \item for every $a\in M$, there is an open  neighborhood $U$ of $a$ in $M$ and $n,k\in \BN$ such that $(U, \CO|_U)$ is isomorphic to $(\R^n)^{(k)}$ as locally ringed spaces over $\mathrm{Spec}(\BC)$.
\end{itemize}

Throughout this paper, let $(M,\CO)$ be a formal manifold. By abuse of notation,  we will often not distinguish it with the underlying topological space $M$.  For every $a\in M$, the numbers $n$ and $k$  are respectively called the dimension and the degree of $M$ at $a$,  denoted by $\dim_a(M)$ and $\deg_a(M)$. We denote by $\pi_0(M)$ the set of all connected components in $M$, which may or may not be countable. An element in $\CO(M)$ is called a formal function on $M$.

In this paper,  by an LCS, 
we mean a 
locally convex topological vector
space over $\C$, which may or may not be Hausdorff. 
However, a complete or quasi-complete LCS is always assumed to be Hausdorff. 
For two LCS $E_1$ and $E_2$, let $\CL(E_1,E_2)$ denote the space of all continuous linear maps from $E_1$ to $E_2$. When $E_2=\C$, we also set $E_1':=\CL(E_1,E_2)$. 
Unless otherwise mentioned,  $\CL(E_1,E_2)$ is equipped with the strong topology.

As stated in \cite{Gr}, there are three useful topological tensor products on LCS:
the inductive tensor product $\otimes_{\mathrm{i}}$, the projective tensor product $\otimes_\pi$, and the epsilon tensor product $\otimes_\varepsilon$.
We denote the quasi-completions and completions of these topological tensor products by
\[\widetilde\otimes_{\mathrm{i}},\,\, \widetilde\otimes_\pi,\,\, \widetilde\otimes_{\varepsilon}\quad \text{and}\quad 
\widehat\otimes_{\mathrm{i}}, \,\,\widehat\otimes_\pi,\,\, \widehat\otimes_{\varepsilon},\] respectively (see Appendix \ref{appendixC2} for more details).
Similar notations will be used for topological tensor products of 
topological cochain complexes (see Appendix \ref{appendixC2}).

Let $\CF$ be a sheaf of  $\CO$-modules and let $E$ be a quasi-complete LCS. Recall the (co)sheaves 
\[
\mathrm{D}_c^{\infty}(\CF),\ \mathrm{C}^{-\infty}(\CF;E),\ \CF_c,\ \mathrm{D}^{-\infty}(\CF;E),\ \mathrm{D}_c^{-\infty}(\CF;E)
\]
of $\CO$-modules, as well as the corresponding spaces 
\[
\mathrm{D}_c^{\infty}(M;\CF),\ \mathrm{C}^{-\infty}(M;\CF;E),\ \CF_c(M),\ \mathrm{D}^{-\infty}(M;\CF;E),\ \mathrm{D}_c^{-\infty}(M;\CF;E)
\] of global sections
introduced in \cite{CSW2}. 
When $E=\C$, set \[\mathrm{C}^{-\infty}(\CF):=\mathrm{C}^{-\infty}(\CF;E) \qaq \mathrm{C}^{-\infty}(M;\CF):=\mathrm{C}^{-\infty}(M;\CF;E),\] and we have the similar notations $\mathrm{D}^{-\infty}(\CF)$, 
$\mathrm{D}_c^{\infty}(\CF)$, $\mathrm{D}^{-\infty}(M;\CF)$ and $\mathrm{D}_c^{-\infty}(M;\CF)$. 
In particular, (see \cite[Theorem 1.1]{CSW2})
\begin{itemize}
    \item $\mathrm{D}_c^{\infty}(M;\CO)$ is the space of 
    compactly supported formal densities, and when $M=N^{(0)}$, it coincides with the space $\mathrm{D}_c^\infty(N)$ of compactly supported smooth densities on $N$;
    \item $\mathrm{C}^{-\infty}(M;\CO)$ is the space of formal generalized functions on $M$, and when $M=N^{(0)}$, it coincides with  the space $\mathrm{C}^{-\infty}(N)$ of generalized functions on $N$.
    \item $\mathcal{\CO}_c(M)$ is the space of compactly supported formal functions on $M$, and when $M=N^{(0)}$, it coincides with the space $\mathrm{C}_c^\infty(N)$ of compactly supported smooth functions on $N$;
    \item $\mathrm{D}^{-\infty}(M;\CO)$ is the space of formal distributions on $M$, and when $M=N^{(0)}$, it coincides with the space  $\mathrm{D}^{-\infty}(N)$ of distributions on $N$; and 
    \item  $\mathrm{D}_c^{-\infty}(M;\CO)$ is the space of compactly supported formal distributions on $M$, and when $M=N^{(0)}$, it coincides with  the space $\mathrm{D}_c^{-\infty}(N)$ of compactly supported distributions on $N$.
\end{itemize}
One may consult \cite{CSW2} for more details.

 \subsection{Main results}In Section \ref{sec:derham}, we  define the de Rham complex
\be\label{de1}
\Omega_\CO^\bullet: \ \cdots\rightarrow 0\rightarrow  0\rightarrow \Omega_\CO^0\rightarrow \Omega_\CO^1\rightarrow \Omega_\CO^2\rightarrow \cdots
\ee
for  $(M, \CO)$ with coefficients in formal functions, which is a complex of sheaves of $\CO$-modules.
By taking the transpose of $\Omega_\CO^\bullet$, we obtain a complex
\be \label{de2}
\mathrm{D}^{-\infty}_c(\Omega_\CO^{-\bullet}): \ \cdots\rightarrow  \mathrm{D}^{-\infty}_c(\Omega_\CO^2)\rightarrow \mathrm{D}^{-\infty}_c(\Omega_{\CO}^1)\rightarrow \mathrm{D}^{-\infty}_c(\Omega_\CO^0)\rightarrow 0\rightarrow \cdots
\ee
of cosheaves of $\CO$-modules.
We call $\mathrm{D}^{-\infty}_c(\Omega_\CO^{-\bullet})$ the de Rham complex
for $(M,\CO)$ with coefficients in compactly supported formal distributions.

As a subcomplex of $\mathrm{D}^{-\infty}_c(\Omega_\CO^{-\bullet})$, there is also a complex
\be\label{de3}
\mathrm{D}^{\infty}_c(\Omega_\CO^{-\bullet}): \ \cdots\rightarrow  \mathrm{D}^{\infty}_c(\Omega_\CO^2)\rightarrow \mathrm{D}^{\infty}_c(\Omega_{\CO}^1)\rightarrow \mathrm{D}^{\infty}_c(\Omega_\CO^0)\rightarrow 0\rightarrow \cdots
\ee
 of cosheaves of $\CO$-modules.
Similarly, by considering the  transpose of $\mathrm{D}^{\infty}_c(\Omega_\CO^{-\bullet})$, we have a complex
\be\label{de4}
\mathrm{C}^{-\infty}(\Omega_\CO^\bullet): \ \cdots\rightarrow 0\rightarrow  0\rightarrow \mathrm{C}^{-\infty}(\Omega_\CO^0)\rightarrow
\mathrm{C}^{-\infty}(\Omega_\CO^1)\rightarrow \mathrm{C}^{-\infty}(\Omega_\CO^2)\rightarrow \cdots
\ee
of sheaves of $\CO$-modules.
We call $\mathrm{D}^{\infty}_c(\Omega_\CO^{-\bullet})$ and $\mathrm{C}^{-\infty}(\Omega_\CO^\bullet)$ the de Rham complexes for
 $(M,\CO)$ with coefficients in compactly supported formal densities and formal generalized functions, respectively.

By taking the global sections in \eqref{de1}-\eqref{de4}, these de Rham complexes
produce four topological cochain complexes
\be\label{intro:de}\Omega^\bullet_\CO(M),\quad \mathrm{D}^{-\infty}_c(M;\Omega_\CO^{-\bullet}),\quad
\mathrm{D}^{\infty}_c(M;\Omega_\CO^{-\bullet})\quad\text{and}\quad \mathrm{C}^{-\infty}(M;\Omega_\CO^\bullet)\ee
 of reflexive complete LCS. 

Let $N_1$ and $N_2$ be two smooth manifolds. There are natural LCS identifications 
\begin{eqnarray}\label{eq:schwartzkernel1}
&&\RC^\infty(N_1\times N_2)=\RC^\infty(N_1)\widetilde\otimes_\pi\RC^\infty(N_2)=\RC^\infty(N_1)\widehat\otimes_\pi \RC^\infty(N_2),\\
\label{eq:schwartzkernel2}
&&\RD_c^{-\infty}(N_1\times N_2)=\RD_c^{-\infty}(N_1)\widetilde\otimes_{\mathrm{i}}\RD_c^{-\infty}(N_2)=\RD_c^{-\infty}(N_1)\widehat\otimes_{\mathrm{i}}\RD_c^{-\infty}(N_2),\\
\label{eq:schwartzkernel3}&&\RD^\infty_{c}(N_1\times N_2)=\RD^\infty_c(N_1)\widetilde\otimes_{\mathrm{i}}\RD^\infty_c(N_2)=\RD^\infty_c(N_1)\widehat\otimes_{\mathrm{i}} \RD^\infty_c(N_2),\\
\label{eq:schwartzkernel4}&&\RC^{-\infty}(N_1\times N_2)=\RC^{-\infty}(N_1)\widetilde\otimes_{\pi}\RC^{-\infty}(N_2)=\RC^{-\infty}(N_1)\widehat\otimes_{\pi}\RC^{-\infty}(N_2),
\end{eqnarray}
which are known as Schwartz kernel theorems for smooth manifolds (see \cite{Sc2} or \cite{Gr}). Recall from \cite[Theorem 6.18]{CSW1} that finite product exists in the category of formal manifolds.
In Section \ref{sec:tenderham}, we prove the following generalization of Schwartz kernel theorems in the setting of formal manifolds.

\begin{thmd}\label{Introthm:tensorofcomplex}
Let $(M_1,\CO_1)$ and $(M_2,\CO_2)$ be two 
formal manifolds and let
$(M_3,\CO_3)$
be the product of them. Then we have the following topological cochain complex identifications: 
\begin{eqnarray*}
\Omega^\bullet_{\CO_3}(M_3)&=&\Omega^\bullet_{\CO_1}(M_1)\widetilde\otimes_{\pi} \Omega^\bullet_{\CO_2}(M_2)\\
&=&\Omega^\bullet_{\CO_1}(M_1)\widehat\otimes_{\pi} \Omega^\bullet_{\CO_2}(M_2),\\
\mathrm{D}^{-\infty}_c(M_3;\Omega_{\CO_3}^{-\bullet})&=&\mathrm{D}^{-\infty}_c(M_1;\Omega_{\CO_1}^{-\bullet})\wt\otimes_{\mathrm 
 i}\mathrm{D}^{-\infty}_c(M_2;\Omega_{\CO_2}^{-\bullet})\\
&=& \mathrm{D}^{-\infty}_c(M_1;\Omega_{\CO_1}^{-\bullet})\widehat\otimes_{\mathrm{i}}  \mathrm{D}^{-\infty}_c(M_2;\Omega_{\CO_2}^{-\bullet}),\\
\mathrm{D}^{\infty}_c(M_3;\Omega_{\CO_3}^{-\bullet})&=&\mathrm{D}^{\infty}_c(M_1;\Omega_{\CO_1}^{-\bullet})
\widetilde\otimes_{\mathrm{i}} \mathrm{D}^{\infty}_c(M_2;\Omega_{\CO_2}^{-\bullet})\\
&=& \mathrm{D}^{\infty}_c(M_1;\Omega_{\CO_1}^{-\bullet})\widehat\otimes_{\mathrm{i}}  \mathrm{D}^{\infty}_c(M_2;\Omega_{\CO_2}^{-\bullet}),\\
\mathrm{C}^{-\infty}(M_3;\Omega_{\CO_3}^\bullet)&=&\mathrm{C}^{-\infty}(M_1;\Omega_{\CO_1}^\bullet)
\widetilde\otimes_\pi \mathrm{C}^{-\infty}(M_2;\Omega_{\CO_2}^\bullet)\\
&=& \mathrm{C}^{-\infty}(M_1;\Omega_{\CO_1}^\bullet)\widehat\otimes_\pi  \mathrm{C}^{-\infty}(M_2;\Omega_{\CO_2}^\bullet).
\end{eqnarray*}
\end{thmd}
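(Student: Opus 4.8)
The plan is to realize each of the four complexes on the product $M_3$ as the topological tensor product complex of the corresponding complexes on $M_1$ and $M_2$, by combining a K\"unneth-type decomposition of the sheaf of formal differential forms with the Schwartz kernel theorem for formal manifolds applied bidegree by bidegree.

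First I would establish the external tensor product decomposition of the de Rham complex. Writing $\mathrm{pr}_1,\mathrm{pr}_2$ for the two projections from $M_3$, the product structure (\cite[Theorem 6.18]{CSW1}) yields a canonical splitting $\Omega^1_{\CO_3}=\mathrm{pr}_1^*\Omega^1_{\CO_1}\oplus \mathrm{pr}_2^*\Omega^1_{\CO_2}$ of the sheaf of formal $1$-forms, locally free of the expected rank on each connected component. Taking exterior powers gives a bigrading $\Omega^p_{\CO_3}=\bigoplus_{i+j=p}\Omega^{i,j}_{\CO_3}$ with $\Omega^{i,j}_{\CO_3}\cong \mathrm{pr}_1^*\Omega^i_{\CO_1}\otimes_{\CO_3}\mathrm{pr}_2^*\Omega^j_{\CO_2}$, and the Leibniz rule shows that the formal de Rham differential splits as $d_3=d_1\otimes 1+(-1)^i\,1\otimes d_2$ on $\Omega^{i,j}_{\CO_3}$. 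Thus, as complexes of sheaves, $\Omega^\bullet_{\CO_3}$ is the total complex of the external tensor product of $\Omega^\bullet_{\CO_1}$ and $\Omega^\bullet_{\CO_2}$, and the same decomposition, transposed, governs the cosheaf complexes \eqref{de2} and \eqref{de3}. Since each total degree $p$ involves only the finitely many pairs $(i,j)$ with $i+j=p$, this reduces the theorem to a bidegreewise identification of spaces of global sections, together with the verification that these identifications intertwine $d_3$ with $d_1\otimes 1\pm 1\otimes d_2$.

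Next I would prove the single-space Schwartz kernel theorem for formal manifolds and feed it into each bidegree. As $\Omega^i_{\CO_1}$ and $\Omega^j_{\CO_2}$ are locally free of finite rank over $\CO_1$ and $\CO_2$, the bidegree $(i,j)$ statement reduces to the kernel theorem for the basic coefficient sheaves $\CO$, $\mathrm{C}^{-\infty}(\CO)$, $\mathrm{D}^\infty_c(\CO)$ and $\mathrm{D}^{-\infty}_c(\CO)$. On local models $M_r=N_r^{(k_r)}$ one has $\CO(N_r^{(k_r)})=\RC^\infty(N_r)[[y_1,\dots,y_{k_r}]]$, and the product is $N_3^{(k_1+k_2)}$ with $N_3=N_1\times N_2$; feeding the smooth kernel theorems \eqref{eq:schwartzkernel1}--\eqref{eq:schwartzkernel4} through the formal power series variables, and using that the spaces of \cite{CSW2} are built as the appropriate tensor products with $\C[[y]]$, gives the local identification with exactly the tensor product dictated by the smooth case ($\widetilde\otimes_\pi$/$\widehat\otimes_\pi$ for $\CO$ and $\mathrm{C}^{-\infty}$, and $\widetilde\otimes_{\mathrm i}$/$\widehat\otimes_{\mathrm i}$ for $\mathrm{D}^\infty_c$ and $\mathrm{D}^{-\infty}_c$). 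A partition of unity and sheaf-gluing argument, of the kind used for the corresponding statements in \cite{CSW2}, then globalizes these local identifications to $M_1$, $M_2$ and $M_3$.

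The remaining, and most delicate, points are compatibility with the differentials and the reconciliation of quasi-completed with completed tensor products. For the former, one checks on local models that under the kernel-theorem isomorphism the operator $d_3$ corresponds to $d_1\otimes 1+(-1)^i\,1\otimes d_2$; this is a direct computation from the Leibniz rule and the explicit product structure, but it must be propagated compatibly through the gluing so that the resulting maps are isomorphisms of topological cochain complexes, not merely of the underlying graded LCS. For the latter, I expect the main obstacle: I must show that the quasi-completed tensor product complexes coincide with the completed ones and that forming them commutes with passage to global sections in each degree. This rests on the nuclearity and (quasi-)completeness of the spaces $\Omega^p_{\CO_r}(M_r)$ from \cite{CSW2}---precisely the properties that make \eqref{eq:schwartzkernel1}--\eqref{eq:schwartzkernel4} hold simultaneously for $\widetilde\otimes$ and $\widehat\otimes$---together with the formalism of tensor products and completions of topological cochain complexes from Appendix \ref{appendixC2}. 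The care needed there, in the presence of unbounded de Rham degree across the possibly uncountable set $\pi_0(M_3)$, is where the argument is least automatic.
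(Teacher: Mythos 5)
Your architecture for the first identification is essentially the paper's own argument: the paper builds the global map $\Psi(\omega_1\otimes\omega_2)=p_1^{\natural}(\omega_1)\wedge p_2^{\natural}(\omega_2)$, checks $d\circ\Psi=\Psi\circ d$ by the Leibniz rule, proves it is a topological isomorphism on charts $N^{(k)}$ exactly by your "feed the smooth kernel theorem through the formal variables" computation (Lemma \ref{lem:Psiisopre}), and globalizes by a sheaf argument resting on the vector-valued identification $\Omega^r_\CO(M)\widetilde\otimes E=\Omega^r_\CO(M;E)$ (Propositions \ref{prop:EvaluedDF} and \ref{propro}). Where you genuinely diverge is in the other three identifications: you propose to prove separate kernel theorems for $\mathrm{D}^{-\infty}_c(\CO)$, $\mathrm{D}^{\infty}_c(\CO)$ and $\mathrm{C}^{-\infty}(\CO)$ and glue each one directly, whereas the paper derives them by duality. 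Since $\Omega^{r}_{\CO_i}(M_i)$ is a product of nuclear Fr\'echet spaces, the canonical map $(\Omega^{r_1}_{\CO_1}(M_1))'\widehat\otimes_{\mathrm{i}}(\Omega^{r_2}_{\CO_2}(M_2))'\rightarrow(\Omega^{r_1}_{\CO_1}(M_1)\widehat\otimes\,\Omega^{r_2}_{\CO_2}(M_2))'$ is a topological isomorphism, so transposing $\Psi^{-1}$ gives the $\mathrm{D}^{-\infty}_c$ statement at once; the $\mathrm{D}^{\infty}_c$ statement is then obtained by restricting this dual isomorphism to the density subcosheaves, and the $\mathrm{C}^{-\infty}$ statement by transposing once more. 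Because $\mathrm{D}^{-\infty}_c(M;\Omega^r_\CO)$ and $\mathrm{C}^{-\infty}(M;\Omega^r_\CO)$ are identified with strong duals in \cite{CSW2}, the duality route is shorter and closer to the definitions; your direct route is viable but would end up re-deriving essentially these duality facts.

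One concrete warning about the step you describe as "a partition of unity and sheaf-gluing argument": for $\mathrm{D}^{\infty}_c$ this is where the bulk of the paper's work sits, and the difficulty is not the one you flag at the end (nuclearity, quasi-completeness, uncountable $\pi_0$). The space $\mathrm{D}^{\infty}_c(M;\Omega^r_\CO)$ carries the inductive limit topology over $(K,s)\in\mathcal{C}(M)$, and the identification must be established filtration-wise: a compactly supported density of order $\le s$ on $M_1\times M_2$ does not decompose into tensors of fixed bi-orders summing to $s$, so one is forced to compare $\mathrm{D}^{\infty}_{K_1\times K_2,s}(M_3;\Omega^r_{\CO_3})$ with an inductive limit over the poset $\oT_s=\{(s_1,s_2)\mid s_1+s_2\le s\}$ of sums of pieces $\mathrm{D}^{\infty}_{K_1,s_1}\widehat\otimes\,\mathrm{D}^{\infty}_{K_2,s_2}$ (Propositions \ref{prop:boxtimeswelldefined} and \ref{prop:csdfisoK}), prove openness and surjectivity of the partition-of-unity extension maps, check cofinality of the product compacts $K_1\times K_2$ in $\mathcal{C}(M_3)$, and only then invoke commutation of $\widehat\otimes_{\mathrm{i}}$ with inductive limits together with the equality $\widetilde\otimes_{\mathrm{i}}=\widehat\otimes_{\mathrm{i}}$ (Lemma \ref{lem:Omegactilde=hat}). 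This filtration bookkeeping, rather than completion issues, is the part of your plan that requires a genuine additional idea.
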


 Let $\C_M$ denote the sheaf of locally constant $\BC$-valued functions on $M$.
As slight modifications of those complexes in \eqref{intro:de}, we have topological  cochain complexes
\begin{eqnarray}\label{extenddc1}
&&\cdots\rightarrow 0\rightarrow \BC_M(M)\rightarrow \Omega_\CO^0(M)\rightarrow \Omega_\CO^1(M)\rightarrow \cdots,\\
\label{extenddc2}&& \cdots \rightarrow \mathrm{D}^{-\infty}_c(M;\Omega_\CO^{1})\rightarrow \mathrm{D}^{-\infty}_c(M;\Omega_\CO^{0})\rightarrow
(\C_M(M))'\rightarrow 0\rightarrow \cdots,\\
\label{extenddc3}&& \cdots \rightarrow \mathrm{D}^{\infty}_c(M;\Omega_\CO^1)\rightarrow \mathrm{D}^{\infty}_c(M;\Omega_\CO^{0})\rightarrow
(\C_M(M))'\rightarrow 0\rightarrow \cdots,\\
\label{extenddc4}&& \cdots\rightarrow 0\rightarrow \BC_M(M)\rightarrow \mathrm{C}^{-\infty}(M;\Omega_{\CO}^0)\rightarrow  \mathrm{C}^{-\infty}(M;\Omega^1_{\CO})\rightarrow  \cdots
\end{eqnarray}
of reflexive complete   LCS.

Recall that a continuous linear map $\alpha: E_1\rightarrow E_2$ of complex topological vector spaces is said to be strong if there is a continuous linear map $\beta: E_2\rightarrow E_1$ such that $\alpha\circ\beta\circ \alpha=\alpha$.
Using Theorem \ref{Introthm:tensorofcomplex}, we prove in Section \ref{sec:poin} the following theorems, which generalize Poincar\'e's lemma for smooth manifolds. 

\begin{thmd}\label{pf}
	Suppose that $M=N^{(k)}$ for some contractible smooth manifold $N$ and some $k\in \BN$. Then the complexes \eqref{extenddc1}
 and \eqref{extenddc2} are strongly exact, namely, they are exact and all the arrows are strong continuous linear maps.
\end{thmd}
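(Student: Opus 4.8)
The plan is to exploit the product structure of $M=N^{(k)}$ together with the Schwartz kernel theorem of Theorem \ref{Introthm:tensorofcomplex}. Observe that $N^{(k)}$ is the product, in the category of formal manifolds, of the ordinary smooth manifold $N^{(0)}=N$ and the ``formal point'' $P:=(\R^0)^{(k)}$, whose structure sheaf is $\mathbb{C}[[y_1,\dots,y_k]]$. Under this identification $\Omega^\bullet_{\CO_N}(N^{(0)})$ is exactly the classical complex $\Omega^\bullet(N)$ of smooth differential forms, while $\Omega^\bullet_{\CO_P}(P)$ is the algebraic de Rham complex of the power series ring $\mathbb{C}[[y_1,\dots,y_k]]$ with its formal exterior differential in the variables $y_1,\dots,y_k$. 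Theorem \ref{Introthm:tensorofcomplex} then supplies a topological cochain complex identification
\[
\Omega^\bullet_{\CO}(M)=\Omega^\bullet(N)\,\widehat\otimes_\pi\,\Omega^\bullet_{\CO_P}(P).
\]
Since $N$ is contractible it is connected, so $\BC_M(M)=\BC$, and the goal becomes to exhibit a continuous contracting homotopy for this tensor product complex augmented by $\BC$ in degree $-1$. Strong exactness of \eqref{extenddc1} is then immediate: a continuous homotopy operator $T$ with $d\,T+T\,d=\mathrm{id}$ witnesses exactness and, via $d\,T\,d=d$, provides the pseudo-inverse making each differential strong; likewise $T^0$ serves as the pseudo-inverse for the augmentation $\iota\colon\BC\to\Omega^0_\CO(M)$.

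Next I would construct the two factor homotopies. For $N$, smooth contractibility yields a smooth map $H\colon N\times[0,1]\to N$ with $H_1=\mathrm{id}$ and $H_0$ constant; the usual formula $s_N\omega=\int_0^1\iota_{\partial_t}H^*\omega\,dt$ defines an operator that is continuous for the $\RC^\infty$-topology and satisfies $d\,s_N+s_N\,d=\mathrm{id}-\iota_N\pi_N$, where $\pi_N\colon\Omega^0(N)\to\BC$ is evaluation at the base point, $\iota_N$ is the inclusion of constants, and $\pi_N\iota_N=\mathrm{id}$ (with $\iota_N\pi_N$ understood as $0$ in positive degrees). For $P$, the radial (Euler) homotopy $s_P\omega=\int_0^1 t^{p-1}(\iota_E\omega)(t\,y)\,dt$ with $E=\sum_i y_i\partial_{y_i}$ acts diagonally on monomials with coefficients $1/(p+|\alpha|)\le 1$, hence is continuous for the natural nuclear \Fre{} topology of $\mathbb{C}[[y_1,\dots,y_k]]$, and satisfies $d\,s_P+s_P\,d=\mathrm{id}-\iota_P\pi_P$ with $\pi_P$ the constant-term map. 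Thus each factor is continuously chain-homotopy equivalent to $\BC$ in degree $0$.

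The two homotopies are then assembled. Because $\widehat\otimes_\pi$ is functorial for continuous linear maps, all of $s_N,s_P,\iota_N,\pi_N,\iota_P,\pi_P$ extend to the completed tensor product, and the standard tensor-product homotopy $S:=s_N\,\widehat\otimes\,\mathrm{id}+(\iota_N\pi_N)\,\widehat\otimes\,s_P$ (with the appropriate Koszul signs) satisfies $d\,S+S\,d=\mathrm{id}-(\iota_N\pi_N)\,\widehat\otimes\,(\iota_P\pi_P)$. The right-hand side is precisely the rank-one idempotent onto $\BC\cdot(1\otimes1)$, i.e.\ $\iota\circ\mathrm{aug}$ for the augmentation of \eqref{extenddc1}; since $\mathrm{aug}\circ\iota=\mathrm{id}_\BC$, this $S$ is exactly a continuous contracting homotopy for the augmented complex \eqref{extenddc1}. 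I expect this to be the main obstacle: one must verify that the homotopy identities, valid on the algebraic tensor product, survive completion. Here nuclearity of $\RC^\infty(N)$ and $\mathbb{C}[[y_1,\dots,y_k]]$ is decisive, guaranteeing continuity of the relevant operators and density of the algebraic tensor product so that the identities extend by continuity; one must also track the total-degree grading and Koszul signs carefully when merging the two partial homotopies into a single contraction.

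Finally, \eqref{extenddc2} is handled by duality. By construction it is the transpose of \eqref{extenddc1}, and all spaces in sight are reflexive and complete. Transposing the continuous contracting homotopy $S$ built above yields a continuous contracting homotopy $S'$ for \eqref{extenddc2}: the identity $d\,S+S\,d=\mathrm{id}$ dualizes, using reflexivity, to $S'\,\delta+\delta\,S'=\mathrm{id}$ for the transposed differentials $\delta$, so \eqref{extenddc2} is exact. Moreover the transpose of a strong map between reflexive spaces is again strong, the transposed pseudo-inverse serving as witness, so every arrow in \eqref{extenddc2} is strong as well. This completes the strategy for both complexes.
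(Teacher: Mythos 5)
Your proposal is correct and follows essentially the same route as the paper: decompose $M=N^{(0)}\times(\R^0)^{(k)}$, apply the tensor-product theorem (Theorem \ref{Introthm:tensorofcomplex}), contract each factor (classical Poincar\'e homotopy on $N$, Euler-type homotopy on the formal variables), tensor the homotopies and extend by density to the completed tensor product, transpose to handle \eqref{extenddc2}, and extract strongness from the relation $d\,S\,d=d$. The only differences are cosmetic --- the paper packages these steps as abstract lemmas on topological homotopy equivalences (Lemmas \ref{f1}, \ref{prop:tensorhomoequi}, \ref{lem:trancomthe}, \ref{lem:tophomimplestrong1}, \ref{lem:tophomimplestrong2}) and treats $(\R^0)^{(k)}$ by tensoring $k$ copies of the one-variable case, where you contract all $k$ variables at once.
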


\begin{thmd}\label{pfc}
	Suppose that $M=(\R^n)^{(k)}$ for some $n,k\in \BN$. Then the complexes \eqref{extenddc3} and \eqref{extenddc4} are strongly exact.
\end{thmd}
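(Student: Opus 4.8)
The plan is to reduce to one–variable building blocks through the Schwartz–kernel identities of Theorem \ref{Introthm:tensorofcomplex} and then to establish strong exactness blockwise. First I would record that, as complexes of reflexive complete LCS, \eqref{extenddc4} is the transpose of \eqref{extenddc3}: the augmentation $\BC_M(M)$ transposes to $(\BC_M(M))'$ and $\mathrm{C}^{-\infty}(M;\Omega_\CO^i)$ to $\mathrm{D}^\infty_c(M;\Omega_\CO^i)$, with the inclusion of constants transposing to integration, exactly as \eqref{de4} is the transpose of \eqref{de3}. Since the transpose of a strong map is strong (if $\alpha\circ\beta\circ\alpha=\alpha$ then $\alpha^t\circ\beta^t\circ\alpha^t=\alpha^t$) and a complex carrying a continuous contracting homotopy transposes to one carrying the transposed homotopy, reflexivity shows that it suffices to prove one of the two complexes is strongly exact; I would treat \eqref{extenddc3}, where the compactly supported coefficients make the product method most natural.

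Next I would decompose $(\R^n)^{(k)}$, in the category of formal manifolds, as the product of $n$ copies of the real line $\R=(\R^1)^{(0)}$ and $k$ copies of the formal point $(\R^0)^{(1)}$, whose ring of formal functions is $\C[[y]]$; this is immediate from the definition of $\CO_N^{(k)}$ and the existence of finite products. Applying Theorem \ref{Introthm:tensorofcomplex} repeatedly, the density de Rham complex of $(\R^n)^{(k)}$ becomes the completed inductive tensor product $\widehat\otimes_{\mathrm i}$ of the density complexes of the factors; as $M$ is connected, every augmentation term is simply $\C$, so the augmented complex \eqref{extenddc3} is the $\widehat\otimes_{\mathrm i}$–product of the augmented one–factor complexes.

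Then I would dispatch the two base cases by exhibiting continuous homotopy operators that trivialize the augmented complexes. For the formal point, all the relevant one–factor complexes are built from $\C[[y]]$ with differential $\partial_y$ (after the density/dual identification), and exactness amounts to the elementary fact that every formal power series has a primitive unique up to a constant; the continuous operator of formal integration $\sum b_i y^i\,dy\mapsto\sum\frac{b_i}{i+1}y^{i+1}$ (or its transpose) furnishes a contracting homotopy, so the differential is strong. For the line the complex is the compactly supported de Rham complex of $\R$ augmented by integration against constants, and I would use the classical primitive–minus–bump construction—subtracting a fixed bump density scaled by the total integral before integrating—to produce a continuous homotopy that preserves compact support, making the one–dimensional differential strong.

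The final and hardest step is to show that strong exactness is stable under the completed tensor products above. Using the characterization of strong exactness by continuous contracting homotopies, an augmented factor complex with homotopy $h$ satisfying $d\circ h+h\circ d=\mathrm{id}$ contributes a homotopy $h\,\widehat\otimes\,\mathrm{id}$ on the tensor product, which (with the Koszul signs) still satisfies $dH+Hd=\mathrm{id}$ purely algebraically. The obstacle is topological rather than algebraic: one must verify that $h\,\widehat\otimes\,\mathrm{id}$ is continuous for the $\otimes_{\mathrm i}$ (resp. $\otimes_\pi$) topology and extends to the quasi-completion and completion. This is precisely where strongness is essential—because a strong map has complemented closed image, its tensor with the identity splits compatibly with the completion and the homotopy identity survives passage to $\widehat\otimes$. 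Granting this Künneth-type stability, \eqref{extenddc3} is strongly exact, and transposing yields \eqref{extenddc4}.
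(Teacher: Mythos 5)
Your overall architecture---reduce to one--variable factors via the kernel theorem (Theorem \ref{Introthm:tensorofcomplex}), handle the line by the bump--function homotopy and the formal point by formal integration, and recover \eqref{extenddc4} from \eqref{extenddc3} by transposition---is the same as the paper's (compare Lemmas \ref{lem:cspoin1}, \ref{lem:cspoin2}, \ref{lem:trancomthe} and Theorem \ref{cpoin}). However, your key reduction contains a genuine error: the augmented complex \eqref{extenddc3} of a product $M_3=M_1\times M_2$ is \emph{not} the $\widehat\otimes_{\mathrm i}$--product of the augmented one--factor complexes; Theorem \ref{Introthm:tensorofcomplex} identifies only the \emph{unaugmented} de Rham complexes. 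Concretely, write $A_i^\bullet$ for the augmented complex of $M_i$, with $\mathrm{D}^\infty_c(M_i;\Omega^r_{\CO_i})$ in degree $-r$ and the augmentation $\C$ in degree $1$. Then $A_1^\bullet\widehat\otimes_{\mathrm i}A_2^\bullet$ has $\C\otimes\C$ in degree $2$ and $\left(\mathrm{D}^\infty_c(M_1;\Omega^0_{\CO_1})\otimes\C\right)\oplus\left(\C\otimes\mathrm{D}^\infty_c(M_2;\Omega^0_{\CO_2})\right)$ in degree $1$, whereas the augmented complex of $M_3$ has $\C$ in degree $1$ and nothing in degree $2$: they are different complexes (the same phenomenon as in topology, where reduced chain complexes tensor to compute a smash product rather than a Cartesian product). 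Because of this, your K\"unneth step collapses. A contracting homotopy $h$ of $A_1^\bullet$ does give, via $h\widehat\otimes\mathrm{id}$, a contracting homotopy of $A_1^\bullet\widehat\otimes_{\mathrm i}A_2^\bullet$, but that is not the complex whose strong exactness you need; on the complex you do need, namely $\mathrm{D}^\infty_c(M_3;\Omega^{-\bullet}_{\CO_3})$ augmented by $\zeta_3=\zeta_1\otimes\zeta_2$, the operator $H=h\widehat\otimes\mathrm{id}$ only satisfies $dH+Hd=(\mathrm{id}-\alpha_1\zeta_1)\otimes\mathrm{id}$, where $\alpha_1$ is the section of the augmentation $\zeta_1$ packaged inside $h$; this is not $\mathrm{id}$, nor even $\mathrm{id}-\alpha_3\zeta_3$.

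The repair is exactly the paper's device: do not augment the complexes at all, but package the augmentations as continuous complex maps $\zeta_i:\mathrm{D}^\infty_c(M_i;\Omega^{-\bullet}_{\CO_i})\rightarrow\iota^\bullet(\C)$ and prove that each is a topological homotopy equivalence. That notion \emph{is} stable under $\widetilde\otimes_{\mathrm i}$ and $\widehat\otimes_{\mathrm i}$ (Lemma \ref{prop:tensorhomoequi}), since $\zeta_3=\zeta_1\otimes\zeta_2$ under the kernel identification and $\iota^\bullet(\C)\widehat\otimes_{\mathrm i}\iota^\bullet(\C)=\iota^\bullet(\C)$; and Lemma \ref{lem:tophomimplestrong2} then converts ``homotopy equivalence with $\iota^\bullet(\C)$'' into strong exactness of the augmented complex, via the identities $f=f\circ g\circ f$ and $d\circ h\circ d=d$. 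If you prefer to stay with contracting homotopies, you must carry the full deformation--retract data $(\zeta_i,\alpha_i,h_i)$ through the induction and use the corrected product homotopy $h_3=h_1\otimes\mathrm{id}+(\alpha_1\zeta_1)\otimes h_2$. Finally, your closing concern is misplaced: continuity of $h\widehat\otimes\mathrm{id}$ on the quasi--completed or completed tensor product requires no strongness or ``complemented closed image'' argument---the tensor product of continuous linear maps is automatically continuous there, and the homotopy identities, once checked on the dense subspace of algebraic tensors, extend by continuity and Hausdorffness; this is precisely the content of Lemma \ref{f1}, and it is the easy part of the argument, not the hard one.
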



\section{De Rham complexes for formal manifolds}\label{sec:derham}
In this section,  we introduce the de Rham complex for formal manifolds with coefficients in formal functions, compactly supported
formal distributions, compactly supported
formal densities, or formal generalized
functions. Throughout this section, let $E$ be a quasi-complete  LCS.
\subsection{The sheaves $\mathcal{D}\mathrm{er}(\CO)$ and $\CO_E$} 
In this subsection, we recall the notion of derivations of  $\mathcal{\CO}$ (see \cite[Section 3.2]{CSW1}), as well as the notion of  $E$-valued formal functions on $M$ (see  \cite[Section 6.2]{CSW1}).

For every $\C$-algebra $A$,  write $\mathrm{Der}(A)$ for the space of all derivations of $A$. Namely, 
\[\mathrm{Der}(A):=\{X\in \mathrm{Hom}_\C(A,A)\mid X(a_1a_2)=X(a_1)a_2+a_1X(a_2)\ \text{for all}\ a_1,a_2\in A\}.\]
By a derivation of $\CO$, we mean
 a family
\[
 D=\{
 (D_U: \CO(U)\rightarrow \CO(U))\in \mathrm{Der}(\CO(U))\}_{U\textrm{ is an open subset of }M}
\]
of derivations that commute with the restriction maps.
Write $\mathrm{Der}(\CO)$ for the space of all derivations of $\CO$.
By \cite[Proposition 3.8]{CSW1}, one concludes that the map 
\[
\mathrm{Der}(\CO)\rightarrow \mathrm{Der}(\CO(M)),\quad D\mapsto D_M
\]
is a bijection. In view of this, we will often not distinguish between the derivations $D$ and $D_M$.

With the obvious restriction maps, the assignment 
 \[
U\mapsto \mathrm{Der}(\CO|_U)\quad (\text{$U$ is an open subset of $M$})\]
forms a sheaf over $M$, to be denoted by $\mathcal{D}\mathrm{er}(\CO)$. 
Note that $\mathcal{D}\mathrm{er}(\CO)$ is naturally a sheaf of $\CO$-modules with the action given by  
\[ \CO(U)\times\mathrm{Der}(\CO|_U)\rightarrow \mathrm{Der}(\CO|_U),\quad \quad  (f,D)\mapsto f\circ D,\]
where $U$ is an open subset of $M$ and  $ f\circ D$ is defined as follows
\[
(f\circ D)_V: \ g\mapsto f|_V\cdot D_V(g)\quad (g\in \CO(V)\ \text{and $V$ is an open subset of $U$}).
\]

\begin{exampled}\label{ex:unders}Assume that $(M,\CO)=(N,\CO_N^{(k)})$ with $N$ a nonempty open submanifold of $\BR^n$ and $n,k\in \BN$. Let $x_1, x_2, \dots , x_n$ and $y_1,y_2,\dots,y_k$   denote the standard coordinate functions and formal variables of $M$, respectively. 
For every open subset $U$ of $N$, write  
\[
\partial_{x_1},\partial_{x_2},\dots,\partial_{x_n}\quad\text{and}\quad \partial_{y_1}, \partial_{y_2}, \dots, \partial_{y_k}
\]
for the first-order  partial  derivatives on  $\RC^\infty(U)[[y_1,y_2,\dots,y_k]]$ 
with respect to the above variables. 
By \cite[Corollary 3.12]{CSW1}, the sheaf  $\mathcal{D}\mathrm{er}(\CO)$ of  $\CO$-modules is free of rank $n+k$, with a set of free generators
 given by \[
\{\partial_{x_1}, \partial_{x_2}, \dots, \partial_{x_n},\, \partial_{y_1}, \partial_{y_2}, \dots, \partial_{y_k}\}.
\]
\end{exampled}

Equip  $\CO(M)$ with the smooth topology (see \cite[Definition 4.1]{CSW1}), then it becomes a product of nuclear Fr\'echet spaces (see \cite[Proposition 4.8]{CSW1}). Set 
\[
\CO_E(M):=\CO(M)\widetilde \otimes E.
\]
Here, we refer to  Appendix \ref{appendixC2} for the usual  notations 
\[
\otimes_{\pi},\quad\otimes_{\varepsilon},\quad\otimes_{\mathrm{i}}, \quad \widetilde\otimes_{\pi},\quad\widetilde\otimes_{\varepsilon},\quad\widetilde\otimes_{\mathrm{i}}, \quad\widetilde \otimes, \quad\widehat\otimes_{\pi},\quad\widehat\otimes_{\varepsilon},\quad\widehat\otimes_{\mathrm{i}},\quad\widehat \otimes,
\]
which are used to denote various topological tensor products, their quasi-completions, and their completions. 
By \cite[Theorem 6.11]{CSW1}, the assignment 
\[
 \CO_E:\ U\mapsto \CO_E(U)\quad (\text{$U$ is an open subset of $M$}),
\]
together with the obvious restriction maps, 
forms a sheaf of complex vector spaces over $M$. An element in $\CO_E(M)$ is called an $E$-valued formal function on $M$. Note that we have the LCS identification 
\[\CO_E(U)=\CO(U)\widehat \otimes E\] provided that $E$ is complete.
Additionally, $\CO_E$ is naturally a sheaf of $\CO$-modules with the action given by left multiplication.

\subsection{De Rham complexes I}\label{difff}
In this subsection,  we introduce the de Rham complex for formal manifolds with coefficients in $E$-valued formal functions.

\begin{dfn}
    Let $r\in \BN$. When $r>0$, an $E$-valued differential $r$-form on $M$ is 
 a  multiple 
 $\CO(M)$-linear map 
	\be\label{omeg8}
\omega: \ \underbrace{\mathrm{Der}(\CO)\times \mathrm{Der}(\CO)\times \cdots \times \mathrm{Der}(\CO)}_{r} \rightarrow  \CO_E(M)=\CO(M)\wt\otimes E
	\ee
that is  alternating, namely,  
\[
       \omega(X_1, X_2, \dots, X_r)=0
       \]
     for all $X_1, X_2, \dots, X_r\in \mathrm{Der}(\CO)$ such that $X_i=X_j$ for some $1\leq i<j\leq r$.
      When $r=0$,  an $E$-valued differential $r$-form on $M$ is an element of $\CO_E(M)$.
\end{dfn}
   Write $\Omega^r_\CO(M;E)$ for the space of all $E$-valued differential  $r$-forms on $M$. The space $\Omega^r_\CO(M;E)$ is naturally an $\CO(M)$-module.

The following result is an easy consequence of  \cite[Proposition 3.8]{CSW1}.

\begin{prpd}\label{prop:eqdefnattra} Let $r$ be a positive integer. Then for every $E$-valued differential $r$-form $\omega$ on $M$, there exists a unique natural transform  (between sheaves of sets) \be\label{eq:nattra}
 \widetilde \omega=\{\widetilde \omega_U\}_{U\textrm{ is an open subset of }M}: \  \underbrace{\mathcal D\mathrm{er}(\CO)\times \mathcal D\mathrm{er}(\CO)\times \cdots \times \mathcal D\mathrm{er}(\CO)}_{r} \rightarrow  \CO_E
\ee
satisfying the following conditions:
\begin{itemize}
    \item  for every open subset $U$ of $M$, the map 
    \[
    \widetilde \omega_U:\  \underbrace{\mathrm{Der}(\CO|_U)\times \mathrm{Der}(\CO|_U)\times \cdots \times \mathrm{Der}(\CO|_U)}_{r} \rightarrow  \CO_E(U)
    \]
    is  multiple $\CO(U)$-linear and  alternating;
    \item $\widetilde \omega_M=\omega$.
\end{itemize}
\end{prpd}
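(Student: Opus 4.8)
The plan is to build $\widetilde\omega_U$ by a purely local recipe: extend local derivations to global ones with the help of bump functions, apply the given global form $\omega$, and then glue. The crux of the whole argument is a \emph{locality} (tensoriality) lemma: if $X_1,\dots,X_r\in\mathrm{Der}(\CO)$ and some $X_i$ vanishes on an open set $V\subseteq M$, then $\omega(X_1,\dots,X_r)$ vanishes on $V$. To see this, fix $a\in V$ and a bump function $\phi\in\RC^\infty(M)\subseteq\CO(M)$ with $\mathrm{supp}(\phi)\subseteq V$ and $\phi\equiv 1$ near $a$; since $X_i|_V=0$, the global derivation $\phi\circ X_i$ vanishes identically, so the multiple $\CO(M)$-linearity of $\omega$ gives $\phi\cdot\omega(X_1,\dots,X_r)=\omega(X_1,\dots,\phi\circ X_i,\dots,X_r)=0$, whence $\omega(X_1,\dots,X_r)$ vanishes near $a$. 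As $a\in V$ is arbitrary, the lemma follows. This step is where paracompactness of $M$ and the presence of the smooth part $\RC^\infty(M)$ inside $\CO(M)$ are essential, since they guarantee enough bump functions to localize.

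Granting the lemma, I would define $\widetilde\omega_U$ as follows. For $X_1,\dots,X_r\in\mathrm{Der}(\CO|_U)$ and $a\in U$, choose $\phi\in\RC^\infty(M)$ with $\mathrm{supp}(\phi)\subseteq U$ and $\phi\equiv 1$ on a neighborhood $V_a$ of $a$. Each $\phi\circ X_j$ is a derivation of $\CO|_U$ supported in $\mathrm{supp}(\phi)$, hence extends by zero to a global derivation $\widehat X_j\in\mathrm{Der}(\CO)=\mathrm{Der}(\CO(M))$ (using the bijection of \cite[Proposition 3.8]{CSW1}) satisfying $\widehat X_j|_{V_a}=X_j|_{V_a}$. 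I then set
\[
\widetilde\omega_U(X_1,\dots,X_r)\big|_{V_a}:=\omega(\widehat X_1,\dots,\widehat X_r)\big|_{V_a}\in\CO_E(V_a).
\]
The locality lemma shows this is independent of $\phi$ and of the chosen extensions, since any two global extensions agree on a common neighborhood of $a$, and it likewise shows that the local sections attached to different points agree on overlaps. The sheaf property of $\CO_E$ (\cite[Theorem 6.11]{CSW1}) then glues them over a cover $\{V_a\}_{a\in U}$ to a well-defined element $\widetilde\omega_U(X_1,\dots,X_r)\in\CO_E(U)$.

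It remains to verify the asserted properties, all of which are local and reduce to the corresponding properties of $\omega$ applied to global extensions. Multiple $\CO(U)$-linearity follows by taking the extension of $f\circ X_1$ to be $\widetilde f\cdot\widehat X_1$, where $\widetilde f=\phi f$ is a global formal function agreeing with $f$ on $V_a$, and invoking $\CO(M)$-linearity of $\omega$; the alternating property follows by choosing equal extensions whenever two arguments coincide; and naturality with respect to restriction is immediate from the construction. For $U=M$ and global $X_j$ one may take $\widehat X_j=X_j$, giving $\widetilde\omega_M=\omega$. Uniqueness is forced by the same local description: for any competing natural transformation $\widetilde\omega'$ with $\widetilde\omega'_M=\omega$, naturality yields $\widetilde\omega'_{V_a}(X_1|_{V_a},\dots,X_r|_{V_a})=\omega(\widehat X_1,\dots,\widehat X_r)|_{V_a}$ near each $a$, so $\widetilde\omega'=\widetilde\omega$. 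I expect the main obstacle to be the locality lemma together with the check that $\phi\circ X$ genuinely extends by zero to a global derivation; once these are secured, the gluing and the property verifications are routine.
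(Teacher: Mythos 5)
Your proof is correct and matches the paper's intent: the paper gives no proof at all, calling the proposition an easy consequence of \cite[Proposition 3.8]{CSW1}, and your localization argument---the tensoriality lemma via bump functions, extension by zero of $\phi\circ X_j$ to global derivations, and gluing through the sheaf property of $\CO_E$---is precisely the technique the paper itself employs immediately afterwards in the proof of Lemma \ref{1con}, where the extension by zero $g^aX_i\in\mathrm{Der}(\CO)$ appears. One small repair: on a formal manifold there is no canonical inclusion $\RC^\infty(M)\subseteq\CO(M)$ (the local splittings $\RC^\infty(U)\subset\RC^\infty(U)[[y_1,\dots,y_k]]$ are chart-dependent and need not patch), so your bump functions should instead be taken to be formal functions as supplied by \cite[Corollary 2.4]{CSW1}; with that substitution nothing else in your argument changes.
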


For every $r\in \BN$, Proposition \ref{prop:eqdefnattra} implies that
the assignment
\[
  U\mapsto \Omega^r_{\CO}(U;E):=\Omega^r_{\CO|_U}(U;E)\quad(\text{$U$ is an open subset of $M$}),
\]
together with the restriction maps
\[\Omega^r_{\CO}(U;E)\rightarrow \Omega^r_{\CO}(V;E),\quad \omega\mapsto \omega|_{V}\quad(\text{$V\subset U$ are open subsets of $M$}),\]
forms a sheaf of $\CO$-modules, 
which we denote by $\Omega^r_{\CO}(E)$. Here, when $r>0$, $\omega|_{V}$ is defined to be the element $\wt\omega_V$  as in \eqref{eq:nattra}.
 Set \[ \Omega_{\CO}(M;E):=\bigoplus_{r\in \BN}\Omega^r_{\CO}(M;E)\quad\text{and}\quad
\Omega_{\CO}(E):=\bigoplus_{r\in \BN}\Omega^r_{\CO}(E).\]
When $E=\C$, we will simply denote  $\Omega^r_{\CO}(M;E)$, $\Omega^r_{\CO}(E)$, $\Omega_{\CO}(M;E)$, and $\Omega_{\CO}(E)$  by  $\Omega^r_{\CO}(M)$,
$\Omega^r_{\CO}$, $\Omega_{\CO}(M)$, and $\Omega_{\CO}$, respectively.

The space $\Omega^0_{\CO}(M;E)=\CO(M)\wt\otimes E$ is equipped with the projective tensor product topology. When $r>0$,
equip $\Omega^r_{\CO}(M;E)$ with the point-wise convergence topology. Namely, a net $\{\omega_i\}_{i\in I}$ in  $\Omega^r_{\CO}(M;E)$
  converges to $0$ if and only if
\[\text{the net $\{\omega_{i}(X_1,X_2,\dots,X_r)\}_{i\in I}$ converges to $0$ in $\CO_E(M)$}
	\]
for all $(X_1,X_2,\dots,X_r)\in \mathrm{Der}(\CO)^r$.
Under this topology, $\Omega^r_{\CO}(M;E)$ is naturally a Hausdorff LCS. 

\begin{lemd}\label{1con}
	Let $r\in \BN$ and let $U$ be an open subset of $M$. Then the restriction map \[\Omega^r_{\CO}(M;E)\rightarrow \Omega^r_{\CO}(U;E)\]
is continuous.
 	\end{lemd}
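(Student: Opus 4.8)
The plan is to split the argument according to whether $r=0$ or $r>0$. When $r=0$ we have $\Omega^0_\CO(M;E)=\CO(M)\wt\otimes E$ and $\Omega^0_\CO(U;E)=\CO(U)\wt\otimes E$, both carrying the projective tensor product topology, and the restriction map is simply $\mathrm{res}_U\wt\otimes\mathrm{id}_E$, where $\mathrm{res}_U\colon\CO(M)\to\CO(U)$ is the continuous restriction of formal functions. Continuity then follows from the functoriality of $\wt\otimes$ on continuous linear maps. So I would assume $r>0$ from now on.

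Since the topology on $\Omega^r_\CO(U;E)$ is the topology of pointwise convergence, i.e.\ the initial locally convex topology with respect to the evaluation maps $\mathrm{ev}_{Y_1,\dots,Y_r}\colon\omega'\mapsto\omega'(Y_1,\dots,Y_r)\in\CO_E(U)$ indexed by tuples $(Y_1,\dots,Y_r)\in\mathrm{Der}(\CO|_U)^r$, it suffices by the universal property of the initial topology to show that for each fixed such tuple the map
\[
\Phi\colon\Omega^r_\CO(M;E)\to\CO_E(U),\qquad \omega\mapsto\wt\omega_U(Y_1,\dots,Y_r)
\]
is continuous, where $\wt\omega_U$ is the value on $U$ of the natural transformation $\wt\omega$ attached to $\omega$ by Proposition \ref{prop:eqdefnattra}.

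The key point, and the main obstacle, is to express $\Phi(\omega)$ through evaluations of $\omega$ at \emph{global} derivations, since the topology on the source only sees $\mathrm{Der}(\CO)=\mathrm{Der}(\CO(M))$ while $\Phi$ a priori involves the local derivations $Y_j$. I would argue locally. Because $\CO_E$ is a sheaf of LCS (see \cite{CSW1}), the topology on $\CO_E(U)$ is the initial one for the restrictions $\CO_E(U)\to\CO_E(V)$ over any open cover $\{V\}$ of $U$, so it is enough to make $\omega\mapsto\Phi(\omega)|_V$ continuous for $V$ ranging over a convenient cover. Around each $a\in U$ I would choose a chart $W\subset U$ as in Example \ref{ex:unders}, on which $\mathcal D\mathrm{er}(\CO)$ is free with coordinate basis $\partial_{x_1},\dots,\partial_{x_n},\partial_{y_1},\dots,\partial_{y_k}$, together with an open $V\ni a$ with $\overline V\subset W$ and a smooth cut-off $\chi\in\CO(M)$ supported in $W$ with $\chi\equiv1$ on $V$. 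Each product $\chi\cdot\partial_\alpha$ is a derivation of $\CO|_W$ vanishing near the boundary of $W$, so by the sheaf property of $\mathcal D\mathrm{er}(\CO)$ it extends by zero to a global derivation $X_\alpha\in\mathrm{Der}(\CO)$ with $X_\alpha|_V=\partial_\alpha|_V$.

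Writing $Y_j|_W=\sum_\alpha f_{j\alpha}\,\partial_\alpha$ with $f_{j\alpha}\in\CO(W)$ and using that $\wt\omega$ is a natural, multiple $\CO$-linear, alternating transformation, I would compute
\[
\Phi(\omega)|_V=\wt\omega_V\bigl(Y_1|_V,\dots,Y_r|_V\bigr)=\sum_{\alpha_1,\dots,\alpha_r}\bigl(f_{1\alpha_1}\cdots f_{r\alpha_r}\bigr)\big|_V\cdot\omega(X_{\alpha_1},\dots,X_{\alpha_r})\big|_V,
\]
where the passage from $\wt\omega_V(\partial_{\alpha_1}|_V,\dots)$ to $\omega(X_{\alpha_1},\dots,X_{\alpha_r})|_V$ uses naturality with respect to the restriction $M\supset V$ together with $X_\alpha|_V=\partial_\alpha|_V$. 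In this finite sum each $\omega(X_{\alpha_1},\dots,X_{\alpha_r})\in\CO_E(M)$ depends continuously on $\omega$ by the very definition of the pointwise topology on $\Omega^r_\CO(M;E)$. Since restriction $\CO_E(M)\to\CO_E(V)$ and multiplication by the fixed elements $(f_{1\alpha_1}\cdots f_{r\alpha_r})|_V\in\CO(V)$ are continuous, $\omega\mapsto\Phi(\omega)|_V$ is continuous; letting $V$ run over the cover yields continuity of $\Phi$, and hence of the restriction map. The only genuine work is the bookkeeping in the displayed identity and the verification that the cut-off extension produces bona fide global derivations agreeing with $\partial_\alpha$ on $V$; everything else is formal.
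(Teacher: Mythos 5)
Your proof is correct and rests on exactly the same ideas as the paper's: globalize local derivations by multiplying with a cutoff formal function and extending by zero (via the sheaf property of $\mathcal{D}\mathrm{er}(\CO)$), then verify continuity locally on a cover of $U$ using the embedding of Lemma \ref{emd} (i.e.\ [CSW1, Lemma 6.14]), together with $\CO$-multilinearity and naturality of $\widetilde\omega$. The only difference is one of implementation: the paper applies the cutoff $g^a$ directly to the given derivations $X_j\in \mathrm{Der}(\CO|_U)$, which avoids charts entirely, whereas you cut off the coordinate frame $\partial_\alpha$ inside a chart and expand each $Y_j$ in that frame — slightly more bookkeeping, but nothing essential changes.
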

\begin{proof} The case when $r=0$ is proved in \cite[Lemma 6.13]{CSW1}. Now we assume that $r>0$.
	For each $a\in U$, choose open neighborhoods $V_a$ and $U_a$ of $a$
 such that \[ V_a\subset \overline{V_a}\subset U_a \subset U\qquad (\text{$\overline{V_a}$ is the closure of $V_a$ in $U$}).\]
	Take a  formal function $g^a$ on $M$ such that (see \cite[Corollary 2.4]{CSW1})
 \[g^a|_{\overline{V_a}}=1\quad \txt{and} \quad g^a|_{M\backslash U_a}=0.\]
Let  $\{\omega_{i}\}_{i\in I}$ be a net in $\Omega^r_{\CO}(M;E)$ that  converges to $0$.
 Then  for each $(X_1,X_2,\dots,X_r)\in \mathrm{Der}(\CO|_U)^r$, we obtain that 
 \begin{eqnarray*} 
&&	\lim_{i\in I} (\left(\omega_i|_{U}(X_1,X_2,\dots,X_r)\right)|_{V_a}) \\
&=&	\lim_{i\in I} (\omega_{i}|_{V_a}(X_1|_{V_a},X_2|_{V_a},\dots,X_r|_{V_a}))\\
	&=&\lim_{i\in I} (\left(\omega_i( g^aX_1,g^aX_2,\dots,g^aX_r)\right)|_{V_a})\\&=& (\lim_{i\in I} \left(\omega_i( g^aX_1,g^aX_2,\dots,g^aX_r)\right))|_{V_a}=0. 
\end{eqnarray*}
Here $g^aX_i\in \mathrm{Der}(\CO)$ is the extension by zero of $g^a|_UX_i$ for $1\leq i\leq r$. By \cite[Lemma  6.14]{CSW1}, it follows that $\lim_{i\in I} \left(\omega_i|_{U}(X_1,X_2,\dots,X_r)\right)=0$,
and the lemma then follows. 
\end{proof}

\begin{lemd}\label{emd}
Let $r\in\BN$ and let $\{U_{\gamma}\}_{\gamma\in \Gamma}$ be an open cover of $M$. Then the linear map \be \label{embd} \Omega^r_{\CO}(M;E)\rightarrow \prod_{\gamma\in \Gamma}\Omega^r_{\CO}(U_{\gamma};E)\ee  is a closed topological embedding.
\end{lemd}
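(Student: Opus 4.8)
The plan is to verify the three constituent properties of a closed topological embedding separately for the map
\[
\Phi\colon \Omega^r_{\CO}(M;E)\longrightarrow \prod_{\gamma\in\Gamma}\Omega^r_{\CO}(U_\gamma;E),\qquad \omega\mapsto(\omega|_{U_\gamma})_{\gamma\in\Gamma}:
\]
continuity of $\Phi$, the fact that $\Phi$ is a homeomorphism onto its image, and the closedness of that image. Continuity is immediate: by Lemma \ref{1con} each component $\omega\mapsto\omega|_{U_\gamma}$ is continuous, so $\Phi$ is continuous by the universal property of the product topology.

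For injectivity, and more generally for the identification of the topology, I would exploit that $\Omega^r_{\CO}(E)$ is a sheaf of $\CO$-modules whose values are controlled by the sheaf $\CO_E$. If $\omega|_{U_\gamma}=0$ for all $\gamma$, then for every tuple $(X_1,\dots,X_r)\in\mathrm{Der}(\CO)^r$ the element $\omega(X_1,\dots,X_r)\in\CO_E(M)$ restricts to $0$ on each $U_\gamma$, hence vanishes by the separation axiom for the sheaf $\CO_E$; thus $\omega=0$ and $\Phi$ is injective. To see that $\Phi$ is a topological embedding it then remains to check that $\Phi^{-1}$ is continuous on the image, i.e. that the point-wise convergence topology on $\Omega^r_{\CO}(M;E)$ is recovered as the initial topology; by linearity this reduces to net convergence at $0$. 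Suppose $\Phi(\omega_i)\to 0$, that is, $\omega_i|_{U_\gamma}\to 0$ in $\Omega^r_{\CO}(U_\gamma;E)$ for every $\gamma$. Fixing a global tuple $(X_1,\dots,X_r)$ and setting $Y_j:=X_j|_{U_\gamma}$, the values $\omega_i(X_1,\dots,X_r)|_{U_\gamma}=\omega_i|_{U_\gamma}(Y_1,\dots,Y_r)$ converge to $0$ in $\CO_E(U_\gamma)$ for each $\gamma$. The corresponding statement for $\CO_E$ — that a net in $\CO_E(M)$ tends to $0$ once all its restrictions to an open cover do — is precisely the $r=0$ case, supplied by \cite[Lemma 6.14]{CSW1}; applying it gives $\omega_i(X_1,\dots,X_r)\to 0$ in $\CO_E(M)$, and since the tuple was arbitrary, $\omega_i\to 0$. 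This shows $\Phi$ is a homeomorphism onto its image.

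Finally, for closedness I would realize the image as an equalizer. Consider the two maps
\[
p,q\colon \prod_{\gamma\in\Gamma}\Omega^r_{\CO}(U_\gamma;E)\longrightarrow \prod_{(\gamma,\gamma')}\Omega^r_{\CO}(U_\gamma\cap U_{\gamma'};E)
\]
sending $(\omega_\gamma)_\gamma$ to $(\omega_\gamma|_{U_\gamma\cap U_{\gamma'}})$ and to $(\omega_{\gamma'}|_{U_\gamma\cap U_{\gamma'}})$ respectively; both are continuous, being composites of projections with the restriction maps of Lemma \ref{1con}. Their equalizer $\{x : p(x)=q(x)\}$ is closed because the target is a product of Hausdorff LCS, hence Hausdorff. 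By the gluing axiom for the sheaf $\Omega^r_{\CO}(E)$, a family $(\omega_\gamma)_\gamma$ lies in the image of $\Phi$ if and only if it is compatible on overlaps, i.e. if and only if it lies in this equalizer; therefore the image is closed.

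The main obstacle I anticipate is the topological-embedding step rather than the algebra: one must be sure that the point-wise convergence topology on $\Omega^r_{\CO}(M;E)$, defined through \emph{global} derivations, is genuinely recovered from the local data, and this hinges on transporting the analogous $r=0$ statement for $\CO_E$ from \cite{CSW1} through evaluation at global tuples. By contrast, injectivity and closedness reduce cleanly to the sheaf axioms for $\Omega^r_{\CO}(E)$ together with the continuity of restriction already furnished by Lemma \ref{1con}.
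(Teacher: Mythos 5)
Your proposal is correct and follows essentially the same route as the paper: continuity via Lemma \ref{1con}, injectivity and closedness of the image via the sheaf axioms of $\Omega^r_{\CO}(E)$, and the key embedding step by evaluating at global tuples in $\mathrm{Der}(\CO)^r$, restricting to the $U_\gamma$, and invoking the $r=0$ case \cite[Lemma 6.14]{CSW1} for $\CO_E$. Your equalizer argument for closedness and the reduction of injectivity to the separation axiom of $\CO_E$ merely make explicit what the paper compresses into the phrase ``the sheaf property of $\Omega^r_{\CO}(E)$''.
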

\begin{proof} The case when $r=0$ is proved in \cite[Lemma 6.14]{CSW1}. Now we assume that $r>0$. Lemma \ref{1con} implies that the map \eqref{embd} is continuous,
while the sheaf property of $\Omega^r_{\CO}(E)$ implies that the map \eqref{embd} is injective and has closed image.
Thus, it remains to prove that, if $\{\omega_{i}\}_{i\in I}$ is a net in $\Omega^r_{\CO}(M;E)$ whose image under the map \eqref{embd} converges to zero,
then $\{\omega_i\}_{i\in I}$ converges to zero.

For every $\gamma\in \Gamma$ and $(X_1,X_2,\dots, X_r)\in\mathrm{Der}(\CO)^r$, we have that
\begin{eqnarray*}&&
\lim_{i\in I}(\left(\omega_{i}(X_1,X_2,\dots,X_r)\right)|_{U_\gamma})\\
&=&\lim_{i\in I} (\omega_i|_{U_\gamma}(X_1|_{ U_\gamma},X_2|_{U_\gamma},\dots,X_r|_{ U_\gamma})) =0.\end{eqnarray*}
This, together with  \cite[Lemma 6.14]{CSW1},  
shows that  \[\lim_{i\in I} (\omega_{i}(X_1,X_2,\dots,X_r)) =0,\] as desired.
\end{proof}

As usual, for $r,s\in \mathbb N$, we define the wedge product
\be\label{eq:wedge}
\wedge:\ 
\Omega^r_\CO(M)\times \Omega^s_{\CO}(M;E)\rightarrow \Omega^{r+s}_{\CO}(M;E),\quad (\omega_1,\omega_2)
\mapsto \omega_1\wedge\omega_2
\ee
 by setting
\begin{eqnarray*}\label{eq:wedgeofw}
  &&(\omega_1\wedge \omega_2)(X_1,X_2, \dots, X_{r+s})\\
 & :=&\frac{1}{r! s!}\sum_\sigma \mathrm{sign} (\sigma)\omega_{1}(X_{\sigma(1)}, X_{\sigma(2)}, \dots, X_{\sigma(r)})\cdot  \omega_{2}(X_{\sigma(r+1)}, X_{\sigma(r+2)}, \dots, X_{\sigma(r+s)}),\notag
\end{eqnarray*}
where  $(X_1,X_2,\dots,X_{r+s})\in \mathrm{Der}(\CO)^{r+s}$, and
$\sigma$ runs over all the permutations of $\{1,2,\dots, r+s\}$.
By using the fact that $\CO(M)$ is a topological $\BC$-algebra (see \cite[Proposition 4.8]{CSW1}), the module structure map \[\CO(M)\times \CO_E(M)\rightarrow \CO_E(M),\quad (f,g)\rightarrow fg\] is bilinear and continuous. Then the wedge product  \eqref{eq:wedge} is bilinear and continuous as well. Endow \[\Omega_{\CO}(M;E)=\bigoplus_{r\in\BN}\Omega_{\CO}^r(M;E)\] with the direct sum topology.
Then   $\Omega_{\CO}(M;E)$ is naturally an  $\Omega_\CO(M)$-module such that the module structure map \[ \begin{array}{rcl}
 \Omega_{\CO}(M) \times  \Omega_{\CO}(M;E)  & \rightarrow & \Omega_{\CO}(M;E), \\
    (\omega_1,\omega_2 ) &\mapsto & \omega_1\wedge \omega_2
\end{array}\] is bilinear and continuous. Moreover, $(\Omega_{\CO}(M),\wedge)$ is a graded topological $\BC$-algebra, which is super-commutative in the sense that
\[
  \omega_1\wedge \omega_2=(-1)^{rs} \omega_2\wedge \omega_1\quad (\omega_1\in\Omega^r_\CO(M), \omega_2\in\Omega^s_\CO(M)).
\]

For each $\omega\in \Omega^r_{\CO}(M;E)$, define its differential $d\omega\in \Omega^{r+1}_{\CO}(M;E)$
by setting 
\be\label{eq:coboundary} \begin{array}{rl}
& (d\omega)(X_0, X_1, \dots, X_r)\\
   :=&\sum\limits_{0\le i\le r} (-1)^i (X_i\otimes \mathrm{id}_E)(\omega(X_0, X_1, \dots,\hat{X_i}, \dots, X_r))\\
  +&\sum\limits_{0\le i<j\le r} (-1)^{i+j}(\omega([X_i, X_j], X_0, X_1, \dots,\hat{X_i}, \dots,\hat{X_j}, \dots , X_r)),\end{array}
\ee
where  $(X_0,X_1,\dots,X_r)\in \mathrm{Der}(\CO)^{r+1}$,  $\mathrm{id}_E$ denotes the identity map on $E$, and “$
\hat{\phantom X}$” indicates the omission of a term. 
As usual, we have the following  straightforward results (\cf \cite[Theorem 2.20]{War}):
\begin{itemize}
\item the linear map $d:\Omega^r_{\CO}(M;E)\rightarrow\Omega^{r+1}_{\CO}(M;E)$ is continuous;
\item $d\circ d:\Omega^r_{\CO}(M;E)\rightarrow \Omega^{r+2}_{\CO}(M;E)$ is the zero map; and
\item for $\omega_1\in \Omega^r_{\CO}(M)$ and $\omega_2\in \Omega^s_{\CO}(M;E)$, \be \label{eq:dwedge} d(\omega_1\wedge \omega_2)=(d\omega_1)\wedge \omega_2+(-1)^r \omega_1\wedge (d\omega_2).\ee
\end{itemize}
These  imply that $(\Omega_{\CO}(M),\wedge,d)$ is a differential  graded topological $\BC$-algebra.
Meanwhile, it is routine to check that the family
 \be \label{eq:dof1} \{d:\ \Omega^r_{\CO}(U;E)\rightarrow \Omega^{r+1}_{\CO}(U;E)\} _{\text{$U$ is an open subset of $M$}}\ee 
is  a $\BC_M$-homomorphism between the sheaves $\Omega^r_{\CO}(E)$ and $\Omega^{r+1}_{\CO}(E)$.

For $n,r\in \BN$, put
\begin{eqnarray*}
	\Lambda_n^r:=\{( i_1,i_2,\dots,i_r)\in\BN^r\mid 1\leq  i_1<i_2< \cdots <i_r\leq n\},
	\end{eqnarray*}
which is an empty set when $r>n$. Furthermore, for $k\in \BN$, set
 \be\label{lambdankr}
\Lambda_{n,k}^r:=\bigsqcup_{0\le s\le r}\Lambda_n^s\times \Lambda_k^{r-s}.
\ee

\begin{exampled}\label{ex:DF}
In the setting of Example \ref{ex:unders}, 
for $1\le i\le n$ and $1\le j\le k$,  the differential $1$-forms $dx_i$  and $dy_j$  are the $\CO(M)$-linear maps  from $\mathrm{Der}(\CO)$ to $\CO(M)$ respectively determined by 
	\begin{eqnarray*}
 \partial_{x_{i'}}\mapsto \delta_{i,i'},\quad
\partial_{y_{j'}}\mapsto0\quad\text{and}\quad
\partial_{x_{i'}}\mapsto 0,\quad
		\partial_{y_{j'}}\mapsto \delta_{j,j'}
	\end{eqnarray*}
 for $i'=1,2,\dots,n$ and $j'=1,2,\dots,k$. Here $\delta_{i,i'}$ and $\delta_{j,j'}$ are the values of Kronecker delta function.
Using this, for every $f\in\Omega^0_{\CO}(M;E)=\CO_E(M)$, we have that
\[
df=\sum_{i=1}^n dx_i\wedge \partial_{x_i}(f) + \sum_{j=1}^k dy_j\wedge  \partial_{y_j}(f).\]
 If $r> n+k$, then $\Omega^r_{\CO}(M;E)=0$. For $0\le r\leq n+k $,
 every element in $\Omega^r_{\CO}(M;E)$ is uniquely of  the form
\[\sum_{(I,J)\in \Lambda_{n,k}^{r}} f_{I,J} dx_I dy_J,\] 
where  $f_{I,J}\in \CO_E(M)$ and for $(I,J)=( (i_1,i_2,\dots,i_s),(j_1,j_2,\dots,j_{r-s}))\in \Lambda_{n,k}^r$,
\[f_{I,J} dx_I dy_J:=dx_{i_1}\wedge dx_{i_2}\wedge\cdots \wedge dx_{i_s}\wedge dy_{j_1}\wedge dy_{j_2}\wedge \cdots \wedge dy_{j_{r-s}}\wedge f_{I,J}.\]
Thus, we have the following topological linear isomorphism:
 \be\label{eq:DFid} \begin{array}{rcl}
 \Omega^r_{\CO}(M;E)&\rightarrow&\prod\limits_{(I,J)\in \Lambda_{n,k}^r}\CO_E(M),\\ \sum f_{I,J} dx_I dy_J&\mapsto& (f_{I,J}).
 \end{array}\ee 
This, together with  \cite[Lemma 6.6]{CSW1}, implies  that \be\label{eq:Omega=OwtE}\Omega^r_{\CO}(M;E)=\Omega^r_{\CO}(M)\wt\otimes E\ee as LCS.
\end{exampled}

\begin{remarkd}\label{rem:DFtop}
For every $r\in \BN$, Example \ref{ex:DF} implies that the sheaf $\Omega^r_\CO$ of $\CO$-modules is locally free of finite rank. 
Furthermore, one concludes from \cite[Lemma 4.6 and Theorem 6.11]{CSW1}, Lemma \ref{emd}, and \eqref{eq:DFid} that
\begin{itemize}
    \item the topology on $\Omega^r_{\CO}(M)$
coincides with its smooth topology; 
\item  $\Omega^r_\CO(M;E)$ is  quasi-complete; and 
\item $\Omega^r_\CO(M;E)$ is complete provided that $E$ is complete.
\end{itemize}

\end{remarkd}

As in the case of smooth manifolds, we introduce the following definition.
\begin{dfn}
The cochain complex
\[
\Omega_{\CO}^\bullet(E):\quad \cdots\rightarrow 0\rightarrow 0\rightarrow\Omega_{\CO}^0(E)\xrightarrow{d} \Omega_{\CO}^1(E)\xrightarrow{d} \Omega^2_{\CO}(E)\xrightarrow{d} \cdots.
\]
of sheaves on $M$ is called the de Rham complex for $(M,\CO)$ with coefficients in $E$-valued formal functions.
\end{dfn}
By taking the global sections in $\Omega_{\CO}^\bullet(E)$, we have a topological cochain complex (see Definition \ref{de:topcomplex}) as follows:
\be \label{eq:Omegabullet}\Omega_\CO^{\bullet}(M;E):\ \cdots\rightarrow  0 \rightarrow\Omega^0_{\CO}(M;E)\xrightarrow{d} \Omega^1_{\CO}(M;E)\xrightarrow{d} \Omega^2_{\CO}(M;E)\xrightarrow{d} \cdots. \ee
For simplicity, we also set
\[\Omega_\CO^\bullet:=\Omega_\CO^\bullet(\C)\quad\text{and}\quad \Omega_\CO^{\bullet}(M):=\Omega_\CO^{\bullet}(M;\C).\]

Recall from \cite[Section 2.1]{CSW1} that  an open subset $U$ of $M$ is called a chart if  there exist $n,k\in \BN$ and an open
submanifold $N$ of $\R^n$ such that $(U,\CO|_U)\cong (N,\CO_N^{(k)})$ as $\C$-locally ringed spaces. 
An open cover of $M$ consisting of charts will be called an atlas of $M$.

We also recall some notions related to the quasi-completeness of LCS (see \cite{Sc} or  \cite[Section 6.1]{CSW1}). Let $A$ be a subset of an LCS $F$. Then
\begin{itemize}
    \item $A$ is called quasi-closed if it contains all limit points of the bounded subsets in it;
\item  the quasi-closure of $A$ is defined as the intersection of all the quasi-closed subsets of $F$  that contain $A$; and 

\item $A$ is called strictly dense in $F$ if its quasi-closure is $F$.
\end{itemize}
For every $r\in\BN$, the following result implies that
  the assignment
\[U\mapsto \Omega^r_\CO(U)\widetilde\otimes E\quad (\text{$U$ is an open subset of $M$})\] forms a sheaf of $\CO$-modules over $M$.
\begin{prpd}\label{prop:EvaluedDF} For each $r\in \BN$, the natural linear map
\[\Omega^r_\CO(M)\otimes_{\pi} E\rightarrow \Omega^r_\CO(M;E)\]
is continuous, and induces a topological linear isomorphism
\[\Omega^r_\CO(M)\widetilde\otimes E\rightarrow \Omega^r_\CO(M;E).\]
Furthermore, if $E$ is complete, then 
\[\Omega^r_{\CO}(M)\widetilde\otimes E=\Omega^r_\CO(M;E)=\Omega^r_\CO(M)\widehat\otimes E\] as LCS.
\end{prpd}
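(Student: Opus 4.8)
The plan is to reduce the global statement to the local model computed in Example \ref{ex:DF} and then invoke the sheaf-theoretic gluing already established in Lemma \ref{emd}. The key structural input is the isomorphism \eqref{eq:Omega=OwtE}, which asserts precisely the desired conclusion $\Omega^r_\CO(M;E)=\Omega^r_\CO(M)\wt\otimes E$ \emph{when $M$ is a single chart} of the form $(N,\CO_N^{(k)})$. Thus the whole difficulty is the passage from charts to an arbitrary formal manifold, which lacks a global coordinate description. The natural continuous map $\Omega^r_\CO(M)\otimes_\pi E\to\Omega^r_\CO(M;E)$ sends an elementary tensor $\omega\otimes v$ to the $E$-valued form $(X_1,\dots,X_r)\mapsto \omega(X_1,\dots,X_r)\otimes v$; its continuity follows from the fact that $\CO(M)\otimes_\pi E\to\CO_E(M)=\CO(M)\wt\otimes E$ is continuous together with the pointwise-convergence definition of the topology on $\Omega^r_\CO(M;E)$. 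The substantive claim is that the induced map on the quasi-completion $\Omega^r_\CO(M)\wt\otimes E$ is a topological isomorphism onto $\Omega^r_\CO(M;E)$.

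First I would fix an atlas $\{U_\gamma\}_{\gamma\in\Gamma}$ of $M$ and consider the commutative diagram relating the two sides over $M$ to their products over the charts. For the target, Lemma \ref{emd} provides a closed topological embedding
\[
\Omega^r_\CO(M;E)\hookrightarrow\prod_{\gamma\in\Gamma}\Omega^r_\CO(U_\gamma;E),
\]
and applying the same lemma with $E=\C$ gives the corresponding embedding for $\Omega^r_\CO(M)$. On each chart $U_\gamma$, \eqref{eq:Omega=OwtE} identifies $\Omega^r_\CO(U_\gamma;E)$ with $\Omega^r_\CO(U_\gamma)\wt\otimes E$. The strategy is then to show that the functor $-\,\wt\otimes E$ (i.e. the quasi-completed tensor product against the fixed quasi-complete space $E$) is compatible with these closed embeddings and with products, so that from the chart-level identifications one recovers the global one. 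Concretely, I would verify that the image of $\Omega^r_\CO(M)\wt\otimes E$ inside $\prod_\gamma(\Omega^r_\CO(U_\gamma)\wt\otimes E)=\prod_\gamma\Omega^r_\CO(U_\gamma;E)$ coincides with the closed subspace $\Omega^r_\CO(M;E)$ cut out by the sheaf-gluing (cocycle) conditions on overlaps.

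The main obstacle is controlling how the quasi-completed topological tensor product interacts with the closed subspace realized as an equalizer of the two restriction maps to overlaps. Since $\Omega^r_\CO(M)$ is identified with a closed subspace of $\prod_\gamma\Omega^r_\CO(U_\gamma)$ and is, by Remark \ref{rem:DFtop}, a product of nuclear \Fre{} spaces carrying its smooth topology, the tensor product is well behaved: for nuclear spaces the three tensor products coincide and $\wt\otimes$ commutes with the relevant projective limits and closed embeddings. I would therefore lean on the nuclearity and \Fre{} (or product-of-\Fre) structure of $\Omega^r_\CO(M)$ recorded in Remark \ref{rem:DFtop}, together with the exactness properties of $\wt\otimes E$ on such spaces, to conclude that the equalizer defining $\Omega^r_\CO(M)$ remains an equalizer after tensoring, i.e. that $\big(\ker(\mathrm{restriction\ difference})\big)\wt\otimes E=\ker\big(\mathrm{restriction\ difference}\wt\otimes\mathrm{id}_E\big)$. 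This identifies $\Omega^r_\CO(M)\wt\otimes E$ with $\Omega^r_\CO(M;E)$ as LCS.

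Finally, the completeness addendum is immediate: when $E$ is complete, the chart-level statement \eqref{eq:Omega=OwtE} already gives $\Omega^r_\CO(U_\gamma)\wt\otimes E=\Omega^r_\CO(U_\gamma)\wh\otimes E$, and since $\Omega^r_\CO(M;E)$ is complete in this case by Remark \ref{rem:DFtop}, the quasi-completion and completion of the tensor product agree, yielding $\Omega^r_\CO(M)\wt\otimes E=\Omega^r_\CO(M;E)=\Omega^r_\CO(M)\wh\otimes E$. I expect the only genuinely delicate point to be the permanence of closed embeddings and equalizers under $\wt\otimes E$, which is where the nuclearity hypothesis from Remark \ref{rem:DFtop} does the essential work; everything else is bookkeeping with the atlas and the definitions of the pointwise-convergence and projective-tensor topologies.
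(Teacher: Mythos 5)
Your reduction to charts and the embedding half of the argument coincide with what the paper does: Lemma \ref{emd}, together with the fact that the $\varepsilon$-tensor product of two linear topological embeddings of Hausdorff LCS is again a topological embedding (the proof of \cite[Proposition 43.7]{Tr}), shows that both $\Omega^r_\CO(M)\otimes_\pi E$ and $\Omega^r_\CO(M;E)$ embed into $\prod_{\gamma}\Omega^r_\CO(U_\gamma;E)$, and \eqref{eq:Omega=OwtE} settles the chart level. The gap is your final step, where you assert that quasi-completed tensoring with $E$ carries the gluing (equalizer) conditions over, i.e.\ that $(\ker u)\,\widetilde\otimes\, E=\ker\bigl(u\otimes\mathrm{id}_E\bigr)$ for the restriction-difference map $u$, ``because nuclearity does the essential work''. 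Nuclearity buys exactly the preservation of embeddings used above and nothing more: topological tensor products are not exact in the middle, the kernel of $u\otimes\mathrm{id}_E$ may strictly contain the quasi-closure of $(\ker u)\otimes E$, and the map $u$ here has no reason to have closed image, so the standard short-exact-sequence arguments for nuclear Fr\'echet spaces do not apply (note also that $\Omega^r_\CO(M)$ is a \emph{product} of nuclear Fr\'echet spaces, not Fr\'echet, and $E$ is merely quasi-complete). What is actually at stake is whether $\Omega^r_\CO(M)\otimes E$ is strictly dense in $\Omega^r_\CO(M;E)$, and this is precisely the point your argument leaves unproved.

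The paper closes this hole with a partition of unity rather than abstract exactness, and you should do the same. Pick a partition of unity $\{f_\gamma\}_{\gamma\in\Gamma}$ subordinate to the atlas; for each $\gamma$ one gets a continuous multiplication map $m_{f_\gamma}:\Omega^r_\CO(U_\gamma;E)\rightarrow\Omega^r_\CO(M;E)$, $\omega'\mapsto f_\gamma\omega'$ (extension by zero, well defined by Lemma \ref{emd}). If $A$ is any quasi-closed subset of $\Omega^r_\CO(M;E)$ containing $\Omega^r_\CO(M)\otimes E$, then $m_{f_\gamma}^{-1}(A)$ is quasi-closed (preimages of quasi-closed sets under continuous linear maps are quasi-closed) and contains $\Omega^r_\CO(U_\gamma)\otimes E$, hence by the chart case \eqref{eq:Omega=OwtE} it equals all of $\Omega^r_\CO(U_\gamma;E)$; in particular $f_\gamma(\omega|_{U_\gamma})\in A$ for every $\omega\in\Omega^r_\CO(M;E)$. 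The finite partial sums $\sum_{\gamma\in\Gamma_0}f_\gamma(\omega|_{U_\gamma})$ then form a bounded subset of $A$ having $\omega$ as a limit point, so $\omega\in A$ by quasi-closedness. This proves strict density, which combined with the embedding you already have, the quasi-completeness of $\Omega^r_\CO(M;E)$ from Remark \ref{rem:DFtop}, and \cite[Lemma 6.5]{CSW1}, yields the isomorphism; your completeness addendum is fine once this is in place.
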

\begin{proof} The case when $r=0$ is proved in \cite[Theorem 6.11]{CSW1}. Now we assume that $r>0$. Take an atlas  $\{U_\gamma\}_{\gamma\in \Gamma}$ of $M$, and
consider the commutative diagram
\[
 \begin{CD}
              \Omega^r_\CO(M)\otimes_{\pi} E@>  >>  \Omega^r_\CO(M;E) \\
            @V  VV           @V V V\\
            \left(\prod_{\gamma\in \Gamma} \Omega^r_\CO(U_\gamma)\right)\widetilde\otimes E @ >> >
            \prod_{\gamma\in \Gamma} \Omega^r_\CO(U_\gamma;E).\\
  \end{CD}
\]
Lemma \ref{emd} and the fact 
that the epsilon tensor product of two linear topological embeddings of Hausdorff LCS is a linear topological embedding (see the proof of \cite[Proposition 43.7]{Tr}) implies that the left and right vertical arrows are both linear topological embeddings. 
\cite[Lemma 6.6]{CSW1} and \eqref{eq:Omega=OwtE}   imply that the bottom horizontal arrow is a topological linear isomorphism.
Thus the top horizontal arrow is a linear topological embedding.


By Remark \ref{rem:DFtop} and \cite[Lemma 6.5]{CSW1}, we only need to show that $\Omega^r_\CO(M)\otimes E$ is strictly dense in $\Omega^r_\CO(M;E)$.
Pick a partition of unity $\{f_\gamma\}_{\gamma\in \Gamma}$  on $M$ subordinate to $\{U_\gamma\}_{\gamma\in \Gamma}$ (see \cite[Proposition 2.3]{CSW1}).
By applying Lemma \ref{emd} to the open cover $\{U_\gamma, M\setminus \mathrm{supp}\,f_\gamma\}$ of $M$,  there is a well-defined continuous linear map
\[m_{f_\gamma}: \Omega^r_\CO( U_\gamma;E)\rightarrow  \Omega^r_\CO(M;E),
\quad \omega'\mapsto f_\gamma\omega',\]
where $f_\gamma\omega'$ is the extension by zero of $f_\gamma|_{U_\gamma}\omega'$.

Let $A$ be a quasi-closed subset of $\Omega^r_\CO(M;E)$ containing $\Omega^r_\CO(M)\otimes E$, and let $\omega\in \Omega^r_\CO(M;E)$.
Since the inverse image of a quasi-closed set under a continuous linear map is still quasi-closed  (see \cite[Page 92, $2^\circ$)]{Sc}),
it follows that $m_{f_\gamma}^{-1}(A)=\Omega^r_\CO(U_{\gamma};E)$ and $f_{\gamma}(\omega|_{U_{\gamma}})\in A$.
Hence 
\[B:=\left\{\sum_{\gamma\in \Gamma_0}f_\gamma(\omega|_{U_\gamma}) \mid \Gamma_0\ \text{is a finite subset of}\ \Gamma\right\}\]
is a bounded subset in $A$.
This  implies that $\omega=\sum_{\gamma\in \Gamma}f_\gamma(\omega|_{U_\gamma}) $ lies in the closure of $B$ and hence in $A$, as required.
\end{proof}

By using Proposition \ref{prop:EvaluedDF} and \eqref{eq:coboundary}, it is easy to check that 
 \be \label{EEE}
 \Omega_\CO^{\bullet}(M;E)=\Omega_{\CO}^{\bullet}(M)\widetilde\otimes\, \iota^{\bullet}(E)
 \ee
 as complexes of LCS. 
 Here the topological cochain complex $\iota^{\bullet}(E)$ is defined as in \eqref{eq:iotaE}. 
We refer to  Appendix \ref{appendixC2} for the notions of various tensor products of topological cochain complexes. 
	
\begin{prpd}\label{pro}
Let $\varphi=(\overline\varphi, \varphi^*): (M, \CO)\rightarrow (M',\CO')$ be a morphism of formal manifolds.
Then  there is a unique homomorphism
\be \label{eq:natural} \varphi^{\natural}:\ \overline\varphi^{-1}\Omega_{\CO'}\rightarrow \Omega_{\CO}\ee  of sheaves of graded algebras such that
\begin{itemize}
\item   $\varphi^{\natural}|_{\overline\varphi^{-1}\Omega^0_{\CO'}}=\varphi^*:\ \overline\varphi^{-1}\Omega^0_{\CO'}=\overline\varphi^{-1}\CO'\rightarrow \Omega^0_{\CO}=\CO$; and
 \item $\varphi^{\natural}\circ d=d\circ\varphi^{\natural}$.
\end{itemize}
Furthermore, the homomorphism
\[\varphi^{\natural}:\Omega_{\CO'}(M')\rightarrow \Omega_\CO(M)\]
 between the spaces of global sections is continuous.
\end{prpd}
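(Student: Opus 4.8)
The plan is to construct $\varphi^{\natural}$ by the classical pullback-of-forms formula, but realized purely algebraically through $\varphi^{*}$ and $d$ rather than through a tangent map (which is unavailable for formal manifolds). \textbf{Uniqueness} is the easy half: by Example \ref{ex:DF}, over any chart $\Omega_{\CO'}$ is generated, as a sheaf of graded algebras, by $\Omega^{0}_{\CO'}=\CO'$ together with the exact one-forms $d(x'_i)$ and $d(y'_j)$; hence $\overline\varphi^{-1}\Omega_{\CO'}$ is generated over $\C$ by $\overline\varphi^{-1}\CO'$ and $\overline\varphi^{-1}(d\CO')$. The two listed conditions force $\varphi^{\natural}=\varphi^{*}$ on $\overline\varphi^{-1}\CO'$ and $\varphi^{\natural}(\overline\varphi^{-1}df')=d(\varphi^{*}f')$ on exact forms, and compatibility with $\wedge$ then determines $\varphi^{\natural}$ everywhere.

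For \textbf{existence} I would first treat the chart models of Example \ref{ex:unders} and define, in degree one,
\[
\varphi^{\natural}\Big(\sum_i g'_i\,dx'_i+\sum_j h'_j\,dy'_j\Big):=\sum_i \varphi^{*}(g'_i)\,d(\varphi^{*}x'_i)+\sum_j \varphi^{*}(h'_j)\,d(\varphi^{*}y'_j),
\]
extending to higher degrees so as to be a graded-algebra homomorphism; this is legitimate since, locally, $\Omega_{\CO'}$ is the free graded-commutative $\CO'$-algebra on the $dx'_i,dy'_j$ by Example \ref{ex:DF}. The crux is the intrinsic identity $\varphi^{\natural}(df')=d(\varphi^{*}f')$ for all $f'\in\CO'$; granting it, $\varphi^{\natural}\circ d=d\circ\varphi^{\natural}$ follows from \eqref{eq:dwedge} together with $d\circ d=0$, and the construction becomes manifestly coordinate-free. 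To prove the identity, consider
\[
T(f'):=d(\varphi^{*}f')-\sum_i \varphi^{*}(\partial_{x'_i}f')\,d(\varphi^{*}x'_i)-\sum_j \varphi^{*}(\partial_{y'_j}f')\,d(\varphi^{*}y'_j).
\]
Since $\varphi^{*}$ is a $\C$-algebra homomorphism and both $d$ and the $\partial$'s obey the Leibniz rule, each of the two terms is a (continuous) derivation $\CO'\to\Omega^1_{\CO}$, with $\CO'$ acting through $\varphi^{*}$; hence $T$ is a continuous derivation, and by construction $T(x'_i)=T(y'_j)=0$.

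\emph{The hard part will be} showing that such a $T$ vanishes, i.e.\ that a continuous derivation annihilating every coordinate is zero. I would argue this locally at a point: in the smooth directions Hadamard's lemma expresses $f'$ modulo powers of the maximal ideal $\m$ through the $x'_i$, so the derivation property pushes $T(f')$ into $\bigcap_{N}\m^{N}\Omega^1_{\CO}$; in the formal directions $T$ is continuous for the $(y')$-adic topology and kills the $y'_j$, so it kills $\CO'$ modulo each power of $(y')$. Combining these with the jet-wise detection of formal functions — testing $T(f')\in\Omega^1_{\CO}$ against the free generators $\partial_{x}, \partial_{y}$ of $\mathcal D\mathrm{er}(\CO)$ from Example \ref{ex:unders} and evaluating at points — yields $T=0$. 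With the chain rule established, the degree-one map obeys the coordinate-free rule $\overline\varphi^{-1}df'\mapsto d(\varphi^{*}f')$, so the locally defined homomorphisms agree on overlaps and glue to the asserted global homomorphism $\varphi^{\natural}$ of sheaves of graded algebras, automatically commuting with $d$.

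Finally, for \textbf{continuity} on global sections I would work chart by chart on the target. Refine an atlas $\{V_\gamma\}$ of $M$ so that each $V_\gamma$ maps under $\overline\varphi$ into a chart $U'_\gamma$ of $M'$. As $\varphi^{\natural}$ is a sheaf homomorphism it commutes with restriction, so the composite $\Omega^r_{\CO'}(M')\to\Omega^r_{\CO}(V_\gamma)$ factors through the restriction $\Omega^r_{\CO'}(M')\to\Omega^r_{\CO'}(U'_\gamma)$, continuous by Lemma \ref{1con}, followed by the local pullback $\Omega^r_{\CO'}(U'_\gamma)\to\Omega^r_{\CO}(V_\gamma)$. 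Under the identification \eqref{eq:DFid} the latter sends $\sum f'_{I,J}\,dx'_I dy'_J$ to $\sum \varphi^{*}(f'_{I,J})\,\varphi^{\natural}(dx'_I dy'_J)$, a finite combination of the continuous coefficient map $\varphi^{*}\colon\CO'(U'_\gamma)\to\CO(V_\gamma)$ (continuous by \cite{CSW1}), the continuous wedge product, and fixed forms; hence it is continuous. Since Lemma \ref{emd} realizes $\Omega^r_{\CO}(M)$ as a closed topological subspace of $\prod_\gamma \Omega^r_{\CO}(V_\gamma)$, continuity into each chart upgrades to continuity of $\varphi^{\natural}\colon\Omega_{\CO'}(M')\to\Omega_{\CO}(M)$.
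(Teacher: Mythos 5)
Your proposal is correct, and its skeleton coincides with the paper's proof: uniqueness is forced by the coordinate formula on charts, existence is established on charts and then glued using uniqueness, and continuity is obtained chart-by-chart from the restriction maps (Lemma \ref{1con}), the identification \eqref{eq:DFid}, the continuity of $\varphi^{*}$ from \cite{CSW1}, and the closed embedding of Lemma \ref{emd}. Where you genuinely diverge is the existence step on a chart: the paper defines $\varphi^{\natural}$ in all degrees at once by the formula \eqref{eq:defvarnatural} and simply asserts that this proves existence, leaving the compatibility $d\circ\varphi^{\natural}=\varphi^{\natural}\circ d$ --- equivalently the chain rule $d(\varphi^{*}f')=\sum_i\varphi^{*}(\partial_{x'_i}f')\,d(\varphi^{*}x'_i)+\sum_j\varphi^{*}(\partial_{y'_j}f')\,d(\varphi^{*}y'_j)$ --- as an unstated routine verification, whereas you isolate exactly this identity as the crux and prove it by showing that the continuous $\varphi^{*}$-derivation $T$ vanishes. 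Your route is sound and more self-contained; it buys an intrinsic justification of the very point the paper glosses over. One caution, though, on your vanishing argument: the statement that $T(f')$ lands in $\bigcap_{N}\m^{N}\Omega^{1}_{\CO}$ proves nothing by itself, since for smooth germs $\bigcap_{N}\m^{N}\neq 0$ (it contains all functions flat at the point). What makes the argument work --- and what your phrase ``evaluating at points'' must be expanded into --- is the following observation: all generators $\varphi^{*}(x'_i-b_i)$ and $\varphi^{*}(y'_j)$ of the relevant ideal $I_a$ (where $b=\overline\varphi(a)$) have constant $y$-term vanishing at $a$, because the stalk maps of a morphism of locally ringed spaces are local; hence any element of $I_a^{N}$ has all of its $y$-coefficients of multidegree $|L|<N$ vanishing at $a$ (in each monomial of length $N$, at most $|L|$ factors can carry a nonzero $y$-multidegree, so some factor contributes its constant term). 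Since Taylor expansion with remainder in the smooth variables and truncation in the formal variables put $T(f')$, near $a$, into $I_a^{N}\Omega^{1}_{\CO}$ for every $N$, every $y$-coefficient of every component of $T(f')$ vanishes at every point, whence $T(f')=0$. With that detail supplied (and it also shows continuity of $T$ is not really needed for this step), your proof is complete.
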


\begin{proof}Without loss of generality, we assume that $M'\neq\emptyset$.
Assume first that  $M'=N^{(l)}$,
where $N$ is a nonempty open submanifold of $\R^m$,  and $m,l\in \BN$.
Let $\varphi^\natural$ be a homomorphism that fulfills the requirements in the proposition. Let $x_1, x_2, \dots , x_m$ and $y_1,y_2,\dots,y_l$ denote the standard coordinate functions and formal variables of $M'$, respectively. Then we have that
\[\varphi^{\natural}(dx_i)=d(\varphi^{\natural}(x_i))=d(\varphi^{*}(x_i))\quad  \textrm{and}\quad \varphi^{\natural}(dy_j)=d(\varphi^{*}(y_j)),\]
for $i=1,2,\dots,m$ and $j=1,2,\dots,l$.
Thus, for every $f\in \CO'(M')$ and \[(I,J)=((i_1,i_2,\dots,i_s),(j_1,j_2,\dots,j_{r-s}))\in \Lambda_{m,l}^{r}\quad \quad \text{($r\in\BN$),}\] the following equality holds (see Example \ref{ex:DF}):
\begin{eqnarray}\label{eq:defvarnatural}
&&\varphi^\natural(f dx_I dy_J)\\
&=& d(\varphi^*(x_{i_1}))\wedge \cdots \wedge
d(\varphi^*(x_{i_s}))\wedge d(\varphi^*(y_{j_1}))\wedge \cdots \wedge d(\varphi^*(y_{j_{r-s}}))\wedge \varphi^*(f).\notag
\end{eqnarray}
This proves the uniqueness of  $\varphi^\natural$.
On the other hand, the desired homomorphism $\varphi^\natural$ is defined by using \eqref{eq:defvarnatural}, which proves the existence.

For the general case, 
let $\varphi^{\natural}$ be a homomorphism that fulfills the requirements of this proposition. Then for each chart $U'\subset M'$, the homomorphism \[\varphi^\natural|_U: \overline\varphi^{-1}\Omega_{\CO'|_{U'}}\rightarrow \Omega_{\CO|_{U}}\quad \text(U:=\overline\varphi^{-1}(U'))\] satisfies the conditions in this proposition. The above argument implies that $\varphi^\natural|_U$ is uniquely determined, and consequently, so is $\varphi^\natural$.

To prove the existence, take an atlas  $\{U'_\gamma\}_{\gamma\in \Gamma}$ of $M'$ and put
$U_\gamma:=\overline\varphi^{-1}(U'_\gamma)$.
Then for each $\gamma$, there is a unique homomorphism
\[\varphi^\natural_\gamma: \overline\varphi^{-1}\Omega_{\CO'|_{U_\gamma'}}\rightarrow \Omega_{\CO|_{U_\gamma}}\]
satisfies the conditions in the proposition.
The uniqueness
allows us to glue $\{\varphi^{\natural}_{\gamma}\}_{\gamma\in \Gamma}$ together.
Then we obtain  a homomorphism
\[\varphi^\natural:\overline\varphi^{-1}\Omega_{\CO'}\rightarrow \Omega_{\CO}, \]
which satisfies the requirements in the proposition.
This completes the proof of the first assertion in the proposition.

Now we turn to prove that the map $\varphi^\natural$ is continuous.
For each $r\in \BN$ and $\gamma\in \Gamma$, one  deduces from
\eqref{eq:DFid},  \eqref{eq:defvarnatural}, and \cite[Theorem 4.15]{CSW1} that the map
\[
\varphi^{\natural}_{U'_\gamma}:\ \Omega^r_{\CO'}(U_\gamma')\rightarrow \Omega^r_\CO(U_\gamma)
\]
is continuous.
Then the continuity of $\varphi^\natural$ follows from Lemma \ref{emd} and the  commutative diagram
\[
	\begin{CD}
	\Omega^r_{\CO'}(M')	@>{\varphi^\natural} >>  \Omega^r_\CO(M) \\
		@V  VV           @V V V\\
	\prod_{\gamma\in\Gamma}\Omega^r_{\CO'}(U'_\gamma) @ > {\prod\varphi^\natural_{U'_{\gamma}}}> > \prod_{\gamma\in \Gamma}
\Omega^r_\CO(U_\gamma).\\
	\end{CD}
	\]
\end{proof}

\subsection{De Rham complexes II}
In this subsection, we introduce the other three de Rham complexes associated to $(M,\CO)$.

By transposing $\Omega_\CO^\bullet(M)$, we get  a  topological  cochain complex (see  Appendix \ref{appendixC3})
\[
\mathrm D^{-\infty}_c(M;\Omega_{\CO}^{-{\bullet}}):={}^t\Omega_\CO^\bullet(M)\] given by 
\[\cdots\rightarrow  \mathrm D^{-\infty}_c(M;\Omega_{\CO}^{2})\xrightarrow{d}
	\mathrm D^{-\infty}_c(M;\Omega_{\CO}^{1})\xrightarrow{d} \mathrm D^{-\infty}_c(M;\Omega_{\CO}^{0})\rightarrow0\rightarrow \cdots,
\]
 where we have used the identification (see \cite[Proposition 5.14]{CSW2} and 
 Remark \ref{rem:DFtop}) \[\mathrm D^{-\infty}_c(M;\Omega^r_{\CO})=(\Omega^r_\CO(M))' \quad \quad \text{($r\in\BN$)}\] of LCS. Recall that the  map $d$
 satisfies the condition
\be \label{eq:gradeddualofd}
\la d(\eta), \omega\ra=(-1)^{r+1}\la \eta, d(\omega)\ra
\ee 
for all $\eta\in  \mathrm D^{-\infty}_c(M;\Omega_{\CO}^{r+1})$ and $\omega\in \Omega^r_\CO(M)$.
By taking the quasi-completed inductive tensor product with $\iota^\bullet(E)$, we have  a
topological cochain complex
 \be \label{eq:D-inftyc}
\mathrm D^{-\infty}_c(M;\Omega_{\CO}^{-{\bullet}})
\widetilde\otimes_{\mathrm{i}}\,\iota^\bullet(E)\ee given by 
\[
 \cdots\rightarrow  \mathrm D^{-\infty}_c(M;\Omega_{\CO}^2)\wt\otimes_{\mathrm{i}}\, E\xrightarrow{d}
	\mathrm D^{-\infty}_c(M;\Omega_{\CO}^1)\wt\otimes_{\mathrm{i}}\, E\xrightarrow{d} \mathrm D^{-\infty}_c(M;\Omega_{\CO}^0)\wt\otimes_{\mathrm{i}}\, E\rightarrow0\rightarrow  \cdots.\]

Note that  the assignment \[\mathrm D^{-\infty}_c(\Omega_{\CO}^r)_E:\  U\mapsto  \mathrm  D^{-\infty}_c(U;\Omega_{\CO}^r)\wt\otimes_{\mathrm{i}}\, E
 \quad(\text{$U$ is an open subset of $M$})\] forms a precosheaf  of complex vector spaces  on $M$. 
 \begin{prpd}
    Suppose $E$ is a barreled DF space. Then for every $r\in \BN$, \be \label{eq:tOmegarOE}
\mathrm D^{-\infty}_c(M;\Omega_{\CO}^r)\wt\otimes_{\mathrm{i}}\, E=
\mathrm D^{-\infty}_c(M;\Omega_{\CO}^r)\wh\otimes_{\mathrm{i}}\, E
=\mathrm D^{-\infty}_c(M; \Omega^r_{\CO}; E)
\ee
as LCS. Furthermore, we have that  \be \label{eq:DcO-r=r}
 \mathrm D^{-\infty}_c(\Omega_{\CO}^r)_E=\mathrm D^{-\infty}_c(\Omega^r_{\CO};E)\ee is a cosheaf of complex vector spaces.
 \end{prpd}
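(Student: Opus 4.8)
The plan is to exploit the identification $\mathrm D^{-\infty}_c(M;\Omega_\CO^r)=(\Omega_\CO^r(M))'$ recorded above, to analyze the functional-analytic nature of this strong dual, and then to run the standard tensor-product bookkeeping with the barreled (DF)-space $E$, matching the outcome against the $E$-valued cosheaf built in \cite{CSW2}. Write $G:=\mathrm D^{-\infty}_c(M;\Omega_\CO^r)$. First I would record that $G$ is nuclear: by Remark \ref{rem:DFtop} the topology on $\Omega_\CO^r(M)$ is its smooth topology, and combining the local description \eqref{eq:DFid} with \cite[Proposition 4.8]{CSW1} shows that $\Omega_\CO^r(M)$ is a product of nuclear \Fre\ spaces. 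Hence its strong dual $G$ is a locally convex direct sum $G=\bigoplus_{\alpha}G_\alpha$, where each $G_\alpha$ is the strong dual of a nuclear \Fre\ space, and thus a complete, reflexive (so barreled) nuclear (DF)-space; in particular $G$ itself is nuclear and complete.

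Next I would establish the two tensor-product coincidences. Since each $G_\alpha$ and $E$ are barreled (DF)-spaces, every separately continuous bilinear map out of $G_\alpha\times E$ is jointly continuous (see \cite{Gr,Tr}), so $G_\alpha\otimes_{\mathrm{i}}E=G_\alpha\otimes_\pi E$. Because both $\otimes_{\mathrm{i}}$ and $\otimes_\pi$ commute with locally convex direct sums in the first variable, this per-summand equality upgrades to $G\otimes_{\mathrm{i}}E=G\otimes_\pi E$, and the nuclearity of $G$ further gives $G\otimes_\pi E=G\otimes_\varepsilon E$. Finally, since $G$ is a complete nuclear space and $E$ is quasi-complete, the quasi-completion of $G\otimes_{\mathrm{i}}E$ is already complete, so the quasi-completed and completed inductive tensor products agree; this yields the first equality in \eqref{eq:tOmegarOE}, namely $G\wt\otimes_{\mathrm{i}}E=G\wh\otimes_{\mathrm{i}}E$.

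To identify this common space with $\mathrm D^{-\infty}_c(M;\Omega_\CO^r;E)$, I would invoke the construction of $E$-valued compactly supported formal distributions from \cite{CSW2}. Using the local freeness of finite rank of $\Omega_\CO^r$ (Remark \ref{rem:DFtop}) to reduce the coefficient sheaf to $\CO$, the relevant Schwartz-kernel-type identification of \cite{CSW2} realizes $\mathrm D^{-\infty}_c(M;\Omega_\CO^r;E)$ as $G\wt\otimes_{\mathrm{i}}E$, completing \eqref{eq:tOmegarOE}. For the cosheaf assertion \eqref{eq:DcO-r=r}, I would simply apply \eqref{eq:tOmegarOE} with $M$ replaced by an arbitrary open subset $U$ (itself a formal manifold), so that the precosheaf $\mathrm D^{-\infty}_c(\Omega_\CO^r)_E$ agrees section by section, and compatibly with the extension-by-zero maps, with the cosheaf $\mathrm D^{-\infty}_c(\Omega_\CO^r;E)$ of \cite{CSW2}; being isomorphic as a precosheaf to a cosheaf, it is a cosheaf.

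The step I expect to be the main obstacle is the functional analysis underpinning \eqref{eq:tOmegarOE}: the atlas of $M$ may be uncountable, so $G$ need not be a (DF)-space and one cannot apply the (DF) joint-continuity theorem to $G$ directly. This is exactly why the direct-sum decomposition $G=\bigoplus_\alpha G_\alpha$ into duals of nuclear \Fre\ spaces is essential, reducing everything to summands where the barreled (DF) hypothesis on $E$ does promote separate continuity to joint continuity. The secondary delicate point is confirming that the abstract tensor product matches the concrete $E$-valued distribution space of \cite{CSW2}, which I expect to follow from the definitions there together with local freeness, but which should be stated with care.
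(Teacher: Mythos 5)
Your high-level plan (reduce to $\CO$-coefficients via local freeness of $\Omega^r_\CO$, invoke the identification from \cite{CSW2}, and get the cosheaf property by applying the section-wise identity to arbitrary open subsets) is exactly the paper's route — its proof is precisely a citation of \cite[Proposition 5.20]{CSW2} plus that reduction. But the functional analysis you supply to justify the first equality in \eqref{eq:tOmegarOE} contains two false claims, and they are essential to your argument. First, $G:=\mathrm D^{-\infty}_c(M;\Omega^r_\CO)=\bigoplus_{Z\in\pi_0(M)}G_Z$ need \emph{not} be nuclear: nuclearity is preserved only under \emph{countable} locally convex direct sums, and the paper explicitly allows $\pi_0(M)$ to be uncountable (already $\bigoplus_{\alpha\in A}\C$ with $A$ uncountable is not nuclear, since a diagonal map between the local Banach spaces $\ell^1(A,w)$ can never be nuclear when uncountably many weights are nonzero). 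Second, the projective tensor product does \emph{not} commute with infinite locally convex direct sums — that is a privilege of $\otimes_{\mathrm{i}}$, and is the very reason the paper and \cite{CSW2} work with the inductive topology on these dual-type spaces. Consequently your "upgrade" from the per-summand equality $G_Z\otimes_{\mathrm{i}}E=G_Z\otimes_\pi E$ to $G\otimes_{\mathrm{i}}E=G\otimes_\pi E$ is unjustified, and in fact the conclusion is false in general: take $M$ to be an uncountable discrete formal manifold (so $G=\bigoplus_{\alpha\in A}\C$, $|A|=\mathfrak c$) and $E=(\RC^\infty(\R))'$, a complete reflexive barreled nuclear DF space. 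A separately continuous bilinear form on $G\times E$ is given by an arbitrary family $(f_\alpha)_{\alpha\in A}\subset \RC^\infty(\R)$ via $\phi(x,T)=\sum_\alpha x_\alpha T(f_\alpha)$; joint continuity would force the $f_\alpha$ to admit rescalings $\epsilon_\alpha f_\alpha$ lying in a single bounded subset of $\RC^\infty(\R)$, which fails for a family realizing all growth rates $(p_n(f_\alpha))_n$ ranging over $\BN^{\BN}$, since no single sequence $(C_n)$ dominates all of $\BN^{\BN}$ up to scalar multiples. So $G\otimes_{\mathrm i}E\neq G\otimes_\pi E$ there, and your steps (nuclearity, $\otimes_\pi=\otimes_\varepsilon$, and the completeness argument built on them) collapse.

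Note that the proposition never asserts any comparison with $\otimes_\pi$ or $\otimes_\varepsilon$, so none of this detour is needed. The repair is to stay with $\otimes_{\mathrm{i}}$ throughout: use that $\otimes_{\mathrm{i}}$ and its quasi-completion/completion \emph{do} commute with locally convex direct sums, so that $G\,\wt\otimes_{\mathrm{i}}\,E=\bigoplus_Z\bigl(G_Z\,\wt\otimes_{\mathrm{i}}\,E\bigr)$; on each summand the barreled DF hypotheses give $G_Z\otimes_{\mathrm{i}}E=G_Z\otimes_\pi E$, a DF space whose quasi-completion is already complete, and a locally convex direct sum of complete spaces is complete — this yields $G\,\wt\otimes_{\mathrm{i}}\,E=G\,\wh\otimes_{\mathrm{i}}\,E$ without ever asserting a global equality with the projective topology. (This is, in substance, what \cite[Proposition 5.20]{CSW2} encapsulates.) Your last two steps — matching the result with $\mathrm D^{-\infty}_c(M;\Omega^r_\CO;E)$ via local freeness, and deducing \eqref{eq:DcO-r=r} by running \eqref{eq:tOmegarOE} over all open subsets compatibly with extension-by-zero — are correct and coincide with the paper's argument.
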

 \begin{proof}
As the sheaf $\Omega^r_\CO$ of $\CO$-modules  is locally free of finite rank, the first assertion
follows from  \cite[Proposition 5.20]{CSW2}. The second assertion is a direct consequence of the first one.
 \end{proof}
 For $r\in \BN$ and two open subsets $V\subset U$ of $M$, it is easy to check that
\be \label{eq:dcommutesres} d\circ \mathrm{ext}_{U,V}=
\mathrm{ext}_{U,V}\circ d:\ \mathrm D^{-\infty}_c(V;\Omega_{\CO}^r)\wt\otimes_{\mathrm{i}}\, E\rightarrow \mathrm D^{-\infty}_c(U;\Omega_{\CO}^r)\wt\otimes_{\mathrm{i}}\, E.
\ee
Then we have a cochain complex
\[ \mathrm D^{-\infty}_c(\Omega_{\CO}^{-\bullet})_ E:\ \cdots \xrightarrow{d}\mathrm D^{-\infty}_c(\Omega_{\CO}^1)_ E\xrightarrow{d}\mathrm D^{-\infty}_c(\Omega_{\CO}^0)_E\rightarrow0\rightarrow 0\rightarrow \cdots\]
of precosheaves on $M$. We also write
 \[\mathrm D_c^{-\infty}(\Omega_{\CO}^{-\bullet}):=\mathrm D_c^{-\infty}(\Omega_{\CO}^{-\bullet})_ \C.\]
\begin{dfn}
   We call the complex $\mathrm D^{-\infty}_c(\Omega_{\CO}^{-\bullet})_ E$ the de Rham complex for $(M,\CO)$ with coefficients in  compactly
supported $E$-valued formal distributions.
\end{dfn}


Recall that $\mathrm D^{\infty}_c(\Omega_{\CO}^r)$
is naturally a subcosheaf of 
$\mathrm D^{-\infty}_c(\Omega_{\CO}^r)$ 
(see \cite[Corollary 5.16]{CSW2}).
We have the following result, which will be proved in this subsection.

\begin{prpd}\label{prop:defdoncsdc}
For every  $r\in \BN$,  by taking the restriction, the coboundary map \eqref{eq:gradeddualofd} on $\mathrm D^{-\infty}_c(\Omega_{\CO}^{-\bullet})$ induces a well-defined  continuous linear map
\be \label{eq:defdoncsdc}d=d|_{\mathrm D^{\infty}_c(M;\Omega_{\CO}^{r+1})}:\  \mathrm D^{\infty}_c(M;\Omega_{\CO}^{r+1})\rightarrow \mathrm D^{\infty}_c(M;\Omega_{\CO}^r).\ee
\end{prpd}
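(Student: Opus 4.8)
The plan is to reduce the statement to an explicit computation in a single chart, where $d$ becomes a concrete first-order differential operator that manifestly preserves compactly supported densities. By construction the map $d$ on $\mathrm D^{-\infty}_c(\Omega_\CO^{-\bullet})$ is the transpose of the exterior differential on $\Omega_\CO^\bullet(M)$ via \eqref{eq:gradeddualofd}, so the substance of the proposition is twofold: that this transpose carries the subspace $\mathrm D^{\infty}_c(M;\Omega_\CO^{r+1})$ of compactly supported formal densities into $\mathrm D^{\infty}_c(M;\Omega_\CO^{r})$, and that the resulting map is continuous for the intrinsic (inductive-limit) topology of the density cosheaf, rather than merely for the subspace topology it inherits from $\mathrm D^{-\infty}_c$. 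The latter point is the reason the restriction of a continuous map is not automatically what we want.

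First I would localize. Since $\mathrm D^{\infty}_c(\Omega_\CO^r)$ is a subcosheaf of $\mathrm D^{-\infty}_c(\Omega_\CO^r)$ (see \cite[Corollary 5.16]{CSW2}), its global sections carry the locally convex inductive-limit topology determined by the extension-by-zero maps $\mathrm{ext}_U$ from charts $U$, and a linear map out of such a limit is continuous as soon as each composite with an $\mathrm{ext}_U$ is continuous. As the coboundary on $\mathrm D^{-\infty}_c$ commutes with extension by zero, i.e. $d\circ\mathrm{ext}_U=\mathrm{ext}_U\circ d$ in the sense of \eqref{eq:dcommutesres}, both claims follow once I establish, for every chart $U\cong N^{(k)}$ with $N$ an open submanifold of $\R^n$, that the chart-level coboundary restricts to a continuous map $d_U\colon \mathrm D^{\infty}_c(U;\Omega_\CO^{r+1})\to \mathrm D^{\infty}_c(U;\Omega_\CO^r)$. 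A partition of unity subordinate to an atlas then reassembles an arbitrary compactly supported density as a finite sum of chart-supported pieces, and the commutation with $\mathrm{ext}_U$ propagates both well-definedness and continuity to $M$.

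On such a chart I would use the explicit coordinate description of Example \ref{ex:DF}. The basis $\{dx_I\,dy_J\}_{(I,J)\in\Lambda_{n,k}^r}$ identifies $\Omega^r_\CO(M)$ with a finite product of copies of $\CO(M)$ by \eqref{eq:DFid}, and, by the local freeness of $\Omega^r_\CO$ recorded in Remark \ref{rem:DFtop}, dually identifies $\mathrm D^{-\infty}_c(M;\Omega^r_\CO)$ and its subspace $\mathrm D^{\infty}_c(M;\Omega^r_\CO)$ with the corresponding finite direct sums over $(I,J)$ of $\mathrm D^{-\infty}_c(M;\CO)$ and of $\mathrm D^{\infty}_c(M;\CO)$. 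Feeding the formula $df=\sum_i dx_i\wedge\partial_{x_i}(f)+\sum_j dy_j\wedge\partial_{y_j}(f)$ into the defining adjunction \eqref{eq:gradeddualofd}, I would read off $d$ on the distributional side as a fixed finite signed combination of the transposed derivations $\partial_{x_i}^{t}$ and $\partial_{y_j}^{t}$ acting on the coefficient components, composed with the contraction relating the degree-$(r+1)$ and degree-$r$ basis elements. The key point is then that each transposed operator preserves the density subspace: the transpose of $\partial_{x_i}$ is, up to sign, $\partial_{x_i}$ acting on compactly supported smooth densities on $N$ (integration by parts, with no boundary contribution because of compact support), while the transpose of $\partial_{y_j}$ acts in the formal directions and likewise carries $\mathrm D^{\infty}_c(M;\CO)$ into itself by the corresponding result of \cite{CSW2}. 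Hence $d_U$ lands in $\mathrm D^{\infty}_c(U;\Omega_\CO^r)$, and being a finite combination of these continuous operations it is continuous for the intrinsic topology.

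I expect the main obstacle to lie in the treatment of the formal variables $y_1,\dots,y_k$. In the smooth directions the restriction of $d$ to densities is just classical Stokes/integration by parts, but the duality governing $\partial_{y_j}^{t}$ on $\mathrm D^{\infty}_c(M;\CO)$ is the more delicate one supplied by the theory of formal densities in \cite{CSW2}, and it is precisely here that one must check both that the density subspace is preserved and that the operator is continuous for the inductive-limit topology. Once the transposed partial derivations are known to act continuously on $\mathrm D^{\infty}_c(M;\CO)$, assembling them along the basis $\{dx_I\,dy_J\}$ and gluing the charts via the partition of unity is routine.
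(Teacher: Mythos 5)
Your proposal is correct and takes essentially the same route as the paper: reduce to a chart $N^{(k)}$ by a partition of unity together with the commutation $d\circ\mathrm{ext}_{U,V}=\mathrm{ext}_{U,V}\circ d$, then compute $d$ explicitly on the chart, where the transpose of $\partial_{x_i}$ is $\pm\partial_{x_i}$ on compactly supported densities and the transpose of $\partial_{y_j}$ is multiplication by $y_j^*$ (the paper's $m_{y_j^*}$), both of which visibly preserve $\mathrm D^\infty_c$ and act continuously. Your continuity argument via the final (inductive-limit) topology of the extension maps is the same mechanism the paper invokes through the open surjection $\bigoplus_\alpha\mathrm{ext}_{M,V_\alpha}$ of \cite[Proposition 2.15]{CSW2}; only your attribution of this topological fact to the cosheaf property of \cite[Corollary 5.16]{CSW2} is slightly off as a citation, not as mathematics.
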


Proposition \ref{prop:defdoncsdc} shows that by taking the restriction, $\mathrm D^{-\infty}_c(\Omega_{\CO}^{-\bullet})$ induces a  cochain complex
\be \label{eq:comsupderhamO}
\mathrm D^{\infty}_c(\Omega_{\CO}^{-\bullet}):
\, \cdots\rightarrow  \mathrm D^{\infty}_c(\Omega_{\CO}^2)\xrightarrow{d} \mathrm D^{\infty}_c(\Omega_{\CO}^1)\xrightarrow{d} \mathrm D^{\infty}_c(\Omega_{\CO}^0)\rightarrow0\rightarrow 0\rightarrow \cdots
\ee
of cosheaves on $M$.
By  taking the global sections in \eqref{eq:comsupderhamO}, one obtains a topological cochain complex
\be\label{eq:comsupderham}
	\cdots\rightarrow  \mathrm D^{\infty}_c(M;\Omega_{\CO}^2)\xrightarrow{d} \mathrm D^{\infty}_c(M;\Omega_{\CO}^1)\xrightarrow{d} \mathrm D^{\infty}_c(M;\Omega_{\CO}^0)\rightarrow 0\rightarrow \cdots
\ee
of complete reflexive LCS, to be denoted by $\mathrm D^{\infty}_c(M;\Omega_{\CO}^{-\bullet})$. 

Moreover, by taking the quasi-completed inductive tensor product with $\iota^\bullet(E)$, we have a 
topological cochain complex 
\be\label{eq:Dinftyc}
\mathrm D^{\infty}_c(M;\Omega_{\CO}^{-\bullet})\widetilde\otimes_{\mathrm{i}}\,\iota^\bullet(E)\ee
of quasi-complete  LCS given by 
\[\cdots\rightarrow  \RD^{\infty}_c(M;\Omega_{\CO}^2)\widetilde\otimes_{\mathrm{i}}\,E\xrightarrow{d} \RD^{\infty}_c(M;\Omega_{\CO}^1)\widetilde\otimes_{\mathrm{i}}\,E\xrightarrow{d} \RD^{\infty}_c(M;\Omega_{\CO}^0)\widetilde\otimes_{\mathrm{i}}\,E\rightarrow 0\rightarrow \cdots. \]
 The assignment \[\mathrm D^{\infty}_c(\Omega_{\CO}^r)_E:\  U\mapsto  \mathrm  D^{\infty}_c(U;\Omega_{\CO}^r)\wt\otimes_{\mathrm{i}}\, E
 \quad(\text{$U$ is an open subset of $M$})\] forms a precosheaf  of complex vector spaces  on $M$, and we have a cochain complex
\[ \mathrm D^{\infty}_c(\Omega_{\CO}^{-\bullet})_ E:\ \cdots \xrightarrow{d}\mathrm D^{\infty}_c(\Omega_{\CO}^1)_ E\xrightarrow{d}\mathrm D^{\infty}_c(\Omega_{\CO}^0)_E\rightarrow0\rightarrow 0\rightarrow \cdots\]
of precosheaves on $M$, which is a subcomplex of 
$\mathrm D^{\infty}_c(\Omega_{\CO}^{-\bullet})_E$.
\begin{dfn}
We call $\mathrm D^{\infty}_c(\Omega_{\CO}^{-{\bullet}})_E$ the  de Rham complex for $(M,\CO)$ with coefficients in  compactly supported $E$-valued formal densities.
\end{dfn}
Now we are going to prove Proposition \ref{prop:defdoncsdc}. For this purpose, we  begin by describing the LCS $\mathrm D^{\infty}_c(M;\Omega_{\CO}^r)$ when $M=N^{(k)}$, where $n,k\in \BN$ and, without loss of generality, $N$ is a nonempty open submanifold of $\R^n$. Let $r=0,1,\dots,n+k$.  Recall from Example \ref{ex:DF} that, in this case, 
\[\Omega^r_\CO(M)=\bigoplus_{(I,J)\in \Lambda_{n,k}^r} \mathrm{C}^\infty(N)[[y_1,y_2,\dots,y_k]] dx_I dy_J.
\]
For every \[(I,J)=((i_1,i_2,\dots,i_s),(j_1,j_2,\dots,j_{n+k-r-s}))\in \Lambda_{n,k}^{n+k-r},\]
and \[\tau=\sum_{L_2=(l_1,l_2,\dots,l_k)\in \BN^k} \tau_{L_2}\cdot (y^*)^{L_2}\in \mathrm{D}_c^\infty(N)[y_1^*,y_2^*,\dots,y_k^*]\] with $\tau_{L_2}\in \mathrm{D}^\infty_c(U)$ and $(y^*)^{L_2}=(y_1^*)^{l_1}(y_2^*)^{l_2}\cdots(y_k^*)^{l_k}$, 
we define
\[\tau dx_{I}^*d_{J}^*=\tau dx^*_{i_1}\wedge dx_{i_2}^*\wedge \cdots\wedge dx^*_{i_s}\wedge dy^*_{j_1}\wedge dy^*_{j_2}\wedge
\cdots \wedge dy^*_{j_{n+k-r-s}}\]
to be the continuous linear functional on $\Omega^r_\CO(M)$ such that
\be \label{eq:Pdualpair}\la f dx_{I'} dy_{J'}, \tau dx_{I}^*dy_{J}^*\ra:= \la f, \tau \ra \cdot \varepsilon((I',J'),(I,J)),\ee
where $(I',J')\in \Lambda_{n,k}^r$,  \[f=\sum_{L_1=(l_1,l_2,\dots,l_k)\in \BN^k} f_{L_1} y^{L_1}\in \mathrm{C}^\infty(N)[[y_1,y_2,\dots,y_k]]\quad (y^{L_1}=y_1^{l_1}y_2^{l_2}\cdots y_k^{l_k}), \] 
$\varepsilon((I',J'),(I,J))$ is the constant determined by the equality
\begin{eqnarray*}
&&dx_{I'}dy_{J'}\wedge dx_{I}dy_{J}\\
&=&\varepsilon((I',J'),(I,J)) \cdot dx_1\wedge dx_2\wedge \cdots \wedge dx_n\wedge dy_1\wedge dy_2\wedge \cdots \wedge dy_k,
\end{eqnarray*}
and  \be \label{eq:pairing2} \la f, \tau \ra
=\sum_{L_1\in \BN^k} L_1! \cdot \int_N f_{L_1}\tau_{L_1}.\ee  

Let $\CF$ be a sheaf of $\CO$-modules. Recall that $\mathcal{D}\mathrm{iff}_c(\CF,\underline{\mathcal{D}})$ denotes the cosheaf 
\[U\mapsto \mathrm{Diff}_c(\CF|_U,\underline{\mathcal{D}}) \quad \text{($U$ is an open subset of $M$)}\]
of compactly supported differential operators (see \cite[Section 3.2]{CSW1}) from $\CF$ to the sheaf $\underline{\mathcal{D}}$ of (complex-valued) smooth densities on $N$, and recall from \cite[Section 2.1]{CSW2} that there is a homomorphism  \[\rho=\{\rho_U\}_{\text{$U$ is an open subset of $M$}}:\quad\mathcal{D}\mathrm{iff}_c(\CF,\underline{\mathcal{D}})
\rightarrow \mathrm{D}^\infty_c(\CF)\] of cosheaves given by 
\begin{equation}\begin{split}\label{eq:defcomsuppden}
\rho_U:\ \mathrm{Diff}_{c}(\CF|_U,\underline{\mathcal{D}}|_U)&\rightarrow \mathrm{D}^{\infty}_{c}(U;\CF),\quad \text{($U$ is an open subset of $M$)} \\
D&\mapsto \left(u\mapsto  \int_{U} D(u)\right).
\end{split}\end{equation}

Recall  from \cite[Section 2.2]{CSW2} that $(\mathcal{C}(M),\preceq)$ is a  directed set, where
\be\label{eq:C(M)} \mathcal{C}(M):=\{(K,s)\mid \text{$K$ is a compact subset of $M$ and $s\in \BN$}\},\ee
and for two pairs $(K,s),(K',s')\in \mathcal{C}(M)$, 
\[\text{$(K,s)\preceq(K',s')$ if and only if $K\subset K'$ and $s\le s'$.}\] 
For  $(K,s)\in \mathcal{C}(M)$, recall from \cite[Section 2.2]{CSW2} that \[\mathrm{D}^\infty_{K,s}(M;\CF):=\rho_M(\mathrm{Diff}_{K,s}(\CF,\underline{\mathcal{D}}))\subset \mathrm{D}^\infty_{c}(M;\CF)\]
is endowed with the quotient topology of $\mathrm{Diff}_{K,s}(\CF,\underline{\mathcal{D}})$, and 
\[
\mathrm{D}^\infty_{c}(M;\CF)=\varinjlim_{(K,s)\in \mathcal{C}(M)}\mathrm{D}^\infty_{K,s}(M;\CF)\]
is endowed with the inductive limit topology. 
Here $\mathrm{Diff}_{K,s}(\CF,\underline{\mathcal{D}})$ denotes the subspace of $\mathrm{Diff}_c(\CF,\underline{\mathcal{D}})$  consisting of all the
differential operators  with order $\le s$ and supported in $K$ (see  \cite[Sections 2.1 and 2.2]{CSW2} for details).



\begin{lemd}\label{lem:charOmegaonkr} 
Assume that $M=N^{(k)}$. Then for every $(K,s)\in \mathcal{C}(M)$,
the map
\be\begin{split} \label{eq:topiosondnck-ru2}
\bigoplus_{(I,J)\in \Lambda_{n,k}^{n+k-r}} \mathrm{D}^\infty_K(N)[y_1^*,y_2^*,\dots,y_k^*]_{\leq s}
\rightarrow \mathrm D^{\infty}_{K,s}(M;\Omega_{\CO}^r),\\
(\tau_{I,J})\mapsto \sum \tau_{I,J}dx^*_{I} dy^*_{J}
\end{split}
\ee is a well-defined topological linear isomorphism. Moreover, the map
\be\label{eq:topiosondnck-ru}
\bigoplus_{(I,J)\in \Lambda_{n,k}^{n+k-r}} \mathrm{D}^\infty_c(N)[y_1^*,y_2^*,\dots,y_k^*]
\rightarrow \mathrm D^{\infty}_{c}(M;\Omega_{\CO}^r)
\ee
induced by \eqref{eq:topiosondnck-ru2} is also a topological linear isomorphism.
\end{lemd}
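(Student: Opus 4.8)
The plan is to exploit the explicit free basis of $\Omega^r_\CO$ recorded in Example \ref{ex:DF} in order to reduce the statement, for each fixed $(K,s)\in\mathcal{C}(M)$, to a direct computation of the compactly supported differential operators valued in $\underline{\mathcal{D}}$, and then to read off both the linear isomorphism and the coincidence of topologies from the quotient-topology definition $\mathrm{D}^\infty_{K,s}(M;\Omega^r_\CO)=\rho_M(\mathrm{Diff}_{K,s}(\Omega^r_\CO,\underline{\mathcal{D}}))$. First I would fix $(K,s)$ and use that, by Example \ref{ex:DF} and Remark \ref{rem:DFtop}, the sheaf $\Omega^r_\CO$ is free over $\CO$ with basis $\{dx_{I'}dy_{J'}\}_{(I',J')\in\Lambda^r_{n,k}}$. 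Restricting a differential operator to the basis sections yields a decomposition
\[\mathrm{Diff}_{K,s}(\Omega^r_\CO,\underline{\mathcal{D}})=\bigoplus_{(I',J')\in\Lambda^r_{n,k}}\mathrm{Diff}_{K,s}(\CO,\underline{\mathcal{D}})\]
as topological vector spaces, compatible with the order and support filtrations; this is where the free structure does the essential work, turning the problem into finitely many copies of the scalar case $\CO$.

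Next I would compute $\rho_M(\mathrm{Diff}_{K,s}(\CO,\underline{\mathcal{D}}))=\mathrm{D}^\infty_{K,s}(M;\CO)$ explicitly. Every compactly supported differential operator $D\colon\CO\to\underline{\mathcal{D}}$ of order $\le s$ supported in $K$ can be written, after evaluating the formal variables at $0$ (which is forced since $\underline{\mathcal{D}}$ carries no formal variables), as $D=\sum_{|\alpha|+|\beta|\le s}\mu_{\alpha\beta}\,\partial_x^\alpha\partial_y^\beta|_{y=0}$ with $\mu_{\alpha\beta}$ smooth densities supported in $K$. Applying $\rho_M$ and integrating by parts in the $x$-variables inside $\rho_M(D)(f)=\int_N D(f)$ moves all $x$-derivatives off $f$, so that $\rho_M(D)$ becomes pairing against a density-valued polynomial $\tau=\sum_{|\beta|\le s}\tau_\beta(y^*)^\beta\in\mathrm{D}^\infty_K(N)[y_1^*,\dots,y_k^*]_{\le s}$ in the sense of \eqref{eq:pairing2}; conversely every such $\tau$ is realized by $D=\sum_{\beta}\tau_\beta\,\partial_y^\beta|_{y=0}$. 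The crucial bookkeeping is that the $x$-derivatives are absorbed by integration by parts while each $\partial_y^\beta|_{y=0}$ contributes the monomial $(y^*)^\beta$, so the order bound $\le s$ translates exactly into the degree bound $\le s$ in the variables $y_j^*$. This identifies $\mathrm{D}^\infty_{K,s}(M;\CO)$ with $\mathrm{D}^\infty_K(N)[y_1^*,\dots,y_k^*]_{\le s}$, and the identification is topological because the target carries the quotient topology of $\mathrm{Diff}_{K,s}(\CO,\underline{\mathcal{D}})$.

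Assembling these two steps gives \eqref{eq:topiosondnck-ru2}: the complementation bijection $\Lambda^{n+k-r}_{n,k}\leftrightarrow\Lambda^r_{n,k}$, sending $(I,J)$ to the unique $(I',J')$ with $\varepsilon((I',J'),(I,J))\neq0$, matches the index set of the lemma with the basis index set of the first reduction, and under it $\tau_{I,J}\,dx_I^*dy_J^*$ is, up to the unit sign $\varepsilon$, exactly the functional acting on the $(I',J')$-component of $\Omega^r_\CO(M)$ by $\langle\,\cdot\,,\tau_{I,J}\rangle$ and annihilating the other components, as one reads off from \eqref{eq:Pdualpair}. Hence the map \eqref{eq:topiosondnck-ru2} is the direct sum of the scalar isomorphisms from the second step, so it is a topological linear isomorphism. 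For \eqref{eq:topiosondnck-ru} I would pass to the inductive limit over $(K,s)\in\mathcal{C}(M)$: finite direct sums commute with inductive limits, $\varinjlim_{(K,s)}\mathrm{D}^\infty_K(N)[y_1^*,\dots,y_k^*]_{\le s}=\mathrm{D}^\infty_c(N)[y_1^*,\dots,y_k^*]$ (the coefficient of each monomial ranges over $\varinjlim_K\mathrm{D}^\infty_K(N)=\mathrm{D}^\infty_c(N)$ and the degree bound disappears in the limit), and $\varinjlim_{(K,s)}\mathrm{D}^\infty_{K,s}(M;\Omega^r_\CO)=\mathrm{D}^\infty_c(M;\Omega^r_\CO)$ by definition, so \eqref{eq:topiosondnck-ru} is an inductive limit of topological isomorphisms and hence itself one.

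The main obstacle is the topological content of the scalar computation: because $\rho_M$ has a nontrivial kernel (the operators killed by integration by parts), one must check that the induced bijection on the quotient is a homeomorphism and not merely a continuous bijection, i.e.\ that the quotient topology transported from $\mathrm{Diff}_{K,s}(\CO,\underline{\mathcal{D}})$ agrees with the intrinsic topology on $\mathrm{D}^\infty_K(N)[y_1^*,\dots,y_k^*]_{\le s}$. The free-module reduction and the final inductive-limit argument are then routine.
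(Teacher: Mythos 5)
Your proposal is correct and follows essentially the same route as the paper: both reduce, via the freeness of $\Omega^r_\CO$ recorded in \eqref{eq:DFid}, to the scalar identification $\mathrm{D}^\infty_{K,s}(M;\CO)\cong\mathrm{D}^\infty_K(N)[y_1^*,y_2^*,\dots,y_k^*]_{\leq s}$ (indexed through the complementation bijection $\Lambda^{n+k-r}_{n,k}\leftrightarrow\Lambda^{r}_{n,k}$), and then pass to the inductive limit over $(K,s)\in\mathcal{C}(M)$. The only difference is that the paper dispatches the scalar case and the limit step by citing \cite[Proposition 2.19]{CSW2} and \cite[(2.36)]{CSW2}, whereas you re-derive these ingredients (the integration-by-parts computation and the compatibility of finite direct sums with inductive limits), including the quotient-topology point that you flag, which is precisely the content of the cited results.
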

\begin{proof} 
 By using \eqref{eq:DFid}, we have that 
\begin{eqnarray*}
&&\mathrm D^{\infty}_{K,s}(M;\Omega_{\CO}^r)=\rho_M(\mathrm{Diff}_{K,s}(\Omega^r_{\CO},\underline{\mathcal{D}}))\\
&\cong&\bigoplus_{\Lambda_{n,k}^{n+k-r}}\rho_M( \mathrm{Diff}_{K,s}(\CO,\underline{\mathcal{D}}))
=\bigoplus_{\Lambda_{n,k}^{n+k-r}}\mathrm D^{\infty}_{K,s}(M;\CO)\end{eqnarray*} as LCS  for every $(K,s)\in \mathcal{C}(M)$. 
This, together with \cite[Proposition 2.19]{CSW2}, implies that \eqref{eq:topiosondnck-ru2} 
is a well-defined topological linear isomorphism.
By \cite[(2.36)]{CSW2}, the map  \eqref{eq:topiosondnck-ru} is also a topological linear isomorphism.
\end{proof}


For every $i=1,2,\dots,n$, define the continuous linear map
\[\partial_{x_i^*}:\mathrm{D}^\infty_c(N)\rightarrow \mathrm{D}^\infty_c(N),\quad
f\mathrm{d}x\mapsto \partial_{x_i}(f)\mathrm{d}x,\]
where $f\in \RC_c^\infty(N)$ and  $\mathrm{d}x$ is the Lebesgue measure.
And, for every   $j=1,2,\dots,k$, define the continuous linear map
\[m_{y_j^*}:\C[y_1^*,y_2^*,\dots,y_k^*]\rightarrow \C[y_1^*,y_2^*,\dots,y_k^*],\quad
 (y^*)^L\mapsto y_j^*\cdot (y^*)^L\quad (L\in \BN^k).\]
Extend  the linear endomorphisms $\partial_{x_i^*}$ and $m_{y_j^*}$ to
 $\mathrm{D}^\infty_c(N)[y_1^*,y_2^*,\dots,y_k^*]$ in an obvious way.

 \begin{lemd}\label{lem:comdoncsdc} 
 Proposition \ref{prop:defdoncsdc} holds when $M=N^{(k)}$ with $N$ a nonempty open submanifold of $\R^n$ and $n,k\in \BN$.
 \end{lemd}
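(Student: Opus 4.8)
The plan is to reduce everything to the explicit coordinate description of $\mathrm D^\infty_c(M;\Omega_\CO^r)$ furnished by Lemma \ref{lem:charOmegaonkr} and then to identify the transpose differential concretely in those coordinates. First I would recall that, by \eqref{eq:topiosondnck-ru}, a compactly supported formal density in $\mathrm D^\infty_c(M;\Omega_\CO^{r+1})$ is exactly an expression $\eta=\sum_{(I,J)\in\Lambda_{n,k}^{n+k-r-1}}\tau_{I,J}\,dx_I^* dy_J^*$ with each $\tau_{I,J}\in\mathrm D^\infty_c(N)[y_1^*,\dots,y_k^*]$, and that $d\eta\in\mathrm D^{-\infty}_c(M;\Omega_\CO^r)$ is by definition the functional $\omega\mapsto(-1)^{r+1}\la\eta,d\omega\ra$ on $\Omega^r_\CO(M)$, per \eqref{eq:gradeddualofd}. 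The goal is to exhibit $d\eta$ as an expression of the same shape, with coefficients again in $\mathrm D^\infty_c(N)[y_1^*,\dots,y_k^*]$, and to read off continuity from the formula.

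The computational core is an adjunction calculation for the two families of partial derivatives appearing in the coordinate formula for $d$ (Example \ref{ex:DF}), carried out against the pairing \eqref{eq:pairing2}. For the $x$-derivatives, integration by parts over $N$ (legitimate because the $\tau_L$ have compact support in the open set $N$) gives $\la\partial_{x_i}f,\tau\ra=-\la f,\partial_{x_i^*}\tau\ra$, so $\partial_{x_i}$ transposes to $-\partial_{x_i^*}$. For the $y$-derivatives, since $\partial_{y_j}(y^L)=l_j\,y^{L-e_j}$ and the pairing \eqref{eq:pairing2} carries the weight $L!$, the identity $(M+e_j)!=(m_j+1)\,M!$ yields $\la\partial_{y_j}f,\tau\ra=\la f,m_{y_j^*}\tau\ra$, so $\partial_{y_j}$ transposes to multiplication by $y_j^*$. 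Both $\partial_{x_i^*}$ and $m_{y_j^*}$ manifestly preserve $\mathrm D^\infty_c(N)[y_1^*,\dots,y_k^*]$ and are continuous there.

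With these adjoints in hand, I would compute $\la\eta,d\omega\ra$ for a basis form $\omega=f\,dx_{I'}dy_{J'}$ with $(I',J')\in\Lambda_{n,k}^r$, using that $d$ acts on such a form by applying the total derivative $\sum_i dx_i\wedge\partial_{x_i}+\sum_j dy_j\wedge\partial_{y_j}$ to $f$, together with the defining pairing \eqref{eq:Pdualpair}, in which only the complementary index pairs survive through the constant $\varepsilon((I',J'),(I,J))$. Transferring each $\partial_{x_i}$ and $\partial_{y_j}$ onto $\eta$ via the adjunctions above produces, after collecting the surviving terms, an explicit density $\tilde d\eta=\sum\sigma_{I,J}\,dx_I^* dy_J^*$ whose coefficients $\sigma_{I,J}$ are finite signed combinations of terms $\partial_{x_i^*}\tau_{\bullet}$ and $m_{y_j^*}\tau_{\bullet}$, hence again lie in $\mathrm D^\infty_c(N)[y_1^*,\dots,y_k^*]$. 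Since $\tilde d\eta$ and $d\eta$ agree as functionals on $\Omega^r_\CO(M)$, this shows that $d$ restricts to a well-defined map $\mathrm D^\infty_c(M;\Omega_\CO^{r+1})\to\mathrm D^\infty_c(M;\Omega_\CO^r)$; continuity then follows because the identification \eqref{eq:topiosondnck-ru} is a topological isomorphism and $\tilde d$ is assembled from the continuous operators $\partial_{x_i^*}$ and $m_{y_j^*}$.

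I expect the main obstacle to be the sign and wedge-ordering bookkeeping in the last step: correctly tracking the Koszul signs incurred when moving $dx_i$ or $dy_j$ past $dx_{I'}dy_{J'}$, the global sign $(-1)^{r+1}$ from \eqref{eq:gradeddualofd}, and the complementation signs $\varepsilon((I',J'),(I,J))$, and verifying that they combine consistently so that each coefficient $\sigma_{I,J}$ is well defined independently of the redundant index choices. The analytic and continuity content is comparatively routine once the coordinate formula for $\tilde d$ has been pinned down.
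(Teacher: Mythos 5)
Your proposal matches the paper's proof essentially step for step: the paper establishes exactly your two adjunction identities ($\la f,\partial_{x_i^*}(\tau)\ra=\la -\partial_{x_i}(f),\tau\ra$ and $\la f,m_{y_j^*}(\tau)\ra=\la \partial_{y_j}(f),\tau\ra$, its equation \eqref{eq:comexplicitd}), then uses them together with \eqref{eq:gradeddualofd} to write $d(\tau\, dx_I^* dy_J^*)$ explicitly as a signed sum of terms $\partial_{x_i^*}(\tau)$ and $m_{y_j^*}(\tau)$ wedged with complementary covectors, and finally invokes Lemma \ref{lem:charOmegaonkr} for well-definedness and continuity. The sign bookkeeping you flag is precisely what the paper compresses into its phrase ``by a straightforward computation,'' so there is no gap and no genuine difference of route.
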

 \begin{proof} 
 Let $\tau\in \mathrm{D}^\infty_c(N)[y_1^*,y_2^*,\dots,y_k^*]$, $r=0,1,\dots,n+k-1$, and
 \[(I,J)=((i_1,i_2,\dots,i_s),(j_1,j_2,\dots,j_{n+k-r-1-s}))\in \Lambda_{n,k}^{n+k-r-1}.\]
It is easy to check  that
\be \label{eq:comexplicitd}\la f,\partial_{x_i^*}(\tau)\ra=\la -\partial_{x_i}(f),\tau\ra\quad \text{and}\quad
\la f,m_{y_j^*}(\tau)\ra=\la \partial_{y_j}(f),\tau\ra,\ee
for every $f\in \RC^\infty(N)[[y_1,y_2,\dots,y_k]]$.
 
Using \eqref{eq:gradeddualofd} and \eqref{eq:comexplicitd}, we have  the following equality by a straightforward computation:
 \begin{eqnarray}\label{eq:comdoncsdc}&&
d(\tau  dx_I^* dy_J^*) \\&=&\sum_{i\in \{1,2,\dots,n\}\backslash \{ i_1,i_2,\dots, i_s\} } \partial_{x_i^*}(\tau) dx_a\wedge dx_I^*dy_J^*\notag\\
&+&\sum_{j\in  \{1,2,\dots,k \} \backslash \{ j_1,j_2,\dots,j_{n+k-r-1-s}\} }(-1)^{s-1}m_{y_j^*}(\tau) dx_I^*\wedge dy_{j}^*\wedge dy_J^*. \notag
\end{eqnarray}
This, together with Lemma \ref{lem:charOmegaonkr}, implies the lemma.
\end{proof}

\vspace{3mm}

\noindent\textbf{Proof of Proposition \ref{prop:defdoncsdc}:} Let $\eta\in \mathrm D^{\infty}_c(M;\Omega_{\CO}^{r+1})$. Take 
$D\in \mathrm{Diff}_c(\Omega^{r+1}_{\CO},\underline{\mathcal{D}})$ such that $\eta=\rho_M(D)$, and  pick  a finite family $\{U_1,U_2,\dots,U_t\}$ of charts  of $M$ that covers  $K:=\mathrm{supp}\,D$ in $M$. Let  $\{f_0,f_1,\dots,f_t\}$ be a partition of unity of $M$ subordinate to the cover $\{U_0:=M\setminus K,U_1,\dots,U_t\}$ (see \cite[Proposition 2.3]{CSW1}).
Then we have that
\begin{eqnarray*}
&&d(\eta)\\&=&d\left(\sum_{i=1}^t \eta\circ f_i \right)\\ &=&\sum_{i=1}^t d (\mathrm{ext}_{M,U_i} ((\eta\circ f_i)|_{U_i}))\quad (\text{by \cite[Lemma 2.12]{CSW2}})\\
&=&\sum_{i=1}^t\mathrm{ext}_{M,U_i} \circ d ((\eta\circ f_i)|_{U_i})\in\mathrm D_{c}^{\infty}(M;\Omega_{\CO}^r)\quad (\text{by \eqref{eq:dcommutesres} and Lemma \ref{lem:comdoncsdc}}).
\end{eqnarray*}
This implies that the linear  map \eqref{eq:defdoncsdc} is well-defined.

For the continuity of the map \eqref{eq:defdoncsdc}, let us take an atlas $\{V_\alpha\}_{\alpha\in \Gamma}$ of $M$.
From \eqref{eq:dcommutesres}, we have the  commutative diagram
\[
\begin{CD}
	\mathrm D^{\infty}_c(M;\Omega_{\CO}^{r+1})@> d >> \mathrm D^{\infty}_c(M;\Omega_{\CO}^r)\\
	@A  \oplus_{\alpha\in \Gamma}\mathrm{ext}_{M,V_\alpha}AA       @AA\oplus_{\alpha\in \Gamma} \mathrm{ext}_{M,V_\alpha} A   \\
	\bigoplus_{\alpha\in \Gamma}\mathrm D^{\infty}_c(V_{\alpha};\Omega_{\CO}^{r+1})@> \oplus_{\alpha\in \Gamma} d >> \bigoplus_{\alpha\in \Gamma}
\mathrm D^{\infty}_c(V_{\alpha};\Omega_{\CO}^r). \\
\end{CD}
\]
 \cite[Proposition 2.15]{CSW2}  implies that the left and right vertical arrows are both continuous, open and surjective linear maps. 
Meanwhile, Lemma \ref{lem:comdoncsdc} implies that the bottom horizontal arrow is a continuous linear map. Thus,
the top horizontal arrow is a continuous linear map as well.
This completes the proof of Proposition \ref{prop:defdoncsdc}.
\qed

\vspace{3mm}

\begin{remarkd}\label{rem:usuPD}
Assume that $M=N^{(0)}$ with $N$ a nonempty open submanifold of $\R^n$ for some $n\in \BN$.
For $r=0,1,\dots,n$, the space of
compactly supported differential $r$-forms on $N$ is usually defined as
\[\Omega_{\CO,c}^r(N):=\bigoplus_{I\in \Lambda_n^r} \RC^\infty_c(N) dx_I\subset
\Omega^r_\CO(N).\]
With the obvious identification
\[ 
\Omega_{\CO,c}^{n-r}(N)\rightarrow\mathrm D^{\infty}_c(N;\Omega_{\CO}^r),\quad \quad  f dx_I\mapsto (f\mathrm{d}x) dx^*_I,
\]
one concludes from \eqref{eq:comdoncsdc} that the coboundary map \[d:\mathrm D^{\infty}_c(N;\Omega_{\CO}^r)\rightarrow \mathrm D^{\infty}_c(N;\Omega_{\CO}^{r-1})\]
 is nothing but the usual coboundary map
\[d:\Omega_{\CO,c}^{n-r}(N)\rightarrow \Omega_{\CO,c}^{n-r+1}(N)\] (defined by taking the restriction of $d:\Omega^{n-r}_\CO(N)\rightarrow \Omega^{n-r+1}_\CO(N)$).
Moreover, the pairing \eqref{eq:Pdualpair} coincides with the usual Poincar\'e dual pairing
\[\begin{array}{rcl}
\Omega^r_\CO(N)\times \Omega_{\CO, c}^{n-r}(N)&\rightarrow& \C, \\ (f dx_I, g dx_J)
&\mapsto& \int_N fg\, dx_I\wedge dx_J.\end{array}\]
\end{remarkd}

Finally, by considering the transpose of \eqref{eq:comsupderham}, one obtains a  topological cochain complex
\bee\label{eq:trancomsupderham}
\mathrm C^{-\infty}(M;\Omega_{\CO}^{\bullet}):
\,\cdots\rightarrow 0 \rightarrow \mathrm C^{-\infty}(M;\Omega_{\CO}^{0})\xrightarrow{d} \mathrm C^{-\infty}(M;\Omega_{\CO}^{1})\xrightarrow{d} \mathrm C^{-\infty}(M;\Omega_{\CO}^{2})\xrightarrow{d} \cdots
\eee
of complete nuclear reflexive  LCS, where (see \cite[(3.1)]{CSW2})
\[\mathrm C^{-\infty}(M;\Omega^r_{\CO})=(\mathrm D^{\infty}_c(M;\Omega_{\CO}^r))'
\] for $r\in \BN$.
Furthermore, by taking the quasi-completed projective  tensor product with $\iota^\bullet(E)$, we have  a  topological cochain complex
\be\label{eq:C-infty}\mathrm C^{-\infty}(M;\Omega_{\CO}^{\bullet})\widetilde\otimes\, \iota^\bullet(E)\ee
of quasi-complete   LCS as follows:
\[
	\cdots\rightarrow 0\rightarrow 0 \rightarrow \mathrm C^{-\infty}(M;\Omega_{\CO}^0)\widetilde\otimes E\xrightarrow{d} \mathrm C^{-\infty}(M;\Omega_{\CO}^{1})\widetilde\otimes E\xrightarrow{d} \mathrm C^{-\infty}(M;\Omega_{\CO}^2)\widetilde\otimes E\xrightarrow{d} \cdots.
\]
 Since $\Omega^r_\CO$ is locally free of finite rank, it follows from  \cite[Proposition 3.5]{CSW2} that
 \[\mathrm C^{-\infty}(M;\Omega^r_{\CO})\widetilde\otimes E=\mathrm C^{-\infty}(M;\Omega^r_{\CO};E).
 \]
 This, together with \cite[Lemma 3.3]{CSW2}, implies that the assignment
 \[\mathrm C^{-\infty}(\Omega_{\CO}^r)_E:\ U\mapsto \mathrm C^{-\infty}(U;\Omega_{\CO}^{r})\widetilde\otimes E
 \]
 is a sheaf of $\CO$-modules over $M$.

 For every $r\in \BN$, it is easy to check  that \[d\circ \mathrm{res}_{V,U}=\mathrm{res}_{V,U}\circ d:\  \mathrm C^{-\infty}(U;\Omega_{\CO}^r)\widetilde\otimes E\rightarrow\mathrm C^{-\infty}(V;\Omega_{\CO}^{r+1})\widetilde\otimes E,\] where  $V\subset U$ are  two open subsets of $M$.
Consequently, there is a cochain complex
\begin{eqnarray*}
	\cdots\rightarrow 0\rightarrow 0\rightarrow \mathrm C^{-\infty}(\Omega_{\CO}^0)_ E\xrightarrow{d} \mathrm C^{-\infty}(\Omega_{\CO}^1)_ E\xrightarrow{d} \mathrm C^{-\infty}(\Omega_{\CO}^2)_ E\xrightarrow{d} \cdots
\end{eqnarray*}
of sheaves on $M$, to be denoted by $\mathrm C^{-\infty}(\Omega_{\CO}^{\bullet})_E$.
\begin{dfn}
    We call $\mathrm C^{-\infty}(\Omega_{\CO}^{\bullet})_E$  the de Rham complex for $(M,\CO)$ with coefficients in $E$-valued formal generalized functions.
\end{dfn}

\section{Tensor products of de Rham complexes}\label{sec:tenderham}
Throughout this section, let $(M_1,\CO_1)$, $(M_2,\CO_2)$ be two formal manifolds, and let $(M_3,\CO_3)$ be  the product of them (see \cite[Theorem 6.18]{CSW1}).
We consider  the tensor products of those de Rham complexes
introduced in the last section.

\subsection{Tensor products of de Rham complexes I}
The main goal of this subsection is to prove the following theorem.  
We refer to Appendix \ref{appendixC2} for the notions of tensor products of topological cochain complexes, and Section \ref{appendixC1} for the notions of continuous complex maps, topological complex isomorphisms, topological homotopy equivalences, and etc.

\begin{thmd}\label{thm:tensorofcomplex} Let $(M_1,\CO_1)$ and $(M_2,\CO_2)$ be two formal manifolds and let
$(M_3,\CO_3)$
be the product of them. Then we have the identifications
\begin{eqnarray*} \Omega^{\bullet}_{\CO_3}(M_3)&=&\Omega^{\bullet}_{\CO_1}(M_1)\widetilde\otimes\,\Omega^{\bullet}_{\CO_2}(M_2)\\
&=&\Omega^{\bullet}_{\CO_1}(M_1)\widehat\otimes\, \Omega^{\bullet}_{\CO_2}(M_2)\end{eqnarray*}
and
\begin{eqnarray*} \mathrm D^{-\infty}_c(M_3;\Omega_{\CO_3}^{-\bullet})&=& \mathrm D^{-\infty}_c(M_1;\Omega_{\CO_1}^{-\bullet})\widetilde\otimes_{\mathrm{i}}\, \mathrm D^{-\infty}_c(M_2;\Omega_{\CO_2}^{-\bullet})\\
&=&\mathrm D^{-\infty}_c(M_1;\Omega_{\CO_1}^{-\bullet})\widehat\otimes_{\mathrm{i}}\,\mathrm D^{-\infty}_c(M_2;\Omega_{\CO_2}^{-\bullet})
\end{eqnarray*}
of topological cochain complexes. 
\end{thmd}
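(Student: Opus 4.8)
The plan is to establish the two identifications in turn, deriving the distributional one from the functional one by duality. Throughout, write $p_1\colon M_3\to M_1$ and $p_2\colon M_3\to M_2$ for the two canonical projections attached to the product, and recall from Proposition \ref{pro} that each $p_i$ induces a continuous homomorphism $p_i^{\natural}\colon \Omega^{\bullet}_{\CO_i}(M_i)\to \Omega^{\bullet}_{\CO_3}(M_3)$ of differential graded algebras, with $p_i^{\natural}\circ d=d\circ p_i^{\natural}$.

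First I would construct the comparison morphism for the complex with coefficients in formal functions. The map $(\omega_1,\omega_2)\mapsto (p_1^{\natural}\omega_1)\wedge(p_2^{\natural}\omega_2)$ is the composition of the continuous map $(\omega_1,\omega_2)\mapsto(p_1^{\natural}\omega_1,p_2^{\natural}\omega_2)$ with the jointly continuous wedge product of \eqref{eq:wedge}; hence it is jointly continuous, factors through $\Omega^{\bullet}_{\CO_1}(M_1)\otimes_{\pi}\Omega^{\bullet}_{\CO_2}(M_2)$, and extends to the quasi-completion and completion. The graded Leibniz rule \eqref{eq:dwedge}, together with $p_i^{\natural}\circ d=d\circ p_i^{\natural}$, shows at once that this map intertwines the de Rham differential of $\Omega^{\bullet}_{\CO_3}(M_3)$ with the tensor-product differential carrying the Koszul sign; thus it is a morphism of topological cochain complexes.

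The core of the argument is to show this morphism is a \emph{topological} isomorphism, which I would do bidegree by bidegree. The degree-$r$ component of the tensor product complex is the \emph{finite} direct sum $\bigoplus_{p+q=r}\Omega^{p}_{\CO_1}(M_1)\widetilde\otimes\Omega^{q}_{\CO_2}(M_2)$, and by Proposition \ref{prop:EvaluedDF}, applied with the quasi-complete coefficient space $E=\Omega^{q}_{\CO_2}(M_2)$ (see Remark \ref{rem:DFtop}), each summand is $\Omega^{p}_{\CO_1}(M_1;\Omega^{q}_{\CO_2}(M_2))$. It then remains to identify this, as an LCS, with the bidegree-$(p,q)$ part of $\Omega^{r}_{\CO_3}(M_3)$. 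In a product chart $M_i=N_i^{(k_i)}$ this is the explicit coordinate computation of Example \ref{ex:DF}: the coordinate functions and formal variables of $M_3$ are the disjoint union of those of $M_1$ and $M_2$, so the $dx\,dy$-monomials on $M_3$ split cleanly by bidegree, and the underlying topological identification is the Schwartz kernel theorem for formal functions, $\CO_3(M_3)=\CO_1(M_1)\widetilde\otimes\CO_2(M_2)$, from \cite{CSW1,CSW2}. The global statement is assembled from the chart case via the closed topological embedding of Lemma \ref{emd}, using naturality of the identification in the atlas and the unambiguity of $\widetilde\otimes$ and $\widehat\otimes$ afforded by nuclearity.

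For the second identification I would dualize. Since $\mathrm{D}^{-\infty}_c(M;\Omega^{r}_{\CO})=(\Omega^{r}_{\CO}(M))'$ and all the global-section spaces involved are reflexive and nuclear (Remark \ref{rem:DFtop}), taking strong transposes converts the completed projective tensor product of the nuclear Fréchet form spaces into the completed inductive tensor product of their strong duals, i.e.\ the standard nuclear duality $(E\widehat\otimes_{\pi}F)'=E'\widehat\otimes_{\mathrm{i}}F'$ applied in each degree; this accounts for the change from $\widetilde\otimes$ to $\widetilde\otimes_{\mathrm{i}}$. Transposing the differential produces the sign of \eqref{eq:gradeddualofd} and reverses the grading to $\Omega^{-\bullet}$, which is precisely the differential of the tensor product of the transposed complexes, so the transpose of the morphism of the first part is the desired topological complex isomorphism. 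The hardest part will be the bidegree step of the first identification: upgrading the evident algebraic bijection to a topological isomorphism globally, where one must track quasi-completion versus completion and lean on nuclearity; once that is secured, both the differential-compatibility and the passage to distributions by duality are essentially formal.
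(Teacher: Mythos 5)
Your proposal is correct and follows essentially the same route as the paper's own proof: the same comparison map $\omega_1\otimes\omega_2\mapsto (p_1^{\natural}\omega_1)\wedge(p_2^{\natural}\omega_2)$, checked to be a complex map via the Leibniz rule, the same chart-level computation (Example \ref{ex:DF} together with the Schwartz kernel identification $\CO_3(M_3)=\CO_1(M_1)\widetilde\otimes\CO_2(M_2)$), globalization using the sheaf structure supplied by Proposition \ref{prop:EvaluedDF} and the closed embeddings of Lemma \ref{emd} (this is exactly the content of Proposition \ref{propro}), and finally the distributional identification obtained by transposing, using the duality $(E\widehat\otimes F)'\cong E'\widehat\otimes_{\mathrm{i}}F'$ valid for products of nuclear Fr\'echet spaces. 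The only caveat is cosmetic: the form spaces are products of nuclear Fr\'echet spaces rather than nuclear Fr\'echet spaces themselves (as $\pi_0(M)$ may be uncountable), which is why the paper invokes its Lemma A.12 of \cite{CSW2} rather than the textbook nuclear-Fr\'echet duality.
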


Let 
\[ p_i:\,(M_3,\CO_3)\rightarrow(M_i,\CO_i)\quad (i=1,2)\] be the projection map.
By 
Proposition \ref{pro}, there is  a continuous  complex map
\[p_i^{\natural}:\Omega^{\bullet}_{\CO_i}(M_i)\rightarrow \Omega^{\bullet}_{\CO_3}(M_3).\]
On the other hand, for $r_1,r_2\in\BN$, we have a continuous linear map 
\[\wedge:\  \Omega^{r_1}_{\CO_3}(M_3)\widetilde \otimes \Omega^{r_2}_{\CO_3}(M_3)\rightarrow
 \Omega^{r_1+r_2}_{\CO_3}(M_3)  \]
  induced by the wedge product \eqref{eq:wedge}.
  
Consider the composition  map
\be\label{eq:defPsi}\Psi:\ \Omega^{r_1}_{\CO_1}(M_1)\widetilde \otimes \Omega^{r_2}_{\CO_2}(M_2)\xrightarrow{p_1^\natural\otimes p_2^\natural}
\Omega^{r_1}_{\CO_3}(M_3)\widetilde \otimes \Omega^{r_2}_{\CO_3}(M_3)\xrightarrow{\ \wedge\ }
 \Omega^{r_1+r_2}_{\CO_3}(M_3).\ee 
Note that 
 \be\label{eq:p_1p_2}\Psi(\omega_{1}\otimes \omega_{2})=p_1^{\natural}(\omega_{1})\wedge p_2^{\natural}(\omega_{2})\quad \quad (\omega_{1}\in \Omega^{r_1}_{\CO_1}(M_1),\,
   \omega_{2} \in \Omega_{\CO_2}^{r_2}(M_2)).\ee
By Proposition \ref{prop:EvaluedDF} and  Remark \ref{rem:DFtop}, we have that
\[\Omega^{r_1}_{\CO_1}(M_1)\widetilde\otimes \Omega^{r_2}_{\CO_2}(M_2)=\Omega^{r_1}_{\CO_1}(M_1;\Omega^{r_2}_{\CO_2}(M_2))=\Omega^{r_1}_{\CO_1}(M_1)\widehat\otimes \Omega^{r_2}_{\CO_2}(M_2).\]
Thus \eqref{eq:defPsi}  yields a continuous map  (see \eqref{eq:conmap})
   \be \label{eq:Psi}
  \Psi:\,\Omega^{\bullet}_{\CO_1}(M_1)\widetilde\otimes \Omega^{\bullet}_{\CO_2}(M_2)=\Omega^{\bullet}_{\CO_1}(M_1)\widehat\otimes \Omega^{\bullet}_{\CO_2}(M_2)\longrightarrow \Omega^{\bullet}_{\CO_3}(M_3)\ee
of degree $0$.

\begin{lemd} The map \eqref{eq:Psi} is a continuous complex map.
\end{lemd}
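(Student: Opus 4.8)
The map in \eqref{eq:Psi} has already been shown to be continuous of degree $0$, so the only remaining point is to verify that $\Psi$ is a morphism of cochain complexes, i.e. that it intertwines the differential of the tensor product complex $\Omega^{\bullet}_{\CO_1}(M_1)\widetilde\otimes\Omega^{\bullet}_{\CO_2}(M_2)$ (the total differential with the Koszul sign rule, as in Appendix \ref{appendixC2}) with the de Rham differential $d$ on $\Omega^{\bullet}_{\CO_3}(M_3)$. Since both $\Psi\circ d$ and $d\circ\Psi$ are continuous, and since the algebraic tensor product $\Omega^{\bullet}_{\CO_1}(M_1)\otimes\Omega^{\bullet}_{\CO_2}(M_2)$ is strictly dense in its quasi-completion, the plan is to establish the identity $\Psi\circ d=d\circ\Psi$ first on elementary tensors $\omega_1\otimes\omega_2$ with $\omega_i\in\Omega^{r_i}_{\CO_i}(M_i)$, and then to extend it by continuity.

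The verification on elementary tensors is a direct computation built from two facts already in hand. By the definition of the differential on a tensor product complex,
\[
d(\omega_1\otimes\omega_2)=d\omega_1\otimes\omega_2+(-1)^{r_1}\,\omega_1\otimes d\omega_2,
\]
so that, applying \eqref{eq:p_1p_2},
\[
\Psi\bigl(d(\omega_1\otimes\omega_2)\bigr)=p_1^{\natural}(d\omega_1)\wedge p_2^{\natural}(\omega_2)+(-1)^{r_1}\,p_1^{\natural}(\omega_1)\wedge p_2^{\natural}(d\omega_2).
\]
Now Proposition \ref{pro} guarantees $p_i^{\natural}\circ d=d\circ p_i^{\natural}$, which lets me rewrite $p_i^{\natural}(d\omega_i)$ as $d\bigl(p_i^{\natural}(\omega_i)\bigr)$. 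On the other hand, since $p_1^{\natural}(\omega_1)\in\Omega^{r_1}_{\CO_3}(M_3)$, the graded Leibniz rule \eqref{eq:dwedge} on $M_3$ gives
\[
d\bigl(p_1^{\natural}(\omega_1)\wedge p_2^{\natural}(\omega_2)\bigr)=d\bigl(p_1^{\natural}(\omega_1)\bigr)\wedge p_2^{\natural}(\omega_2)+(-1)^{r_1}\,p_1^{\natural}(\omega_1)\wedge d\bigl(p_2^{\natural}(\omega_2)\bigr).
\]
Comparing the last two displays and recalling $\Psi(\omega_1\otimes\omega_2)=p_1^{\natural}(\omega_1)\wedge p_2^{\natural}(\omega_2)$ yields exactly $\Psi\bigl(d(\omega_1\otimes\omega_2)\bigr)=d\bigl(\Psi(\omega_1\otimes\omega_2)\bigr)$. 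The key point making this work cleanly is that the Koszul sign $(-1)^{r_1}$ in the tensor product differential is precisely the sign appearing in the graded Leibniz rule, so the two expansions match term by term.

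Finally, to pass from elementary tensors to the full space I would argue that the equalizer $\{\,\xi : (\Psi\circ d)(\xi)=(d\circ\Psi)(\xi)\,\}$ is the preimage of the diagonal under a continuous map into a Hausdorff LCS, hence is closed and therefore quasi-closed; as it contains the strictly dense algebraic tensor product, its quasi-closure is the whole space, so the two maps agree everywhere. I expect this density/continuity step to be the only genuinely delicate part, precisely because we work with quasi-completed rather than algebraic tensor products and so must invoke strict density and the stability of quasi-closed sets under continuous preimages, exactly as in the proof of Proposition \ref{prop:EvaluedDF}. The sign bookkeeping, by contrast, is entirely routine.
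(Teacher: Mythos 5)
Your proposal is correct and follows essentially the same route as the paper: the paper's proof is exactly the computation on elementary tensors $\omega_1\otimes\omega_2$ using \eqref{eq:p_1p_2}, the graded Leibniz rule \eqref{eq:dwedge}, and Proposition \ref{pro}, with the extension to the quasi-completed tensor product left implicit. Your additional paragraph spelling out the strict-density/quasi-closedness argument for that extension is a careful elaboration of what the paper takes for granted, not a different method.
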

\begin{proof}
For $\omega_{1}\in \Omega^{r_1}_{\CO_1}(M_1)$ and
   $\omega_2 \in \Omega^{r_2}_{\CO_2}(M_2)$, using \eqref{eq:p_1p_2}, \eqref{eq:dwedge} and Proposition \ref{pro},  we have that
 \begin{eqnarray*}
d\circ \Psi(\omega_1\otimes \omega_2)
&=&d (p_1^{\natural}(\omega_{1}))\wedge p_2^{\natural}(\omega_{2})+(-1)^{r_1} p_1^{\natural}(\omega_1)\wedge d(p_2^{\natural}(\omega_{2}))\\
	&=& p_1^{\natural}(d(\omega_{1}))\wedge p_2^{\natural}(\omega_{2})+(-1)^{r_1} p_1^{\natural}(\omega_{1})\wedge p_2^{\natural}(d(\omega_{2}))\\
	&=&\Psi(d(\omega_{1})\otimes\omega_{2}+(-1)^{r_1}\omega_{1}\otimes d(\omega_{2})) \\
	&=&\Psi\circ d(\omega_{1}\otimes\omega_{2}).
\end{eqnarray*}
This finishes the proof.
\end{proof}

Let $n,k,r,s\in \BN$. 
For 
\[
I_1=(i_1,i_2,\dots,i_{r})\in \Lambda_{n}^{r}\quad 
\text{and}\quad I_2=(i_1',i_2',\dots,i_{s}')\in \Lambda_{k}^{s},
\]
put
\be \label{eq:defn+I}
(I_1,n+I_2):=
(i_1,i_2,\dots,i_{r},n+i_1',n+i_2',\dots,n+i_{s}')\in \Lambda_{n+k}^{r+s}.
\ee

\begin{lemd}\label{lem:Psiisopre} 
Assume that 
$M_i=N_i^{(k_i)}$  for $i=1,2$, where $N_i$ is an open submanifold  of $\R^{n_i}$, and $n_i,k_i\in \BN$. Then   the continuous complex  map \eqref{eq:Psi}  is a topological complex isomorphism.
\end{lemd}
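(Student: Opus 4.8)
The plan is to show that $\Psi$ is a topological isomorphism in each degree separately, using the explicit coordinate description of forms from Example \ref{ex:DF}. Since the preceding lemma already gives that $\Psi$ is a continuous complex map, it suffices to prove that for every $r\in\BN$ the degree-$r$ component
\[
\Psi^r:\ \bigoplus_{r_1+r_2=r}\Omega^{r_1}_{\CO_1}(M_1)\wt\otimes\Omega^{r_2}_{\CO_2}(M_2)\longrightarrow\Omega^r_{\CO_3}(M_3)
\]
is a topological linear isomorphism.

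First I would fix the coordinates. By \cite[Theorem 6.18]{CSW1} the product is $M_3=(N_1\times N_2)^{(k_1+k_2)}$, with coordinate functions $x_1,\dots,x_{n_1},x_1',\dots,x_{n_2}'$ and formal variables $y_1,\dots,y_{k_1},y_1',\dots,y_{k_2}'$. From \eqref{eq:p_1p_2} and Proposition \ref{pro} one reads off that $p_1^\natural$ fixes each $dx_i$ and $dy_j$, while $p_2^\natural$ sends $dx_{i'}'\mapsto dx_{n_1+i'}$ and $dy_{j'}'\mapsto dy_{k_1+j'}$. Hence on the basis forms of Example \ref{ex:DF},
\[
\Psi\bigl((f\,dx_{I_1}dy_{J_1})\otimes(g\,dx_{I_2}'dy_{J_2}')\bigr)=\varepsilon\cdot p_1^*(f)\,p_2^*(g)\ dx_{(I_1,\,n_1+I_2)}\,dy_{(J_1,\,k_1+J_2)},
\]
where $(I_1,n_1+I_2)$ and $(J_1,k_1+J_2)$ are formed as in \eqref{eq:defn+I} and $\varepsilon\in\{\pm1\}$ is the sign produced by reordering the wedge into standard order; crucially $\varepsilon$ depends only on the four multi-indices, not on $f$ or $g$.

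Next I would expand both sides through Example \ref{ex:DF}. Each factor is a \emph{finite} direct sum $\Omega^{r_i}_{\CO_i}(M_i)=\bigoplus_{(I,J)\in\Lambda^{r_i}_{n_i,k_i}}\CO_i(M_i)\,dx_Idy_J$, and likewise $\Omega^r_{\CO_3}(M_3)=\bigoplus_{(I,J)\in\Lambda^r_{n_1+n_2,k_1+k_2}}\CO_3(M_3)\,dx_Idy_J$. Because $\wt\otimes$ commutes with finite direct sums, the source of $\Psi^r$ is the direct sum of copies of $\CO_1(M_1)\wt\otimes\CO_2(M_2)$ indexed by $\bigsqcup_{r_1+r_2=r}\Lambda^{r_1}_{n_1,k_1}\times\Lambda^{r_2}_{n_2,k_2}$. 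The assignment $((I_1,J_1),(I_2,J_2))\mapsto\bigl((I_1,n_1+I_2),(J_1,k_1+J_2)\bigr)$ is a bijection of this index set onto $\Lambda^r_{n_1+n_2,k_1+k_2}$: it is the elementary fact that splitting an increasing tuple drawn from $\{1,\dots,n_1+n_2\}$ (respectively from the $(k_1+k_2)$ formal variables) into its part below and its part above the threshold $n_1$ (respectively $k_1$) is invertible. Under these identifications the displayed formula shows that $\Psi^r$ is, on each matched summand, the invertible scalar $\varepsilon$ times the degree-zero map
\[
\Psi^0:\ \CO_1(M_1)\wt\otimes\CO_2(M_2)\longrightarrow\CO_3(M_3),\qquad f\otimes g\mapsto p_1^*(f)\,p_2^*(g).
\]

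It remains to observe that $\Psi^0$ is a topological linear isomorphism, which is the Schwartz kernel theorem for formal functions: with $\CO_i(M_i)=\RC^\infty(N_i)[[\,\cdot\,]]$ and $\CO_3(M_3)=\RC^\infty(N_1\times N_2)[[\,\cdot\,]]$, this follows by applying \cite[Theorem 6.11]{CSW1} with $E=\CO_2(M_2)$ together with the classical identification \eqref{eq:schwartzkernel1}. Granting this, each matched summand of $\Psi^r$ is a topological isomorphism up to an invertible sign, and since the index sets correspond bijectively, $\Psi^r$ is a topological linear isomorphism; letting $r$ vary yields the topological complex isomorphism. The only delicate bookkeeping is the determination of the reordering signs $\varepsilon$, but as these are always invertible scalars they play no role in bijectivity or continuity, so the sole substantive input is the degree-zero identification imported from \cite{CSW1}. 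Accordingly the main obstacle is organizational rather than analytic: one must verify that the finite direct-sum decompositions, the index bijection, and the signs all align so that $\Psi^r$ is block-diagonal with invertible blocks.
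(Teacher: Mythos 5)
Your proposal is correct and is essentially the paper's own argument: the paper likewise uses the coordinate identification \eqref{eq:DFid} on both sides to exhibit the degree-$r$ component of $\Psi$ as the reindexing map $\widetilde\pi_r$ built from the same index bijection $((I_1,J_1),(I_2,J_2))\mapsto((I_1,n_1+I_2),(J_1,k_1+J_2))$ together with the signs $(-1)^{s_1t_2}$, acting block-wise on copies of $\CO_1(M_1)\wt\otimes\CO_2(M_2)=\CO_3(M_3)$. The only cosmetic difference is that the paper treats this degree-zero identification as known from the product construction in \cite{CSW1} (via \cite[Lemma 6.19]{CSW1}), whereas you spell out its justification through \cite[Theorem 6.11]{CSW1} and the classical kernel theorem \eqref{eq:schwartzkernel1}.
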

\begin{proof}Set \[N_3:=N_1\times N_2\subset \R^{n_3},\ n_3:=n_1+n_2\quad  \text{and}\quad  k_3:=k_1+k_2.\]
By \cite[Lemma 6.19]{CSW1}, the product formal manifold
\[(M_3,\CO_3)=(N_3,\CO_{N_3}^{(k_3)}).\]
Let $r\in \BN$. 
The lemma then follows from  the commutative diagram
\[\begin{CD}
	\bigoplus\limits_{r_1,r_2\in \BN; r_1+r_2=r}\left(\Omega^{r_1}_{\CO_1}(M_1)\widetilde \otimes \Omega^{r_2}_{\CO_2}(M_2)\right)	@>{\Psi} >>  \Omega^r_{\CO_3}(M_3) \\
		@V{\eqref{eq:DFid}}  VV           @V V{\eqref{eq:DFid}} V\\
	\prod\limits_{((I_1,J_1),(I_2,J_2))} \CO_1(M_1)\widetilde\otimes \CO_2(M_2) @ > {\widetilde\pi_r}> > \prod\limits_{(I,J)}
\CO_3(M_3),\\
	\end{CD}
	\]
	where  $((I_1,J_1),(I_2,J_2))$ runs over   \[\bigsqcup_{r_1,r_2\in \BN; r_1+r_2=r}\Lambda_{n_1,k_1}^{r_1}\times \Lambda_{n_2,k_2}^{r_2},\] $(I,J)$ runs over $\Lambda_{n_3,k_3}^r$,  and $\widetilde\pi_r$ is the topological linear isomorphism defined by
\bee
\{f_{((I_1,J_1),(I_2,J_2))} \} \mapsto \{\tilde{f}_{(I,J)}\},\quad (\tilde{f}_{(I,J)}:=(-1)^{s_1t_2} f_{\pi^{-1}_r((I,J))}). \eee
Here 
$\pi_r$ denotes the  bijection 
\begin{eqnarray*}
\pi_r:\ 
\bigsqcup_{r_1,r_2\in \BN; r_1+r_2=r}\Lambda_{n_1,k_1}^{r_1}\times \Lambda_{n_2,k_2}^{r_2}&\rightarrow& \Lambda_{n_3,k_3}^{r},\\
 ((I_1,J_1),(I_2,J_2))&\mapsto& ((I_1,n_1+I_2),(J_1,n_2+J_2)),
\end{eqnarray*} 
 and $s_1,t_2$ are the non-negative integers such that 
\[
\pi^{-1}_r((I,J))\in \left(\Lambda_{n_1}^{t_1}\times \Lambda_{k_1}^{s_1}\right)\times \left(\Lambda_{n_2}^{t_2}\times \Lambda_{k_2}^{s_2}\right)
\] for some  non-negative integers $s_2$ and $t_1$.
\end{proof}

In general, we have the following result. 
\begin{prpd}\label{propro} The continuous complex  map \eqref{eq:Psi}  is a topological complex isomorphism.
\end{prpd}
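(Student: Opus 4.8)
The plan is to deduce the general case from the local model of Lemma \ref{lem:Psiisopre} by a gluing argument, in the same spirit as the proof of Proposition \ref{prop:EvaluedDF}. Since \eqref{eq:Psi} is already known to be a continuous complex map, it suffices to prove that in each degree $r\in\BN$ the component
\[
\Psi^r:\ \bigoplus_{r_1+r_2=r}\Omega^{r_1}_{\CO_1}(M_1)\widetilde\otimes\Omega^{r_2}_{\CO_2}(M_2)\longrightarrow \Omega^r_{\CO_3}(M_3)
\]
is a topological linear isomorphism; write $S^r$ for its source. Fix atlases $\{U_\alpha\}_{\alpha}$ of $M_1$ and $\{V_\beta\}_{\beta}$ of $M_2$; by \cite[Lemma 6.19]{CSW1} the products $\{U_\alpha\times V_\beta\}_{\alpha,\beta}$ form an atlas of $M_3$. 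I would first show that $\Psi^r$ is a closed topological embedding, and then that its image is strictly dense, hence surjective.

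For the embedding I would pass through the $E$-valued reformulation of Proposition \ref{prop:EvaluedDF} so as to avoid any product-of-products subtlety. Writing $E_2:=\Omega^{r_2}_{\CO_2}(M_2)$, which is quasi-complete by Remark \ref{rem:DFtop}, we have $\Omega^{r_1}_{\CO_1}(M_1)\widetilde\otimes\Omega^{r_2}_{\CO_2}(M_2)=\Omega^{r_1}_{\CO_1}(M_1;E_2)$, so Lemma \ref{emd} provides a closed topological embedding into $\prod_\alpha \Omega^{r_1}_{\CO_1}(U_\alpha)\widetilde\otimes\Omega^{r_2}_{\CO_2}(M_2)$. Applying Lemma \ref{emd} once more in the $M_2$-variable (after exchanging the two tensor factors, using symmetry of $\widetilde\otimes$ and quasi-completeness of $\Omega^{r_1}_{\CO_1}(U_\alpha)$) localizes the second factor, and composing the two closed embeddings yields a closed topological embedding of $S^r$ into $\prod_{\alpha,\beta}\bigl(\bigoplus_{r_1+r_2=r}\Omega^{r_1}_{\CO_1}(U_\alpha)\widetilde\otimes\Omega^{r_2}_{\CO_2}(V_\beta)\bigr)$. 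This is the left vertical arrow of a commutative square whose right vertical arrow is the closed topological embedding $\Omega^r_{\CO_3}(M_3)\hookrightarrow\prod_{\alpha,\beta}\Omega^r_{\CO_3}(U_\alpha\times V_\beta)$ of Lemma \ref{emd}, and whose bottom arrow $\prod_{\alpha,\beta}\Psi^r_{U_\alpha\times V_\beta}$ is a topological isomorphism by Lemma \ref{lem:Psiisopre}; commutativity then forces $\Psi^r$ to be a topological embedding.

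For surjectivity I would show $\Psi^r(S^r)$ is quasi-closed and strictly dense. Quasi-closedness is automatic: $S^r$ is quasi-complete (a finite direct sum of quasi-completed tensor products), and a subspace topologically isomorphic to a quasi-complete space is quasi-closed, so $\Psi^r(S^r)$ is quasi-closed in $\Omega^r_{\CO_3}(M_3)$. For strict density, choose partitions of unity $\{f_\alpha\}$ on $M_1$ and $\{g_\beta\}$ on $M_2$ subordinate to the two atlases, so $\{(p_1^*f_\alpha)(p_2^*g_\beta)\}$ is a locally finite partition of unity on $M_3$ subordinate to $\{U_\alpha\times V_\beta\}$. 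Given $\omega\in\Omega^r_{\CO_3}(M_3)$, each piece $(p_1^*f_\alpha)(p_2^*g_\beta)\,\omega$ is supported in $U_\alpha\times V_\beta$ and lies in $\Psi^r(S^r)$: one applies $(\Psi^r_{U_\alpha\times V_\beta})^{-1}$ to $\omega|_{U_\alpha\times V_\beta}$, multiplies by $f_\alpha\otimes g_\beta$, and extends by zero through the maps $m_{f_\alpha}\widehat\otimes m_{g_\beta}$ (well-defined as in Proposition \ref{prop:EvaluedDF}), using the intertwining identity $\Psi^r\circ(m_{f_\alpha}\widehat\otimes m_{g_\beta})=m_{(p_1^*f_\alpha)(p_2^*g_\beta)}\circ\Psi^r_{U_\alpha\times V_\beta}$, which follows from Proposition \ref{pro} and the compatibility of $p_i^\natural$ with restrictions. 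The finite partial sums of $\sum_{\alpha,\beta}(p_1^*f_\alpha)(p_2^*g_\beta)\,\omega$ then form a bounded subset of $\Psi^r(S^r)$ (boundedness coming from local finiteness of the partition, exactly as in Proposition \ref{prop:EvaluedDF}) whose closure contains $\omega$. Hence $\omega$ belongs to the quasi-closure of $\Psi^r(S^r)$, which equals $\Psi^r(S^r)$, proving surjectivity; combined with the previous step, $\Psi^r$ is a topological isomorphism.

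The main obstacle I anticipate is the surjectivity step, specifically verifying the intertwining identity and the boundedness of the partial sums that place $\omega$ in the quasi-closure of the image. The embedding step, by contrast, is a largely formal consequence of Lemma \ref{emd}, Lemma \ref{lem:Psiisopre}, and the stability of topological embeddings under the (essentially nuclear) tensor products in play.
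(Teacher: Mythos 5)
Your proposal is correct, but its surjectivity step takes a genuinely different route from the paper's. The paper first proves that $\Psi$ is an \emph{algebraic} isomorphism by sheaf gluing: for a fixed chart $V_1$ of $M_1$, Proposition \ref{prop:EvaluedDF} makes $U_2\mapsto\bigoplus_{r_1+r_2=r}\Omega^{r_1}_{\CO_1}(V_1)\widetilde\otimes\Omega^{r_2}_{\CO_2}(U_2)$ a sheaf on $M_2$; since $\Psi_{V_1\times U_2}$ is an isomorphism on charts $U_2$ (Lemma \ref{lem:Psiisopre}), the induced sheaf homomorphism is an isomorphism, and repeating the argument in the $M_1$-variable (via the auxiliary sheaf $\Omega^r_{M_2}$) gives bijectivity on global sections; only then does the paper run the embedding diagram, using \cite[Lemma 6.6]{CSW1} and \cite[Proposition 43.7]{Tr} for the left vertical arrow, to upgrade the bijection to a topological isomorphism. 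You instead prove the topological embedding first --- your iterated application of Lemma \ref{emd} through the $E$-valued identification of Proposition \ref{prop:EvaluedDF} is a legitimate substitute for the paper's product-of-products manipulation --- and then get surjectivity analytically: the image is quasi-closed because a topologically embedded copy of a quasi-complete space in a Hausdorff LCS is quasi-closed (a bounded net in the image converging in the ambient space pulls back to a bounded Cauchy net, which converges by quasi-completeness and maps to the original limit by Hausdorffness), and it contains every $\omega$ in its quasi-closure by the partition-of-unity argument modeled on Proposition \ref{prop:EvaluedDF}. The two verifications your route hinges on both go through: the intertwining identity $\Psi^r\circ(m_{f_\alpha}\otimes m_{g_\beta})=m_{(p_1^*f_\alpha)(p_2^*g_\beta)}\circ\Psi^r_{U_\alpha\times V_\beta}$ holds on elementary tensors and extends because two continuous linear maps agreeing on a strictly dense subspace agree everywhere (their equalizer is closed, hence quasi-closed), and the finite partial sums are bounded by local finiteness of the product partition of unity, exactly as in Proposition \ref{prop:EvaluedDF}. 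What the paper's route buys is that bijectivity comes essentially for free from sheaf axioms, with no boundedness or quasi-closure bookkeeping; what yours buys is a single symmetric argument carried out directly on $M_3$ with one product atlas, avoiding the asymmetric two-step gluing and the auxiliary sheaves altogether.
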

\begin{proof}

Let $V_1$ be a chart  of $M_1$ and $r\in \BN$.
 Proposition  \ref{prop:EvaluedDF} implies that
\[\Omega^r_{V_1}:\ U_2\mapsto \left(\Omega^r_{V_1}(U_2):=
\bigoplus_{r_1,r_2\in \BN; r_1+r_2=r}\left(\Omega^{r_1}_{\CO_1}(V_1)\widetilde\otimes \Omega^{r_2}_{\CO_2}(U_2)\right)\right)\]
is a sheaf of complex vector spaces over $M_2$, and that
\[\Omega^r_{M_2}:\ U_1\mapsto \left(\Omega^r_{M_2}(U_1):=
\bigoplus_{r_1,r_2\in \BN; r_1+r_2=r}\left(\Omega^{r_1}_{\CO_1}(U_1)\widetilde\otimes \Omega^{r_2}_{\CO_2}(M_2)\right)\right)\]
is a sheaf  of complex vector spaces over $M_1$.

For  open subsets $U_1\subset M_1$ and $U_2\subset M_2$, 
write 
\[\Psi_{U_1\times U_2}:\ \bigoplus_{r_1,r_2\in \BN; r_1+r_2=r}\left(\Omega^{r_1}_{\CO_1}(U_1)\widetilde\otimes \Omega^{r_2}_{\CO_2}(U_2)\right)
\rightarrow \Omega^r_{\CO_3}(U_1\times U_2)\]
for the canonical map.
These maps produce  a  sheaf homomorphism
\be\label{eq:OmegaV1} \{\Psi_{V_1\times U_2}\}_{U_2 \text{ is a open subset of } M_2}: \ 
\Omega^r_{V_1} \rightarrow (p_2|_{V_1\times M_2})_* \Omega^r_{\CO_3}(V_1\times M_2), 
\ee
as well as a sheaf homomorphism
\be \label{eq:OmegaV2} \{\Psi_{U_1\times M_2}\}_{U_1 \text{ is a open subset of } M_1}:\ 
\Omega^r_{M_2} \rightarrow (p_1)_* \Omega^r_{\CO_3}.
\ee

Lemma \ref{lem:Psiisopre}  implies that $\Psi_{V_1\times U_2}$ is an isomorphism for  every chart $U_2$ of $M_2$.
This forces that the sheaf homomorphism \eqref{eq:OmegaV1} is an isomorphism.
Thus $\Psi_{U_1\times M_2}$ is an isomorphism for every chart $U_1$ of $M_1$, and hence
 the sheaf homomorphism \eqref{eq:OmegaV2} is  an isomorphism as well. Then we see that the map \[\Psi:\bigoplus_{r_1,r_2\in \BN;r_1+r_2=r}\left(\Omega^{r_1}_{\CO_1}(M_1)\widetilde\otimes \Omega^{r_2}_{\CO_2}(M_2)\right) \rightarrow \Omega_{\CO_3}^r(M_3)\] is an isomorphism of complex vector spaces.

 Consider the commutative diagram 
\[\begin{CD}\bigoplus_{r_1,r_2}(\Omega^{r_1}_{\CO_1}(M_1)\wt\otimes\Omega^{r_2}_{\CO_2}(M_2))@>\Psi>> \Omega^r_{\CO_3}(M_3)\\@VVV @VVV\\\bigoplus_{r_1,r_2}\left(\left(\prod_{V_1}\Omega^{r_1}_{\CO_1}(V_1)\right)\wt\otimes\left(\prod_{V_2}\Omega^{r_2}_{\CO_2}(V_2)\right)\right)@>>>\prod_{V_1, V_2}\Omega_{\CO_3}^r(V_1\times V_2),\\\end{CD}\] where $r_1,r_2$ runs over all natural numbers such that $r_1+r_2=r$, $V_1$, $V_2$ respectively runs over all charts of $M_1$ and $M_2$,  and the bottom horizontal arrow is the composition of the topological linear isomorphism (see \cite[Lemma 6.6]{CSW1})\[\bigoplus_{r_1,r_2}\left(\left(\prod_{V_1}\Omega^{r_1}_{\CO_1}(V_1)\right)\wt\otimes\left(\prod_{V_2}\Omega^{r_2}_{\CO_2}(V_2)\right)\right) \rightarrow \prod_{V_1,V_2}\left(\bigoplus_{r_1,r_2}\left(\Omega^{r_1}_{\CO_1}(V_1)\wt\otimes\Omega^{r_2}_{\CO_2}(V_2)\right)\right)\]
and the topological linear isomorphism (see Lemma \ref{lem:Psiisopre})
\[\prod_{V_1, V_2}\left(\bigoplus_{r_1,r_2}\Omega^{r_1}_{\CO_1}(V_1)\wt\otimes\Omega^{r_2}_{\CO_2}(V_2)\right)\xrightarrow{\prod\Psi_{V_1\times V_2}}\prod_{V_1, V_2}\Omega_{\CO_3}^r(V_1\times V_2).\]

Lemma \ref{emd} and \cite[Proposition 43.7]{Tr} implies that the left vertical arrow is a linear topological embedding. By Lemma \ref{emd}, the right vertical arrow is also a linear topological embedding. Thus the isomorphism \[\Psi:\bigoplus_{r_1,r_2\in \BN;r_1+r_2=r}\left(\Omega^{r_1}_{\CO_1}(M_1)\widetilde\otimes \Omega^{r_2}_{\CO_2}(M_2)\right) \rightarrow \Omega_{\CO_3}^r(M_3)\] of complex vector spaces is a topological isomorphism. This proves the lemma.
\end{proof}

\noindent\textbf{Proof of Theorem \ref{thm:tensorofcomplex}:} The first assertion of of Theorem \ref{thm:tensorofcomplex}  follows from Proposition \ref{propro}. It remains to prove the second part. 

By \eqref{eq:tOmegarOE}, 
it follows from  \cite[Corollary 5.15, Proposition 5.14 and Lemma A.10]{CSW2} that
 \[\mathrm D^{-\infty}_c(M_1;\Omega_{\CO_1}^{-\bullet})\widetilde\otimes_{\mathrm{i}}\,\mathrm D^{-\infty}_c(M_2;\Omega_{\CO_2}^{-\bullet})=\mathrm D^{-\infty}_c(M_1;\Omega_{\CO_1}^{-\bullet})\widehat\otimes_{\mathrm{i}}\,\mathrm D^{-\infty}_c(M_2;\Omega_{\CO_2}^{-\bullet}).
\]
Furthermore, since $\Omega^{r_1}_{\CO_1}(M_1)$ and $\Omega^{r_2}_{\CO_2}(M_2)$ ($r_1,r_2\in \BN$) are both  products of nuclear Fr\'echet spaces by \eqref{eq:DFid} and \cite[Proposition 4.8]{CSW1},
 the canonical map (see \cite[Lemma A.12]{CSW2})
 \begin{eqnarray*}
\tau:\ (\Omega^{r_1}_{\CO_1}(M_1))'\widehat\otimes_{\mathrm{i}}\, (\Omega^{r_2}_{\CO_2}(M_2))'&\rightarrow&
\left(\Omega^{r_1}_{\CO_1}(M_1)\widehat\otimes\, \Omega^{r_2}_{\CO_2}(M_2)\right)',\\
 \eta_1\otimes \eta_2&\mapsto& (\omega_1\otimes \omega_2\mapsto (-1)^{r_1r_2}\eta_1(\omega_1)\eta_2(\omega_2))
\end{eqnarray*} is a topological linear isomorphism.
This isomorphism obviously extends to a topological complex isomorphism
\begin{eqnarray*}
\tau:\ \mathrm D^{-\infty}_c(M_1;\Omega_{\CO_1}^{-\bullet})\widehat\otimes_{\mathrm{i}}\, \mathrm D^{-\infty}_c(M_2;\Omega_{\CO_2}^{-\bullet})\rightarrow {}^t\left(\Omega^\bullet_{\CO_1}(M_1)\widehat\otimes\Omega^\bullet_{\CO_2}(M_2)\right),
\end{eqnarray*}
where the latter complex denotes the transpose of
$\Omega^\bullet_{\CO_1}(M_1)\widehat\otimes\Omega^\bullet_{\CO_2}(M_2)$ (see Appendix \ref{appendixC3}).

On the other hand, by transposing
 the inverse  \[\Psi^{-1}:\Omega^\bullet_{\CO_3}(M_3)\rightarrow \Omega^\bullet_{\CO_1}(M_1)\widehat\otimes\Omega^\bullet_{\CO_2}(M_2)\] of $\Psi$,
 we have another topological complex isomorphism
 \[{}^t(\Psi^{-1}):\ {}^t\left(\Omega^\bullet_{\CO_1}(M_1)\widehat\otimes\Omega^\bullet_{\CO_2}(M_2)\right)\rightarrow \mathrm D^{-\infty}_c(M_3;\Omega_{\CO_3}^{-\bullet}).
\]
By taking the composition of the previous two isomorphisms, we obtain a topological complex isomorphism
\be\label{defPhi}
\Phi={}^t(\Psi^{-1})\circ \tau:\quad \mathrm D^{-\infty}_c(M_1;\Omega_{\CO_1}^{-\bullet})\widehat\otimes_{\mathrm{i}}\,\mathrm D^{-\infty}_c(M_2;\Omega_{\CO_2}^{-\bullet})\rightarrow
\mathrm D^{-\infty}_c(M_3;\Omega_{\CO_3}^{-\bullet}).
\ee
This completes the proof of Theorem \ref{thm:tensorofcomplex}.
\qed
\subsection{Tensor products of  de Rham complexes II}
The main goal of this subsection is to prove the following theorem,
which  together with Theorem \ref{thm:tensorofcomplex} implies Theorem \ref{Introthm:tensorofcomplex}.

\begin{thmd}\label{thm:tensorofcscomplex} Let $(M_1,\CO_1)$ and $(M_2,\CO_2)$ be two  formal manifolds and let
$(M_3,\CO_3)$
be the product of them. Then we have the identifications
\begin{eqnarray*}
\mathrm D^{\infty}_c(M_3;\Omega_{\CO_3}^{-\bullet})&=&\mathrm D^{\infty}_c(M_1;\Omega _{\CO_1}^{-\bullet})\widetilde\otimes_{\mathrm{i}} \mathrm D^{\infty}_c(M_2;\Omega_{\CO_2}^{-\bullet})\\
&=&\mathrm D^{\infty}_c(M_1;\Omega _{\CO_1}^{-\bullet})\wh\otimes_{\mathrm{i}} \mathrm D^{\infty}_c(M_2;\Omega_{\CO_2}^{-\bullet})
\end{eqnarray*}
and
\begin{eqnarray*}
\mathrm C^{-\infty}(M_3;\Omega_{\CO_3}^{\bullet})&=&\mathrm C^{-\infty}(M_1;\Omega_{\CO_1}^{\bullet})\widetilde\otimes\mathrm C^{-\infty}(M_2;\Omega_{\CO_2}^{\bullet})\\
&=&\mathrm C^{-\infty}(M_1;\Omega_{\CO_1}^{\bullet})\widehat\otimes \mathrm C^{-\infty}(M_2;\Omega_{\CO_2}^{\bullet})
\end{eqnarray*}
of topological cochain complexes.
\end{thmd}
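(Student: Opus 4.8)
The plan is to prove the two identifications in the same order and by the same mechanism employed for Theorem \ref{thm:tensorofcomplex}: I would first settle the statement for compactly supported formal densities $\mathrm{D}^{\infty}_c$, which carries the explicit local model furnished by Lemma \ref{lem:charOmegaonkr}, and then deduce the statement for formal generalized functions $\mathrm{C}^{-\infty}$ by transposition, using the identification $\mathrm{C}^{-\infty}(M;\Omega^{\bullet}_{\CO})={}^t\mathrm{D}^{\infty}_c(M;\Omega^{-\bullet}_{\CO})$. In this way the density identity is the crux, while the generalized-function identity is merely its strong dual, exactly as $\mathrm{D}^{-\infty}_c$ was obtained from $\Omega^{\bullet}_\CO$ in the previous theorem.

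For the density case I would construct the external-product chain map
\[
\Theta\colon\ \mathrm{D}^{\infty}_c(M_1;\Omega^{-\bullet}_{\CO_1})\widetilde\otimes_{\mathrm{i}}\mathrm{D}^{\infty}_c(M_2;\Omega^{-\bullet}_{\CO_2})\longrightarrow \mathrm{D}^{\infty}_c(M_3;\Omega^{-\bullet}_{\CO_3}),
\]
sending $\eta_1\otimes\eta_2$ to the external product of the compactly supported densities $\eta_1$ and $\eta_2$; this is nothing but the restriction to the density subspaces of the map $\Phi$ of \eqref{defPhi} dual to the wedge map $\Psi$ of \eqref{eq:Psi}, and one checks from \eqref{eq:comdoncsdc} together with the Leibniz rule that $\Theta$ commutes with the coboundary. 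The local case, where $M_i=N_i^{(k_i)}$ and $M_3=N_3^{(k_3)}$ with $N_3=N_1\times N_2$, then follows by combining Lemma \ref{lem:charOmegaonkr} on all three factors with the Schwartz kernel theorem \eqref{eq:schwartzkernel3} for smooth densities and the factorization $\C[y^*]\otimes\C[y^*]=\C[y^*]$ of the polynomial parts; the signs and index bookkeeping are governed by the same bijection $\pi_r$ that appears in Lemma \ref{lem:Psiisopre}. To globalize I would imitate the proof of Proposition \ref{prop:defdoncsdc} rather than that of Proposition \ref{propro}, since $\mathrm{D}^{\infty}_c$ is a cosheaf: choosing an atlas of $M_3$ consisting of products $V_1\times V_2$ of charts, I would transport the chart-wise isomorphisms through the extension-by-zero maps, invoking that these are continuous, open and surjective (\cite[Proposition 2.15]{CSW2}) together with a partition of unity, so that the global $\Theta$ is realized as an inductive limit of local isomorphisms. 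The coincidence $\widetilde\otimes_{\mathrm{i}}=\widehat\otimes_{\mathrm{i}}$ would be read off from the completeness and $\mathrm{DF}$-type properties recorded in \cite{CSW2}.

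For the generalized-function case I would transpose the density identity just proved. Since each $\Omega^r_{\CO_i}$ is locally free of finite rank by Remark \ref{rem:DFtop}, the spaces $\mathrm{D}^{\infty}_c(M_i;\Omega^r_{\CO_i})$ are reflexive, with strong duals $\mathrm{C}^{-\infty}(M_i;\Omega^r_{\CO_i})$ that are products of nuclear Fr\'echet spaces. Applying the duality isomorphism of the type used for the map $\tau$ in the proof of Theorem \ref{thm:tensorofcomplex} (\cite[Lemma A.12]{CSW2}), the strong dual of $\mathrm{D}^{\infty}_c(M_1;\Omega^{-\bullet}_{\CO_1})\widehat\otimes_{\mathrm{i}}\mathrm{D}^{\infty}_c(M_2;\Omega^{-\bullet}_{\CO_2})$ is $\mathrm{C}^{-\infty}(M_1;\Omega^{\bullet}_{\CO_1})\widehat\otimes_{\pi}\mathrm{C}^{-\infty}(M_2;\Omega^{\bullet}_{\CO_2})$, so transposing $\Theta$ yields the stated identification, with $\widetilde\otimes_{\pi}=\widehat\otimes_{\pi}$ following from nuclearity. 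Alternatively, this case could be proved directly by the sheaf-gluing argument of Proposition \ref{propro}, now feeding in the Schwartz kernel theorem \eqref{eq:schwartzkernel4}.

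I expect the main obstacle to lie in the global assembly of the density case. Because $\mathrm{D}^{\infty}_c$ is a cosheaf whose sections are inductive limits over compact supports, one cannot embed into a product of chart-wise spaces as in Proposition \ref{propro}; instead the isomorphism must be built from the extension maps while simultaneously controlling how the inductive tensor product $\widetilde\otimes_{\mathrm{i}}$ interacts with these colimits and with the direct-sum decomposition over $\Lambda^{\bullet}_{n,k}$. Verifying that $\Theta$ is a \emph{topological} isomorphism at this step, and that the inductive and completed inductive tensor products genuinely agree, is the delicate point, and it is precisely here that the structural results of \cite{CSW2} on $\mathrm{D}^{\infty}_c$ will be indispensable.
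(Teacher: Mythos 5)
Your proposal follows essentially the same route as the paper: the density identification is established first via the external product (your $\Theta$ is exactly the paper's $\boxtimes$, the restriction of $\Phi$), computed locally through Lemma \ref{lem:charOmegaonkr}, the Schwartz kernel theorem \eqref{eq:schwartzkernel3} and the polynomial factorization, then globalized by partitions of unity and the continuous, open, surjective extension-by-zero maps and assembled as inductive limits over compact supports (the paper's Propositions \ref{prop:boxtimeswelldefined} and \ref{prop:csdfisoK}), after which the $\mathrm C^{-\infty}$ statement is obtained by taking strong duals, just as in the paper. One minor correction: the equality $\widetilde\otimes=\widehat\otimes$ for the generalized-function spaces does not follow from nuclearity alone (nuclearity identifies the $\pi$- and $\varepsilon$-topologies, not the quasi-completion with the completion); the paper derives it from \cite[Proposition 3.5]{CSW2} in Lemma \ref{lem:tOmegactilde=hat}.
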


As a consequence of Theorems \ref{thm:tensorofcomplex} and \ref{thm:tensorofcscomplex}, we get the following generalization
of the Schwartz kernel theorems (see \eqref{eq:schwartzkernel2}-\eqref{eq:schwartzkernel4}) for the spaces of
compactly supported smooth densities, compactly supported distributions, and generalized functions.
 
\begin{cord}  We have the following LCS identifications:
\begin{eqnarray*}
    \mathrm D^{\infty}_c(M_3;\CO_3)&=&\mathrm D^{\infty}_c(M_1;\CO_1)\wt\otimes_{\mathrm{i}}\,
\mathrm D^{\infty}_c(M_2;\CO_2)\\
&=&\mathrm D^{\infty}_c(M_1;\CO_1)\wh\otimes_{\mathrm{i}}\,
\mathrm D^{\infty}_c(M_2;\CO_2),\\
\mathrm D^{-\infty}_c(M_3;\CO_3)&=&\mathrm D^{-\infty}_c(M_1;\CO_1)\wt\otimes_{\mathrm{i}}\,
\mathrm D^{-\infty}_c(M_2;\CO_2)\\
&=&\mathrm D^{-\infty}_c(M_1;\CO_1)\wh\otimes_{\mathrm{i}}\,
\mathrm D^{-\infty}_c(M_2;\CO_2),\\
\mathrm C^{-\infty}(M_3;\CO_3)&=&\mathrm C^{-\infty}(M_1;\CO_1)\wt\otimes\, \mathrm C^{-\infty}(M_2;\CO_2)\\
&=&\mathrm C^{-\infty}(M_1;\CO_1)\wh\otimes\,\mathrm C^{-\infty}(M_2;\CO_2).
\end{eqnarray*}
\end{cord}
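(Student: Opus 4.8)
The plan is to establish the identification for compactly supported formal densities first, and then to obtain the one for formal generalized functions by transposition, in direct analogy with the passage from $\Omega^\bullet_\CO$ to $\mathrm D^{-\infty}_c(\Omega_\CO^{-\bullet})$ carried out in Theorem \ref{thm:tensorofcomplex}. For the densities I would construct the natural exterior-product map
\[\mathrm D^{\infty}_c(M_1;\Omega_{\CO_1}^{-\bullet})\widetilde\otimes_{\mathrm{i}}\mathrm D^{\infty}_c(M_2;\Omega_{\CO_2}^{-\bullet})\longrightarrow \mathrm D^{\infty}_c(M_3;\Omega_{\CO_3}^{-\bullet}),\]
which is simply the restriction of the topological complex isomorphism $\Phi$ of \eqref{defPhi} to the subcomplexes $\mathrm D^{\infty}_c\subset \mathrm D^{-\infty}_c$; compatibility with the differential is then automatic, and in the local model it can also be read off from the explicit formula \eqref{eq:comdoncsdc}. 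The equality of the quasi-completed and completed inductive tensor products would be treated as in the proof of Theorem \ref{thm:tensorofcomplex}, by invoking the structural results on inductive tensor products and nuclearity from \cite{CSW2}.

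First I would treat the local case $M_i=N_i^{(k_i)}$ with $N_i$ an open submanifold of $\R^{n_i}$, so that $(M_3,\CO_3)=N_3^{(k_3)}$ with $N_3=N_1\times N_2$ and $k_3=k_1+k_2$. Here Lemma \ref{lem:charOmegaonkr} gives explicit topological linear isomorphisms
\[\mathrm D^{\infty}_c(M_i;\Omega_{\CO_i}^{r})\cong \bigoplus_{(I,J)\in \Lambda_{n_i,k_i}^{n_i+k_i-r}} \mathrm D^{\infty}_c(N_i)[y_1^*,\dots,y_{k_i}^*],\]
which exhibit each term of the complex as a finite direct sum of copies of $\mathrm D^{\infty}_c(N_i)$ tensored with a polynomial algebra in the dual formal variables. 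Combining the smooth Schwartz kernel theorem \eqref{eq:schwartzkernel3}, namely $\mathrm D^{\infty}_c(N_1\times N_2)=\mathrm D^{\infty}_c(N_1)\widetilde\otimes_{\mathrm{i}}\mathrm D^{\infty}_c(N_2)=\mathrm D^{\infty}_c(N_1)\widehat\otimes_{\mathrm{i}}\mathrm D^{\infty}_c(N_2)$, with the evident multiplication of the polynomial rings in the $y_j^*$, I would match the two sides summand by summand and verify that the resulting map coincides with the restriction of $\Phi$. This reduces the local statement to the same bookkeeping of direct summands and sign factors already recorded in Lemma \ref{lem:Psiisopre}, now applied on the dual side.

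Next I would globalize. Since these are cosheaves of compactly supported objects, the gluing proceeds exactly as in the proof of Proposition \ref{prop:defdoncsdc}: any compactly supported formal density is supported in a compact set, which is covered by finitely many charts, and a subordinate partition of unity decomposes it into chart-supported pieces. Using the extension-by-zero maps together with \eqref{eq:dcommutesres} and the local isomorphism just established, I would assemble a global continuous complex map together with its inverse, and then upgrade the resulting bijection to a topological complex isomorphism by means of the inductive-limit description of the topology on $\mathrm D^{\infty}_c(M;\Omega_{\CO}^r)$ recalled before Lemma \ref{lem:charOmegaonkr}. I expect this globalization to be the main obstacle: one must check that the completed inductive tensor product interacts correctly with the inductive limits over $(K,s)\in\mathcal C(M)$ that define the density spaces, and that the local homeomorphisms patch into a global homeomorphism rather than merely a continuous bijection.

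Finally, the identification for formal generalized functions follows by transposition, paralleling the derivation of the $\mathrm D^{-\infty}_c$ identification in Theorem \ref{thm:tensorofcomplex}. Since $\mathrm C^{-\infty}(M_i;\Omega_{\CO_i}^r)=(\mathrm D^{\infty}_c(M_i;\Omega_{\CO_i}^r))'$ and the density spaces are built from nuclear spaces, I would dualize the density identification by means of a $\tau$-type isomorphism $(E\widehat\otimes_{\mathrm{i}}F)'=E'\widehat\otimes F'$, carrying the sign twist $(-1)^{r_1r_2}$ exactly as in the proof of Theorem \ref{thm:tensorofcomplex}, to obtain
\[\mathrm C^{-\infty}(M_3;\Omega_{\CO_3}^{\bullet})=\mathrm C^{-\infty}(M_1;\Omega_{\CO_1}^{\bullet})\widehat\otimes \mathrm C^{-\infty}(M_2;\Omega_{\CO_2}^{\bullet}),\]
the equality $\widetilde\otimes=\widehat\otimes$ again coming from nuclearity and completeness. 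Together with Theorem \ref{thm:tensorofcomplex}, this yields Theorem \ref{Introthm:tensorofcomplex}.
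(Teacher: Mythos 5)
Your proposal is correct and takes essentially the same route as the paper, which derives this corollary from Theorems \ref{thm:tensorofcomplex} and \ref{thm:tensorofcscomplex}: the paper's proof of Theorem \ref{thm:tensorofcscomplex} is exactly what you outline, namely the product $\eta_1\boxtimes\eta_2:=\Phi(\eta_1\otimes\eta_2)$ obtained by restricting \eqref{defPhi} to the density subcomplexes, a local verification via Lemma \ref{lem:charOmegaonkr} and the classical Schwartz kernel theorems, globalization by partitions of unity with the interaction between $\widehat\otimes_{\mathrm{i}}$ and the inductive limits over $(K,s)$ controlled by the cited results of \cite{CSW2}, and transposition (using nuclearity) for the generalized-function identification. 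The only step you leave implicit is the trivial final one: the corollary is read off from these complex-level identifications by taking degree-zero components.
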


In the rest of this subsection, we prove Theorem \ref{thm:tensorofcscomplex}. 
For $\eta_1\in \mathrm D^{-\infty}_c(V_1;\Omega_{\CO_1}^{r_1})$ and $\eta_2\in \mathrm D^{-\infty}_c(V_2;\Omega_{\CO_2}^{r_2})$, where $r_1,r_2\in \BN$ and $V_1\subset M_1$, $V_2\subset M_2$ are open subsets, we define   
\[\eta_1\boxtimes\eta_2:=\Phi(\eta_1\otimes\eta_2)\in \mathrm{D}_c^{-\infty}(V_1\times V_2;\Omega_{\CO_3}^{r_1+r_2})\] for convenience.  Here the map \[\Phi: \  \mathrm D^{-\infty}_c(V_1;\Omega_{\CO_1}^{-\bullet})\widehat\otimes_{\mathrm{i}}\,\mathrm D^{-\infty}_c(V_2;\Omega_{\CO_2}^{-\bullet})\rightarrow
\mathrm D^{-\infty}_c(V_1\times V_2;\Omega_{\CO_3}^{-\bullet})\] is the topological complex isomorphism defined  in \eqref{defPhi}.
For open subsets $V_1\subset U_1$ of $M_1$ and $V_2\subset U_2$ of $M_2$, it is easy to verify that 
\be\label{eq:rescommutesboxtimes}
\mathrm{ext}_{U_1,V_1}(\eta_1)\boxtimes  \mathrm{ext}_{U_2,V_2}(\eta_2)
=\mathrm{ext}_{U_1\times U_2,V_1\times V_2} (\eta_1\boxtimes \eta_2). 
\ee
The proof of Theorem \ref{thm:tensorofcscomplex} relies on  the following two results.

\begin{prpd}\label{prop:boxtimeswelldefined} Let $\eta_1\in \mathrm D^{\infty}_{K_1,s_1}(M_1;\Omega_{\CO_1}^{r_1})$ and $\eta_2\in \mathrm D^{\infty}_{K_2,s_2}(M_1;\Omega_{\CO_2}^{r_2})$, where $r_1,r_2\in \BN$, $(K_1,s_1)\in \mathcal{C}(M_1)$ and $(K_2,s_2)\in \mathcal{C}(M_2)$.  Then we have that 
\[\eta_1\boxtimes \eta_2\in \mathrm D^{\infty}_{K_1\times K_2,s_1+s_2}(M_3;\Omega_{\CO_3}^{r_1+r_2}).\]
\end{prpd}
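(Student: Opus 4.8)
The plan is to reduce to the coordinate case and then carry out an explicit computation in local coordinates, identifying $\eta_1\boxtimes\eta_2$ as an external product of compactly supported formal densities whose support and order are controlled by those of $\eta_1$ and $\eta_2$.

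First I would reduce to the situation where $M_1$ and $M_2$ are charts. Since $K_1$ and $K_2$ are compact, I cover them by finitely many charts and choose subordinate partitions of unity, exactly as in the proof of Proposition \ref{prop:defdoncsdc}. Composing the representing differential operators with multiplication by the bump functions neither increases the order nor enlarges the support (by the Leibniz rule and the locality of order), so I may write $\eta_1=\sum_\alpha \mathrm{ext}_{M_1,V_\alpha}(\eta_{1,\alpha})$ and $\eta_2=\sum_\beta \mathrm{ext}_{M_2,W_\beta}(\eta_{2,\beta})$ as finite sums, where $V_\alpha,W_\beta$ are charts, $\eta_{1,\alpha}\in \mathrm D^{\infty}_{K_1\cap V_\alpha,s_1}(V_\alpha;\Omega_{\CO_1}^{r_1})$ and $\eta_{2,\beta}\in \mathrm D^{\infty}_{K_2\cap W_\beta,s_2}(W_\beta;\Omega_{\CO_2}^{r_2})$. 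Using the bilinearity of $\boxtimes$ together with \eqref{eq:rescommutesboxtimes}, I obtain
\[
\eta_1\boxtimes\eta_2=\sum_{\alpha,\beta}\mathrm{ext}_{M_3,V_\alpha\times W_\beta}\left(\eta_{1,\alpha}\boxtimes\eta_{2,\beta}\right),
\]
and since each $V_\alpha\times W_\beta$ is a chart of $M_3$ and extension by zero preserves both support and order, the general statement follows once it is proved for charts.

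In the coordinate case $M_i=N_i^{(k_i)}$ with $N_i$ an open submanifold of $\R^{n_i}$, I would use Lemma \ref{lem:charOmegaonkr} to write $\eta_i=\sum \tau^{(i)}_{I_i,J_i}\,dx_{I_i}^* dy_{J_i}^*$ with $\tau^{(i)}_{I_i,J_i}\in \mathrm{D}^\infty_{K_i}(N_i)[y_1^*,\dots,y_{k_i}^*]_{\le s_i}$, and then compute $\eta_1\boxtimes\eta_2=\Phi(\eta_1\otimes\eta_2)$ explicitly. Here I unwind the definition \eqref{defPhi} of $\Phi={}^t(\Psi^{-1})\circ\tau$: by Lemma \ref{lem:Psiisopre} the map $\Psi$ is, under the coordinate identification \eqref{eq:DFid}, the signed reindexing $\widetilde\pi_r$, so its transpose pairs the dual coordinates $dx^*,dy^*$ of $M_3$ against external wedge products, while $\tau$ contributes the sign $(-1)^{r_1r_2}$. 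The upshot is that $\eta_1\boxtimes\eta_2$ is again of the form $\sum\tau_{I,J}\,dx_I^* dy_J^*$, where each coefficient $\tau_{I,J}$ is, up to sign, the external product of a coefficient of $\eta_1$ and a coefficient of $\eta_2$. Two facts then give the claim: the external product of distributions supported in $K_1$ and $K_2$ is supported in $K_1\times K_2$ (via the Schwartz kernel identification \eqref{eq:schwartzkernel3}, compatibly with the pairing \eqref{eq:pairing2} on $N_3=N_1\times N_2$), and the product of a monomial of degree $\le s_1$ in the $y_1^*$-variables with one of degree $\le s_2$ in the $y_2^*$-variables has degree $\le s_1+s_2$. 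Hence $\tau_{I,J}\in \mathrm{D}^\infty_{K_1\times K_2}(N_3)[y^*]_{\le s_1+s_2}$, and Lemma \ref{lem:charOmegaonkr} identifies $\eta_1\boxtimes\eta_2$ as an element of $\mathrm D^{\infty}_{K_1\times K_2,s_1+s_2}(M_3;\Omega_{\CO_3}^{r_1+r_2})$.

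The main obstacle I anticipate is the local computation itself: the map $\Phi$ is defined indirectly, as the transpose of the inverse of the wedge map $\Psi$ precomposed with the pairing isomorphism $\tau$, so extracting from it the clean statement that it sends coordinate coefficients to their external products requires carefully matching the $\varepsilon$-signs coming from $dx_{I'}dy_{J'}\wedge dx_I dy_J$ in \eqref{eq:Pdualpair}, the sign $(-1)^{r_1r_2}$ from $\tau$, and the reindexing signs built into $\widetilde\pi_r$. Verifying that the pairing \eqref{eq:pairing2} on $N_3$ factors as the product of the pairings on $N_1$ and $N_2$ — so that the $L!$ combinatorial factors and the external product of densities are consistent — is the computational heart of the argument; once this compatibility is in place, the support and degree bounds are immediate and everything else is bookkeeping.
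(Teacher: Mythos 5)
Your proposal is correct and follows essentially the same route as the paper: the paper first proves the chart case (Lemma \ref{lem:boxtimes}) by the explicit coordinate computation \eqref{eq:comboxtimes}, identifying $\eta_1\boxtimes\eta_2$ up to sign as $(\tau_1\otimes\tau_2)\,dx^*_{(I_1,n_1+I_2)}dy^*_{(J_1,k_1+J_2)}$ via \eqref{eq:tensorofdessupinK}, and then reduces the general case to charts using partitions of unity, \cite[Lemma 2.14]{CSW2}, and the compatibility \eqref{eq:rescommutesboxtimes} of $\boxtimes$ with extension by zero. Your only deviation is ordering (reduction first, local computation second), which is immaterial.
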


In view of the isomorphism \eqref{defPhi},
Proposition \ref{prop:boxtimeswelldefined} implies that  there is an injective  linear map 
\be \label{eq:defcsdfiso}\begin{array}{rcl}
	\bigoplus\limits_{r_1,r_2\in \BN; r_1+r_2=r}\left(\mathrm D^{\infty}_{K_1,s_1}(M_1;\Omega_{\CO_1}^{r_1})\otimes_{\pi} \mathrm D^{\infty}_{K_2,s_2}(M_2;\Omega_{\CO_2}^{r_2})\right)&\rightarrow & \mathrm D^{\infty}_{K_1\times K_2,s_1+s_2}(M_3;\Omega_{\CO_3}^{r}),\\
	\{\eta_{r_1}\otimes \eta_{r_2}\}_{r_1,r_2\in \BN; r_1+r_2=r}&\mapsto & \sum\limits_{r_1,r_2\in \BN; r_1+r_2=r} \eta_{r_1}\boxtimes \eta_{r_2}
\end{array}
\ee for every $r\in \BN$, $(K_1,s_1)\in \mathcal{C}(M_1)$ and $(K_2,s_2)\in \mathcal{C}(M_2)$. Set 
\[
\oT_s:=\{(s_1,s_2)\in \BN\times \BN\mid s_1+s_2\le s\} \qquad (s\in\BN).
\]
As a subset of the product $\BN\times \BN$ of two totally ordered sets, $\oT_s$ is naturally a partially ordered set. 


\begin{prpd}\label{prop:csdfisoK} The injective linear  map \eqref{eq:defcsdfiso} is continuous and  induces a
topological linear isomorphism 
\be \label{eq:csdfisoK}
\mathrm{D}^{\infty,r}_{K_1,K_2,s}\rightarrow 
\mathrm D^{\infty}_{K_1\times K_2,s}(M_3;\Omega_{\CO_3}^r),
\ee
where $r,s\in \BN$ and
\be \label{eq:notationDK_1K_2sr}
\mathrm{D}^{\infty,r}_{K_1,K_2,s}:=
\varinjlim_{(s_1,s_2)\in \oT_s} \left(\bigoplus_{r_1,r_2\in \BN; r_1+r_2=r}
\left(\mathrm D^{\infty}_{K_1,s_1}(M_1;\Omega_{\CO_1}^{r_1})\widehat\otimes\mathrm D^{\infty}_{K_2,s_2}(M_2;\Omega_{\CO_2}^{r_2})\right)\right).
\ee
\end{prpd}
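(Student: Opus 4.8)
The plan is to reduce everything to the local situation $M_i=N_i^{(k_i)}$ through Lemma \ref{lem:charOmegaonkr} and then to transport the resulting explicit isomorphism to arbitrary $M_1,M_2$ by a product-atlas and partition-of-unity argument in the style of the proof of Proposition \ref{prop:defdoncsdc}. First I would make $\boxtimes$ completely explicit in coordinates. Writing $\eta_i=\tau_i\,dx^*_{I_i}dy^*_{J_i}$ with $\tau_i\in\mathrm D^{\infty}_{K_i}(N_i)[y_1^*,\dots,y_{k_i}^*]_{\le s_i}$ as in Lemma \ref{lem:charOmegaonkr}, the definition $\eta_1\boxtimes\eta_2=\Phi(\eta_1\otimes\eta_2)$ together with the description $\Phi={}^t(\Psi^{-1})\circ\tau$ from \eqref{defPhi} should give
\[
\eta_1\boxtimes\eta_2=\pm\,(\tau_1\otimes\tau_2)\,dx^*_{(I_1,n_1+I_2)}\,dy^*_{(J_1,n_2+J_2)},
\]
the sign being the one recorded by the bijection $\pi_r$ of Lemma \ref{lem:Psiisopre} and the transpose pairing $\tau$; this is essentially the information already extracted in Proposition \ref{prop:boxtimeswelldefined}.

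Under the coordinate isomorphisms of Lemma \ref{lem:charOmegaonkr}, both source and target of \eqref{eq:csdfisoK} become finite direct sums of copies of $\mathrm D^{\infty}_{K_1}(N_1)\widehat\otimes\mathrm D^{\infty}_{K_2}(N_2)$, respectively $\mathrm D^{\infty}_{K_1\times K_2}(N_3)$, tensored with polynomial factors in the formal variables, and the map factors through three ingredients: (i) on the density factor, the identification $\mathrm D^{\infty}_{K_1}(N_1)\widehat\otimes\mathrm D^{\infty}_{K_2}(N_2)=\mathrm D^{\infty}_{K_1\times K_2}(N_3)$, the fixed-support form underlying the Schwartz kernel theorem \eqref{eq:schwartzkernel3} for the nuclear Fr\'echet spaces $\mathrm D^{\infty}_{K_i}(N_i)$; (ii) on the formal-variable factor, the map $(y^*)^{L^{(1)}}\otimes(y^*)^{L^{(2)}}\mapsto(y^*)^{(L^{(1)},L^{(2)})}$; and (iii) on the form indices, the bijection $\pi_r$ of Lemma \ref{lem:Psiisopre} relabelling the summands. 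The only place the colimit over $\oT_s$ is genuinely needed is (ii): a single stage $(s_1,s_2)$ produces exactly the monomials $(y^*)^M$ with $M=(M^{(1)},M^{(2)})$ satisfying $|M^{(1)}|\le s_1$, $|M^{(2)}|\le s_2$, whereas every $M\in\BN^{k_3}$ with $|M|\le s$ arises exactly once by taking $s_1=|M^{(1)}|$, $s_2=|M^{(2)}|$, which lies in $\oT_s$ since $|M^{(1)}|+|M^{(2)}|=|M|\le s$. Hence
\[
\varinjlim_{(s_1,s_2)\in\oT_s}\bigl(\C[y^*]^{(1)}_{\le s_1}\otimes\C[y^*]^{(2)}_{\le s_2}\bigr)\longrightarrow\C[y^*_1,\dots,y^*_{k_3}]_{\le s}
\]
is a bijection of finite-dimensional spaces; since the density factor is untouched by the limit, the colimit topology on \eqref{eq:notationDK_1K_2sr} agrees with the Fr\'echet topology of the target, settling the local case.

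For general $M_1,M_2$ at fixed $(K_1,s_1)$, $(K_2,s_2)$ I would cover $K_i$ by finitely many charts $\{V_i^j\}_j$, choose compact $K_i^j\subset V_i^j$ with $K_i=\bigcup_j K_i^j$ and subordinate partitions of unity $\{f^j\}$, $\{g^l\}$, so that $\{f^jg^l\}$ is a partition of unity on $M_3$ subordinate to the product charts $\{V_1^j\times V_2^l\}$. Using that $\boxtimes$ commutes with the extension maps \eqref{eq:rescommutesboxtimes}, I would form a commutative square whose bottom arrow is the direct sum over $(j,l)$ of the local isomorphisms just established (note $\bigcup_{j,l}K_1^j\times K_2^l=K_1\times K_2$) and whose vertical arrows are extension by zero. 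By the fixed-order analogue of \cite[Proposition 2.15]{CSW2} the right vertical arrow is continuous, open and surjective, and the corresponding statement on the tensor-product side should follow by combining \cite[Proposition 2.15]{CSW2} for $M_1$ and $M_2$ with the appendix tensor-product lemmas of \cite{CSW2}. Since an open continuous surjection is a topological quotient map, continuity and openness of \eqref{eq:csdfisoK} descend from the bottom arrow, surjectivity follows by decomposing a target element through the partition of unity, and injectivity is already guaranteed by the isomorphism \eqref{defPhi}.

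The main obstacle I anticipate is purely topological rather than algebraic. The space $\mathrm D^{\infty}_{K_1\times K_2,s}(M_3;\Omega_{\CO_3}^r)$ carries the quotient topology inherited from $\mathrm{Diff}_{K_1\times K_2,s}$, which is strictly finer than the subspace topology from $\mathrm D^{-\infty}_c(M_3;\Omega^r_{\CO_3})$; consequently the continuity of $\boxtimes$ into the \emph{correct} topology cannot simply be read off the continuity of $\Phi$ in \eqref{defPhi}, and must be obtained from the coordinate description of Lemma \ref{lem:charOmegaonkr}. A secondary nuisance is that $\oT_s$ is a finite but \emph{non-directed} poset (its maximal elements form an antichain), so \eqref{eq:notationDK_1K_2sr} is a finite colimit rather than a directed inductive limit; I would control its interaction with the completed tensor products by the same type of inductive-limit and tensor-product lemmas from the appendix of \cite{CSW2} already used for Theorem \ref{thm:tensorofcomplex}, ultimately reducing to the finite-dimensionality of the formal-variable factor.
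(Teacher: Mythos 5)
Your proposal is correct and follows essentially the same route as the paper's proof: the local case via the explicit coordinate formula for $\boxtimes$, the fixed-support Schwartz kernel identification on the density factor, the colimit over $\oT_s$ absorbing the finite-dimensional formal-variable factors, and the index bijection; the global case via product charts, partitions of unity, and a commutative square whose vertical extension maps are continuous, open and surjective, so that the quotient-map argument transfers the local isomorphism to \eqref{eq:csdfisoK}. The only slip is notational: in your coordinate formula for $\eta_1\boxtimes\eta_2$ the second block of $y^*$-indices should be shifted by $k_1$, i.e.\ $dy^*_{(J_1,k_1+J_2)}$ as in \eqref{eq:comboxtimes}, not by $n_2$.
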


\vspace{3mm}

We begin to prove Proposition \ref{prop:boxtimeswelldefined}.
Let $n_1,n_2,k_1,k_2\in \BN$, and let $N_1\subset \R^{n_1}$, $N_2\subset \R^{n_2}$ be two open submanifolds.
Set \[n_3:=n_1+n_2,\ k_3:=k_1+k_2,\quad\text{and}\quad N_3:=N_1\times N_2\subset \R^{n_3}.\]

\begin{lemd} Assume that $M_1=N_1^{(k_1)}$ and $M_2=N_2^{(k_2)}$.
Then for every $(K_1,s_1)\in \CC(M_1)$ and $(K_2,s_2)\in \CC(M_2)$, we have that 
\be \label{eq:Kstilde=Kshat} \mathrm D^{\infty}_{K_1,s_1}(M_1;\CO_1)\widetilde\otimes \mathrm D^{\infty}_{K_2,s_2}(M_2;\CO_2)=  \mathrm D^{\infty}_{K_1,s_1}(M_1;\CO_1)\widehat\otimes  \mathrm D^{\infty}_{K_2,s_2}(M_2;\CO_2)
\ee as LCS.
Also, for every $s\in \BN$, we have an LCS identification
\be \label{eq:tensorofdessupinK} \varinjlim_{(s_1,s_2)\in \oT_s} \mathrm D^{\infty}_{K_1,s_1}(M_1;\CO_1)\widehat\otimes\,  \mathrm D^{\infty}_{K_2,s_2}(M_2;\CO_2)
= \mathrm D^{\infty}_{K_1\times K_2,s}(M_3;\CO_3).
\ee
\end{lemd}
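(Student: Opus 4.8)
The plan is to reduce all three spaces in the statement to the explicit finite direct-sum models provided by Lemma~\ref{lem:charOmegaonkr} and then to invoke the Schwartz kernel theorem for smooth densities with prescribed compact supports. Specialising Lemma~\ref{lem:charOmegaonkr} to $r=0$ (so that $\Omega^0_{\CO}=\CO$ and $\Lambda_{n,k}^{n+k}$ is a singleton) identifies, for $M=N^{(k)}$,
\[
\mathrm D^{\infty}_{K,s}(M;\CO)\cong \mathrm{D}^\infty_K(N)[y_1^*,\dots,y_k^*]_{\le s}=\bigoplus_{|L|\le s}\mathrm{D}^\infty_K(N)\,(y^*)^L
\]
as LCS, a finite direct sum of copies of $\mathrm{D}^\infty_K(N)$ indexed by the dual monomials of degree $\le s$. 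Since $\mathrm{D}^\infty_{K_i}(N_i)$ is a nuclear Fr\'echet space, so is each $\mathrm D^{\infty}_{K_i,s_i}(M_i;\CO_i)$.

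For \eqref{eq:Kstilde=Kshat} I would use only that both factors are metrizable: the projective tensor product of two Fr\'echet spaces is again metrizable, and for a metrizable LCS the quasi-completion coincides with the completion (every point of the completion is the limit of a Cauchy sequence, which is bounded). This gives $\widetilde\otimes=\widehat\otimes$ at once.

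For \eqref{eq:tensorofdessupinK} I would compute each term of the inductive system from the models above,
\[
\mathrm D^{\infty}_{K_1,s_1}(M_1;\CO_1)\widehat\otimes \mathrm D^{\infty}_{K_2,s_2}(M_2;\CO_2)=\bigoplus_{|L_1|\le s_1,\ |L_2|\le s_2}\big(\mathrm{D}^\infty_{K_1}(N_1)\widehat\otimes \mathrm{D}^\infty_{K_2}(N_2)\big),
\]
and then apply the fixed-support form of the Schwartz kernel theorem for smooth densities, $\mathrm{D}^\infty_{K_1}(N_1)\widehat\otimes \mathrm{D}^\infty_{K_2}(N_2)=\mathrm{D}^\infty_{K_1\times K_2}(N_3)$, a standard consequence of the nuclearity of the factors (compare \eqref{eq:schwartzkernel3}, and note that nuclearity makes the various completed tensor products coincide). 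The transition maps are the evident inclusions of monomial-indexed summands, and the identification $(y^*)^L=(y_1^*)^{L_1}(y_2^*)^{L_2}$ matches the formal variables of $M_3=N_3^{(k_3)}$. The computation of $\varinjlim_{(s_1,s_2)\in\oT_s}$ then rests on the combinatorial observation that a bidegree $(L_1,L_2)$ occurs in the summand indexed by some $(s_1,s_2)\in\oT_s$ exactly when $|L_1|+|L_2|\le s$: one direction is witnessed by $(s_1,s_2)=(|L_1|,|L_2|)$, the other by $|L_1|+|L_2|\le s_1+s_2\le s$. Hence the colimit is, as a vector space, $\bigoplus_{|L|\le s}\mathrm{D}^\infty_{K_1\times K_2}(N_3)=\mathrm D^{\infty}_{K_1\times K_2,s}(M_3;\CO_3)$, the right-hand side.

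The main obstacle is topological and lies in this last step, because $\oT_s$ is not directed (for $s\ge 1$ the pairs $(s,0)$ and $(0,s)$ have no upper bound in $\oT_s$), so one cannot pass to a single cofinal stage. I would instead argue directly at the level of the monomial-indexed decompositions: each stage is a finite direct sum of copies of the single complete space $\mathrm{D}^\infty_{K_1\times K_2}(N_3)$, and every transition map merely identifies equally-indexed summands, so the inductive limit is the direct sum over the union of the index sets. The delicate point is that the colimit topology agrees with the direct-sum topology on $\mathrm D^{\infty}_{K_1\times K_2,s}(M_3;\CO_3)$; this holds because a linear map out of a finite direct sum is continuous iff its restriction to each summand is, so the topologies making every structure map $\mathrm D^{\infty}_{K_1,s_1}(M_1;\CO_1)\widehat\otimes\mathrm D^{\infty}_{K_2,s_2}(M_2;\CO_2)\to\,\cdot\,$ continuous are exactly those making every summand inclusion continuous, and hence their finest members coincide.
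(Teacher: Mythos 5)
Your proof is correct, and its overall skeleton is the same as the paper's: reduce both sides to the monomial models $\mathrm D^{\infty}_{K,s}(N^{(k)};\CO)\cong\mathrm{D}^\infty_K(N)[y_1^*,\dots,y_k^*]_{\le s}$, invoke the fixed-support Schwartz kernel theorem for the density factor, and finish with a combinatorial computation of the colimit. Two substantive differences are worth recording. First, for \eqref{eq:Kstilde=Kshat} the paper deduces $\widetilde\otimes=\widehat\otimes$ from the kernel identification $\RC^\infty_{K_1}(N_1)\widetilde\otimes\RC^\infty_{K_2}(N_2)=\RC^\infty_{K_1}(N_1)\widehat\otimes\RC^\infty_{K_2}(N_2)=\RC^\infty_{K_1\times K_2}(N_1\times N_2)$ (derived from \eqref{eq:schwartzkernel1}) combined with \cite[(2.38)]{CSW2}, whereas you get it from metrizability alone: the projective tensor product of Fr\'echet spaces is metrizable, and for a metrizable LCS the quasi-completion equals the completion since every point of the completion is the limit of a (bounded) Cauchy sequence. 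Your argument is more elementary and needs neither nuclearity nor the kernel theorem for this step (the model is still needed to see the factors are Fr\'echet, and nuclearity to make the unsubscripted $\widetilde\otimes$ meaningful). Second, for \eqref{eq:tensorofdessupinK} the paper pulls the fixed factor $\mathrm{D}^\infty_{K_1}(N_1)\widehat\otimes\mathrm{D}^\infty_{K_2}(N_2)$ out of the colimit and computes $\varinjlim_{(s_1,s_2)\in\oT_s}\C[y_1^*,\dots,y_{k_1}^*]_{\le s_1}\otimes\C[y_1^*,\dots,y_{k_2}^*]_{\le s_2}=\C[y_1^*,\dots,y_{k_3}^*]_{\le s}$, silently exchanging a colimit over the non-directed poset $\oT_s$ with a completed tensor product; your summand-by-summand treatment, which explicitly flags that $\oT_s$ is not directed and verifies via the universal properties that the colimit topology coincides with the direct-sum topology, supplies a justification for precisely the step the paper glosses over. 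Both routes yield the same identifications; yours is somewhat longer but more self-contained on these two points.
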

\begin{proof} 
From \eqref{eq:schwartzkernel1}, it follows that \be\label{eq:schwartzkernel2'}\RC^\infty_{K_1}(N_1)\widetilde\otimes\RC^\infty_{K_2}
(N_2)=\RC^\infty_{K_1}(N_1)\widehat\otimes \RC^\infty_{K_2}(N_2)
=\RC^\infty_{K_1\times K_2}(N_1\times N_2).\ee  
The LCS identification \eqref{eq:Kstilde=Kshat} follows from \eqref{eq:schwartzkernel2'} and
\cite[(2.38)]{CSW2}. 
For the identification \eqref{eq:tensorofdessupinK}, by using  \eqref{eq:schwartzkernel2} and
 \cite[(2.38)]{CSW2}, we have that
\begin{eqnarray*}
&& \varinjlim_{(s_1,s_2)\in \oT_s} \mathrm D^{\infty}_{K_1,s_1}(M_1;\CO_1)\widehat\otimes\,  \mathrm D^{\infty}_{K_2,s_2}(M_2;\CO_2)\\
&=&\varinjlim_{(s_1,s_2)\in \oT_s} (\mathrm{D}^\infty_{K_1}(N_1)[y_1^*,y_2^*,\dots,y_{k_1}^*]_{\le s_1})\widehat\otimes
(\mathrm{D}^\infty_{K_2}(N_2)[y_1^*,y_2^*,\dots,y_{k_2}^*]_{\le s_2})\\
&=& (\mathrm{D}^\infty_{K_1}(N_1)\widehat\otimes\mathrm{D}^\infty_{K_2}(N_2))\otimes \left(\varinjlim_{(s_1,s_2)\in \oT_s}
 \C[y_1^*,y^*_2,\dots,y^*_{k_1}]_{\le s_1}\otimes
\C[y_1^*,y_2^*,\dots,y^*_{k_2}]_{\le s_2}\right)\\
&=& \mathrm{D}^\infty_{K_1\times K_2}(N_3)[y_1^*,y^*_2,\dots,y^*_{k_3}]_{\le s}
= \mathrm D^{\infty}_{K_1\times K_2,s}(M_3;\CO_3)
\end{eqnarray*} as LCS.
\end{proof}

\begin{lemd}\label{lem:boxtimes}
Proposition \ref{prop:boxtimeswelldefined} holds when $M_1=N_1^{(k_1)}$ and $M_2=N_2^{(k_2)}$.
\end{lemd}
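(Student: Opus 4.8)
The plan is to reduce everything to an explicit monomial computation in the coordinate descriptions provided by Lemma \ref{lem:charOmegaonkr}. Writing $M_i=N_i^{(k_i)}$, that lemma identifies $\mathrm D^{\infty}_{K_1,s_1}(M_1;\Omega_{\CO_1}^{r_1})$ with $\bigoplus_{(I_1,J_1)}\mathrm D^{\infty}_{K_1}(N_1)[y_1^*,\dots,y_{k_1}^*]_{\le s_1}$, the sum running over $(I_1,J_1)\in\Lambda_{n_1,k_1}^{n_1+k_1-r_1}$, and similarly for $\eta_2$. Since $\boxtimes=\Phi(\,\cdot\otimes\cdot\,)$ is bilinear, it suffices to establish the formula for elementary terms $\eta_1=\tau_1\,dx^*_{I_1}dy^*_{J_1}$ and $\eta_2=\tau_2\,dx^*_{I_2}dy^*_{J_2}$, with $\tau_1\in\mathrm D^{\infty}_{K_1}(N_1)[y^*]_{\le s_1}$ and $\tau_2\in\mathrm D^{\infty}_{K_2}(N_2)[y^*]_{\le s_2}$.

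First I would unwind the definition $\Phi={}^t(\Psi^{-1})\circ\tau$ from \eqref{defPhi}: for any test form $\omega\in\Omega^{r_1+r_2}_{\CO_3}(M_3)$ one has $\langle\eta_1\boxtimes\eta_2,\omega\rangle=(-1)^{r_1r_2}\langle\eta_1\otimes\eta_2,\Psi^{-1}(\omega)\rangle$, the right-hand pairing being taken componentwise. Feeding in the explicit shape of $\Psi$ recorded in Lemma \ref{lem:Psiisopre} (through the isomorphism $\widetilde\pi_r$ and its sign $(-1)^{s_1t_2}$) together with the pairing formulas \eqref{eq:Pdualpair} and \eqref{eq:pairing2}, a direct bookkeeping of indices and signs yields
\[
(\tau_1\,dx^*_{I_1}dy^*_{J_1})\boxtimes(\tau_2\,dx^*_{I_2}dy^*_{J_2})=\pm\,(\tau_1\cdot\tau_2)\,dx^*_{(I_1,n_1+I_2)}\,dy^*_{(J_1,n_2+J_2)},
\]
where $(I_1,n_1+I_2)$ and $(J_1,n_2+J_2)$ are the concatenated multi-indices of \eqref{eq:defn+I}, and $\tau_1\cdot\tau_2$ is the image of $\tau_1\otimes\tau_2$ under multiplication of the $y^*$-monomials and external product of the density coefficients.

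It then remains to read off the support and order. By Lemma \ref{lem:charOmegaonkr} in the case $r=0$, each factor $\tau_i$ is exactly an element of $\mathrm D^{\infty}_{K_i,s_i}(M_i;\CO_i)$, so $\tau_1\cdot\tau_2$ is the image of $\tau_1\otimes\tau_2$ under
\[
\mathrm D^{\infty}_{K_1,s_1}(M_1;\CO_1)\widehat\otimes\mathrm D^{\infty}_{K_2,s_2}(M_2;\CO_2)\longrightarrow\mathrm D^{\infty}_{K_1\times K_2,s_1+s_2}(M_3;\CO_3),
\]
which is the $(s_1,s_2)$-term of the inductive-limit identification \eqref{eq:tensorofdessupinK} (valid since $(s_1,s_2)\in\oT_{s_1+s_2}$). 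Hence $\tau_1\cdot\tau_2$ is supported in $K_1\times K_2$ and has order $\le s_1+s_2$ in $y_1^*,\dots,y_{k_3}^*$. Since the pair $((I_1,n_1+I_2),(J_1,n_2+J_2))$ lies in $\Lambda_{n_3,k_3}^{n_3+k_3-(r_1+r_2)}$, a final application of Lemma \ref{lem:charOmegaonkr} to $M_3=N_3^{(k_3)}$ places $\eta_1\boxtimes\eta_2$ in $\mathrm D^{\infty}_{K_1\times K_2,\,s_1+s_2}(M_3;\Omega_{\CO_3}^{r_1+r_2})$, as desired.

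I expect the genuine work to be concentrated in the middle step: confirming that the transpose of the wedge-monomial map $\Psi$ sends a product of dual monomials precisely to the dual monomial indexed by the concatenated multi-indices, and that all the sign contributions --- the $(-1)^{r_1r_2}$ from $\tau$, the $(-1)^{s_1t_2}$ from $\widetilde\pi_r$, and the orientation constants $\varepsilon$ appearing in \eqref{eq:Pdualpair} --- combine consistently. The analytic content (compact support in $K_1\times K_2$ and order $\le s_1+s_2$) is then purely formal, being inherited from \eqref{eq:tensorofdessupinK}.
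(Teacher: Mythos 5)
Your proposal follows the paper's proof essentially verbatim: reduce by bilinearity and Lemma \ref{lem:charOmegaonkr} to monomial terms $\tau_i\,dx^*_{I_i}dy^*_{J_i}$, compute $\eta_1\boxtimes\eta_2$ explicitly by unwinding $\Phi$ (the paper records the result as \eqref{eq:comboxtimes}, with the sign made explicit), and read off the support $K_1\times K_2$ and order $\le s_1+s_2$ from the identification \eqref{eq:tensorofdessupinK} together with a final application of Lemma \ref{lem:charOmegaonkr} to $M_3$. One slip worth fixing: the concatenated $y^*$-index should be $(J_1,k_1+J_2)$, not $(J_1,n_2+J_2)$ --- the shift is by the number $k_1$ of formal variables of $M_1$, as dictated by \eqref{eq:defn+I} and as written in the paper's \eqref{eq:comboxtimes}; otherwise the pair need not lie in $\Lambda_{n_3,k_3}^{n_3+k_3-(r_1+r_2)}$ (the paper's own proof of Lemma \ref{lem:Psiisopre} contains the same typo, which you likely inherited).
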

\begin{proof}
By using Lemma \ref{lem:charOmegaonkr}, for $i=1,2$, we  assume  without loss of generality that 
\[\eta_i=\tau_i\, dx^*_{I_i} dy^*_{J_i}\in \mathrm D^{\infty}_{K_i,s_i}(M_i;\Omega_{\CO_i}^{r_i}), \]
where \[\tau_i\in \mathrm{D}_{K_i}^\infty(N_i)[y_1^*,y_2^*,\dots,y_{k_i}^*]_{\le s_i}\quad \text{and}\quad  (I_i,J_i)\in \Lambda_{n_i}^{n_i-t_i}\times \Lambda_{k_i}^{k_i-r_i+t_i}\] for some $t_i\in \BN$.
It is straightforward to check that 

\be \label{eq:comboxtimes}\eta_1\boxtimes \eta_2=(-1)^a(\tau_1\otimes \tau_2) dx^*_{(I_1,n_1+I_2)} dy^*_{(J_1,k_1+J_2)},
\ee
where \[a=t_2k_1+n_2r_1+n_2t_1+r_2k_1+n_1r_2+r_1t_2+t_1t_2,\] $\tau_1\otimes \tau_2$ is viewed as an element in $\mathrm{D}_{K_1\times K_2}^\infty(N_3)[y_1^*,y_2^*,\dots,y_{k_3}^*]_{\le s_1+s_2}$ via the
identification \eqref{eq:tensorofdessupinK}, and $(I_1,n_1+I_2)$, $(J_1,k_1+J_2)$ are defined as in \eqref{eq:defn+I}.
This completes the proof.
\end{proof}

\vspace{3mm}

\noindent\textbf{Proof of Proposition \ref{prop:boxtimeswelldefined}:} Let $\eta_i\in  \mathrm D^{\infty}_{K_i,s_i}(M_i;\Omega_{\CO_i}^{r_i})$ be as in Proposition \ref{prop:boxtimeswelldefined}. For each $i=1,2$, take a finite family $\{U_{i,\alpha}\}_{\alpha\in \Gamma_i}$ of  charts of $M_i$ that covers $K_i$. 
Let $\{f_{i,\alpha}\}_{\alpha\in\Gamma_{i}\sqcup\{0\}}$ be  
a partition of unity of $M_i$ subordinate to the open cover $\{U_{i,\alpha}\}_{\alpha\in \Gamma_i\sqcup\{0\}}$ (see \cite[Proposition 2.3]{CSW1}), where $U_{i,0}:=M_i\setminus K_i$. Set $K_{i,\alpha}:=\mathrm{supp}f_{i,\alpha}\cap K_i$ for all $\alpha\in \Gamma_i$.

It follows from \cite[Lemma 2.14]{CSW2} that
\[ \eta_i=\sum_{\alpha\in \Gamma_i} \eta_i\circ f_{i,\alpha}
=\sum_{\alpha\in \Gamma_i} \mathrm{ext}_{M_i,U_{i,\alpha}} (\eta_{i,\alpha}),
\]
where $\eta_{i,\alpha}:=(\eta_i\circ f_{i,\alpha})|_{U_{i,\alpha}}\in \mathrm D^{\infty}_{K_{i,\alpha},s_i}(U_{i,\alpha};\Omega_{\CO_i}^{r_i})$.
Meanwhile,  Lemma \ref{lem:boxtimes} implies that
\[\eta_{1,\alpha}\boxtimes \eta_{2,\beta}\in \mathrm D^{\infty}_{K_{1,\alpha}\times K_{2,\beta},s_1+s_2}(U_{1,\alpha }\times U_{2,\beta};\Omega_{\CO_3}^{r_1+r_2})
\quad (\alpha\in \Gamma_1, \beta\in \Gamma_2).
\]
These, together with \eqref{eq:rescommutesboxtimes},  imply that
\begin{eqnarray*}
&&\eta_1\boxtimes \eta_2=\left(\sum_{\alpha\in \Gamma_1} \mathrm{ext}_{M_1,U_{1,\alpha}} (\eta_{1,\alpha})\right)
\boxtimes \left(\sum_{\beta\in \Gamma_2} \mathrm{ext}_{M_2,U_{2,\beta}} (\eta_{2,\beta})\right)\\
&=& \sum_{(\alpha,\beta)\in \Gamma_1\times \Gamma_2}
\mathrm{ext}_{M_1\times M_2,U_{1,\alpha}\times U_{2,\beta}} (\eta_{1,\alpha}\boxtimes \eta_{2,\beta})
\in  \mathrm D^{\infty}_{K_1\times K_2,s_1+s_2}(M_3;\Omega_{\CO_3}^{r_1+r_2}).
\end{eqnarray*}
This finishes the proof of Proposition \ref{prop:boxtimeswelldefined}.
 \qed
\vspace{3mm}

Now we turn to prove Proposition \ref{prop:csdfisoK}.

\begin{lemd}\label{lem:csdfisopre} Proposition \ref{prop:csdfisoK} holds when $M_1=N_1^{(k_1)}$ and $M_2=N_2^{(k_2)}$.
\end{lemd}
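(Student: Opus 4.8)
The plan is to reduce the statement entirely to the explicit coordinate descriptions already in hand and then to transport the analytic content through the Schwartz-type identity \eqref{eq:tensorofdessupinK}.

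First I would apply Lemma \ref{lem:charOmegaonkr} to each of the three factors. This identifies $\mathrm D^{\infty}_{K_i,s_i}(M_i;\Omega_{\CO_i}^{r_i})$ with the finite direct sum $\bigoplus_{(I_i,J_i)} \mathrm{D}^\infty_{K_i}(N_i)[y_1^*,\dots,y_{k_i}^*]_{\leq s_i}$ indexed by $(I_i,J_i)\in \Lambda_{n_i,k_i}^{n_i+k_i-r_i}$, and likewise identifies the target $\mathrm D^{\infty}_{K_1\times K_2,s}(M_3;\Omega_{\CO_3}^r)$ with $\bigoplus_{(I,J)\in\Lambda_{n_3,k_3}^{n_3+k_3-r}}\mathrm{D}^\infty_{K_1\times K_2}(N_3)[y_1^*,\dots,y_{k_3}^*]_{\leq s}$. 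The combinatorial point here — the analogue for complementary degrees of the bijection $\pi_r$ from Lemma \ref{lem:Psiisopre} — is that the assignment $((I_1,J_1),(I_2,J_2))\mapsto((I_1,n_1+I_2),(J_1,k_1+J_2))$, using the notation of \eqref{eq:defn+I}, is a bijection from $\bigsqcup_{r_1+r_2=r}\Lambda_{n_1,k_1}^{n_1+k_1-r_1}\times\Lambda_{n_2,k_2}^{n_2+k_2-r_2}$ onto $\Lambda_{n_3,k_3}^{n_3+k_3-r}$, since $(n_1+k_1-r_1)+(n_2+k_2-r_2)=n_3+k_3-r$.

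Next I would invoke the explicit formula \eqref{eq:comboxtimes} from Lemma \ref{lem:boxtimes}: on elementary forms $\eta_i=\tau_i\,dx^*_{I_i}dy^*_{J_i}$ one has $\eta_1\boxtimes\eta_2=(-1)^a(\tau_1\otimes\tau_2)\,dx^*_{(I_1,n_1+I_2)}dy^*_{(J_1,k_1+J_2)}$. Consequently, under the coordinate identifications above and the index bijection, the map \eqref{eq:defcsdfiso} at a fixed level $(s_1,s_2)$ becomes — up to the scalar signs $(-1)^a$ and the reindexing — the direct sum over the bijecting index set of the canonical bilinear maps $\mathrm{D}^\infty_{K_1}(N_1)[y^*]_{\leq s_1}\widehat\otimes \mathrm{D}^\infty_{K_2}(N_2)[y^*]_{\leq s_2}\to \mathrm{D}^\infty_{K_1\times K_2}(N_3)[y^*]_{\leq s_1+s_2}$, where I have used \eqref{eq:Kstilde=Kshat} to replace $\otimes_\pi$ by $\widehat\otimes$ on the source. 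In particular each such map is continuous, and since a direct limit of continuous maps is continuous, the induced map on $\mathrm{D}^{\infty,r}_{K_1,K_2,s}$ is continuous, giving the first assertion of Proposition \ref{prop:csdfisoK}.

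Finally, passing to the direct limit over $(s_1,s_2)\in\oT_s$ and using that the finite direct sums (over $r_1+r_2=r$ and over the multi-indices $(I_i,J_i)$) commute with the direct limit, each summand assembles, by the identification \eqref{eq:tensorofdessupinK} read in coordinates, into a topological isomorphism $\varinjlim_{(s_1,s_2)\in\oT_s}\bigl(\mathrm{D}^\infty_{K_1}(N_1)[y^*]_{\leq s_1}\widehat\otimes \mathrm{D}^\infty_{K_2}(N_2)[y^*]_{\leq s_2}\bigr)\xrightarrow{\sim}\mathrm{D}^\infty_{K_1\times K_2}(N_3)[y^*]_{\leq s}$; taking the direct sum over the finite bijecting index set then yields \eqref{eq:csdfisoK} as a topological isomorphism. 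The main obstacle is purely one of bookkeeping: one must verify that the index bijection together with the signs $(-1)^a$ makes the coordinate form of \eqref{eq:defcsdfiso} genuinely block-diagonal against the target decomposition of Lemma \ref{lem:charOmegaonkr}, so that the levelwise topological isomorphisms of \eqref{eq:tensorofdessupinK} glue compatibly with all the direct-limit and direct-sum structures. The genuine analytic content is entirely supplied by \eqref{eq:tensorofdessupinK}.
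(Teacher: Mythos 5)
Your proposal is correct and takes essentially the same route as the paper's own (much terser) proof, which simply cites the same four ingredients you assemble: the formula \eqref{eq:comboxtimes}, the coordinate isomorphism \eqref{eq:topiosondnck-ru2} of Lemma \ref{lem:charOmegaonkr}, the identification \eqref{eq:tensorofdessupinK}, and the complementary-degree index bijection $((I_1,J_1),(I_2,J_2))\mapsto((I_1,n_1+I_2),(J_1,k_1+J_2))$. Your write-up just makes explicit the block-diagonal bookkeeping and the compatibility with the direct limits and finite direct sums that the paper leaves implicit.
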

\begin{proof} The assertions follow from the equality \eqref{eq:comboxtimes},
the topological linear isomorphism \eqref{eq:topiosondnck-ru2}, the
LCS identification \eqref{eq:tensorofdessupinK}, and the following bijection
\begin{eqnarray*}
\bigsqcup_{r_1,r_2\in \BN;r_1+r_2=r}\Lambda_{n_1,k_1}^{n_1+k_1-r_1}\times \Lambda_{n_2,k_2}^{n_2+k_2-r_2}&\rightarrow& \Lambda_{n_3,k_3}^{n_3+k_3-r},\\
((I_1,J_1),(I_2,J_2))&\mapsto & ((I_1,n_1+I_2),(J_1,k_1+J_2)),
\end{eqnarray*}
where $(I_1,n_1+I_2)$ and $(J_1,k_1+J_2)$ are defined as in \eqref{eq:defn+I}.
\end{proof}

With  the  notations used in the proof of Proposition \ref{prop:boxtimeswelldefined},
by applying \eqref{eq:rescommutesboxtimes},  
one concludes that the diagram
\be\label{eq:diapre12.18}
 \begin{CD}
             \bigoplus\limits_{\alpha\in \Gamma_1}\mathrm D^{\infty,r_1}_{K_{1,\alpha},s_1}\otimes_{\pi}
            \bigoplus\limits_{\beta\in \Gamma_2}\mathrm  D^{\infty,r_2}_{K_{2,\beta},s_2}@>  >>  \bigoplus\limits_{(\alpha,\beta)\in \Gamma_1\times \Gamma_2}
           \mathrm D^{\infty,r}_{K_{1,\alpha}\times K_{2,\beta},s}\\
          @VV(\oplus\, \mathrm{ext}_{M_1,U_{1,\alpha}})\otimes(\oplus\, \mathrm{ext}_{M_2,U_{2,\beta}})    V
                  @V V \oplus\,\mathrm{ext}_{M_3,U_{1,\alpha}\times U_{2,\beta}} V\\
           \mathrm D^{\infty}_{K_1,s_1}(M_1;\Omega_{\CO_1}^{r_1})\otimes_{\pi}\mathrm D^{\infty}_{K_2,s_2}(M_2;\Omega_{\CO_2}^{r_2}) @ >> >
           \mathrm D^{\infty}_{K_1\times K_2,s}(M_3;\Omega_{\CO_3}^r)
  \end{CD}
\ee commutes. Here $r=r_1+r_2$, $s=s_1+s_2$, 
\begin{eqnarray}
\label{eq:notionindiag1}\mathrm D^{\infty,r_1}_{K_{1,\alpha},s_1}&:=&\mathrm D^{\infty}_{K_{1,\alpha},s_1}(U_{1,\alpha};\Omega_{\CO_1}^{r_1}),\\
\label{eq:notionindiag2}\mathrm  D^{\infty,r_2}_{K_{2,\beta},s_2}&:=&\mathrm  D^{\infty}_{K_{2,\beta},s_2}(U_{2,\beta};\Omega_{\CO_2}^{r_2}),\\
 \label{eq:notionindiag3} \mathrm D^{\infty,r}_{K_{1,\alpha}\times K_{2,\beta},s}&:=&\mathrm D^{\infty}_{K_{1,\alpha}\times K_{2,\beta},s}(U_{1,\alpha}\times U_{2,\beta};\Omega_{\CO_3}^{r}),
\end{eqnarray}
 and the top horizontal arrow in the diagram is the composition of the canonical topological linear isomorphism
\[  
\left(\bigoplus\limits_{\alpha\in \Gamma_1}\mathrm D^{\infty,r_1}_{K_{1,\alpha},s_1}\right)\otimes_{\pi}
            \left(\bigoplus\limits_{\beta\in \Gamma_2}\mathrm  D^{\infty,r_2}_{K_{2,\beta},s_2}\right)
             \rightarrow  \bigoplus\limits_{(\alpha,\beta)\in \Gamma_1\times \Gamma_2}
          \left( \mathrm D^{\infty,r_1}_{K_{1,\alpha},s_1}\otimes_{\pi}
          \mathrm  D^{\infty,r_2}_{K_{2,\beta},s_2}\right)
             \]
with the linear topological embedding (see Lemma \ref{lem:csdfisopre})
\[\bigoplus\limits_{(\alpha,\beta)\in \Gamma_1\times \Gamma_2}
           \left(\mathrm D^{\infty,r_1}_{K_{1,\alpha},s_1}\otimes_{\pi}
          \mathrm  D^{\infty,r_2}_{K_{2,\beta},s_2}\right)
             \rightarrow \bigoplus\limits_{(\alpha,\beta)\in \Gamma_1\times \Gamma_2}
           \mathrm D^{\infty,r}_{K_{1,\alpha}\times K_{2,\beta},s}.
             \]
\begin{lemd}\label{lem:pre12.18}
 The right vertical arrow in \eqref{eq:diapre12.18} is a continuous, open and surjective linear map.    
\end{lemd}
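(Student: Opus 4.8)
The plan is to identify the right vertical arrow of \eqref{eq:diapre12.18} with the ``extend and sum'' map attached to a finite chart cover of $M_3$, and then to exhibit an explicit continuous linear section; continuity, surjectivity and openness will all follow from this. First I would record the geometry underlying the diagram. Because $\{U_{1,\alpha}\}_{\alpha\in\Gamma_1}$ covers $K_1$ and $\{U_{2,\beta}\}_{\beta\in\Gamma_2}$ covers $K_2$, the finite family $\{U_{1,\alpha}\times U_{2,\beta}\}_{(\alpha,\beta)\in\Gamma_1\times\Gamma_2}$ consists of charts of $M_3$ (\cf\cite[Lemma 6.19]{CSW1}) and covers $K_1\times K_2$. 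Moreover $\mathrm{supp}\,f_{i,0}\subseteq M_i\setminus K_i$ forces $\sum_{\alpha\in\Gamma_i}f_{i,\alpha}=1$ on a neighborhood of $K_i$, so that $\bigcup_{\alpha}K_{i,\alpha}=K_i$ and hence $\bigcup_{(\alpha,\beta)}(K_{1,\alpha}\times K_{2,\beta})=K_1\times K_2$, each $K_{1,\alpha}\times K_{2,\beta}$ being a compact subset of the chart $U_{1,\alpha}\times U_{2,\beta}$. With this in hand the continuity of the right vertical arrow is immediate: it is a finite sum of the extension maps $\mathrm{ext}_{M_3,U_{1,\alpha}\times U_{2,\beta}}$, each continuous by \cite[Proposition 2.15]{CSW2}.

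Next I would construct the section. Writing $g_{\alpha,\beta}:=(p_1^{*}f_{1,\alpha})(p_2^{*}f_{2,\beta})$, the family $\{g_{\alpha,\beta}\}$ indexed by $(\alpha,\beta)\in(\Gamma_1\sqcup\{0\})\times(\Gamma_2\sqcup\{0\})$ is a partition of unity of $M_3$ subordinate to $\{U_{1,\alpha}\times U_{2,\beta}\}$. For $\eta\in\mathrm D^{\infty}_{K_1\times K_2,s}(M_3;\Omega_{\CO_3}^r)$ I would apply \cite[Lemma 2.14]{CSW2} to this partition of unity; since $f_{1,0}$ and $f_{2,0}$ vanish near $K_1$ and $K_2$ while $\mathrm{supp}\,\eta\subseteq K_1\times K_2$, only the indices $(\alpha,\beta)\in\Gamma_1\times\Gamma_2$ contribute, yielding
\[
\eta=\sum_{(\alpha,\beta)\in\Gamma_1\times\Gamma_2}\mathrm{ext}_{M_3,U_{1,\alpha}\times U_{2,\beta}}(\eta_{\alpha,\beta}),\qquad \eta_{\alpha,\beta}:=(\eta\circ g_{\alpha,\beta})\big|_{U_{1,\alpha}\times U_{2,\beta}}.
\]
A support computation gives $\mathrm{supp}\,\eta_{\alpha,\beta}\subseteq(K_1\times K_2)\cap(\mathrm{supp}\,f_{1,\alpha}\times\mathrm{supp}\,f_{2,\beta})=K_{1,\alpha}\times K_{2,\beta}$, and since composing with a fixed formal function does not raise the order, $\eta_{\alpha,\beta}\in\mathrm D^{\infty,r}_{K_{1,\alpha}\times K_{2,\beta},s}$ in the notation of \eqref{eq:notionindiag3}. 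Thus $\sigma\colon\eta\mapsto(\eta_{\alpha,\beta})_{(\alpha,\beta)}$ is a linear map into the source of the right vertical arrow whose composite with that arrow is the identity; in particular the arrow is surjective.

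Finally I would upgrade this to openness by a formal argument. Writing the right vertical arrow as $\pi\colon E\to F$ and the map just built as $\sigma\colon F\to E$, the composite $\sigma\circ\pi$ is a continuous linear idempotent, so $E$ splits as the topological direct sum $\ker\pi\oplus\sigma(F)$; on the summand $\sigma(F)$ the map $\pi$ restricts to a topological isomorphism onto $F$ with inverse $\sigma$, and the projection $E\to\sigma(F)$ is open, whence $\pi$ is open. The main obstacle is precisely the continuity of $\sigma$, i.e.\ that each component $\eta\mapsto(\eta\circ g_{\alpha,\beta})\big|_{U_{1,\alpha}\times U_{2,\beta}}$ is continuous as a map into $\mathrm D^{\infty,r}_{K_{1,\alpha}\times K_{2,\beta},s}$; this rests on the continuity of the action $\eta\mapsto\eta\circ g$ and of the relevant restriction maps, both of which are among the properties of compactly supported formal densities established in \cite[Lemma 2.14]{CSW2} and \cite[Proposition 2.15]{CSW2}. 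Everything else is formal, and I expect no further difficulty.
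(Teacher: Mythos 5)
Your proposal is correct and follows essentially the same route as the paper: both arguments verify that the product family $\{U_{1,\alpha}\times U_{2,\beta}\}_{(\alpha,\beta)\in\Gamma_1\times\Gamma_2}$ covers $K_1\times K_2$, that the formal functions $f_{1,\alpha}\otimes f_{2,\beta}$ give a subordinate partition-of-unity datum with $\mathrm{supp}\,(f_{1,\alpha}\otimes f_{2,\beta})\cap(K_1\times K_2)=K_{1,\alpha}\times K_{2,\beta}$, and then invoke \cite[Lemma 2.14]{CSW2}. The only difference is that the paper cites that lemma for continuity, openness and surjectivity all at once, whereas you re-derive openness from a continuous linear section built from the same partition-of-unity decomposition --- a correct but redundant elaboration of what the cited lemma already supplies.
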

\begin{proof}
    Note  that
$\{U_{1,\alpha}\times U_{2,\beta}\}_{(\alpha,\beta)\in \Gamma_1\times \Gamma_2}$ is a family  of  open subsets  of $M_3$  that covers $K_1\times K_2$, 
$\{f_{1,\alpha}\otimes f_{2,\beta}\}_{(\alpha,\beta)\in \Gamma_1\times \Gamma_2}$ is a family of formal functions on $M_3$ such that
\[\mathrm{supp}\,(f_{1,\alpha}\otimes f_{2,\beta}) \subset U_{1,\alpha}\times U_{2,\beta}\quad\text{for all $(\alpha,\beta)\in \Gamma_1\times \Gamma_2$}\] and 
\[\sum_{(\alpha,\beta)\in\Gamma_1\times \Gamma_2} (f_{1,\alpha}\otimes f_{2,\beta})|_{K_1\times K_2}=1.\]
We also have that 
\begin{eqnarray*}
K_{1,\alpha}\times K_{2,\beta}&=&(\mathrm{supp}f_{1,\alpha}\times \mathrm{supp}f_{2,\beta})\cap (K_1\times K_2)\\
&=&(\mathrm{supp}(f_{1,\alpha}\otimes f_{2,\beta}))\cap (K_1\times K_2).\end{eqnarray*}
The lemma then follows from \cite[Lemma 2.14]{CSW2}.
\end{proof}
\begin{lemd}\label{lem:csdfpre2} The injective linear map  \eqref{eq:defcsdfiso} is continuous.
\end{lemd}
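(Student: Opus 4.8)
The plan is to read off the continuity of \eqref{eq:defcsdfiso} from the commutative square \eqref{eq:diapre12.18} by a quotient-map argument. First I would reduce to a single bidegree: the domain of \eqref{eq:defcsdfiso} is the \emph{finite} direct sum over the pairs $(r_1,r_2)$ with $r_1+r_2=r$, and a linear map out of a finite direct sum is continuous if and only if each of its restrictions to a summand is continuous. The restriction to the $(r_1,r_2)$-summand is exactly the map
\[\mathrm D^{\infty}_{K_1,s_1}(M_1;\Omega_{\CO_1}^{r_1})\otimes_{\pi}\mathrm D^{\infty}_{K_2,s_2}(M_2;\Omega_{\CO_2}^{r_2})\rightarrow \mathrm D^{\infty}_{K_1\times K_2,s}(M_3;\Omega_{\CO_3}^r),\quad \eta_{r_1}\otimes\eta_{r_2}\mapsto \eta_{r_1}\boxtimes\eta_{r_2},\]
with $s=s_1+s_2$, which is the bottom horizontal arrow of \eqref{eq:diapre12.18}. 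So it is enough to prove that this bottom arrow is continuous for each fixed $(r_1,r_2)$.

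The key step is to show that the left vertical arrow of \eqref{eq:diapre12.18} is a topological quotient map (continuous, open and surjective). By construction it is the projective tensor product of $\bigoplus_{\alpha\in\Gamma_1}\mathrm{ext}_{M_1,U_{1,\alpha}}$ and $\bigoplus_{\beta\in\Gamma_2}\mathrm{ext}_{M_2,U_{2,\beta}}$. Each of these two maps is continuous, open and surjective by \cite[Proposition 2.15]{CSW2}, the surjectivity being exactly the decomposition $\eta_i=\sum_\alpha\mathrm{ext}_{M_i,U_{i,\alpha}}(\eta_{i,\alpha})$ furnished by the partition of unity as in \cite[Lemma 2.14]{CSW2}. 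I would then invoke the standard fact that the projective tensor product of two topological quotient maps of locally convex spaces is again a topological quotient map; here the finiteness of $\Gamma_1$ and $\Gamma_2$ ensures that the relevant direct sums commute with $\otimes_\pi$, so this fact applies verbatim. Consequently the left vertical arrow is continuous, open and surjective.

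With this the diagram chase closes. The top horizontal arrow is continuous, being the composite of the canonical topological linear isomorphism with the linear topological embedding provided by Lemma \ref{lem:csdfisopre}, and the right vertical arrow is continuous by Lemma \ref{lem:pre12.18}. Hence the composite (right vertical)$\,\circ\,$(top horizontal) is continuous, and by commutativity of \eqref{eq:diapre12.18} it coincides with (bottom horizontal)$\,\circ\,$(left vertical). Since the left vertical arrow is a topological quotient map, a linear map out of its target is continuous precisely when its precomposition with that arrow is continuous; therefore the bottom horizontal arrow is continuous. Summing over the finitely many $(r_1,r_2)$ then gives the continuity of \eqref{eq:defcsdfiso}.

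I expect the only delicate point to be the assertion that the projective tensor product of the two extension-sum maps is a quotient map: one must combine the quotient-map property of each factor with the stability of topological quotient maps under $\otimes_\pi$, being careful that the direct sums involved are finite. The remaining verifications are a straightforward diagram chase using results already established in the excerpt.
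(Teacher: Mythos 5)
Your proof is correct and follows essentially the same route as the paper: the paper likewise deduces from \cite[Lemma 2.14]{CSW2} and \cite[Proposition 43.9]{Tr} (the stability of surjective topological homomorphisms under $\otimes_\pi$) that the left vertical arrow of \eqref{eq:diapre12.18} is a continuous, open and surjective linear map, and then obtains the continuity of the bottom horizontal arrow, hence of \eqref{eq:defcsdfiso}, by the same diagram chase. The only difference is that you spell out the reduction to a single bidegree and the quotient-map descent, which the paper leaves implicit.
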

\begin{proof} 
\cite[Lemma 2.14]{CSW2} and \cite[Proposition 43.9]{Tr} imply that the left vertical arrow in \eqref{eq:diapre12.18} is a continuous, open and surjective linear map. 
Thus the bottom horizontal arrow in \eqref{eq:diapre12.18} is continuous, and the lemma then follows. 
\end{proof}

\vspace{3mm}

\noindent\textbf{Proof of Proposition \ref{prop:csdfisoK}:} 
In view of Lemma \ref{lem:csdfpre2}, the map \eqref{eq:defcsdfiso} yields an injective continuous linear map 
\[
\bigoplus_{r_1,r_2\in \BN; r_1+r_2=r}
\left(\mathrm D^{\infty}_{K_1,s_1}(M_1;\Omega_{\CO_1}^{r_1})\widehat\otimes\mathrm D^{\infty}_{K_2,s_2}(M_2;\Omega_{\CO_2}^{r_2})\right)\rightarrow D^{\infty}_{K_1\times K_2,s_1+s_2}(M_3;\Omega_{\CO_3}^r).
\]
Then we have an  injective continuous linear map
\[
  \mathrm{D}_{K_1,K_2,s}^{\infty,r}\rightarrow \mathrm D^{\infty}_{K_1\times K_2,s}(M_3;\Omega_{\CO_3}^r)
\] 
as  in \eqref{eq:csdfisoK}.

On the other hand,   
the commutative diagram \eqref{eq:diapre12.18}
 can be extended to the following commutative diagram (see \eqref{eq:notationDK_1K_2sr} and \eqref{eq:notionindiag1}-\eqref{eq:notionindiag3} for the notations):
\[
 \begin{CD}
             \varinjlim\limits_{(s_1,s_s)\in \oT_s}\left(\bigoplus\limits_{r_1,r_2\in \BN;r_1+r_2=r}\left(\bigoplus\limits_{\alpha\in \Gamma_1}\mathrm D^{\infty,r_1}_{K_{1,\alpha},s_1}\wh\otimes
            \bigoplus\limits_{\beta\in \Gamma_2}\mathrm  D^{\infty,r_2}_{K_{2,\beta},s_2}\right)\right)
             @>  >>    \bigoplus\limits_{(\alpha,\beta)\in \Gamma_1\times \Gamma_2}
           \mathrm D^{\infty,r}_{K_{1,\alpha}\times K_{2,\beta},s}\\
            @V V V
                  @V V V\\
            \mathrm{D}_{K_1,K_2,s}^{\infty,r}  @ > \eqref{eq:csdfisoK}> >
            \mathrm D^{\infty}_{K_1\times K_2,s}(M_3;\Omega_{\CO_3}^r).
  \end{CD}
\]

From Lemma \ref{lem:csdfisopre}, it follows that the top horizontal arrow is a topological linear isomorphism.
Since $ \mathrm D^{\infty,r_1}_{K_{1,\alpha},s_1} $ and $\mathrm D^{\infty,r_2}_{K_{2,\beta},s_2} $ are Fr\'echet spaces
(see \cite[Proposition 2.20]{CSW2}),
 the left vertical arrow is   open and surjective  by \cite[Proposition 43.9]{Tr}. Meanwhile,  the right vertical arrow is open and surjective by Lemma \ref{lem:pre12.18}.
These imply that the bottom horizontal arrow is open and surjective. Then it is a topological linear isomorphism, as required. \qed
\vspace{3mm}

Now we are ready to prove Theorem \ref{thm:tensorofcscomplex}. 
To begin with, we have the following result.

\begin{lemd}\label{lem:Omegactilde=hat} For $r_1,r_2\in \BN$, we have that 
\[\mathrm D^{\infty}_c(M_1;\Omega_{\CO_1}^{r_1})\widetilde\otimes_{\mathrm{i}}\, \mathrm D^{\infty}_c(M_2;\Omega_{\CO_2}^{r_2})
=\mathrm D^{\infty}_c(M_1;\Omega_{\CO_1}^{r_1})\wh\otimes_{\mathrm{i}}\, \mathrm D^{\infty}_c(M_2;\Omega_{\CO_2}^{r_2})\] as LCS.
\end{lemd}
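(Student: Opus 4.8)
The plan is to mirror the argument used for $\mathrm D^{-\infty}_c$ in the proof of Theorem \ref{thm:tensorofcomplex}, reducing everything to the Fr\'echet building blocks $\mathrm D^{\infty}_{K_i,s_i}(M_i;\Omega^{r_i}_{\CO_i})$ and exploiting that for metrizable spaces the quasi-completion and the completion coincide. First I would recall from \cite[Proposition 2.20]{CSW2} that each $\mathrm D^{\infty}_{K_i,s_i}(M_i;\Omega^{r_i}_{\CO_i})$ is a Fr\'echet space and that
\[
\mathrm D^{\infty}_c(M_i;\Omega^{r_i}_{\CO_i}) = \varinjlim_{(K_i,s_i)\in \CC(M_i)} \mathrm D^{\infty}_{K_i,s_i}(M_i;\Omega^{r_i}_{\CO_i}),
\]
so that each of the two factors is an inductive limit of Fr\'echet spaces.

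Since the inductive tensor product commutes with inductive limits in each variable, I would present both $\widetilde\otimes_{\mathrm{i}}$ and $\widehat\otimes_{\mathrm{i}}$ of the two factors as the quasi-completion, respectively completion, of
\[
\varinjlim_{(K_1,s_1),\,(K_2,s_2)} \mathrm D^{\infty}_{K_1,s_1}(M_1;\Omega^{r_1}_{\CO_1}) \otimes_{\mathrm{i}} \mathrm D^{\infty}_{K_2,s_2}(M_2;\Omega^{r_2}_{\CO_2}).
\]
On each step both factors are Fr\'echet, so separate continuity of the canonical bilinear map forces joint continuity and $\otimes_{\mathrm{i}}$ agrees with $\otimes_\pi$ there; as the projective tensor product of two metrizable spaces is metrizable, its quasi-completion equals its completion, which gives the form-valued extension of \eqref{eq:Kstilde=Kshat},
\[
\mathrm D^{\infty}_{K_1,s_1}(M_1;\Omega^{r_1}_{\CO_1}) \widetilde\otimes \mathrm D^{\infty}_{K_2,s_2}(M_2;\Omega^{r_2}_{\CO_2}) = \mathrm D^{\infty}_{K_1,s_1}(M_1;\Omega^{r_1}_{\CO_1}) \widehat\otimes \mathrm D^{\infty}_{K_2,s_2}(M_2;\Omega^{r_2}_{\CO_2}),
\]
and follows, exactly as there, from the Schwartz kernel identity \eqref{eq:schwartzkernel2'} together with \eqref{eq:DFid} and Lemma \ref{lem:charOmegaonkr} (this step-wise identification is already contained in Lemma \ref{lem:csdfisopre}).

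The heart of the matter is then to commute both the quasi-completion and the completion past the inductive limit, i.e.\ to identify $\widetilde\otimes_{\mathrm{i}}$ and $\widehat\otimes_{\mathrm{i}}$ of the two factors with the \emph{same} inductive limit of the completed Fr\'echet steps. For this I would invoke the abstract lemma of \cite[Lemma A.10]{CSW2} that already underlies the $\mathrm D^{-\infty}_c$ case, whose role is precisely to compare the quasi-completed and the completed inductive tensor products of inductive limits of Fr\'echet spaces; this requires knowing that the presentation of $\mathrm D^{\infty}_c(M_i;\Omega^{r_i}_{\CO_i})$ is regular and quasi-complete, which is ensured because any compactly supported section is concentrated on finitely many connected components, so the limit reduces locally to a countable $(\mathrm{LF})$-space even when $M_i$ has uncountably many components.

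I expect the main obstacle to be exactly this last interchange: an inductive limit of complete spaces need not be complete, so one cannot naively pass the completion through $\varinjlim$. The technical content therefore lies in verifying the hypotheses under which the quasi-completion of the inductive tensor product is already complete — the content of the cited \cite{CSW2} lemmas — rather than in any explicit computation, the latter having been carried out step-wise in Lemma \ref{lem:csdfisopre} and Proposition \ref{prop:csdfisoK}.
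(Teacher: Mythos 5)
Your proposal is correct and shares the paper's overall architecture --- reduce to the Fr\'echet pieces $\mathrm D^{\infty}_{K_i,s_i}(M_i;\Omega_{\CO_i}^{r_i})$, prove that quasi-completion and completion coincide at that level, then pass through the inductive limit --- but your step-level argument is genuinely different and more elementary. You obtain the analogue of \eqref{eq:DK1s1=DK2s2} for arbitrary $M_1,M_2$ from general facts alone: the factors are Fr\'echet by \cite[Proposition 2.20]{CSW2}, so $\otimes_{\mathrm{i}}=\otimes_{\pi}$ on them and the tensor product is metrizable, and the quasi-completion of a metrizable LCS equals its completion. The paper instead proves \eqref{eq:DK1s1=DK2s2} via the diagram induced from \eqref{eq:diapre12.18}: the chart-level identity \eqref{eq:Kstilde=Kshat} (Schwartz kernel theorem) makes the left vertical arrow an isomorphism, and the open surjectivity of the partition-of-unity map (\cite[Proposition 43.9]{Tr}) forces the canonical embedding of the quasi-completed into the completed tensor product to be onto; your metrizability observation renders that entire diagram unnecessary. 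Note, however, that your alternative justification of the step-level identity ``exactly as in'' \eqref{eq:Kstilde=Kshat} via the Schwartz kernel identity and Lemma \ref{lem:csdfisopre} only covers the case where the $M_i$ are charts, so in your write-up it is the metrizability argument that must carry the general case. For the crucial interchange of completion with the inductive limit, the citation should be \cite[Lemma A.14]{CSW2} rather than Lemma A.10 (the latter is what the paper uses for the $\mathrm D^{-\infty}_c$-case in Theorem \ref{thm:tensorofcomplex}); granting it, the paper concludes with a short diagram chase: the completion of the limit is the limit of the completed steps (right vertical arrow), each completed step equals the quasi-completed step (top arrow), hence the canonical topological embedding
\[
\mathrm D^{\infty}_c(M_1;\Omega_{\CO_1}^{r_1})\widetilde\otimes_{\mathrm{i}}\, \mathrm D^{\infty}_c(M_2;\Omega_{\CO_2}^{r_2})\hookrightarrow
\mathrm D^{\infty}_c(M_1;\Omega_{\CO_1}^{r_1})\widehat\otimes_{\mathrm{i}}\, \mathrm D^{\infty}_c(M_2;\Omega_{\CO_2}^{r_2})
\]
is surjective, so a topological isomorphism. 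Your plan of identifying both sides with the same limit of completed steps is equivalent, but the identification on the quasi-completed side is not purely a citation: it deserves the one-line argument that every element of the completion factors through some completed ($=$ quasi-completed) step and therefore already lies in the quasi-completion.
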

\begin{proof}
Let $r_1,r_2\in \BN$, $(K_1,s_1)\in \mathcal{C}(M_1)$ and $(K_2,s_2)\in \mathcal{C}(M_2)$.
The left vertical arrow of the commutative diagram \eqref{eq:diapre12.18} induces the following commutative
diagram (see \eqref{eq:notionindiag1} and \eqref{eq:notionindiag2} for the notations):
\[\begin{CD}
\bigoplus\limits_{\alpha\in \Gamma_1}\mathrm D^{\infty,r_1}_{K_{1,\alpha},s_1} \wt\otimes
  \bigoplus\limits_{\beta\in \Gamma_2}\mathrm  D^{\infty,r_2}_{K_{2,\beta},s_2}@> >>
            \mathrm D^{\infty}_{K_{1},s_1}(M_1;\Omega_{\CO_1}^{r_1})\wt \otimes\mathrm  D^{\infty}_{K_{2},s_2}(M_2;\Omega_{\CO_2}^{r_2})\\
               @V V V
                  @V V V\\
 \bigoplus\limits_{\alpha\in \Gamma_1}\mathrm D^{\infty,r_1}_{K_{1,\alpha},s_1} \wh\otimes
  \bigoplus\limits_{\beta\in \Gamma_2}\mathrm  D^{\infty,r_2}_{K_{2,\beta},s_2}@> >>
    \mathrm D^{\infty}_{K_{1},s_1}(M_1;\Omega_{\CO_1}^{r_1})\wh \otimes\mathrm  D^{\infty}_{K_{2},s_2}(M_2;\Omega_{\CO_2}^{r_2}).
              \end{CD}
              \]

By 
\eqref{eq:Kstilde=Kshat} and \eqref{eq:topiosondnck-ru2}, we have that
\begin{eqnarray*}  \mathrm D^{\infty,r_1}_{K_{1,\alpha},s_1}\wt\otimes
\mathrm  D^{\infty,r_2}_{K_{2,\beta},s_2}
=\mathrm D^{\infty,r_1}_{K_{1,\alpha},s_1}\wh\otimes
\mathrm  D^{\infty,r_2}_{K_{2,\beta},s_2}
\end{eqnarray*}
as LCS.
This implies that the left vertical arrow is a topological linear isomorphism.
Note that the bottom horizontal arrow is a continuous, open and surjective linear map by \cite[Proposition 43.9]{Tr}, which forces that the right vertical arrow is surjective and hence
\be\begin{split}\label{eq:DK1s1=DK2s2}
&\mathrm D^{\infty}_{K_{1},s_1}(M_1;\Omega_{\CO_1}^{r_1})\wt\otimes
\mathrm  D^{\infty}_{K_{2},s_2}(M_2;\Omega_{\CO_2}^{r_2})\\
=\quad& \mathrm D^{\infty}_{K_{1},s_1}(M_{1};\Omega_{\CO_1}^{r_1})\wh\otimes
\mathrm  D^{\infty}_{K_{2},s_2}(M_{2};\Omega_{\CO_2}^{r_2}).\end{split}\ee

Consider  the following commutative diagram:
\[\begin{CD}
\varinjlim\limits_{((K_1,s_1),(K_2,s_2))}\left(\mathrm D^{\infty,r_1}_{K_{1},s_1}\widetilde\otimes_{\mathrm{i}}\,\mathrm  D^{\infty,r_2}_{K_{2},s_2}\right)@> >>
            \varinjlim\limits_{((K_1,s_1),(K_2,s_2))}\left(\mathrm D^{\infty,r_1}_{K_{1},s_1}\wh\otimes_{\mathrm{i}}\,\mathrm  D^{\infty,r_2}_{K_{2},s_2}\right)\\
               @V V V
                  @V V V\\
 \mathrm D^{\infty}_c(M_1;\Omega_{\CO_1}^{r_1})\widetilde\otimes_{\mathrm{i}}\, \mathrm D^{\infty}_c(M_2;\Omega_{\CO_2}^{r_2})@> >>
   \mathrm D^{\infty}_c(M_1;\Omega_{\CO_1}^{r_1})\wh\otimes_{\mathrm{i}}\, \mathrm D^{\infty}_c(M_2;\Omega_{\CO_2}^{r_2}),
              \end{CD}
              \] where $((K_1,s_1),(K_2,s_2))$ runs over $\mathcal{C}(M_1)\times \mathcal{C}(M_2)$,
    \[\mathrm D^{\infty,r_1}_{K_{1},s_1}:=\mathrm D^{\infty}_{K_1,s_1}(M_1;\Omega_{\CO_1}^{r_1}),\quad \text{and} \quad   
    \mathrm  D^{\infty,r_2}_{K_{2},s_2}:=
\mathrm  D^{\infty}_{K_{2},s_2}(M_2;\Omega_{\CO_2}^{r_2}).\]
By  \cite[(2.12) and Proposition 2.20]{CSW2}, it follows from  \cite[Lemma A.14]{CSW2} that the right vertical arrow is a topological linear isomorphism. 
The top horizontal arrow is also a topological linear  isomorphism by 
\eqref{eq:DK1s1=DK2s2}. 
Thus the bottom horizontal arrow, which is a linear topological embedding, 
must be an isomorphism, as required. 
\end{proof}

By applying \cite[Proposition 3.5]{CSW2}, we also have the following result.

\begin{lemd}\label{lem:tOmegactilde=hat} For $r_1,r_2\in \BN$, we have that
\[\mathrm C^{-\infty}(M_1;\Omega_{\CO_1}^{r_1})\widetilde\otimes\, \mathrm C^{-\infty}(M_2;\Omega_{\CO_2}^{r_2})
=\mathrm C^{-\infty}(M_1;\Omega_{\CO_1}^{r_1})\widehat\otimes\, \mathrm C^{-\infty}(M_2;\Omega_{\CO_2}^{r_2})\]
as LCS.
\end{lemd}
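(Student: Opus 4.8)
The plan is to deduce the equality from the $E$-valued identity \cite[Proposition 3.5]{CSW2} by taking the coefficient space $E$ to be the second factor. Concretely, I would set
\[
E := \mathrm C^{-\infty}(M_2;\Omega_{\CO_2}^{r_2}),
\]
and record that, by the structure of the complex $\mathrm C^{-\infty}(M_2;\Omega_{\CO_2}^{\bullet})$ constructed in the previous section, this space is a complete (indeed nuclear reflexive) LCS. In particular $E$ is quasi-complete, so it is an admissible coefficient space for the $E$-valued formal generalized functions $\mathrm C^{-\infty}(M_1;\Omega_{\CO_1}^{r_1};E)$.

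Since $\Omega_{\CO_1}^{r_1}$ is locally free of finite rank (Remark \ref{rem:DFtop}), \cite[Proposition 3.5]{CSW2} applies with $M=M_1$, $\CF=\Omega_{\CO_1}^{r_1}$ and this choice of $E$. As already used above in the definition of the sheaf $\mathrm C^{-\infty}(\Omega_{\CO}^{r})_E$, this yields
\[
\mathrm C^{-\infty}(M_1;\Omega_{\CO_1}^{r_1})\widetilde\otimes E = \mathrm C^{-\infty}(M_1;\Omega_{\CO_1}^{r_1};E)
\]
as LCS. The key point is that, because $E$ is \emph{complete}, the same proposition also identifies $\mathrm C^{-\infty}(M_1;\Omega_{\CO_1}^{r_1};E)$ with the completed tensor product $\mathrm C^{-\infty}(M_1;\Omega_{\CO_1}^{r_1})\widehat\otimes E$; this is the completeness clause exactly parallel to the final assertion of Proposition \ref{prop:EvaluedDF}. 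Chaining these two identifications and substituting back $E=\mathrm C^{-\infty}(M_2;\Omega_{\CO_2}^{r_2})$ gives the asserted equality.

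The only substantive hypothesis to verify is the completeness of $E$, which is what licenses the passage from the quasi-completed tensor product to the completed one in \cite[Proposition 3.5]{CSW2}; this completeness is part of the structure theory of $\mathrm C^{-\infty}(M_2;\Omega_{\CO_2}^{\bullet})$ recorded earlier. Consequently I do not expect a genuine obstacle here: the entire content is the bookkeeping of applying the $E$-valued result with $E$ taken to be the second generalized-function space, together with nuclearity of the factors, which renders the unsubscripted $\widetilde\otimes$ and $\widehat\otimes$ unambiguous (so that $\widetilde\otimes=\widetilde\otimes_\pi$ and $\widehat\otimes=\widehat\otimes_\pi$).
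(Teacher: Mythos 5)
Your proof is correct and takes essentially the same route as the paper: the paper derives this lemma directly from \cite[Proposition 3.5]{CSW2} (using that $\Omega_{\CO_1}^{r_1}$ is locally free of finite rank), and your choice of coefficient space $E=\mathrm C^{-\infty}(M_2;\Omega_{\CO_2}^{r_2})$, together with its completeness and nuclearity as recorded in the structure of $\mathrm C^{-\infty}(M;\Omega_{\CO}^{\bullet})$, is precisely the intended application.
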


\vspace{3mm}

\noindent\textbf{Proof of Theorem \ref{thm:tensorofcscomplex}:}
Set \begin{eqnarray*}
    &&\mathcal{C}'(M_3):=\\&&\{(K_1\times K_2,s)\mid s\in \BN, K_1\subset M_1 \text{ and }K_2\subset M_2\ \text{are compact subsets}
\}\subset \mathcal{C}(M_3).
\end{eqnarray*}

Let $r\in \BN$.
By using \cite[(2.12) and Lemma A.14]{CSW2}, and
Proposition  \ref{prop:csdfisoK}, we have the following LCS identifications:
\begin{eqnarray*}
&&\bigoplus_{r_1,r_2\in \BN;r_1+r_2=r}\mathrm D^{\infty}_c(M_1;\Omega_{\CO_1}^{r_1})\wh\otimes_{\mathrm{i}}\, \mathrm D^{\infty}_c(M_2;\Omega_{\CO_2}^{r_2})\\
&=&\bigoplus_{r_1,r_2\in \BN;r_1+r_2=r}\left(\varinjlim_{(K_1,s_1)\in \mathcal{C}(M_1)}\mathrm D^{\infty}_{K_1,s_1}(M_1;\Omega_{\CO_1}^{r_1})
\widehat\otimes_{\mathrm{i}}\varinjlim_{(K_2,s_2)\in \mathcal{C}(M_2)}\mathrm D^{\infty}_{K_2,s_2}(M_2;\Omega_{\CO_2}^{r_2})\right)\\
&=&\bigoplus_{r_1,r_2\in \BN;r_1+r_2=r}\left(\varinjlim_{((K_1,s_1),(K_2,s_2))\in \mathcal{C}(M_1)\times \mathcal{C}(M_2)}
\mathrm D^{\infty}_{K_1,s_1}(M_1;\Omega_{\CO_1}^{r_1})\widehat\otimes_{\mathrm{i}}\mathrm D^{\infty}_{K_2,s_2}(M_2;\Omega_{\CO_2}^{r_2})\right)\\
&=&\varinjlim_{((K_1,s_1),(K_2,s_2))\in \mathcal{C}(M_1)\times \mathcal{C}(M_2)}\left(\bigoplus_{r_1,r_2\in \BN;r_1+r_2=r}
\mathrm D^{\infty}_{K_1,s_1}(M_1;\Omega_{\CO_1}^{r_1})\widehat\otimes_{\mathrm{i}}\mathrm D^{\infty}_{K_2,s_2}(M_2;\Omega_{\CO_2}^{r_2})\right)\\
&=& \varinjlim_{(K_1\times K_2,s)\in \mathcal{C}'(M_3)} \left( \varinjlim_{(s_1,s_2)\in \oT_s}\left(\bigoplus_{r_1,r_2\in \BN;r_1+r_2=r}
\mathrm D^{\infty}_{K_1,s_1}(M_1;\Omega_{\CO_1}^{r_1})\widehat\otimes_{\mathrm{i}}\mathrm D^{\infty}_{K_2,s_2}(M_2;\Omega_{\CO_2}^{r_2})\right)\right)\\
&=&\varinjlim_{(K_1\times K_2,s)\in \mathcal{C}'(M_3)} \mathrm D^{\infty}_{K_1\times K_2,s}(M_3;\Omega_{\CO_3}^{r})\\
&=&\varinjlim_{(K,s)\in \mathcal{C}(M_3)} \mathrm D^{\infty}_{K,s}(M_3;\Omega_{\CO_3}^{r})= \mathrm D^{\infty}_{c}(M_3;\Omega_{\CO_3}^{r}).
\end{eqnarray*}
Furthermore, by taking the strong dual,  it follows from \cite[Lemma A.14]{CSW2} that 
\[\bigoplus_{r_1,r_2\in \BN;r_1+r_2=r}\mathrm C^{-\infty}(M_1;\Omega_{\CO_1}^{r_1})\widehat\otimes\, \mathrm C^{-\infty}(M_2;\Omega_{\CO_2}^{r_2})
=\mathrm C^{-\infty}(M_3;\Omega_{\CO_3}^{r}).
\]
These, together with Lemmas \ref{lem:Omegactilde=hat} and  \ref{lem:tOmegactilde=hat}, finish the proof of Theorem \ref{thm:tensorofcscomplex}.

\qed

\section{Poincar\'e's lemma in the formal manifold setting}\label{sec:poin}
In this section, we generalize
the classical Poincar\'e's lemma of de Rham complexes 
on smooth manifolds to the setting of formal manifolds. 
As before, let  $E$ be a quasi-complete  LCS
throughout this section.


\subsection{Poincar\'e's lemma I} In this subsection, we prove the Poincar\'e's lemma for the de Rham complexes of
$(M,\CO)$ with coefficients {in} formal functions  or compactly supported formal distributions. 

Let $E_M(M)$ be the space of  $E$-valued locally constant functions over $M$, equipped with the point-wise convergence topology. 
Set 
\[
E_{M,\mathrm{f}}(M):=(\C_M(M))'\widetilde\otimes_{\mathrm{i}}\,E.
\]
Then we have the  LCS identifications
\[
E_M(M)=\prod_{Z\in \pi_0(M)}E\]and\[
E_{M,\mathrm{f}}(M)=\left(\bigoplus_{Z\in \pi_0(M)}\C\right)\widetilde\otimes_{\mathrm{i}}\,E=\bigoplus_{Z\in \pi_0(M)} E.
\]

Note that (see \cite[Lemma 6.6]{CSW1})
\[
\Omega^0_{\CO}(M;E)=\CO(M)\widetilde{\otimes}E=
\left(\prod_{Z\in \pi_0(M)}\CO(N)\right)\widetilde{\otimes}E=\prod_{Z\in \pi_0(M)}\left(\CO(N)\widetilde\otimes E\right).
\]
We define a canonical continuous linear map
\begin{eqnarray}\label{eq:varepsilon}
\varepsilon:\ E_M(M)=\prod_{Z\in \pi_0(M)}E&\rightarrow &\Omega^0_{\CO}(M;E)=\prod_{Z\in \pi_0(M)}\left(\CO(Z)\widetilde\otimes E\right),\\  \{v_Z\}_{Z\in \pi_0(M)}&\mapsto& \{1_Z\otimes v _Z\}_{Z\in \pi_0(M)},\notag
\end{eqnarray} where $1_Z$ is the identity element in $\CO(Z)$. 
Taking the  transpose of $\varepsilon: \,\BC_M(M)\rightarrow \Omega^0_{\CO}(M)$, we have a continuous linear map \be \label{eq:zataasamap}\zeta:\, {\mathrm D}^{-\infty}_c(M;\Omega_{\CO}^{0})\rightarrow \BC_{M,\mathrm{f}}(M).\ee 
Furthermore, by taking the quasi-completed inductive tensor product with $E$, we obtain a continuous linear map
\be \label{eq:zeta}\zeta: \,  {\mathrm D}^{-\infty}_c(M;\Omega_{\CO}^{0})\wt\otimes_{\mathrm{i}}E\rightarrow E_{M,\mathrm{f}}(M).\ee

 Note that the composition map \[d\circ\varepsilon:\ E_M(M)\rightarrow \Omega^0_\CO(M;E)\rightarrow \Omega^1_{\CO}(M;E)\] is zero, and so is  the composition map 
 \[ 
 \zeta\circ d:\
 {\mathrm D}^{-\infty}_c(M;\Omega_{\CO}^{1})\wt\otimes_{\mathrm{i}}E
 \rightarrow {\mathrm D}^{-\infty}_c(M;\Omega_{\CO}^{0})\wt\otimes_{\mathrm{i}}E\rightarrow E_{M,\mathrm{f}}(M).
 \]
Then as a slight modification of the de Rham complex \eqref{eq:Omegabullet}, we have an augmented de Rham complex
\be\label{eq:ardeRham1}\cdots\rightarrow 0\rightarrow E_M(M)\xrightarrow{\varepsilon} \Omega^0_{\CO}(M;E)\xrightarrow{d} \Omega^1_{\CO}(M;E)\xrightarrow{d} \cdots. \ee
  Similarly,  as a slight modification of the de Rham complex \eqref{eq:D-inftyc},   we have  a  coaugmented de Rham complex
    \be \label{eq:coardeRham1}\cdots \xrightarrow{d}
{\mathrm D}^{-\infty}_c(M;\Omega_{\CO}^{1})\wt\otimes_{\mathrm{i}}E\xrightarrow{d}{\mathrm D}^{-\infty}_c(M;\Omega_{\CO}^{0})\wt\otimes_{\mathrm{i}}E\xrightarrow{\zeta}E_{M,\mathrm{f}}(M)\rightarrow0\rightarrow  \cdots.
\ee

 The main goal of this subsection is to prove the following theorem, which implies Theorem \ref{pf} provided that $E=\C$. 

\begin{thmd}\label{thm:poinlem1}
    
    Suppose that $M=N^{(k)}$ for some contractible smooth manifold $N$ and $k\in \BN$. 
    Then   the topological cochain complexes
   \eqref{eq:ardeRham1}
    and 
   \eqref{eq:coardeRham1} are 
   both strongly exact.
\end{thmd}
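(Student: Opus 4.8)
The plan is to show that the augmentation $\varepsilon$ is the source of a \emph{topological homotopy equivalence} $\underline{E}\to\Omega^\bullet_\CO(M;E)$, where $\underline{E}$ is $E_M(M)$ placed in degree $0$, and then to transpose for the second complex. Since $N$ is contractible it is connected, so $\pi_0(M)=\pi_0(N)$ is a singleton and $E_M(M)=E$, $\C_M(M)=\C$. Recall that a contracting homotopy $\{h^r\}$ with $dh+hd=\mathrm{id}$ forces exactness, and that taking $\beta=h$ in the definition of a strong map makes every differential strong (indeed $d^r h^{r+1} d^r=(\mathrm{id}-h^{r+2}d^{r+1})d^r=d^r$); thus a continuous contracting homotopy of each augmented/coaugmented complex yields strong exactness at once.

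First I would reduce to $E=\C$. By \eqref{EEE} the $E$-valued complex \eqref{eq:ardeRham1} is $\widetilde\Omega^\bullet\,\widetilde\otimes\,\iota^\bullet(E)$, where $\widetilde\Omega^\bullet$ is the $\C$-coefficient augmented complex $0\to\C\xrightarrow{\varepsilon}\Omega^0_\CO(M)\xrightarrow{d}\cdots$; tensoring a contracting homotopy of $\widetilde\Omega^\bullet$ with $\mathrm{id}_E$ produces one for the $E$-valued complex, the continuity being furnished by the functoriality of $\widetilde\otimes$ recalled in Appendix \ref{appendixC2}. Likewise \eqref{eq:coardeRham1} is the transpose ${}^t\widetilde\Omega^\bullet$ (a complex of reflexive complete LCS, with $\zeta={}^t\varepsilon$) quasi-completed-inductively tensored with $E$; the transpose of a continuous contracting homotopy is again a continuous contracting homotopy for the transposed differentials. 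Hence it suffices to exhibit a continuous contracting homotopy of $\widetilde\Omega^\bullet$, equivalently to prove that $\varepsilon_\C\colon\underline{\C}\to\Omega^\bullet_\CO(M)$ is a topological homotopy equivalence.

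Next I would realize $M$ as a product of formal manifolds. By \cite[Lemma 6.19]{CSW1} one has $N^{(k)}=N^{(0)}\times(\R^0)^{(k)}$, where $(\R^0)^{(k)}=(\{\mathrm{pt}\},\C[[y_1,\dots,y_k]])$. Theorem \ref{thm:tensorofcomplex} identifies $\Omega^\bullet_\CO(M)$ with $\Omega^\bullet(N)\,\widehat\otimes\,\Omega^\bullet_{(\R^0)^{(k)}}$, and under this identification $\varepsilon_\C$ factors as $\varepsilon_1\widehat\otimes\varepsilon_2$ (using $\C=\C\widehat\otimes\C$). It then remains to prove that each factor augmentation is a topological homotopy equivalence and to invoke the appendix fact that the completed tensor product of two topological homotopy equivalences is again one. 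For the formal factor, Example \ref{ex:DF} identifies the complex with the Koszul-type complex $\C[[y]]\otimes\Lambda^\bullet(dy_1,\dots,dy_k)$, $d=\sum_j dy_j\wedge\partial_{y_j}$; the classical formal Poincar\'e lemma supplies the explicit homotopy coming from the Euler field (integrating $\iota_{\sum_j y_j\partial_{y_j}}$ against $dt/t$ termwise), which acts diagonally for the grading by total $y$-degree and is therefore continuous on the nuclear Fr\'echet space $\C[[y]]$, so that together with the constant-term projection $\varepsilon_2$ is a topological homotopy equivalence.

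For the smooth factor $N$, I would use that a contractible $N$ admits a smooth contraction $H$ (after smoothing a continuous one if necessary), and feed the associated operator $K\omega=\int_0^1\iota_{\partial_t}H^*\omega\,dt$ into the standard identity $dK+Kd=\mathrm{id}-c^*$; on positive-degree forms $c^*=0$, while evaluation $\mathrm{ev}_p$ at the base point is a left inverse of $\varepsilon_1$ and $\varepsilon_1\circ\mathrm{ev}_p=c^*\simeq\mathrm{id}$ via $K$, so $\varepsilon_1$ is a homotopy equivalence with homotopy inverse $\mathrm{ev}_p$. The one genuinely analytic point---and the main obstacle---is the continuity of $K$ in the $\mathrm{C}^\infty$ topology on $\Omega^\bullet(N)$, which is exactly what upgrades the classical Poincar\'e lemma to a \emph{topological} homotopy equivalence; granting this, all remaining steps are formal manipulations with the tensor-product and transpose machinery of the appendices.
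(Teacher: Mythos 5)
Your proposal is correct and takes essentially the same route as the paper: reduce strong exactness to showing that $\varepsilon$ and $\zeta$ are topological homotopy equivalences (Lemmas \ref{lem:tophomimplestrong1} and \ref{lem:tophomimplestrong2}), strip off $E$ by tensoring with $\iota^\bullet(E)$, split $M=N^{(0)}\times(\R^0)^{(k)}$ via Theorem \ref{thm:tensorofcomplex} together with Lemma \ref{prop:tensorhomoequi}, handle the distribution complex by transposition (Lemma \ref{lem:trancomthe}), and quote the classical smooth Poincar\'e lemma for the factor $N^{(0)}$ (the paper's Lemma \ref{putong}, which cites \cite{Man} for exactly the continuity point you flag). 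The only difference is cosmetic: on the formal factor you use the Euler-field Koszul homotopy on $\C[[y_1,\dots,y_k]]$ directly for all $k$, whereas the paper writes down the homotopy explicitly for $k=1$ and then tensors $k$ copies (Lemma \ref{dandian}).
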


We extend the maps 
\[\varepsilon: \,E_M(M)\rightarrow \Omega^0_{\CO}(M;E)\quad \text{and}\quad \zeta:\, {\mathrm D}^{-\infty}_c(M;\Omega_{\CO}^{0})\wt\otimes_{\mathrm{i}}E\rightarrow E_{M,\mathrm{f}}(M)\] to  continuous complex maps 
\be \label{eq:commapvarepsilon}
\varepsilon:\quad \iota^\bullet(E_M(M))\rightarrow \Omega_{\CO}^{\bullet}(M;E)
\ee 
and \be\label{eq:commapeta}
\zeta:\quad {\mathrm D}^{-\infty}_c(M;\Omega_{\CO}^{-\bullet})\wt\otimes_{\mathrm{i}}\iota ^{\bullet}(E)\rightarrow \iota^\bullet(E_{M,\mathrm{f}}(M)),
\ee
respectively.
Here 
$\iota^{\bullet}(E_M(M))$ and $\iota^\bullet(E_{M,\mathrm{f}}(M))$ are as in \eqref{eq:iotaE}.

Using Lemmas \ref{lem:tophomimplestrong1} and \ref{lem:tophomimplestrong2},  Theorem \ref{thm:poinlem1} 
is implied by the following result. 

\begin{thmd}\label{poin}
Suppose that $M=N^{(k)}$ for some contractible smooth manifold $N$ and $k\in \BN$.
 Then the continuous complex maps \eqref{eq:commapvarepsilon} and \eqref{eq:commapeta}
 are both topological homotopy equivalences.
\end{thmd}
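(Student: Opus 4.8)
The plan is to reduce the statement to two elementary building-block cases by exploiting the product decomposition $N^{(k)}=N^{(0)}\times(\R^0)^{(k)}$ together with the tensor-product identifications of Theorem~\ref{thm:tensorofcomplex}, after first stripping away the coefficient space $E$ and the map $\zeta$ by pure functoriality. First I would dispose of the coefficients: by the very construction of \eqref{eq:commapvarepsilon} and \eqref{eq:commapeta} (see also \eqref{EEE}), the maps $\varepsilon$ and $\zeta$ for general $E$ are obtained from the case $E=\C$ by applying $-\,\widetilde\otimes\,\iota^\bullet(E)$ and $-\,\widetilde\otimes_{\mathrm{i}}\,\iota^\bullet(E)$, respectively. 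Since tensoring (projectively or inductively) with a fixed topological cochain complex carries a topological homotopy equivalence to one of the same type—the homotopy operator simply tensors with the identity—it suffices to treat $E=\C$. Next, $\zeta_\C$ is by definition the transpose of $\varepsilon_\C$ (see \eqref{eq:zataasamap}); as all spaces in sight are reflexive and complete, transposition sends a topological homotopy equivalence to a topological homotopy equivalence (one transposes the homotopy operator). Hence the whole theorem reduces to showing that
\[
\varepsilon:\ \iota^\bullet(\BC_M(M))\longrightarrow \Omega^\bullet_\CO(M)
\]
is a topological homotopy equivalence when $M=N^{(k)}$ with $N$ contractible; note $\BC_M(M)=\C$ here since a contractible $N$ is connected.

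For this last assertion I would invoke the decomposition $N^{(k)}=N^{(0)}\times(\R^0)^{(k)}$ and the identification $\Omega^\bullet_{\CO}(N^{(k)})=\Omega^\bullet(N)\,\widehat\otimes\,\Omega^\bullet_{\mathrm f}$ of Theorem~\ref{thm:tensorofcomplex}, where $\Omega^\bullet_{\mathrm f}:=\Omega^\bullet_{\CO}((\R^0)^{(k)})$ is the formal de Rham complex of $\C[[y_1,\dots,y_k]]$. Under this identification $\varepsilon$ factors as the completed tensor product $\varepsilon_N\,\widehat\otimes\,\varepsilon_{\mathrm f}$ of the two augmentations, compatibly with $\C=\C\,\widehat\otimes\,\C$. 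A tensor product of topological homotopy equivalences is again one, so it remains to handle the two factors separately.

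For the factor $N^{(0)}$ this is the classical smooth Poincar\'e lemma: a smooth contraction of $N$ (which exists by Whitney approximation) yields the standard operator $K\omega=\int_0^1 \iota_{\partial_t}H^*\omega\,dt$ with $dK+Kd=\mathrm{id}-c^*$, where $c^*=\varepsilon_N\circ\mathrm{ev}$ and $\mathrm{ev}$ is evaluation at the base point; since $K$ is continuous in the nuclear Fr\'echet $C^\infty$-topology, $\varepsilon_N$ is a topological homotopy equivalence with homotopy inverse $\mathrm{ev}$. For the factor $(\R^0)^{(k)}$ the complex $\Omega^\bullet_{\mathrm f}$ is purely algebraic; grading forms by total weight $m$ (polynomial degree in the $y_j$ plus form degree), the Euler operator $L_E=d\iota_E+\iota_E d$ acts on the weight-$m$ part by multiplication by $m$, so $h:=m^{-1}\iota_E$ on the subspace $\{m>0\}$ satisfies $dh+hd=\mathrm{id}$ there and contracts $\Omega^\bullet_{\mathrm f}$ onto the constants $\C=\mathrm{Im}\,\varepsilon_{\mathrm f}$. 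Because $h$ is diagonal with respect to the weight grading with uniformly bounded multipliers $m^{-1}\le 1$, it is continuous for the product (nuclear Fr\'echet) topology on $\C[[y]]$, so $\varepsilon_{\mathrm f}$ is a topological homotopy equivalence as well.

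The only routine verifications are the two chain-homotopy identities and the continuity of these explicit operators. The main obstacle I anticipate is topological rather than algebraic: one must check that the homotopy operators are genuinely continuous in the prescribed LCS topologies, and—more delicately—that the completed tensor products ($\widehat\otimes$, $\widehat\otimes_{\mathrm i}$) and the transpose operation actually preserve topological homotopy equivalences. These preservation properties hinge on the nuclearity of the function spaces and the reflexivity and completeness of all spaces involved (so that the various completed tensor products coincide, as already recorded in Theorem~\ref{thm:tensorofcomplex} and its corollaries), and they are precisely the points where the appendix machinery on topological cochain complexes must be brought to bear.
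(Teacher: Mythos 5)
Your proposal is correct and follows essentially the same route as the paper: reduce to $E=\C$ by tensoring with $\iota^\bullet(E)$ (Lemma \ref{prop:tensorhomoequi}), handle $\zeta$ as the transpose of $\varepsilon$ (Lemma \ref{lem:trancomthe}), split $N^{(k)}=N^{(0)}\times(\R^0)^{(k)}$ via the tensor identification of Proposition \ref{propro}, and settle the two factors by the classical smooth Poincar\'e lemma and a contraction of the formal factor. The only local difference is that for $(\R^0)^{(k)}$ you use the Euler-operator (Koszul) homotopy $h=m^{-1}\iota_E$ in all $k$ variables at once, whereas the paper treats $k=1$ by explicit term-wise integration and then tensors $k$ copies together; both contractions are continuous for the product topology on $\C[[y_1,\dots,y_k]]$, so the two variants are interchangeable.
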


We prove Theorem \ref{poin} in the rest part of this subsection.
Suppose that  $M$ is as in Theorem \ref{poin}. In this case, we have that $E_M(M)=E$. 
Then the continuous complex map  \eqref{eq:commapvarepsilon} can  be written as \[\varepsilon:\quad \iota^\bullet(E)\rightarrow \Omega_{\CO}^{\bullet}(M;E),
\] with 
the continuous linear map 
\[ \varepsilon:\,E_M(M)=E\rightarrow \Omega^0_{\CO}(M;E)=\CO(M)\widetilde\otimes E,\quad v \mapsto 1\otimes v.\] 
Similarly, the continuous complex map \eqref{eq:commapeta} can  be written as
\[\zeta:\quad {\mathrm D}^{-\infty}_c(M;\Omega_{\CO}^{-\bullet})\wt\otimes_{\mathrm{i}}\iota^{\bullet}(E)\rightarrow \iota^\bullet(E),\]  with the continuous linear map 
\begin{eqnarray}\label{eq:zetaspecial}
 \zeta:\ {\mathrm D}^{-\infty}_c(M;\Omega_{\CO}^{0})\wt\otimes_{\mathrm{i}}E=(\CO(M))'\,\widetilde\otimes_{\mathrm{i}} E&\rightarrow& E_{M,\mathrm{f}}(M)=E,\\ \eta \otimes v &\mapsto &\eta (1)v \notag\end{eqnarray}
  where $1\in \CO(M)$ is the identity element.

The following result is a form of Poincar\'{e}'s lemma (see \cite[\S\,4.3]{Man}).

\begin{lemd}\label{putong}
The complex map  \eqref{eq:commapvarepsilon} is a topological homotopy equivalence when $M=N^{(0)}$ and $E=\C$.
\end{lemd}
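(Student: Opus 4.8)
The plan is to recognise that, under the hypotheses $M=N^{(0)}$ and $E=\C$, the complex $\Omega_\CO^\bullet(M;E)$ is nothing but the ordinary smooth de Rham complex $\Omega^\bullet(N)$ of the contractible manifold $N$, that $\iota^\bullet(E_M(M))=\iota^\bullet(\C)$ is concentrated in degree $0$, and that $\varepsilon$ is the inclusion $\C\to\RC^\infty(N)=\Omega^0(N)$ of the constant functions. Fixing a basepoint $p_0\in N$, I will exhibit the evaluation map $\mathrm{ev}_{p_0}\colon \Omega^0(N)\to\C$, viewed as a complex map $g\colon \Omega^\bullet(N)\to\iota^\bullet(\C)$ concentrated in degree $0$, as a continuous homotopy inverse of $\varepsilon$. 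Since $\mathrm{ev}_{p_0}\circ\varepsilon=\mathrm{id}_\C$, the homotopy on the $\iota^\bullet(\C)$ side is zero, and the entire content is a continuous contracting homotopy $h$ on $\Omega^\bullet(N)$ witnessing $dh+hd=\mathrm{id}-\varepsilon\circ\mathrm{ev}_{p_0}$.

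To build $h$, I will use that a contractible smooth manifold is smoothly contractible: after Whitney-smoothing a topological contraction and reparametrising the time variable by a smooth function that is constant near the two ends, I obtain a smooth map $F\colon \R\times N\to N$ with $F(t,\cdot)=\mathrm{id}_N$ for $t\gg 0$ and $F(t,\cdot)\equiv p_0$ for $t\ll 0$. The homotopy operator is then the fibre integration of the pullback,
\[
h\colon \Omega^r(N)\to\Omega^{r-1}(N),\qquad h\omega=\int_\R \iota_{\partial_t}(F^*\omega)\,dt,
\]
whose integrand is supported in a compact $t$-interval because $\partial_t F$ vanishes outside it. The classical Cartan/Stokes computation then gives $dh+hd=F_{+\infty}^*-F_{-\infty}^*=\mathrm{id}-\varepsilon\circ\mathrm{ev}_{p_0}$, which is exactly the required identity; this is the form of Poincar\'e's lemma recorded in \cite[\S\,4.3]{Man}.

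The genuinely new, topological, point is the continuity of all three maps in the relevant LCS topologies (that of $\Omega^r_\CO(M)$, which for $k=0$ is the usual $\RC^\infty$-topology). Continuity of $\varepsilon$ is already part of its construction, and $\mathrm{ev}_{p_0}$ is plainly continuous. For $h$, I will factor it as $F^*$ followed by fibre integration: the pullback $F^*\colon \Omega^\bullet(N)\to\Omega^\bullet(\R\times N)$ is continuous by Proposition~\ref{pro} applied to the morphism of formal manifolds $(\R\times N)^{(0)}\to N^{(0)}$ induced by $F$, while fibre integration is continuous by the elementary seminorm estimate $\sup_{K,\,|\alpha|\le m}\big|\partial_x^\alpha h\omega\big|\le \sup_{[0,1]\times K,\,|\alpha|\le m}\big|\partial_x^\alpha\,\iota_{\partial_t}F^*\omega\big|$ over compact $K\subset N$. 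Composing with the definitions from Section~\ref{appendixC1} then identifies $(\varepsilon,g,h)$ as a topological homotopy equivalence. I expect the main obstacle to be precisely this continuity bookkeeping for the fibre-integration homotopy---ensuring the integrand's compact support in $t$ and that the resulting estimates are genuinely with respect to the smooth topology used throughout---rather than the purely algebraic homotopy identity, which is classical.
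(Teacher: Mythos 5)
Your proof is correct. The paper in fact offers no proof of this lemma at all---it simply labels it ``a form of Poincar\'e's lemma'' and cites \cite[\S\,4.3]{Man}---and your argument (smoothing the contraction, the fibre-integration homotopy operator $h\omega=\int_\R \iota_{\partial_t}(F^*\omega)\,dt$ with $\mathrm{ev}_{p_0}$ as homotopy inverse, and the continuity checks for $\varepsilon$, $\mathrm{ev}_{p_0}$ and $h$ in the smooth topology) is precisely the classical proof that citation points to, with the topological bookkeeping that the notion of topological homotopy equivalence requires made explicit.
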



\begin{lemd}\label{dandian}
The complex map  \eqref{eq:commapvarepsilon} is a topological homotopy equivalence when $E=\C$ and  $M=(\R^0)^{(k)}$ for some $k\in\BN$.
\end{lemd}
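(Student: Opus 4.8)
The plan is to recognize this as the purely formal (algebraic) Poincar\'e lemma for the power series ring and to exhibit an explicit, continuous contracting homotopy built from the Euler derivation; this is the companion of Lemma \ref{putong}, the two being the building blocks to which the tensor product Theorem \ref{thm:tensorofcomplex} will be applied for the general $N^{(k)}$ case. Since $N=\R^0$ is a point, we have $\CO(M)=\C[[y_1,\dots,y_k]]$, and by Example \ref{ex:DF} the de Rham complex reads
\[\Omega^r_\CO(M)=\bigoplus_{J\in\Lambda^r_k}\C[[y_1,\dots,y_k]]\,dy_J,\qquad df=\sum_{j=1}^k dy_j\wedge\partial_{y_j}(f),\]
carrying the smooth topology, under which each $\Omega^r_\CO(M)$ is a finite product of copies of the nuclear Fr\'echet space $\C[[y]]=\prod_{L\in\BN^k}\C$ (coefficientwise convergence). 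First I would introduce the retraction $s\colon\Omega^\bullet_\CO(M)\to\iota^\bullet(\C)$ given in degree $0$ by $f\mapsto f(0)$ (the constant term) and by $0$ in positive degrees; this is a continuous complex map with $s\circ\varepsilon=\mathrm{id}_{\iota^\bullet(\C)}$. It then remains only to produce a continuous homotopy between $\varepsilon\circ s$ and $\mathrm{id}$.

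The key step is the construction of the homotopy operator $h\colon\Omega^r_\CO(M)\to\Omega^{r-1}_\CO(M)$. Let $\mathcal{E}=\sum_{j=1}^k y_j\partial_{y_j}$ be the Euler derivation and grade forms by \emph{total degree}, declaring $y^L\,dy_J$ to have total degree $\abs{L}+r$ when $\abs{J}=r$. Both $d$ and the interior product $\iota_{\mathcal{E}}$ (contraction against $\mathcal{E}$) preserve this grading, and Cartan's formula gives $d\,\iota_{\mathcal{E}}+\iota_{\mathcal{E}}\,d=\mathcal{L}_{\mathcal{E}}$, which acts as multiplication by $p$ on the total-degree-$p$ part. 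I would therefore set $h=\tfrac1p\,\iota_{\mathcal{E}}$ on total degree $p\ge1$ and $h=0$ on the constant part; explicitly, for $r\ge1$,
\[h\bigl(y^L\,dy_{j_1}\wedge\cdots\wedge dy_{j_r}\bigr)=\frac{1}{\abs{L}+r}\sum_{a=1}^r(-1)^{a-1}y_{j_a}y^L\,dy_{j_1}\wedge\cdots\wedge\widehat{dy_{j_a}}\wedge\cdots\wedge dy_{j_r}.\]
A check via Cartan's formula then yields $dh+hd=\mathrm{id}-\varepsilon\circ s$ degree by degree: in positive form degree every total degree is $\ge1$, so $dh+hd=\mathrm{id}$, while in degree $0$ a direct computation (using $\sum_j l_j=\abs{L}$) gives $hd(f)=f-f(0)$.

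Finally I would verify continuity of $h$. Because the topology on each $\Omega^r_\CO(M)$ is the product topology over the coefficients indexed by $(L,J)$, a linear operator is continuous as soon as each output coefficient depends on only finitely many input coefficients. By the displayed formula, the coefficient of $y^{L'}dy_{J'}$ in $h\omega$ equals $\tfrac{1}{\abs{L'}+r-1}$ times a signed sum of the coefficients of $y^{L'-e_j}dy_{J'\cup\{j\}}$ of $\omega$, a combination of at most $k-(r-1)$ input coefficients with scalars bounded by $1$. Hence $h$ is a continuous linear map of degree $-1$, the identity $dh+hd=\mathrm{id}-\varepsilon\circ s$ exhibits $\varepsilon$ and $s$ as mutually inverse up to continuous homotopy, and \eqref{eq:commapvarepsilon} is a topological homotopy equivalence. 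I expect the continuity bookkeeping to be the main obstacle: one must ensure that the infinite diagonal sum $\sum_{p\ge1}\tfrac1p\,\iota_{\mathcal{E}}$ defines a genuine continuous endomorphism of the formal function spaces rather than a merely algebraic one, which is precisely where the boundedness of the scalars $1/p$ and the finiteness of the coefficient dependence are essential.
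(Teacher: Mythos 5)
Your proof is correct, but it takes a genuinely different route from the paper. The paper treats $k=1$ explicitly (with the homotopy given by formal integration $\sum_i c_i y^i\,dy\mapsto \sum_i \tfrac{c_i}{i+1}y^{i+1}$) and then deduces the general case by writing $(\R^0)^{(k)}$ as the $k$-fold product of $(\R^0)^{(1)}$, invoking Proposition \ref{propro} to identify $\Omega^\bullet_{\CO_{\R^0}^{(k)}}(\R^0)$ with the $k$-fold tensor product of the one-variable complexes, and Lemma \ref{prop:tensorhomoequi} to conclude that a tensor product of topological homotopy equivalences is again one. You instead construct a single closed-form homotopy for all $k$ at once: the normalized contraction $h=\tfrac1p\,\iota_{\mathcal E}$ against the Euler derivation on the total-degree-$p$ part, with the homotopy identity following from Cartan's formula $d\iota_{\mathcal E}+\iota_{\mathcal E}d=\mathcal L_{\mathcal E}$ (which, in this formal setting, is most safely justified by checking it directly on the monomials $y^L\,dy_J$, exactly as your explicit formula does), and continuity checked coefficient-wise. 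Your continuity argument is the right one and in fact simpler than you suggest: since each $\Omega^r_\CO(M)$ carries the product topology over the coefficients, any linear map in which each output coefficient is a \emph{finite} linear combination of input coefficients is automatically continuous, with no boundedness condition on the scalars $1/p$ needed. What the paper's route buys is that no new computation is required beyond one formal variable, at the cost of relying on the tensor-product machinery of Section \ref{sec:tenderham} (which is needed later anyway, e.g.\ to pass from $N^{(0)}\times(\R^0)^{(k)}$ to $N^{(k)}$ in Lemma \ref{lem:generalpo=C}); what your route buys is a self-contained, explicit contracting homotopy in all variables simultaneously, avoiding the sign bookkeeping of tensor-product homotopies.
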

\begin{proof} This is well-known. We provide a proof for the convenience of the reader. The complex map \eqref{eq:commapvarepsilon} is the identity map of $\iota^{\bullet}(\BC)$ when $k=0$. Now we assume that $k=1$.
In this case, by using Example \ref{ex:DF}, the complex $\Omega_{\CO}^{\bullet}(M)$ is as follows:
\[\cdots\rightarrow 0\rightarrow \C[[y]]\xrightarrow{d} \C[[y]]dy\rightarrow 0\rightarrow \cdots,\]
where the coboundary map $d$ is given by 
\[d: \C[[y]]\rightarrow \C[[y]]dy,\quad \sum_{i\in \BN} c_i y^i\mapsto \left(\sum_{i\in \BN\setminus \{0\}} i c_i y^{i-1}\right)dy.\]
Let $g:\Omega_{\CO}^{\bullet }(M)\rightarrow \iota^\bullet(\C)$ be the complex map determined by
\[g:\C[[y]]\rightarrow \C,\quad \sum_{i\in \BN} c_i y^i\mapsto c_0.\]
It is clear that
\[g\circ \varepsilon=\mathrm{id}_{\iota^{\bullet}(\BC)}:\ \iota^\bullet(\BC)\rightarrow \iota^\bullet(\C).\]

On the other hand, let $h:\Omega_{\CO}^{\bullet}(M)\rightarrow\Omega_{\CO}^{\bullet}(M)$ be the continuous map of degree $-1$ determined by
\[h:\BC[[y]]dy\rightarrow\BC[[y]],\quad
		\sum_{i\in\BN }c_iy^{i}dy \mapsto \sum_{i\in\BN }\frac{c_i}{i+1}y^{i+1}.\]
Then we have that
 \[h\circ d+d\circ h=\mathrm{id}-\varepsilon\circ g:\ \Omega_{\CO}^{\bullet}(M)\rightarrow\Omega_{\CO}^{\bullet}(M).\]
This implies that $\varepsilon$ is a  topological homotopy equivalence.

For the general case, since $(\R^0)^{(k)}$ is the product of $k$-copies of $(\R^0)^{(1)}$, it follows from Theorem \ref{propro}
that
\[\Omega_{\CO_{\R^0}^{(k)}}^{\bullet}(\BR^0)=\underbrace{\Omega_{\CO_{\R^0}^{(1)}}^{\bullet}(\BR^{0})\widetilde\otimes  \Omega_{\CO_{\R^0}^{(1)}}^{\bullet}(\BR^{0})\widetilde\otimes  \cdots\widetilde\otimes\Omega_{\CO_{\R^0}^{(1)}}^{\bullet}(\BR^{0})}_k.\]
Under this identification, the complex map $\varepsilon: \iota^\bullet(\C)\rightarrow \Omega_{\CO_{\R^0}^{(k)}}^{\bullet}(\BR^0)$
coincides with the following tensor product of complex maps: 
\[
\underbrace{\varepsilon\otimes \cdots\otimes \varepsilon}_k:\
\underbrace{\iota^\bullet(\C)\widetilde\otimes
 \cdots\widetilde\otimes\iota^\bullet(\C)}_k
\rightarrow
\underbrace{\Omega_{\CO_{\R^0}^{(1)}}^{\bullet}(\BR^{0})\widetilde
\otimes  \cdots\widetilde\otimes\Omega_{\CO_{\R^0}^{(1)}}^{\bullet}(\BR^{0})}_k.
\]
The assertion then follows from  Lemma \ref{prop:tensorhomoequi}.
\end{proof}

\begin{lemd}\label{lem:generalpo=C}
The complex maps  \eqref{eq:commapvarepsilon} and \eqref{eq:commapeta} are  topological homotopy equivalences when  $E=\C$.
\end{lemd}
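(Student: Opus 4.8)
The plan is to reduce both assertions to the two base cases already in hand (Lemmas \ref{putong} and \ref{dandian}) by exploiting the product decomposition $N^{(k)} = N^{(0)} \times (\R^0)^{(k)}$ together with the K\"unneth-type identification of Theorem \ref{thm:tensorofcomplex}, and then to deduce the statement for $\zeta$ from that for $\varepsilon$ by transposition. The point is that the product structure handles $\varepsilon$, and duality handles $\zeta$ for free, so no separate ``dual Poincar\'e lemma'' on the factors is needed.

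First I would treat the map \eqref{eq:commapvarepsilon}. Since $N$ is contractible, it is connected, so $E_M(M)=\C_M(M)=\C$ and \eqref{eq:commapvarepsilon} is the augmentation $\varepsilon\colon \iota^\bullet(\C)\to \Omega^\bullet_\CO(M)$. Writing $M_1:=(N,\CO_N^{(0)})$ and $M_2:=(\R^0)^{(k)}$, the identity \cite[Lemma 6.19]{CSW1} identifies their product with $M=N^{(k)}$, so Theorem \ref{thm:tensorofcomplex} gives $\Omega^\bullet_\CO(M)=\Omega^\bullet_{\CO_1}(M_1)\widetilde\otimes \Omega^\bullet_{\CO_2}(M_2)$ as topological cochain complexes. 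Under $\iota^\bullet(\C)=\iota^\bullet(\C)\widetilde\otimes \iota^\bullet(\C)$ the constant $0$-form $1_M$ corresponds to $1_{M_1}\otimes 1_{M_2}$; since a complex map out of $\iota^\bullet(\C)$ is completely determined by the (closed) image of its degree-$0$ generator, this forces $\varepsilon=\varepsilon_1\widetilde\otimes \varepsilon_2$, where $\varepsilon_1,\varepsilon_2$ are the augmentations for $M_1,M_2$. Lemma \ref{putong} shows $\varepsilon_1$ is a topological homotopy equivalence, Lemma \ref{dandian} gives the same for $\varepsilon_2$, and Lemma \ref{prop:tensorhomoequi} then delivers the conclusion for $\varepsilon$.

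Next I would treat \eqref{eq:commapeta}. When $E=\C$ one has $\C_{M,\mathrm{f}}(M)=\C$ and ${}^t\iota^\bullet(\C)=\iota^\bullet(\C)$, and by the construction of $\zeta$ via \eqref{eq:zataasamap} together with the definition $\mathrm D^{-\infty}_c(M;\Omega_\CO^{-\bullet})={}^t\Omega^\bullet_\CO(M)$, the complex map \eqref{eq:commapeta} is precisely the transpose ${}^t\varepsilon$ of the complex map \eqref{eq:commapvarepsilon} (both are the unique complex maps extending $\zeta$, whose existence uses $\zeta\circ d=0$). I would then invoke the general fact (Appendix \ref{appendixC3}) that transposition is additive and compatible with composition and grading, so it carries topological homotopy equivalences to topological homotopy equivalences: if $g$ is a homotopy inverse of $\varepsilon$ with homotopy operators $h$, then ${}^t g$ is a homotopy inverse of ${}^t\varepsilon$ with homotopy operators ${}^t h$. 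Combined with the first part, this shows $\zeta$ is a topological homotopy equivalence.

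The main obstacle I anticipate is bookkeeping rather than conceptual. On the one hand I must check that the abstract factorization $\varepsilon=\varepsilon_1\widetilde\otimes\varepsilon_2$ is genuinely compatible with the concrete isomorphism $\Psi$ of Theorem \ref{thm:tensorofcomplex}, i.e.\ that the augmentations match under $\Psi$ on the nose and not merely up to a sign or scalar; the degree-$0$ compatibility $1_M\mapsto 1_{M_1}\otimes 1_{M_2}$ is what makes this work. On the other hand, on the dual side I must verify that transposition interacts correctly with the sign convention built into the differential of $\mathrm D^{-\infty}_c(M;\Omega_\CO^{-\bullet})$ (cf.\ \eqref{eq:gradeddualofd}), so that the transposed homotopy operators are continuous and satisfy the homotopy identities exactly; here the reflexivity and completeness of the spaces involved guarantee that the transpose of a continuous map is continuous. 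Both verifications are routine once these compatibilities are recorded.
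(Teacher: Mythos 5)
Your proposal is correct and follows essentially the same route as the paper's own proof: decompose $M=N^{(0)}\times(\R^0)^{(k)}$, use the K\"unneth identification (Proposition \ref{propro}/Theorem \ref{thm:tensorofcomplex}) to write $\varepsilon$ as $\varepsilon\otimes\varepsilon$, conclude via Lemmas \ref{putong}, \ref{dandian} and \ref{prop:tensorhomoequi}, and then handle $\zeta$ as the transpose of $\varepsilon$ via Lemma \ref{lem:trancomthe}. The only caveat, which you yourself flag, is that the transposed homotopy operator must carry the sign $(-1)^i$ (the map ${}^{\#}h$ of Lemma \ref{lem:trancomthe}) rather than being the naive term-wise transpose ${}^t h$; with that adjustment your argument matches the paper's.
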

\begin{proof}
As
\[M=N^{(k)}=N^{(0)}\times (\R^0)^{(k)},\]
it follows from Theorem \ref{propro}  that the complex map $\varepsilon:\iota^\bullet(\C)\rightarrow \Omega_{\CO_{N}^{(k)}}^{\bullet}(N)$ coincides with the tensor product
\[\varepsilon\otimes \varepsilon:\ \iota^\bullet(\C)\widetilde\otimes  \iota^\bullet(\C)\rightarrow \Omega_{\CO_{N}^{(0)}}^{\bullet}(N)\widetilde\otimes \Omega_{\CO_{\R^0}^{(k)}}^{\bullet}(\BR^{0}).\]
Thus, it follows from Lemmas \ref{prop:tensorhomoequi}, \ref{putong} and \ref{dandian} that the complex map 
 \eqref{eq:commapvarepsilon}
 is a topological homotopy equivalence when $E=\C$.

Recall that, when $E=\C$,  the complex map \eqref{eq:commapeta} is the continuous transpose of the complex map $\varepsilon$. 
Therefore, by  Lemma \ref{lem:trancomthe}, it is a topological homotopy equivalence as well.
\end{proof}

\vspace{3mm}

\noindent\textbf{Proof of Theorem \ref{poin}:}  Proposition \ref{prop:EvaluedDF} implies that the complex map \[
\varepsilon:\iota^\bullet(E)
\rightarrow \Omega_{\CO}^\bullet(M;E)
\] is the same as 
\[
\varepsilon\otimes \mathrm {id}_{\iota^{\bullet}(E)}:\iota^\bullet(\C)\widetilde\otimes \iota^\bullet(E)
\rightarrow \Omega_{\CO}^\bullet(M)\widetilde\otimes \iota^\bullet(E).
\]
Similarly, by the definition of the complex map \eqref{eq:commapeta}, we have that the complex map \[\zeta:\ {\mathrm D}^{-\infty}_c(M;\Omega_{\CO}^{-\bullet})\wt\otimes_{\mathrm{i}}\iota^{\bullet}(E)\rightarrow \iota^\bullet(E)\] coincides with 
\[\zeta\otimes \mathrm {id}_{\iota^{\bullet}(E)}:\ 
{\mathrm D}^{-\infty}_c(M;\Omega_{\CO}^{-\bullet})\widetilde\otimes_{\mathrm{i}}\,\iota^\bullet(E)\rightarrow \iota^\bullet(\C)\widetilde\otimes_{\mathrm{i}}\, \iota^\bullet(E).\]
Then the theorem follows from Lemmas \ref{prop:tensorhomoequi} and \ref{lem:generalpo=C}.
\qed

\vspace{3mm}


\subsection{Poincar\'e's lemma II}\label{sectioncp}
In this subsection, we prove the Poincar\'e's lemma for the de Rham complexes of
$(M,\CO)$ with coefficients in compactly supported formal densities or formal generalized functions.

By taking the restriction of \eqref{eq:zataasamap}, 
we obtain a continuous linear map 
\be \label{eq:map3E=C*}\zeta:\ {\mathrm D}^{\infty}_c(M;\Omega_{\CO}^{0})\rightarrow \BC_{M,\mathrm{f}}(M).\ee 
Then by taking the quasi-completed inductive tensor product with $E$, \eqref{eq:map3E=C*} yields  a continuous linear map 
\be\label{eq:mapetacs*} \zeta:\ {\mathrm D}^{\infty}_c(M;\Omega_{\CO}^{0})\wt\otimes_{\mathrm{i}}E\rightarrow E_{M,\mathrm{f}}(M).\ee
On the other hand,  take the transpose of \eqref{eq:map3E=C*}, and  consider the quasi-completed projective tensor product with $E$, then we obtain a continuous linear map
\be \label{eq:mapvarsc*}
\varepsilon:\ E_M(M)\rightarrow {\mathrm C}^{-\infty}(M;\Omega_{\CO}^0)\wt\otimes E.
\ee

As a slight modification of the de Rham complex \eqref{eq:Dinftyc}, we have a coaugmented de Rham complex
\be\label{eq:ardeRham2} \cdots\xrightarrow{d}{\mathrm D}^{\infty}_c(M;\Omega_{\CO}^{1})\wt\otimes_{\mathrm{i}}E\xrightarrow {d}
{\mathrm D}^{\infty}_c(M;\Omega_{\CO}^{0})\wt\otimes_{\mathrm{i}}E\xrightarrow{\zeta}  E_{M,\mathrm{f}}(M)\rightarrow 0\rightarrow \cdots \ee
  Similarly,  as a slight modification of the de Rham complex \eqref{eq:C-infty},   we have  an augmented de Rham complex
    \be \label{eq:coardeRham2}\cdots\rightarrow 0\rightarrow E_M(M)\xrightarrow{\varepsilon} {\mathrm C}^{-\infty}(M;\Omega_{\CO}^{0})\wt\otimes E \xrightarrow{d} {\mathrm C}^{-\infty}(M;\Omega_{\CO}^{1})\wt\otimes E\xrightarrow{d}\cdots.
\ee

The main goal of this subsection is to prove the following result, which implies Theorem \ref{pfc} with $E=\C$.

\begin{thmd}\label{thm:poinlem2}
	Suppose that $M=(\R^n)^{(k)}$ for some $n,k\in \BN$. Then the complexes \eqref{eq:ardeRham2} and \eqref{eq:coardeRham2}
are both strongly exact.
\end{thmd}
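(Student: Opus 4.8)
The plan is to mirror the proofs of Theorems \ref{poin} and \ref{thm:poinlem1}, replacing the functions/distributions pair by the densities/generalized-functions pair. First I would extend the augmentation maps \eqref{eq:mapetacs*} and \eqref{eq:mapvarsc*} to continuous complex maps
$$\zeta:\ \mathrm D^{\infty}_c(M;\Omega_{\CO}^{-\bullet})\wt\otimes_{\mathrm{i}}\iota^\bullet(E)\to \iota^\bullet(E_{M,\mathrm{f}}(M))\qquad\text{and}\qquad \varepsilon:\ \iota^\bullet(E_M(M))\to \mathrm C^{-\infty}(M;\Omega_{\CO}^{\bullet})\wt\otimes\iota^\bullet(E),$$
and, invoking Lemmas \ref{lem:tophomimplestrong1} and \ref{lem:tophomimplestrong2} exactly as in the deduction of Theorem \ref{thm:poinlem1} from Theorem \ref{poin}, reduce the strong exactness of \eqref{eq:ardeRham2} and \eqref{eq:coardeRham2} to showing that these two complex maps are topological homotopy equivalences. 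Since the complexes \eqref{eq:ardeRham2} and \eqref{eq:coardeRham2} together with their augmentations are obtained from their $E=\C$ versions by applying $(-)\wt\otimes_{\mathrm{i}}\iota^\bullet(E)$ (resp. $(-)\wt\otimes\iota^\bullet(E)$), both maps are the tensor product of their $E=\C$ versions with $\mathrm{id}_{\iota^\bullet(E)}$, so Lemma \ref{prop:tensorhomoequi} reduces everything to the case $E=\C$. Finally, when $E=\C$ the complex map $\varepsilon$ for generalized functions is by definition the continuous transpose of the complex map $\zeta$ for densities, so Lemma \ref{lem:trancomthe} lets me treat only the density map $\zeta:\ \mathrm D^{\infty}_c(M;\Omega_{\CO}^{-\bullet})\to \iota^\bullet(\C)$.

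For the density map with $E=\C$ I would use the factorization $M=(\R^n)^{(k)}=\underbrace{(\R^1)^{(0)}\times\cdots\times(\R^1)^{(0)}}_{n}\times\underbrace{(\R^0)^{(1)}\times\cdots\times(\R^0)^{(1)}}_{k}$. By Theorem \ref{thm:tensorofcscomplex} the complex $\mathrm D^{\infty}_c(M;\Omega_{\CO}^{-\bullet})$ is the quasi-completed inductive tensor product of the corresponding one-variable complexes, and—this is the compatibility I must check—under this identification the augmentation $\zeta$ for $M$ is the tensor product of the one-variable augmentations. With that in hand, Lemma \ref{prop:tensorhomoequi} reduces the problem to the two base cases $(\R^1)^{(0)}$ and $(\R^0)^{(1)}$, in the same spirit as Lemmas \ref{putong} and \ref{dandian}.

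Each base case is then settled by an explicit contracting homotopy. For $(\R^0)^{(1)}$, Lemma \ref{lem:charOmegaonkr} and the formula \eqref{eq:comdoncsdc} identify the augmented complex with $0\to\C[y^*]\xrightarrow{\ \tau\mapsto -y^*\tau\ }\C[y^*]\xrightarrow{\ \tau\mapsto\tau_0\ }\C\to 0$, for which the degree $-1$ operator $\tau\mapsto -(\tau-\tau_0)/y^*$, together with the splitting $c\mapsto c\cdot 1$, provides the required topological homotopy equivalence. For $(\R^1)^{(0)}$, Remark \ref{rem:usuPD} identifies the augmented complex with the compactly supported de Rham complex $\Omega^0_{\CO,c}(\R)\xrightarrow{\ d\ }\Omega^1_{\CO,c}(\R)\xrightarrow{\ \int_{\R}\ }\C$, and the classical homotopy $f\,dx\mapsto\bigl(x\mapsto\int_{-\infty}^{x}(f-(\int_{\R}f)\,\rho)\bigr)$ for a fixed bump function $\rho$ with $\int_{\R}\rho=1$, which is continuous in the relevant topologies, gives the homotopy equivalence; this is exactly the compactly supported Poincar\'e lemma on the line.

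The main obstacle I expect is twofold. The more conceptual point is verifying the compatibility of the augmentations $\zeta$ and $\varepsilon$ with the tensor-product identifications of Theorem \ref{thm:tensorofcscomplex}: one must trace through the explicit kernel $\boxtimes$ described in \eqref{eq:comboxtimes} together with the relation $1_{M_3}=1_{M_1}\otimes 1_{M_2}$ to confirm that $\zeta_{M_3}=\zeta_{M_1}\otimes\zeta_{M_2}$ as \emph{complex} maps, including the sign bookkeeping coming from the Poincar\'e pairing \eqref{eq:Pdualpair}. The more technical point is that, unlike the homotopy-invariant situation of Theorem \ref{thm:poinlem1}, compactly supported cohomology detects the dimension of $N$; this is precisely why the hypothesis must be $N=\R^n$ rather than an arbitrary contractible manifold, and it is why I reduce the smooth factor all the way to $(\R^1)^{(0)}$ and supply the explicit integration homotopy there, taking care that the homotopy operators are continuous so that the conclusions are genuine topological homotopy equivalences and not merely exactness of the underlying complexes.
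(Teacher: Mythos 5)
Your proposal is correct and follows essentially the same route as the paper: the same reduction via Lemmas \ref{lem:tophomimplestrong1}, \ref{lem:tophomimplestrong2}, \ref{prop:tensorhomoequi} and \ref{lem:trancomthe} to topological homotopy equivalences of the augmented complexes at $E=\C$, the same K\"unneth-type factorization through Theorem \ref{thm:tensorofcscomplex}, and the same explicit integration homotopy on $(\R^1)^{(0)}$ (the paper's $\circledast$-homotopy in Lemma \ref{lem:cspoin1}). The only cosmetic difference is the formal-variable base case: you give a direct contracting homotopy $\tau\mapsto -(\tau-\tau_0)/y^*$ on the density complex of $(\R^0)^{(1)}$, whereas the paper (Lemma \ref{lem:cspoin2}) observes that for $n=0$ the density complex is nothing but the distribution complex and quotes Lemma \ref{lem:generalpo=C}, i.e.\ the dual of the formal-function homotopy of Lemma \ref{dandian}; both verifications are immediate and your homotopy checks out.
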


We extend  the maps 
\eqref{eq:mapetacs*} and \eqref{eq:mapvarsc*} to continuous complex maps
\be\label{eq:complexmapetacs} \zeta:\ {\mathrm D}^{\infty}_c(M;\Omega_{\CO}^{-\bullet})\wt\otimes_{\mathrm{i}}\iota^{\bullet}(E)\rightarrow \iota^\bullet(E_{M,\mathrm{f}}(M)),\ee 
and 
\be \label{eq:complexmapvarsc}
\varepsilon:\ \iota^\bullet(E_M(M))\rightarrow {\mathrm C}^{-\infty}(M;\Omega_{\CO}^{\bullet})\wt\otimes \iota^{\bullet} (E),
\ee
respectively.

By Lemmas \ref{lem:tophomimplestrong1} and \ref{lem:tophomimplestrong2}, Theorem \ref{thm:poinlem2} is implied by the following result.  

\begin{thmd}\label{cpoin}
    Suppose that $M=(\R^n)^{(k)}$ for some $n,k\in \BN$. Then the continuous complex maps \eqref{eq:complexmapetacs} and \eqref{eq:complexmapvarsc} are both topological homotopy equivalences.
\end{thmd}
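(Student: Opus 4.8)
The plan is to follow the architecture of the proof of Theorem~\ref{poin}, reducing the statement by a sequence of formal manipulations to two base cases, exactly one of which carries genuine content. First I would reduce to $E=\BC$ and to the single map \eqref{eq:complexmapetacs}. Since $M=(\R^n)^{(k)}$ is connected we have $E_M(M)=E_{M,\mathrm f}(M)=E$, and by their very constructions the complex maps \eqref{eq:complexmapetacs} and \eqref{eq:complexmapvarsc} are obtained from their $E=\BC$ versions by applying $-\,\widetilde\otimes_{\mathrm i}\,\iota^\bullet(E)$ and $-\,\widetilde\otimes\,\iota^\bullet(E)$, respectively. Thus Lemma~\ref{prop:tensorhomoequi} lets me assume $E=\BC$. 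When $E=\BC$, the map $\varepsilon$ of \eqref{eq:complexmapvarsc} is by definition the transpose of the map $\zeta$ of \eqref{eq:map3E=C*}; hence by Lemma~\ref{lem:trancomthe} it is a topological homotopy equivalence as soon as $\zeta$ is. So everything comes down to showing that
\[
\zeta:\ \mathrm D^{\infty}_c(M;\Omega_{\CO}^{-\bullet})\longrightarrow \iota^\bullet(\BC)
\]
is a topological homotopy equivalence for $M=(\R^n)^{(k)}$.

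Next I would split $M$ into one-dimensional pieces. Writing
\[
M=\underbrace{(\R)^{(0)}\times\cdots\times(\R)^{(0)}}_{n}\times\underbrace{(\R^0)^{(1)}\times\cdots\times(\R^0)^{(1)}}_{k},
\]
Theorem~\ref{thm:tensorofcscomplex} identifies $\mathrm D^{\infty}_c(M;\Omega_{\CO}^{-\bullet})$ with the quasi-completed inductive tensor product of the corresponding factor complexes. Since the constant function $1$ on a product is the tensor product of the constant functions on the factors, the coaugmentation $\zeta$ is the tensor product of the factors' coaugmentations (its placement in degree $0$ makes the sign in \eqref{defPhi} irrelevant). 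Invoking Lemma~\ref{prop:tensorhomoequi} once more, I am reduced to the two base cases $M=(\R^0)^{(1)}$ and $M=(\R)^{(0)}=\R$.

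The case $M=(\R^0)^{(1)}$ is purely formal. On a one-point space the finite dual identifies compactly supported formal distributions with compactly supported formal densities, so $\mathrm D^{\infty}_c((\R^0)^{(1)};\Omega_{\CO}^{-\bullet})$ is the transpose complex of $\Omega^\bullet_\CO((\R^0)^{(1)})$ and $\zeta$ is the transpose of the map $\varepsilon$ treated in Lemma~\ref{dandian} (with $k=1$); Lemma~\ref{lem:trancomthe} then finishes this case. The case $M=\R$ is the heart of the matter. By Remark~\ref{rem:usuPD} the complex $\mathrm D^{\infty}_c(\R;\Omega_{\CO}^{-\bullet})$ is the usual compactly supported de Rham complex $\RC^\infty_c(\R)\xrightarrow{d}\RC^\infty_c(\R)\,dx$ and $\zeta$ is integration $f\,dx\mapsto\int_{\R}f$. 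Fixing $e\in\RC^\infty_c(\R)$ with $\int_\R e=1$, I would take the homotopy inverse $g:\BC\to\RC^\infty_c(\R)\,dx$, $c\mapsto c\,e\,dx$, together with the contracting homotopy $H(f\,dx)(x):=\int_{-\infty}^x\bigl(f(t)-(\textstyle\int_\R f)\,e(t)\bigr)\,dt$. A direct check gives $\zeta\circ g=\mathrm{id}_\BC$ and $dH+Hd=\mathrm{id}-g\circ\zeta$, while $H$ is continuous for the LF-topology since it sends $\RC^\infty_K(\R)$ into $\RC^\infty_{K'}(\R)$ for a compact $K'$ depending only on $K$ and $\mathrm{supp}\,e$.

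The hard part is precisely this $\R$ base case, and it is the reason the hypothesis is $M=(\R^n)^{(k)}$ rather than the contractible $N$ allowed in Theorem~\ref{poin}: compactly supported de Rham cohomology is not a homotopy invariant, so the soft argument behind Lemma~\ref{putong} is unavailable and one must exhibit the explicit integration homotopy above, which uses the specific geometry of $\R^n$. The remaining points requiring care are the verification that $\zeta$ is genuinely the tensor product of the factor coaugmentations under the identification of Theorem~\ref{thm:tensorofcscomplex}, and the continuity of $H$ on the inductive-limit space $\RC^\infty_c(\R)$; both are routine once set up.
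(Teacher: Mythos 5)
Your proposal is correct and follows essentially the same route as the paper: reduce to $E=\BC$ and to the map \eqref{eq:complexmapetacs} via Lemmas \ref{prop:tensorhomoequi} and \ref{lem:trancomthe}, decompose $M$ into one-dimensional factors through Theorem \ref{thm:tensorofcscomplex}, handle $(\R^0)^{(1)}$ by transposing Lemma \ref{dandian}, and handle $\R$ by an explicit integration homotopy (your $H$ is, up to sign, the paper's $h(f)=f\circledast g$ from Lemma \ref{lem:cspoin1}, and the paper likewise invokes Fubini to identify $\zeta$ with the tensor product of the factor coaugmentations). The only differences are organizational: the paper splits $M$ as $(\R^n)^{(0)}\times(\R^0)^{(k)}$ and treats the two factors in Lemmas \ref{lem:cspoin1} and \ref{lem:cspoin2} before tensoring with $\iota^\bullet(E)$ at the end, whereas you split into $n+k$ one-dimensional pieces at once and reduce to $E=\BC$ first.
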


In the rest of this subsection, we prove Theorem \ref{cpoin}.
Suppose that the formal manifold $M$ is as in Theorem \ref{cpoin} in the rest part of this subsection. Note that in this case, we have that 
\[{\mathrm D}^{\infty}_c(M;\Omega_{\CO}^{0})=\RD_c^\infty(\R^n)[y_1^*,y_2^*,\dots,y_k^*]\quad\text{and}\quad \C_{M,\mathrm{f}}(M)=\C\] as LCS by \cite[Proposition 2.17]{CSW2}.
Then by using  \cite[(2.30)]{CSW2} and \eqref{eq:zetaspecial}, the continuous complex map \eqref{eq:complexmapetacs} can be written as 
\[\zeta:\ {\mathrm D}^{\infty}_c(M;\Omega_{\CO}^{-\bullet})\wt\otimes_{\mathrm{i}}\iota^{\bullet}(E)\rightarrow \iota^\bullet(E),\]
with the continuous linear map 
\begin{eqnarray}
\zeta:\ \RD_c^\infty(\R^n)[y_1^*,y_2^*,\dots,y_k^*]\widetilde\otimes_{\mathrm{i}}\, E&\rightarrow &\C\widetilde\otimes_{\mathrm{i}} E=E, \notag\\
\label{eq:zetaofdensity} \left(\sum_{L\in \BN^k} \tau_L (y^*)^L\right)\otimes v&\mapsto & \left(\int_{\R^n} \tau _{(0,0,\dots,0)}\right)\cdot v.
\end{eqnarray}
Similarly, the continuous complex   map  \eqref{eq:complexmapvarsc} can be written as 
\[
\varepsilon:\ \iota^\bullet(E)\rightarrow {\mathrm C}^{-\infty}(M;\Omega_{\CO}^{\bullet})\wt\otimes \iota^{\bullet}(E),
\] with
\begin{eqnarray*}
\varepsilon:\ E_M(M)=E&\rightarrow& {\mathrm C}^{-\infty}(M;\Omega_{\CO}^{0})\wt\otimes E=\CL({\mathrm D}^{\infty}_c(M;\Omega_{\CO}^{0}),E),\\
 v &\mapsto& (\eta\mapsto \zeta(\eta)\cdot v ),
\end{eqnarray*}
where for every $\eta\in {\mathrm D}^{\infty}_c(M;\Omega_{\CO}^{0})$,   $\zeta(\eta)$ denotes the image of $\eta$ under the map \eqref{eq:map3E=C*}.

\begin{lemd}\label{lem:cspoin1} 
The complex map \eqref{eq:complexmapetacs} is a topological homotopy equivalence when $k=0$ and $E=\BC$.
\end{lemd}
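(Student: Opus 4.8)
The plan is to identify, via Poincar\'e duality, the complex underlying \eqref{eq:complexmapetacs} (in the case $k=0$, $E=\C$) with the classical compactly supported de Rham complex of $\R^n$, to settle the case $n=1$ by an explicit contracting homotopy, and then to pass to arbitrary $n$ through the product decomposition $\R^n=\R\times\cdots\times\R$ together with the tensor-product identification of Theorem \ref{thm:tensorofcscomplex} and Lemma \ref{prop:tensorhomoequi} (a completed inductive tensor product of topological homotopy equivalences is again one). First I would record that, since $k=0$, we have $\C_{M,\mathrm f}(M)=\C$, and that by Remark \ref{rem:usuPD} the complex $\mathrm D^{\infty}_c(\R^n;\Omega_{\CO}^{-\bullet})$ is identified with the compactly supported de Rham complex $\Omega_{\CO,c}^0(\R^n)\to\cdots\xrightarrow{d}\Omega_{\CO,c}^{n}(\R^n)\to0$ placed in cohomological degrees $-n,\dots,0$, while \eqref{eq:zetaofdensity} shows that under this identification $\zeta$ is integration $\omega\mapsto\int_{\R^n}\omega$ in degree $0$.

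For the base case $n=1$ (the case $n=0$ being trivial, as $\zeta$ is then the identity of $\iota^\bullet(\C)$), the complex reads $\RC_c^\infty(\R)\xrightarrow{d}\Omega_{\CO,c}^1(\R)$ in degrees $-1,0$, with $d(g)=g'\,dx$ and $\zeta(f\,dx)=\int_\R f\,dx$. I would fix $\rho\in\RC_c^\infty(\R)$ with $\int_\R\rho\,dx=1$ and define the degree-$0$ complex map $s\colon\iota^\bullet(\C)\to\mathrm D^{\infty}_c(\R;\Omega_{\CO}^{-\bullet})$ by $s(1)=\rho\,dx$, so that $\zeta\circ s=\mathrm{id}_{\iota^\bullet(\C)}$. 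Next I would define the degree-$(-1)$ continuous linear operator $h\colon\Omega_{\CO,c}^1(\R)\to\RC_c^\infty(\R)$ by
\[
h(f\,dx)(x)=\int_{-\infty}^x\Big(f(t)-\big(\textstyle\int_\R f\,dt\big)\,\rho(t)\Big)\,dt,
\]
which is well defined and compactly supported precisely because the integrand has total integral $0$. A direct computation then yields the homotopy identity $dh+hd=\mathrm{id}-s\circ\zeta$ (with the usual sign convention): in degree $0$ one gets $d\big(h(f\,dx)\big)=\big(f-(\int f)\rho\big)dx=f\,dx-s\zeta(f\,dx)$, while in degree $-1$ one gets $h(dg)=h(g'\,dx)=g$ since $\int_\R g'=0$.

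For general $n\ge2$ I would write $\R^n=\R^{n-1}\times\R$ and apply Theorem \ref{thm:tensorofcscomplex} to identify $\mathrm D^{\infty}_c(\R^n;\Omega_{\CO}^{-\bullet})$ with $\mathrm D^{\infty}_c(\R^{n-1};\Omega_{\CO}^{-\bullet})\widetilde\otimes_{\mathrm{i}}\mathrm D^{\infty}_c(\R;\Omega_{\CO}^{-\bullet})$. Since the differentials are concentrated in nonpositive degrees, the degree-$0$ term of the tensor product is the tensor product of the degree-$0$ terms, and formula \eqref{eq:comboxtimes} in the density (top-degree) situation forces all the relevant indices $r_i,t_i$ and the $k_i$ to vanish, so the sign $(-1)^a$ is trivial; combined with Fubini's theorem this shows $\zeta_{\R^n}=\zeta_{\R^{n-1}}\otimes\zeta_{\R}$ under $\iota^\bullet(\C)=\iota^\bullet(\C)\widetilde\otimes\iota^\bullet(\C)$. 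As both tensor factors are topological homotopy equivalences—by the inductive hypothesis and by the $n=1$ case—Lemma \ref{prop:tensorhomoequi} shows that $\zeta_{\R^n}$ is one as well, completing the induction.

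The only genuinely substantive point will be the $n=1$ construction: checking that the explicit primitive $h$ is continuous as a self-map of the $\mathrm{LF}$-space $\RC_c^\infty(\R)$—which follows because $h(f\,dx)$ is supported in the fixed compact set $\operatorname{supp}f\cup\operatorname{supp}\rho$ and antidifferentiation is continuous on each $\RC^\infty_K(\R)$—and verifying the homotopy identity. Everything else is a formal consequence of the already-established tensor-product theorem and Lemma \ref{prop:tensorhomoequi}; the one minor bookkeeping item is confirming that the Künneth identification carries $\zeta_{\R^n}$ to the tensor product of the one-dimensional maps, which reduces exactly to Fubini's theorem and the vanishing of the sign in \eqref{eq:comboxtimes} when $k=0$.
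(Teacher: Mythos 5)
Your proposal is correct and follows essentially the same route as the paper's proof: identify the complex with the classical compactly supported de Rham complex of $\R^n$ via Remark \ref{rem:usuPD} with $\zeta$ becoming integration, settle $n=1$ by an explicit primitive-based contracting homotopy (the paper's operator $h(f)=f\circledast g$ is exactly the negative of your $h$, so the two homotopy identities agree up to the harmless replacement $h\mapsto -h$), and reduce general $n$ to the one-dimensional case through the tensor decomposition of Theorem \ref{thm:tensorofcscomplex}, Fubini's theorem, and Lemma \ref{prop:tensorhomoequi} (your induction on $\R^n=\R^{n-1}\times\R$ versus the paper's one-shot $n$-fold product is an immaterial difference). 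One cosmetic slip: $h(f\,dx)$ is supported in the convex hull of $\operatorname{supp}f\cup\operatorname{supp}\rho$, not in that union itself, but since this hull is still a compact set determined by $\operatorname{supp}f$, your continuity argument on the LF-space $\RC^\infty_c(\R)$ goes through unchanged.
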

\begin{proof}  In this case,
  ${\mathrm D}^{\infty}_c(M;\Omega_{\CO}^{-\bullet})$ agrees with the usual compactly supported
 de Rham complex on $\R^n$ (see Remark \ref{rem:usuPD}). 
Then the lemma is a form of the compactly supported Poincar\'e's lemma for $\R^n$, which is well-known (see  \cite[Corollary 4.7.1]{BT} for example). 
For the convenience of readers,  we present a proof of the lemma below.

Assume first that $n=1$.
We identify ${\mathrm D}^{\infty}_c(\BR;\Omega_{\CO_{\BR}^{(0)}}^{-\bullet})$  with the complex
\[\cdots\rightarrow 0\rightarrow\RC_c^\infty(\R)\xrightarrow{\partial_x}\RC_c^\infty(\R)\rightarrow 0\rightarrow \cdots.\]
Under this identification, by using \eqref{eq:zetaofdensity}, the complex map $\zeta$ is given by  the integration over $\R$.
Fix a smooth function $g\in \RC_c^\infty(\R)$ such that $\int_{-\infty}^{\infty} g(x) dx=1$.
Let $\alpha: \iota^\bullet(\C)\rightarrow  {\mathrm D}^{\infty}_c(\BR;\Omega_{\CO_{\BR}^{(0)}}^{-\bullet})$ be the complex map defined by
\[\alpha:\ \C\rightarrow\RC_c^\infty(\R),\quad \lambda\mapsto \lambda\cdot g.\]
Then we have that $\zeta\circ \alpha=\mathrm{id}_{\iota^{\bullet}(\C)}$.

On the other hand, it is easy to see that the bilinear map
\[\RC_c^\infty(\R)\times \RC_c^\infty(\R)\rightarrow \RC_c^\infty(\R),\quad (f_1,f_2)\mapsto f_1\circledast f_2\]
is well-defined and separately continuous, where
\[(f_1\circledast f_2)(a):=\int_{-\infty}^\infty f_1(x) dx \int_{-\infty}^a f_2(x) dx -\int_{-\infty}^\infty f_2(x) dx\int_{-\infty}^a f_1(x) dx.\]
Let $h:{\mathrm D}^{\infty}_c(\BR;\Omega_{\CO_{\BR}^{(0)}}^{-\bullet})\rightarrow{\mathrm D}^{\infty}_c(\BR;\Omega_{\CO_{\BR}^{(0)}}^{-\bullet})$ be the  continuous map of degree $-1$ determined by
\[h:\ \RC_c^\infty(\R)\rightarrow \RC_c^\infty(\R),\quad f\mapsto f\circledast g.\]
Using the fact that $\int_{-\infty}^{\infty} g(x) dx=1$, one concludes that
\[
\alpha\circ \zeta-\mathrm{id}_{{\mathrm D}^{\infty}_c(\BR;\Omega_{\CO_{\BR}^{(0)}}^{-\bullet})}=d\circ h+h\circ d:\ {\mathrm D}^{\infty}_c(\BR;\Omega_{\CO_{\BR}^{(0)}}^{-\bullet})\rightarrow{\mathrm D}^{\infty}_c(\BR;\Omega_{\CO_{\BR}^{(0)}}^{-\bullet}).
\]
This proves the assertion with $n=1$.

For the general case, Theorem \ref{thm:tensorofcscomplex} implies that
\[{\mathrm D}^{\infty}_c(\BR^n;\Omega_{\CO_{\BR^n}^{(0)}}^{-\bullet})=\underbrace{{\mathrm D}^{\infty}_c(\BR;\Omega_{\CO_{\BR}^{(0)}}^{-\bullet})\widetilde\otimes_{\mathrm{i}}
{\mathrm D}^{\infty}_c(\BR;\Omega_{\CO_{\BR}^{(0)}}^{-\bullet})\widetilde\otimes_{\mathrm{i}}\cdots \widetilde\otimes_{\mathrm{i}} \mathrm D^{\infty}_c(\BR;\Omega_{\CO_{\BR}^{(0)}}^{-\bullet})}_n.\]
Furthermore, Fubini's theorem implies that the complex map \[\zeta:\ \mathrm D^{\infty}_c(\BR^n;\Omega_{\CO_{\BR^n}^{(0)}}^{-\bullet})\rightarrow
\iota^\bullet(\C)\]
coincides with 
\[
\zeta\otimes \cdots \otimes \zeta:
\mathrm D^{\infty}_c(\BR;\Omega_{\CO_{\BR}^{(0)}}^{-\bullet})\widetilde\otimes_{\mathrm{i}}\cdots \widetilde\otimes_{\mathrm{i}} \mathrm D^{\infty}_c(\BR;\Omega_{\CO_{\BR}^{(0)}}^{-\bullet})
\rightarrow
\iota^\bullet(\C)\widetilde\otimes_{\mathrm{i}}\cdots \widetilde\otimes_{\mathrm{i}}\iota^\bullet(\C).
\]
Then the assertion follows from Lemma \ref{prop:tensorhomoequi}.
\end{proof}

\begin{lemd}\label{lem:cspoin2} The complex map \eqref{eq:complexmapetacs} is a  topological homotopy equivalence when  $n=0$ and $E=\C$.
\end{lemd}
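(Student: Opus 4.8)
The plan is to follow the same two-step strategy used for Lemma~\ref{dandian} (the $\varepsilon$-analogue over $(\R^0)^{(k)}$): first dispatch the single-variable case $k=1$ by an explicit continuous contracting homotopy, and then bootstrap to arbitrary $k$ via the product decomposition $(\R^0)^{(k)}=\big((\R^0)^{(1)}\big)^{\times k}$ together with Theorem~\ref{thm:tensorofcscomplex} and Lemma~\ref{prop:tensorhomoequi}. The base case $k=0$ is trivial, since there the complex map \eqref{eq:complexmapetacs} is the identity of $\iota^\bullet(\C)$.

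First I would make the complex explicit in the case $n=0$, $k=1$. By Lemma~\ref{lem:charOmegaonkr} the complex $\mathrm D^{\infty}_c(M;\Omega_{\CO}^{-\bullet})$ is identified with the two-term complex $\C[y^*]\xrightarrow{d}\C[y^*]$ placed in degrees $-1$ and $0$, where by \eqref{eq:comdoncsdc} the coboundary is $d(\tau)=-m_{y^*}(\tau)$, i.e. multiplication by $-y^*$, and by \eqref{eq:zetaofdensity} the augmentation $\zeta\colon\C[y^*]\to\C$ extracts the constant term. I would then take the homotopy inverse $\alpha\colon\C\to\C[y^*]$, $\lambda\mapsto\lambda$, which satisfies $\zeta\circ\alpha=\mathrm{id}_{\C}$, and the degree $-1$ homotopy $h\colon\C[y^*]\to\C[y^*]$ determined by $(y^*)^m\mapsto-(y^*)^{m-1}$ for $m\ge 1$ and $1\mapsto 0$. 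A direct degreewise check then gives $d\circ h+h\circ d=\mathrm{id}-\alpha\circ\zeta$. Since $\C[y^*]$ carries the locally convex direct sum topology, every linear map out of it is automatically continuous, so $\alpha$, $h$ and $\zeta$ are continuous and $\zeta$ is a topological homotopy equivalence when $k=1$.

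For general $k$ I would invoke that $(\R^0)^{(k)}$ is the $k$-fold product of $(\R^0)^{(1)}$. Theorem~\ref{thm:tensorofcscomplex} then identifies $\mathrm D^{\infty}_c(M;\Omega_{\CO}^{-\bullet})$ with the $k$-fold quasi-completed inductive tensor power of the $k=1$ complex, and the Fubini-type factorization of \eqref{eq:zetaofdensity}---the constant term of a product is the product of the constant terms---shows that under this identification $\zeta$ becomes $\zeta\otimes\cdots\otimes\zeta$. Lemma~\ref{prop:tensorhomoequi} then upgrades the single-variable homotopy equivalence to the tensor power, which finishes the proof.

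The main obstacle I anticipate is bookkeeping rather than anything conceptual: pinning down the degree conventions, the sign occurring in \eqref{eq:comdoncsdc}, and the precise identification of $\zeta$ with the constant-term map so that the homotopy identity $d\circ h+h\circ d=\mathrm{id}-\alpha\circ\zeta$ holds on the nose. Continuity, which is often the delicate point in this paper, is here essentially free because the relevant spaces $\C[y^*]$ carry the locally convex direct sum topology. The tensor-product step is routine given Theorem~\ref{thm:tensorofcscomplex} and Lemma~\ref{prop:tensorhomoequi}, once the factorization of $\zeta$ through Fubini is verified on simple tensors.
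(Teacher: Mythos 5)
Your proof is correct, but it takes a different route from the paper's. The paper disposes of this lemma in one line: when $n=0$, every formal distribution on $(\R^0)^{(k)}$ is automatically a formal density (both $\mathrm D^{\infty}_c(M;\Omega^r_{\CO})$ and $\mathrm D^{-\infty}_c(M;\Omega^r_{\CO})=(\Omega^r_\CO(M))'$ reduce to finite sums of copies of $\C[y_1^*,\dots,y_k^*]$, with the same topology), so the map \eqref{eq:complexmapetacs} \emph{is} the map \eqref{eq:commapeta}, and Lemma \ref{lem:generalpo=C} -- already proved by transposing the $\varepsilon$-side homotopy equivalence via Lemma \ref{lem:trancomthe} -- applies verbatim. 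You instead work entirely on the compactly-supported-density side: an explicit contracting homotopy for $k=1$ (your $h\colon (y^*)^m dy^*\mapsto -(y^*)^{m-1}$ is, up to normalization, the transpose of the homotopy $\sum c_iy^i\,dy\mapsto\sum \frac{c_i}{i+1}y^{i+1}$ used in Lemma \ref{dandian}), followed by the tensor bootstrap through Theorem \ref{thm:tensorofcscomplex} and Lemma \ref{prop:tensorhomoequi}, exactly mirroring how the paper handles Lemma \ref{lem:cspoin1} in the $n$-direction. What your approach buys is self-containedness: you never need Lemma \ref{lem:generalpo=C}, the duality Lemma \ref{lem:trancomthe}, or the observation that densities and distributions coincide over a point; continuity is indeed free since $\C[y^*]$ carries the finest locally convex (direct sum) topology. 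What it costs is redoing computations the paper gets by transposition, plus the obligation you correctly flag: checking that the identification of Theorem \ref{thm:tensorofcscomplex} intertwines $\zeta$ with $\zeta\otimes\cdots\otimes\zeta$, which holds on the nose here because the sign $(-1)^a$ in \eqref{eq:comboxtimes} vanishes in the relevant degree-zero components when all $n_i=0$, $r_i=0$, $t_i=0$. Your two verifications (the homotopy identity and the sign bookkeeping) both check out, so the proposal stands as a valid alternative proof.
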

\begin{proof} The assertion follows from Lemma \ref{lem:generalpo=C}, as
in this case \eqref{eq:complexmapetacs} is nothing but the  complex map
\eqref{eq:commapeta}.

\end{proof}

\noindent\textbf{Proof of Theorem \ref{cpoin}:}
From  Theorem \ref{thm:tensorofcscomplex}, one  concludes  that
 the complex map \eqref{eq:complexmapetacs}
 coincides with   
 \[\zeta\otimes\zeta\otimes {\mathrm{id}_{\iota^{\bullet}(E)}}:\ \mathrm D^{\infty}_c(\BR^n;\Omega_{\CO_{\BR^n}^{(0)}}^{-\bullet})\widetilde\otimes_{\mathrm{i}} \mathrm D^{\infty}_c(\BR^0;\Omega_{\CO_{\BR^0}^{(k)}}^{-\bullet})
 \widetilde\otimes_{\mathrm{i}} \iota^\bullet(E)\rightarrow \iota^\bullet(\C)\widetilde\otimes_{\mathrm{i}} \iota^\bullet(\C)
 \widetilde\otimes_{\mathrm{i}} \iota^\bullet(E).\]
Thus, by
Lemmas  \ref{lem:cspoin1} and \ref{lem:cspoin2}, it follows form Lemma \ref{prop:tensorhomoequi} that  the complex map
\eqref{eq:complexmapetacs}
is a topological homotopy equivalence.

When $E=\C$, as the transpose of \eqref{eq:complexmapetacs}, the complex map \eqref{eq:complexmapvarsc}
is a topological  homotopy equivalence by Lemma \ref{lem:trancomthe}.
In general, the complex  map \eqref{eq:complexmapvarsc} is also a topological  homotopy equivalence by Lemma \ref{prop:tensorhomoequi}, as it coincides with 
\[
\varepsilon\otimes \mathrm{id}_{\iota^{\bullet}(E)}:\quad \iota^\bullet(\C)\widetilde\otimes \iota^\bullet(E)
\rightarrow\mathrm C^{-\infty}(M;\Omega_{\CO}^{\bullet})\widetilde\otimes \iota^\bullet(E).
\] This finishes the proof.

\appendix

\section{Topological cochain complexes}\label{appendixC}

In this appendix, we collect  some basics on  topological cochain complexes which are needed in 
Sections \ref{sec:derham}-\ref{sec:poin}.

\subsection{Topological cochain complexes}\label{appendixC1}

We start with the following definition. 

\begin{dfn}\label{de:topcomplex}
A topological  cochain complex $I^\bullet$ is a cochain complex
\[\cdots\rightarrow I^i\xrightarrow{d} I^{i+1}\xrightarrow{d} I^{i+2}\rightarrow \cdots\]
of Hausdorff  LCS with continuous linear coboundary maps $d$.
\end{dfn}

Let $I^\bullet$ and $J^\bullet$ be two topological cochain complexes, and let $p\in \Z$. By a continuous map $f:I^\bullet\rightarrow J^\bullet$ of degree $p$, we mean a sequence \be\label{eq:conmap}
f=\{f_n: I^n\rightarrow J^{n+p}\}_{n\in \Z}
\ee
of continuous linear maps. 
As usual, a continuous map of degree $0$ that commutes with the coboundary maps is called a continuous complex map. A continuous complex map $f: I^{\bullet}\rightarrow J^{\bullet}$ is said to be  a topological complex isomorphism if there is a continuous complex map $f': J^{\bullet}\rightarrow I^{\bullet}$ such that $f\circ f'=\mathrm{id}_{J^{\bullet}}$ and $f'\circ f=\mathrm{id}_{I^{\bullet}} $.  Here $\mathrm{id}_{I^\bullet}$ and $\mathrm{id}_{J^\bullet}$ denote the identity maps of complexes.

Let $f, g: I^\bullet\rightarrow J^\bullet$ be two continuous complex maps. 
We say that $f$ is topologically homotopic to $g$, denoted by $f\sim_{\mathrm{h}} g$, if there
is a continuous map $h: I^{\bullet}\rightarrow J^{\bullet}$ of degree $-1$ such that $d\circ h+h\circ d=f-g$. 
The map $f$ is said to be a topological homotopy  equivalence if there is a continuous complex map $f':J^\bullet\rightarrow I^\bullet$
such that $f\circ f'\sim_{\mathrm{h}} \mathrm{id}_{J^\bullet}$ and $f'\circ f\sim_{\mathrm{h}} \mathrm{id}_{I^\bullet}$.

\subsection{Tensor products of topological cochain complexes}\label{appendixC2}
Let $E$ and $F$ be two  LCS. 
As discussed in \cite{Gr}, the algebraic tensor product $E\otimes F$ admits at least three valuable locally convex topologies:  the inductive tensor product $E\otimes_{\mathrm{i}} F$, the projective tensor product $E\otimes_{\pi} F$, and the epsilon (injective) tensor product $E\otimes_{\varepsilon} F$. Explicit definitions can be found in \cite{Gr}.
In this context, as mentioned  in the Introduction,  we denote  the quasi-completions and completions of these topological tensor product spaces as follows:
\[\text{$E\widetilde\otimes_{\mathrm{i}} F$,
$E\widetilde\otimes_{\pi} F$, $E\widetilde\otimes_{\varepsilon} F$\quad and\quad $E\widehat\otimes_{\mathrm{i}} F$,
$E\widehat\otimes_{\pi} F$, $E\widehat\otimes_{\varepsilon} F$}.\]
When $E$ or $F$ is nuclear, it is a classical result of Grothendieck that their projective tensor product coincides with the epsilon tensor product (see \cite[Theorem 50.1]{Tr}).
 In this case, we
will simply write
\bee
E\widetilde\otimes F:=E\widetilde\otimes_{\pi} F=E\widetilde\otimes_{\varepsilon} F
\quad
\text{and}
\quad
E\widehat\otimes F:=E\widehat\otimes_{\pi} F=E\widehat\otimes_{\varepsilon} F.
\eee
Given two continuous linear maps $\phi_1:E_1\rightarrow F_1$ and $\phi_2:E_2\rightarrow F_2$  between  LCS, we use $\phi_1\otimes \phi_2$ to denote the continuous linear map on  various topological tensor products (or their quasi-completions or completions) obtained by the tensor product of $\phi_1$ and $\phi_2$. For example, we have the maps \[\phi_1\otimes \phi_2:\ E_1\widehat\otimes_{\pi} E_2\rightarrow F_1\widehat\otimes_{\pi} F_2\quad \text{and}\quad 
\phi_1\otimes \phi_2:\ E_1\widetilde\otimes_{\mathrm{i}} E_2\rightarrow F_1\widetilde\otimes_{\mathrm{i}} F_2.\]

Let  $I^\bullet$ and $J^\bullet$ be two topological cochain complexes. 
We define
$I^\bullet\widetilde\otimes_\Box J^\bullet$  and $I^\bullet\widehat\otimes_\Box J^\bullet$ to be the topological cochain complexes such that
\[
  (I^\bullet\widetilde\otimes_\Box J^\bullet)^i=\bigoplus_{r,s\in \Z; r+s=i} I^r\widetilde \otimes_\Box J^s,\quad
 (I^\bullet\widehat\otimes_\Box J^\bullet)^i=\bigoplus_{r,s\in \Z; r+s=i} I^r\widehat \otimes_\Box J^s
\]
for all $i\in \Z$, and 
\[
   d(u\otimes v)=(du)\otimes v+ (-1)^r u\otimes (d v)
   \]
   for all $r,s\in \Z$, $u\in I^r$ and $v\in J^s$. Here the notation $\otimes_\Box$ stands for one of the topological tensor products $\otimes_\pi$, $\otimes_\varepsilon$ and $\otimes_{\mathrm{i}}$. Then both $I^\bullet\widetilde\otimes_\Box J^\bullet$ and $I^\bullet\widehat\otimes_\Box J^\bullet$ are 
   topological cochain complexes.


 We say that a topological cochain complex $I^\bullet$ is nuclear if the LCS $I^i$ is nuclear for all $i\in \Z$.
 If  $I^\bullet$ or $J^\bullet$ is nuclear, we
   will simply write
\bee
I^\bullet \widetilde\otimes J^\bullet:= I^\bullet\widetilde\otimes_{\pi} J^\bullet=I^\bullet \widetilde\otimes_{\varepsilon} J^\bullet
\quad
\text{and}
\quad
I^\bullet \widehat\otimes J^\bullet:=I^\bullet\widehat\otimes_{\pi} J^\bullet=I^\bullet\widehat\otimes_{\varepsilon} J^\bullet.
\eee

\begin{lemd}\label{f1} Let $I^\bullet$, $J^\bullet$ and $K^\bullet$  be  
topological cochain complexes, and
   let $f_1, f_2: I^\bullet\rightarrow K^\bullet$ be two continuous complex maps. If $f_1\sim_{\mathrm{h}}f_2$,
  then  \[	f_1\otimes  \mathrm{id}_{J^{\bullet}} \sim_{\mathrm{h}}f_2\otimes  \mathrm{id}_{J^{\bullet}} :\quad I^\bullet\widetilde\otimes_\Box  J^\bullet\rightarrow K^\bullet\widetilde\otimes_\Box  J^\bullet,\] and  \[	f_1\otimes  \mathrm{id}_{J^{\bullet}} \sim_{\mathrm{h}}f_2\otimes  \mathrm{id}_{J^{\bullet}} :\quad I^\bullet\wh\otimes_\Box  J^\bullet\rightarrow K^\bullet\wh\otimes_\Box  J^\bullet.\]
\end{lemd}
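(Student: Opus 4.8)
The plan is to produce the homotopy operator on the tensor complexes simply by tensoring the given homotopy with the identity. By the hypothesis $f_1\sim_{\mathrm{h}}f_2$ and the definition in Appendix \ref{appendixC1}, there is a continuous map $h\colon I^\bullet\rightarrow K^\bullet$ of degree $-1$ satisfying $d\circ h+h\circ d=f_1-f_2$. I would set
\[
H:=h\otimes \mathrm{id}_{J^{\bullet}},
\]
a map of degree $-1$, and claim that it is the desired topological homotopy between $f_1\otimes \mathrm{id}_{J^{\bullet}}$ and $f_2\otimes \mathrm{id}_{J^{\bullet}}$, on both $I^\bullet\widetilde\otimes_\Box J^\bullet$ and $I^\bullet\widehat\otimes_\Box J^\bullet$. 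Its continuity (on each of the topologies $\otimes_\pi$, $\otimes_\varepsilon$, $\otimes_{\mathrm{i}}$, and hence on the quasi-completions and completions) is exactly the fact recalled in Appendix \ref{appendixC2} that the tensor product of two continuous linear maps induces a continuous linear map on the various topological tensor products.

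The core of the argument is a direct verification of the homotopy identity on decomposable tensors, where the Koszul sign in the differential of the tensor complex (see Appendix \ref{appendixC2}) makes the relevant cross terms cancel. For $u\in I^r$ and $v\in J^s$, using that $h(u)\in K^{r-1}$ and $d(u\otimes v)=(du)\otimes v+(-1)^r u\otimes(dv)$, I would compute
\begin{align*}
d(H(u\otimes v))+H(d(u\otimes v))
&= d(h(u))\otimes v+(-1)^{r-1}h(u)\otimes dv+h(du)\otimes v+(-1)^{r}h(u)\otimes dv\\
&= \bigl((d\circ h+h\circ d)(u)\bigr)\otimes v\\
&= \bigl((f_1-f_2)(u)\bigr)\otimes v,
\end{align*}
the middle equality using $(-1)^{r-1}+(-1)^{r}=0$. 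The right-hand side equals $(f_1\otimes \mathrm{id}_{J^{\bullet}}-f_2\otimes \mathrm{id}_{J^{\bullet}})(u\otimes v)$, so the identity $d\circ H+H\circ d=f_1\otimes \mathrm{id}_{J^{\bullet}}-f_2\otimes \mathrm{id}_{J^{\bullet}}$ holds on the algebraic tensor product $I^\bullet\otimes J^\bullet$.

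Finally I would upgrade this algebraic identity to the quasi-completed and completed tensor complexes. Since the algebraic tensor product is dense in $I^\bullet\widetilde\otimes_\Box J^\bullet$ and in $I^\bullet\widehat\otimes_\Box J^\bullet$, and since all four maps $d\circ H$, $H\circ d$, $f_1\otimes \mathrm{id}_{J^{\bullet}}$, $f_2\otimes \mathrm{id}_{J^{\bullet}}$ are continuous, the identity extends by continuity to the whole space, giving $f_1\otimes \mathrm{id}_{J^{\bullet}}\sim_{\mathrm{h}} f_2\otimes \mathrm{id}_{J^{\bullet}}$ in both settings. I expect no serious obstacle here: the only points requiring care are the sign bookkeeping in the displayed computation and the density-plus-continuity extension, both of which are routine. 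The lemma is genuinely elementary, serving as the technical input (together with Lemma \ref{prop:tensorhomoequi}) that lets homotopy equivalences be built up factorwise in the product decompositions used throughout Section \ref{sec:poin}.
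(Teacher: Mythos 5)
Your proposal is correct and follows essentially the same route as the paper: choose the homotopy $h$, tensor it with $\mathrm{id}_{J^\bullet}$, verify the homotopy identity on decomposable tensors (your sign computation makes explicit the cancellation the paper leaves implicit), and then extend to the whole space by continuity. The only slight imprecision is that for the quasi-completed tensor product $\widetilde\otimes_\Box$ the algebraic tensor product $I^r\otimes J^s$ is in general only \emph{strictly} dense (in the sense of quasi-closures, as the paper states) rather than dense, but your extension argument goes through unchanged, since the set on which two continuous linear maps agree is closed and hence quasi-closed.
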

\begin{proof}
  Choose a continuous map $h:I^\bullet\rightarrow K^\bullet$  of degree $-1$ such that  $d\circ h+h\circ d=f_1-f_2.$
  Then
  \[h\otimes \mathrm{id}_{J^{\bullet}} :I^\bullet\widetilde\otimes_\Box J^\bullet\rightarrow K^\bullet\widetilde\otimes_\Box J^\bullet
  \quad (\text{resp.}\ h\otimes  \mathrm{id}_{J^{\bullet}} :I^\bullet\widehat\otimes J^\bullet\rightarrow K^\bullet\widehat\otimes J^\bullet)\]
  is a continuous map of degree $-1$ such that $d\circ (h\otimes  \mathrm{id}_{J^{\bullet}}) +(h\otimes  \mathrm{id}_{J^{\bullet}}) \circ d$ and  $f_1\otimes  \mathrm{id}_{J^{\bullet}}- f_2\otimes  \mathrm{id}_{J^{\bullet}}$
  agree on the strictly dense (resp.\,dense) subspace $I^r\otimes J^s$ of $I^r\widetilde\otimes_\Box J^s$ (resp.\,$I^r\widehat\otimes_\Box J^s$), where $r,s\in \Z$. 
 The lemma then follows.
\end{proof}

\begin{lemd} \label{prop:tensorhomoequi}
Let $I^\bullet$, $J^\bullet$, $I_1^\bullet$ and $J_1^\bullet$  be  
topological cochain complexes.
Suppose that $f: I^\bullet\rightarrow I_1^\bullet$ and $g:  J^\bullet\rightarrow J_1^\bullet$ are topological homotopy equivalences. Then the
continuous complex maps
\be\label{eq:tenprocommap}
  f\otimes g: I^\bullet\widetilde\otimes_\Box J^\bullet\rightarrow I_1^\bullet\widetilde\otimes_\Box J_1^\bullet\quad\text{and}\quad
  f\otimes g: I^\bullet\widehat\otimes_\Box J^\bullet\rightarrow I_1^\bullet\widehat\otimes_\Box J_1^\bullet
\ee
are topological homotopy equivalences as well.

\end{lemd}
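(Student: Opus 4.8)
The plan is to build an explicit homotopy inverse for $f\otimes g$ out of homotopy inverses for $f$ and $g$, and then to verify the two required homotopies by reducing everything to Lemma \ref{f1}. Since $f$ and $g$ are topological homotopy equivalences, I would first choose continuous complex maps $f':I_1^\bullet\to I^\bullet$ and $g':J_1^\bullet\to J^\bullet$ with $f\circ f'\sim_{\mathrm{h}}\mathrm{id}_{I_1^\bullet}$, $f'\circ f\sim_{\mathrm{h}}\mathrm{id}_{I^\bullet}$, and the analogous relations for $g$. The natural candidate inverse is $f'\otimes g'$, so the whole argument reduces to showing that $(f\otimes g)\circ(f'\otimes g')\sim_{\mathrm{h}}\mathrm{id}$ and $(f'\otimes g')\circ(f\otimes g)\sim_{\mathrm{h}}\mathrm{id}$, simultaneously on $\widetilde\otimes_\Box$ and on $\widehat\otimes_\Box$.

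Before the main step I would record two elementary stability properties of the relation $\sim_{\mathrm{h}}$, valid for both $\widetilde\otimes_\Box$ and $\widehat\otimes_\Box$. First, $\sim_{\mathrm{h}}$ is preserved under pre- and post-composition with continuous complex maps: if $p\sim_{\mathrm{h}}q$ via a degree $-1$ map $h$ and $r$ is a complex map, then $r\circ p\sim_{\mathrm{h}}r\circ q$ via $r\circ h$ and $p\circ r\sim_{\mathrm{h}}q\circ r$ via $h\circ r$, both computations using only that $r$ commutes with $d$. Second, $\sim_{\mathrm{h}}$ is transitive. Combining these, if two endomorphisms of a fixed complex are each topologically homotopic to the identity, then so is their composite.

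Next, using bifunctoriality of the tensor product on maps (all the maps in play have degree $0$, so no Koszul signs intervene at this stage), I would factor
\[
(f\otimes g)\circ(f'\otimes g')=(f\circ f')\otimes(g\circ g')=\bigl((f\circ f')\otimes\mathrm{id}_{J_1^\bullet}\bigr)\circ\bigl(\mathrm{id}_{I_1^\bullet}\otimes(g\circ g')\bigr).
\]
Lemma \ref{f1} applied to $f\circ f'\sim_{\mathrm{h}}\mathrm{id}_{I_1^\bullet}$ gives $(f\circ f')\otimes\mathrm{id}_{J_1^\bullet}\sim_{\mathrm{h}}\mathrm{id}$. For the second factor I would invoke the left-handed analogue of Lemma \ref{f1}, namely that $g_1\sim_{\mathrm{h}}g_2$ implies $\mathrm{id}\otimes g_1\sim_{\mathrm{h}}\mathrm{id}\otimes g_2$; this is proved by exactly the same argument as Lemma \ref{f1}, taking the homotopy to be $\mathrm{id}\otimes h'$ equipped with the Koszul sign $(-1)^{r}$ on $I^r\otimes J^s$. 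The two stability properties of the previous paragraph then combine the factors to give $(f\otimes g)\circ(f'\otimes g')\sim_{\mathrm{h}}\mathrm{id}$, and the symmetric factorization yields $(f'\otimes g')\circ(f\otimes g)\sim_{\mathrm{h}}\mathrm{id}$.

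The only genuinely delicate point is the sign bookkeeping in the left-handed analogue of Lemma \ref{f1}: with the differential $d(u\otimes v)=du\otimes v+(-1)^{r}u\otimes dv$ on the tensor complex, the naive homotopy $\mathrm{id}\otimes h'$ without a sign fails, since the $du\otimes h'(v)$ terms in $d\circ(\mathrm{id}\otimes h')+(\mathrm{id}\otimes h')\circ d$ reinforce rather than cancel; inserting the Koszul sign $(-1)^{r}$ restores the cancellation and leaves exactly $u\otimes(dh'+h'd)(v)$, as required. Everything else is formal, and because Lemma \ref{f1} is stated for both the quasi-completion and the completion, the factorization above applies verbatim to $I_1^\bullet\widetilde\otimes_\Box J_1^\bullet$ and to $I_1^\bullet\widehat\otimes_\Box J_1^\bullet$ at once, so no separate treatment of the two cases is needed.
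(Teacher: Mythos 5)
Your proof is correct and takes essentially the same route as the paper, whose entire proof is the single line ``This easily follows from Lemma \ref{f1}'': you factor $(f\otimes g)\circ(f'\otimes g')$ into a left-slot factor handled by Lemma \ref{f1} and a right-slot factor handled by its signed analogue, then combine via composition-stability and transitivity of $\sim_{\mathrm{h}}$. Your explicit verification that the right-handed version of Lemma \ref{f1} requires the Koszul sign $(-1)^r$ on the homotopy $\mathrm{id}\otimes h'$ is a genuine detail the paper leaves implicit, and your bookkeeping there is accurate.
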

\begin{proof}
This easily follows from Lemma \ref{f1}.
\end{proof}

\subsection{Transposes of topological cochain complexes}\label{appendixC3}

Let $E$ and $F$ be two LCS. As mentioned in the Introduction, let  $E'$  denote the space of all continuous linear maps from $E$ to $\BC$, which is endowed with the strong topology. For a continuous linear map $\phi :E\rightarrow F$, let\be \label{eq:contrans} {}^t \phi:\ F' \rightarrow E',\quad u'\mapsto (v\mapsto \la u', \phi(v)\ra)\ee denote the transpose of $\phi$. It is well-known that ${}^t \phi$ is a continuous linear map.

Let $I^\bullet$ be a topological cochain complex. Write $({}^{t}I)^{\bullet}$ for
the topological cochain complex, to be called the transpose of  $I^\bullet$,
such that for all $i\in \Z$,
\[({}^{t}I)^{i}:=(I^{-i})'\]
 and 
\[\la d(u'),v\ra=(-1)^{i}\la u',d(v)\ra\quad (u'\in ({}^{t}I)^{i}, v\in I^{-i-1}).\]

Let $f:I^\bullet\rightarrow J^\bullet$ be a continuous complex map.
As in \eqref{eq:contrans}, we define the transpose   ${}^tf:({}^tJ)^\bullet\rightarrow ({}^tI)^\bullet$ of $f$ in an obvious way (by taking the term-wise transpose), which
is still a continuous complex map. Furthermore, we have the following result.

\begin{lemd}\label{lem:trancomthe} Let $f,g:I^\bullet\rightarrow J^\bullet$ be two continuous complex maps which are topologically homotopic  to each other. 
	Then
\[{}^tf,\ {}^tg:\quad ({}^tJ)^\bullet\rightarrow ({}^tI)^\bullet\] are topologically homotopic  to each other as well. 
 Moreover, if $f:I^\bullet\rightarrow J^\bullet$ is a topological homotopy equivalence,
then so is ${}^tf:({}^tJ)^\bullet\rightarrow ({}^tI)^\bullet$.
\end{lemd}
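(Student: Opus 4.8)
The plan is to produce, from a given topological homotopy between $f$ and $g$, an explicit topological homotopy between ${}^tf$ and ${}^tg$ by transposing term by term, and then to deduce the statement about homotopy equivalences by a purely formal argument.

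First I would treat the homotopy claim. By hypothesis there is a continuous map $h\colon I^\bullet\to J^\bullet$ of degree $-1$ with $d\circ h+h\circ d=f-g$. On each term $h$ restricts to a continuous linear map $h\colon I^{-i+1}\to J^{-i}$, whose transpose ${}^th\colon (J^{-i})'\to (I^{-i+1})'$ is, by the very definition of the transpose complexes, a continuous linear map from $({}^tJ)^i$ to $({}^tI)^{i-1}$. Thus, after inserting a sign, the family $\tilde h:=\{(-1)^{i+1}\,{}^th\colon ({}^tJ)^i\to ({}^tI)^{i-1}\}_{i\in \Z}$ is a continuous map of degree $-1$. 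I would then verify $d\circ \tilde h+\tilde h\circ d={}^tf-{}^tg$ by pairing against arbitrary elements: for $u'\in ({}^tJ)^i=(J^{-i})'$ and $w\in I^{-i}$, one expands $\la (d\tilde h+\tilde h d)(u'),w\ra$ using the sign rule $\la d(p'),v\ra=(-1)^j\la p',d(v)\ra$ on both $({}^tI)^\bullet$ and $({}^tJ)^\bullet$. This turns the two summands into $(-1)^{i-1}(-1)^{i+1}\la u',hd(w)\ra$ and $(-1)^{i}(-1)^{i+2}\la u',dh(w)\ra$, i.e.\ exactly $\la u',(hd+dh)(w)\ra=\la u',(f-g)(w)\ra=\la ({}^tf-{}^tg)(u'),w\ra$, using that ${}^tf(u')=u'\circ f$. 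This shows ${}^tf\sim_{\mathrm{h}}{}^tg$.

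For the second assertion I would argue formally, using that transposition is a contravariant operation on continuous complex maps which sends the identity to the identity and reverses composition, i.e.\ ${}^t(\mathrm{id})=\mathrm{id}$ and ${}^t(\beta\circ \alpha)={}^t\alpha\circ {}^t\beta$, and which preserves continuity for the strong topologies (as recalled before \eqref{eq:contrans}). So if $f'\colon J^\bullet\to I^\bullet$ is a homotopy inverse of $f$, then applying the first part to $f\circ f'\sim_{\mathrm{h}}\mathrm{id}_{J^\bullet}$ and $f'\circ f\sim_{\mathrm{h}}\mathrm{id}_{I^\bullet}$ yields ${}^tf'\circ {}^tf\sim_{\mathrm{h}}\mathrm{id}_{({}^tJ)^\bullet}$ and ${}^tf\circ {}^tf'\sim_{\mathrm{h}}\mathrm{id}_{({}^tI)^\bullet}$. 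Hence ${}^tf\colon ({}^tJ)^\bullet\to ({}^tI)^\bullet$ is a topological homotopy equivalence with inverse ${}^tf'$.

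The computations are entirely routine; the only point demanding care — and the one step I expect to be the main obstacle — is the bookkeeping of the signs coming from the coboundary convention $\la d(u'),v\ra=(-1)^i\la u',d(v)\ra$ on the two transpose complexes, which is precisely what forces the sign $(-1)^{i+1}$ in the definition of $\tilde h$. Once the correct sign is fixed, both parts follow immediately.
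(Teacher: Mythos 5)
Your proof is correct and follows essentially the same route as the paper: the paper also defines a signed term-wise transpose of the homotopy, namely ${}^{\#}h(u')(v)=(-1)^i\la u',h(v)\ra$ for $u'\in({}^tJ)^{i+1}$, which agrees with your $\tilde h$ since $(-1)^{i}=(-1)^{i+2}$ under the index shift, and then deduces the equivalence statement formally from the first part. The only difference is that you write out the sign verification and the contravariant-functoriality argument that the paper leaves implicit.
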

\begin{proof}Let  $h:I^\bullet\rightarrow J^\bullet$  be a continuous map of degree $-1$ such that  $d\circ h+h\circ d=f-g.$
Then
\[{}^{\#}h:\ ({}^tJ)^\bullet\rightarrow ({}^tI)^\bullet,\quad u'\mapsto (v\mapsto (-1)^i \la u',h(v)\ra)\quad
(u'\in ({}^{t}J)^{i+1}, v\in I^{-i}, i\in \Z)\]
is a continuous map of degree $-1$ such that  $d\circ {}^{\#}h+{}^{\#}h\circ d={}^tf-{}^tg$.
This proves the first assertion. The second assertion easily follows from the first one.
\end{proof}

\subsection{Strong exactness of topological cochain complexes}\label{appendixC4}
Recall from the Introduction that a topological cochain complex is said to be strongly exact if it is exact and all the coboundary maps are strong.
For every  Hausdorff LCS $E$, form the topological cochain complex concentrated at degree $0$: 
\be\label{eq:iotaE} \iota^{\bullet}(E):\quad \cdots\rightarrow  0\rightarrow 0\rightarrow \iota^0(E):=E\rightarrow 0\rightarrow 0\rightarrow \cdots.\ee

We say that a topological cochain complex $I^\bullet$ is positive (resp.\,negative) if $I^i=0$ for all $i<0$ (resp.\,$i>0$).
The following two results are useful in checking the strong exactness of positive/negative topological cochain complexes.

\begin{lemd}\label{lem:tophomimplestrong1}
Let $I^\bullet$ be a positive topological cochain complex, and let $E$ be
a Hausdorff LCS. Suppose that
 there is a topological homotopy equivalence  $f:\iota^\bullet(E)\rightarrow I^\bullet$.
Then the induced topological cochain complex
\be\label{eq:complexI+E} \cdots\rightarrow 0\rightarrow 0\rightarrow E\xrightarrow{f} I^{0} \xrightarrow{d} I^{1}\xrightarrow{d} I^{2}\rightarrow \cdots \ee
is strongly exact.
\end{lemd}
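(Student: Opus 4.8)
The plan is to recognize the augmented complex \eqref{eq:complexI+E} as \emph{contractible} and to read off strong exactness from a contracting homotopy. Denote this complex by $A^\bullet$, with $A^{-1}:=E$, $A^i:=I^i$ for $i\ge 0$, and coboundary maps $d_A^{-1}:=f$ and $d_A^i:=d$ for $i\ge 0$. I will construct a continuous degree $-1$ map $s=\{s^i:A^i\to A^{i-1}\}$ satisfying $d_A\circ s+s\circ d_A=\mathrm{id}_{A^\bullet}$. Granting this, exactness is the usual consequence: if $x\in A^i$ with $d_A^i x=0$, then $x=d_A^{i-1}(s^i x)+s^{i+1}(d_A^i x)=d_A^{i-1}(s^i x)\in\Im d_A^{i-1}$, while $\Im d_A^{i-1}\subseteq\ker d_A^i$ holds since $d_A^i\circ d_A^{i-1}=0$. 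Strongness of each $d_A^i$ is equally formal: taking $\beta:=s^{i+1}$ and composing the degree $(i+1)$ relation $d_A^i s^{i+1}+s^{i+2}d_A^{i+1}=\mathrm{id}_{A^{i+1}}$ with $d_A^i$ on the right, then using $d_A^{i+1}d_A^i=0$, yields $d_A^i s^{i+1}d_A^i=d_A^i$, which is exactly the splitting condition.

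The construction of $s$ uses the homotopy-equivalence data directly. Since $\iota^\bullet(E)$ is concentrated in degree $0$, the complex map $f$ has a single nonzero component $f_0:E\to I^0$, and any homotopy inverse $g$ has a single component $g_0:I^0\to E$. Crucially, every degree $-1$ endomorphism of $\iota^\bullet(E)$ vanishes, so the relation $g\circ f\sim_{\mathrm{h}}\mathrm{id}_{\iota^\bullet(E)}$ collapses to the strict equality $g_0\circ f_0=\mathrm{id}_E$. Let $h=\{h_i:I^i\to I^{i-1}\}$ realize $d\circ h+h\circ d=f\circ g-\mathrm{id}_{I^\bullet}$; evaluating this in each degree and using $I^{-1}=0$ (so that $d^{-1}=0$) gives the identities $h_1 d^0=f_0 g_0-\mathrm{id}_{I^0}$ and $d^{i-1}h_i+h_{i+1}d^i=-\mathrm{id}_{I^i}$ for $i\ge 1$. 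I then set $s^0:=g_0$, $s^i:=-h_i$ for $i\ge 1$, and $s^i:=0$ otherwise; all components are continuous by hypothesis.

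Verifying $d_A\circ s+s\circ d_A=\mathrm{id}_{A^\bullet}$ is a short degree-by-degree check. In degree $-1$ it reads $s^0 f_0=g_0 f_0=\mathrm{id}_E$; in degree $0$ it reads $f_0 g_0+s^1 d^0=f_0 g_0-h_1 d^0=\mathrm{id}_{I^0}$; and in degree $i\ge 1$ it reads $d^{i-1}s^i+s^{i+1}d^i=-d^{i-1}h_i-h_{i+1}d^i=\mathrm{id}_{I^i}$. Each line is precisely one of the identities collected above, so $s$ is the desired continuous contracting homotopy, and the first paragraph then delivers both exactness and strongness of every coboundary map.

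I expect no serious obstacle: the argument is formal once the degeneration $g_0 f_0=\mathrm{id}_E$ is noticed. The only points demanding care are bookkeeping — maintaining the degree shift that places $E$ at position $-1$ in $A^\bullet$, and correctly handling the two boundary degrees ($-1$ and $0$), where the augmentation map $f$ and the vanishing of $d^{-1}$ enter. Continuity never needs to be checked by hand, since $s$ is assembled from the given continuous maps $g_0$ and $h_i$, and the passage from the contracting homotopy to the splittings witnessing strongness is purely algebraic.
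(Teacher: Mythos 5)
Your proof is correct and takes essentially the same route as the paper's: both rest on the observation that $g\circ f=\mathrm{id}_{\iota^\bullet(E)}$ holds strictly (every degree $-1$ map on $\iota^\bullet(E)$ vanishes) together with the homotopy $h$ for $f\circ g$, from which the splitting identities $f\circ g_0\circ f=f$ and $d\circ h\circ d=d$ (your $s^{i+1}$ is just $-h_{i+1}$, and $s^0=g_0$) follow by composing with $d$ and using $d\circ d=0$, $g\circ d=0$. Packaging these identities as a single contracting homotopy of the augmented complex, rather than checking exactness and the strongness of $f$ and of each $d$ separately as the paper does, is only an organizational difference.
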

\begin{proof} The exactness of \eqref{eq:complexI+E} is obvious, and we need to prove that the continuous linear maps
 $f:E\rightarrow I^0$ and $d:I^r\rightarrow I^{r+1}$ ($r\ge 0$) are strong.
Let $g:I^\bullet\rightarrow \iota^\bullet(E)$ be a continuous complex map  such that $f\circ g\sim_{\mathrm{h}} \mathrm{id}_{I^\bullet}$ and $g\circ f\sim_{\mathrm{h}} \mathrm{id}_{\iota^\bullet(E)}$.
Since $g\circ f=\mathrm{id}_{\iota^\bullet(E)}$, we have that  $f=f\circ g\circ f:E\rightarrow I^0$.	

We take  a continuous map $h:I^\bullet\rightarrow I^\bullet$ of degree $-1$ such that
$d\circ h+h\circ d=\mathrm{id}_{I^\bullet}-f\circ g$.
Since $g\circ d=0$ on $I^r$, we find that
 \[d\circ h\circ d=(\mathrm{id}_{I^{r+1}}-f\circ g-h\circ d)\circ d=d\]
 as continuous linear maps from $I^r$ to $I^{r+1}$. This completes the proof.
\end{proof}

Similar to Lemma \ref{lem:tophomimplestrong1}, we also have the following result.

\begin{lemd}\label{lem:tophomimplestrong2}
Let $J^\bullet$ be a negative topological cochain complex, and let $E$ be
a Hausdorff LCS. Suppose that
there is a topological homotopy equivalence $g:J^\bullet\rightarrow \iota^\bullet(E)$.
Then the topological cochain complex
\be \cdots\rightarrow  J^{-2} \xrightarrow{d} J^{-1}\xrightarrow{d} J^{0}\xrightarrow{g}E\rightarrow 0 \rightarrow 0\rightarrow \cdots \ee
is strongly exact.
\end{lemd}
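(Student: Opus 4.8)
The plan is to mirror the argument already carried out for Lemma \ref{lem:tophomimplestrong1}, simply exchanging the roles of source and target. First I would record that a topological homotopy equivalence is in particular a homotopy equivalence of the underlying cochain complexes, hence induces an isomorphism on cohomology. Since the cohomology of $\iota^\bullet(E)$ is $E$ concentrated in degree $0$, the map $g$ forces $H^i(J^\bullet)=0$ for all $i<0$ and identifies $H^0(J^\bullet)=J^0/\mathrm{im}\,d$ with $E$ via $g$. Because $J^\bullet$ is negative, this is precisely the assertion that the augmented complex is exact at every $J^i$ with $i<0$, at $J^0$ (where $\ker g=\mathrm{im}\,d$), and at $E$ (where $g$ is surjective). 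So exactness is immediate, exactly as in Lemma \ref{lem:tophomimplestrong1}.

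Next I would fix a continuous complex map $f:\iota^\bullet(E)\rightarrow J^\bullet$ witnessing the homotopy equivalence, so that $g\circ f\sim_{\mathrm{h}}\mathrm{id}_{\iota^\bullet(E)}$ and $f\circ g\sim_{\mathrm{h}}\mathrm{id}_{J^\bullet}$. The key elementary observation is that every continuous map of degree $-1$ from $\iota^\bullet(E)$ to itself vanishes, since its only possibly nonzero component would map degree $0$ into degree $-1$, where the target is $0$. Hence the relation $g\circ f\sim_{\mathrm{h}}\mathrm{id}_{\iota^\bullet(E)}$ collapses to the honest equality $g\circ f=\mathrm{id}_E$. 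Consequently $g\circ f\circ g=g$, which shows that $g:J^0\rightarrow E$ is strong, with $f$ serving as the required map in the definition of strongness.

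For the coboundary maps I would take a continuous map $h:J^\bullet\rightarrow J^\bullet$ of degree $-1$ with $d\circ h+h\circ d=\mathrm{id}_{J^\bullet}-f\circ g$, which exists because $f\circ g\sim_{\mathrm{h}}\mathrm{id}_{J^\bullet}$. Composing with $d$ on the left and using $d\circ d=0$ yields
\[
d\circ h\circ d=d-(d\circ f)\circ g.
\]
Since $f$ is a complex map out of $\iota^\bullet(E)$, whose differential is zero, one has $d\circ f=0$; concretely, the only nonzero component $f_0:E\rightarrow J^0$ satisfies $d\circ f_0:E\rightarrow J^0\rightarrow J^1=0$. Therefore $d\circ h\circ d=d$ on every $J^r$, exhibiting each coboundary $d:J^r\rightarrow J^{r+1}$ (for $r\le -1$) as strong with $h$ as the required map. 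Together with the strongness of $g$ established above, and noting that the remaining zero maps of the augmented complex are trivially strong, this gives strong exactness.

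I do not anticipate a genuine obstacle: the whole argument is the formal transpose of the one for Lemma \ref{lem:tophomimplestrong1}, and the only points demanding care are the bookkeeping of degrees and the verification that the homotopy on $\iota^\bullet(E)$ degenerates to the identity $g\circ f=\mathrm{id}_E$. Continuity of all the maps involved is automatic, as $f$, $g$ and $h$ are continuous by hypothesis and construction.
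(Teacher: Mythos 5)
Your proposal is correct and takes essentially the same approach as the paper: the paper proves Lemma \ref{lem:tophomimplestrong2} simply by declaring it ``similar to Lemma \ref{lem:tophomimplestrong1}'', and your argument is precisely that mirrored proof. In particular, your two key steps --- that the degree $-1$ homotopy on $\iota^\bullet(E)$ must vanish, forcing $g\circ f=\mathrm{id}_E$ and hence $g\circ f\circ g=g$, and that composing $d\circ h+h\circ d=\mathrm{id}_{J^\bullet}-f\circ g$ with $d$ on the left and using $d\circ f=0$ yields $d\circ h\circ d=d$ --- are exactly the transposes of the steps in the paper's proof of Lemma \ref{lem:tophomimplestrong1}, where one instead uses $f=f\circ g\circ f$ and $g\circ d=0$.
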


\section*{Acknowledgement}
F. Chen is supported by the National Natural Science Foundation of China (Nos. 12131018, 12161141001) and the Fundamental Research Funds for the Central Universities (No. 20720230020).
B. Sun is supported by  National Key R \& D Program of China (Nos. 2022YFA1005300 and 2020YFA0712600) and New Cornerstone Investigator Program. 
The first and the third authors would like to thank Institute for Advanced Study in Mathematics, Zhejiang University. Part of this work was carried out while they were visiting the institute.

\end{document}